\newcounter{denseversion}
\newcounter{comments}
\newcounter{authorcounter}
\newcounter{adresscounter}
\def\title#1{\gdef\@title{#1}}
\def\@title{}
\def\subtitle#1{\gdef\@subtitle{#1}}
\def\@subtitle{}
\def\authortagsused{0}
\def\adresstag#1{\if!#1!\else$^{\;#1\;}$\fi}
\renewcommand{\author}[2][]{
  \stepcounter{authorcounter}
  \if!#1!\else\gdef\authortagsused{1}\fi
  \ifnum\value{authorcounter}=1
    \def\@authorstringa{#2\adresstag{#1}}
    \def\@authorstringb{#2}
    \def\@authorstringc{#2\adresstag{#1}}
  \else
    \g@addto@macro\@authorstringa{\ and #2\adresstag{#1}}
    \g@addto@macro\@authorstringb{\ and #2}
    \g@addto@macro\@authorstringc{\\#2\adresstag{#1}}
  \fi}
\def\@author{\ifnum\value{denseversion}=0\@authorstringa\else\@authorstringb\fi}
\def\@adressstringa{}
\def\@adressstringb{}
\newcommand{\adress}[2][]{
  \stepcounter{adresscounter}
  \ifnum\value{adresscounter}=1
    \g@addto@macro\@adressstringa{\ifnum\authortagsused=0\def\br{\\}\else\def\br{, }\fi\adresstag{#1}#2}
    \g@addto@macro\@adressstringb{\def\br{\\}\adresstag{#1}\parbox[t]{14cm}{#2}}
  \else
    \g@addto@macro\@adressstringa{\\[\bigskipamount]\adresstag{#1}#2}
    \g@addto@macro\@adressstringb{\\[\medskipamount]\adresstag{#1}\parbox[t]{14cm}{#2}}
  \fi}
\def\preprint#1{\gdef\@preprint{#1}}
\def\@preprint{}
\def\keywords#1{\gdef\@keywords{#1}}
\def\@keywords{}
\def\msc#1{\gdef\@msc{#1}}
\def\@msc{}
\def\email#1{
   \gdef\@email{#1}
   \g@addto@macro\@authorstringc{ {\it (#1)}}}
\def\@email{}
\def\dedication#1{\gdef\@dedication{#1}}
\def\@dedication{}
\def\mybaselinestretch#1{\gdef\@mybaselinestretch{#1}}
\def\@mybaselinestretch{}
\def\myparskip#1{\gdef\@myparskip{#1}}
\def\@myparskip{}
\renewcommand{\baselinestretch}{\@mybaselinestretch}
\def\denseversion{
  \setcounter{denseversion}{1}
  \newgeometry{left=3cm,right=3cm,top=3cm}
  \mybaselinestretch{1.1}
  \myparskip{0.5ex}
  \renewcommand{\baselinestretch}{\@mybaselinestretch}
  \setlength{\parskip}{\@myparskip}
  \normalfont
  \def\possiblelinebreak{}
  \fancyfoot[C]{\itshape{--$\,\,$\thepage$\,\,$--}}}
\def\possiblelinebreak{\\}
\renewcommand{\emph}[1]{\def\reserved@a{it}\ifx\f@shape\reserved@a\ul{#1}\else\textit{#1}\fi}
\newcommand{\mytableofcontents}{
   \ifnum\value{denseversion}=0
     \tableofcontents
   \else
     \renewcommand{\baselinestretch}{1.1}
     \setlength{\parskip}{0ex}
     \normalfont
     \begingroup
     \def\addvspace##1{\vskip0.4em}
     \tableofcontents
     \endgroup
     \renewcommand{\baselinestretch}{\@mybaselinestretch}
     \setlength{\parskip}{\@myparskip}
     \normalfont
   \fi}
\newlength{\zeilenlaenge}
\def\putindent#1{
  \settowidth{\zeilenlaenge}{#1}
  \ifnum\zeilenlaenge>\textwidth
    #1
  \else
    \noindent #1
  \fi
}
\def\href#1#2{#2}
\def\pdfdaten{
  \hypersetup{
    linktocpage = true,
    pdftitle = {\@title},
    pdfauthor = {\@author},
    pdfkeywords = {\@keywords},    
    bookmarksopen = true,
    bookmarksopenlevel = 1
  }}  
\def\showkeywords{\begin{flushleft}\footnotesize\textbf{Keywords}: \@keywords\end{flushleft}}
\def\showmsc{\begin{flushleft}\footnotesize\textbf{MSC 2010}: \@msc\end{flushleft}}
\def\mytitle{}
\def\zmptitle{
  \begin{tabular}{cc}
    \begin{minipage}[c]{0.4\textwidth}
      \begin{flushleft}
        \includegraphics[width=110pt]{../../tex/zmp}
      \end{flushleft}  
    \end{minipage}&
    \begin{minipage}[c]{0.55\textwidth}
      \begin{flushright}
      {\small\sf\@preprint}
      \end{flushright}
    \end{minipage}
  \end{tabular}
  \vskip 2cm}
\def\maketitle{
  \pdfdaten
  \newpage
  \noindent
  \mytitle
  \begin{center}
    \LARGE\@title\\
    \if!\@subtitle!\else\smallskip\LARGE\@subtitle\\\fi
    \bigskip
    \if!\@author!\else\bigskip\large\@author\\\fi
    \ifnum\value{denseversion}=0
      \if!\@adressstringa!\else\bigskip\normalsize\@adressstringa\\\fi
      \if!\@email!\else\ifnum\value{authorcounter}=1\bigskip\normalsize\textit{\@email}\\\else\fi\fi
    \else
    \fi
    \if!\@dedication!\else\bigskip\normalsize{\@dedication}\\\fi
  \end{center}
  \ifnum\value{denseversion}=0\vskip 1.5cm\else\vskip0.5cm\fi
  \if!\@draft!\else\thispagestyle{empty}\fi}
\def\kobib#1{
  \begin{raggedright}
  \ifnum\value{denseversion}=0\else\small\fi
  \Oldbibliography{#1/kobib}
  \bibliographystyle{#1/kobib}
  \end{raggedright}
  \ifnum\value{denseversion}=0\else
      \noindent
      \if!\@authorstringc!\else
        \ifnum\authortagsused=0\ifnum\value{authorcounter}>1\normalsize\@authorstringc\\[\medskipamount]\else\fi\else\normalsize\@authorstringc\\[\medskipamount]\fi
      \fi
      \if!\@adressstringb!\else\normalsize\@adressstringb\\{}\fi
      \ifnum\authortagsused=0
        \ifnum\value{authorcounter}=1
          \if!\@email!\else\linebreak\normalsize\textit{\@email}\\{}\fi
        \else
        \fi
      \else
      \fi
  \fi}
\let\Oldbibliography\bibliography
\def\bibliography#1{
  \begin{raggedright}
  \ifnum\value{denseversion}=0\else\small\fi
  \Oldbibliography{#1}
  \end{raggedright}
  \ifnum\value{denseversion}=0\else
      \medskip
      \noindent
      \if!\@authorstringc!\else
        \ifnum\authortagsused=0\ifnum\value{authorcounter}>1\normalsize\@authorstringc\\[\medskipamount]\else\fi\else\normalsize\@authorstringc\\[\medskipamount]\fi
      \fi
      \if!\@adressstringb!\else\normalsize\@adressstringb\\{}\fi
      \ifnum\authortagsused=0
        \ifnum\value{authorcounter}=1
          \if!\@email!\else\linebreak\normalsize\textit{\@email}\\{}\fi
        \else
        \fi
      \else
      \fi
  \fi
}
\newenvironment{commentfigure}{\begin{comment}}{\end{comment}}
\newenvironment{sidewayscommentfigure}{\begin{minipage}}{\end{minipage}}
\def\comments{
  \setcounter{comments}{1}
  \renewenvironment{comment}{\begin{list}{}{\rightmargin=1cm\leftmargin=1cm}\item\sf\begin{small}}{\end{small}\end{list}}
  
  }
\def\draftstamp#1#2#3{
  \ifnum\value{comments}=0
    \gdef\@draft{DRAFT - Version #1 - Last edited on #2 by #3 - Comments are not displayed}
  \else
      \gdef\@draft{DRAFT - Version #1 - Last edited on #2 by #3 - Comments are displayed}
  \fi
  \fancyhead[C]{\footnotesize\tt\textcolor{red}{\@draft}}}
\def\draft#1#2#3#4{
  \ifnum#4=1\comments\else\fi
  \draftstamp}
\def\@draft{}
\newcounter{marke}
\def\N {\mathbb{N}}
\def\Z {\mathbb{Z}}
\def\R {\mathbb{R}}
\def\id{\mathrm{id}}
\def\hc#1{\mathrm{h}_{#1}}
\def\h {\mathrm{H}}
\def\subset{\subseteq}
\def\sep{\;|\;}
\def\maps{\colon}
\def\df{:=}
\renewcommand{\varepsilon}{\epsilon}
\renewcommand{\to}{\!\xymatrix@R=0cm@C=1.4em{\ar[r] &}}
\renewcommand{\mapsto}{\!\xymatrix@R=0cm@C=1.4em{\ar@{|->}[r] &}\!}
\renewcommand{\Rightarrow}{\!\xymatrix@R=0cm@C=1.4em{\ar@{=>}[r] &}\!}
\renewcommand{\Leftarrow}{\!\xymatrix@R=0cm@C=1.4em{\ar@{<=}[r] &}\!}
\newcommand{\incl}{\!\xymatrix@R=0cm@C=1.4em{\ar@{^(->}[r] &}\!}
\renewcommand\Leftrightarrow{\!\xymatrix@R=0cm@C=1.4em{\ar@{<=>}[r] &}\!}
\def\gdw{\Leftrightarrow}
\renewenvironment{proof}[1][\nameProof]
  {\par\pushQED{\qed}%
   \normalfont \topsep6\p@\@plus6\p@\relax
   \trivlist
   \item[\hskip\labelsep
         \itshape
         #1\@addpunct{.}]
  \leavevmode}
  {\popQED\endtrivlist\@endpefalse}
\def\notebox#1#2{\begin{minipage}[b]{#1}\sloppy\renewcommand{\baselinestretch}{0.8}\footnotesize \begin{center}#2\end{center}\end{minipage}}
\def\mquad{\hspace{-2em}}
\def\erf#1{(\ref{#1})}
\def\stackref#1#2{\stackrel{\text{\ref{#1}}}{#2}}
\def\eqref#1{\stackref{#1}{=}}
\newlength{\myeqt} 
\newlength{\myeqs} 
\newlength{\myeqm} 
\newlength{\myeqn} 
\newcommand\eqtext[2][\myeqn]{\symtext[#1]{#2}{=}}
\newcommand\symtext[3][\myeqn]{
  \settowidth{\myeqt}{#2}
  \settowidth{\myeqs}{$#3$}
  \addtolength{\myeqs}{\the\myeqm}
  \ifdim\myeqt>\myeqs
    \stackrel{\hspace{-#1}\notebox{#1}{\medskip #2 \\ $\downarrow$\smallskip}\hspace{-#1}}{#3}
  \else
    \stackrel{\text{#2}}{#3}
  \fi}
\newcommand\eqcref[2][\myeqn]{\symcref[#1]{#2}{=}}
\newcommand\symcref[3][\myeqn]{\symtext[#1]{\cref{#2}}{#3}}
\def\brackets#1{\IfStrEq{#1}{-}{}{(#1)}}
\def\subindex#1{\IfStrEq{#1}{-}{}{_{#1}}}
\newcommand{\alxydim}[2]{\begin{aligned}\xymatrix#1{#2}\end{aligned}}
\newlength{\myl}
\def\ddt#1#2#3{\left.\frac{\mathrm{d}^{\IfStrEq{#1}{1}{}{#1}}}{\mathrm{d}#2}\IfStrEq{#2}{#3}{\right.}{\right|_{#3}}}
\newcommand{\ueins}{{\mathrm{U}}(1)}
\def\af{\mathcal{A}na^{\infty}}
\def\fun{\mathcal{F}un}
\def\two{2\text{-}}
\def\hom{\mathcal{H}\!om}
\def\act#1#2{#1/\!\!/#2}
\def\idmorph#1{#1_{dis}}
\def\pr{{\mathrm{pr}}}
\newlength{\widthtmp}
\def\length#1{\settowidth{\widthtmp}{#1}\the\widthtmp}
\def\lli#1{\tensor{_#1}{}}
\def\ttimes#1#2{\hspace{-0.15em}\tensor[_{#1}]{\times}{_{#2}}}
\def\buntech#1#2{\mathcal{B}\hspace{-0.01em}un_{\hspace{0.05em}#1}^{#2}}
\def\bun#1#2{\buntech{#1}{}\brackets{#2}}
\def\buncon#1#2{\buntech{#1}{\nabla}\hspace{-0.05em}\brackets{#2}}
\def\bunconmod#1#2#3{\buntech{#1}{\nabla_{\!#3}}\hspace{-0.05em}\brackets{#2}}
\def\zweibun#1#2{\two\bun{#1}{#2}}
\def\zweibuncon#1#2{\two\buncon{#1}{#2}}
\def\quot#1{``#1''}
\def\quand{\quad\text{ and }\quad}
\def\quomma{\quad\text{, }\quad}
\def\nameProof{Proof}
\def\splitting{transition span}
\def\Splitting{Transition span}
\def\ff{f\!f}
\def\flat{f\!lat}
\def\gen{g\hspace{-0.08em}e\hspace{-0.06em}n}
\def\con#1#2{\mathcal{C}\!on_{#1}\brackets{#2}}
\def\conff#1#2{\mathcal{C}\!on^{{f\!f}}_{#1}\brackets{#2}}
\def\inf#1{\EuScript{#1}}
\def\zweibunconreg#1#2{\two\bunconmod{#1}{#2}{r\!e\!g}}
\def\zweibunconff#1#2{\two\bunconmod{#1}{#2}{f\!f}}
\def\fa#1{{#1}^{a}}
\def\fb#1{{#1}^{b}}
\def\fc#1{{#1}^{c}}
\def\ob#1{\mathrm{Obj}(#1)}
\def\mor#1{\mathrm{Mor}(#1)}
\def\1mor#1{1\text{-}\mathrm{Mor}(#1)}
\def\2mor#1{2\text{-}\mathrm{Mor}(#1)}
\title{A global perspective to connections on principal 2-bundles}
\author{Konrad Waldorf}
\email{konrad.waldorf@uni-greifswald.de}
\keywords{}
\begin{document}


\maketitle 

\begin{abstract}
For a strict Lie 2-group, we develop a notion  of Lie 2-algebra-valued differential forms on Lie groupoids, furnishing a differential  graded-commutative Lie algebra equipped with an adjoint action of the Lie 2-group and a pullback operation along Morita equivalences between Lie groupoids. Using this notion, we define connections on principal 2-bundles  as  Lie 2-algebra-valued 1-forms on the total space Lie groupoid of the 2-bundle, satisfying a condition in complete analogy to connections on ordinary principal bundles. We carefully treat various notions of curvature, and prove a classification result by the non-abelian differential cohomology of Breen-Messing.   This provides a consistent, global perspective to higher gauge theory.

\end{abstract}

\setcounter{tocdepth}{3}
\mytableofcontents


\setsecnumdepth{2}

\thispagestyle{empty}

\section{Introduction}

We develop a new, global perspective to connections on principal 2-bundles, enjoying various nice analogies to  connections on ordinary principal bundles. The underlying principal 2-bundles have Lie groupoids $\inf P$ as \quot{total spaces}, which are fibred over a smooth manifold $M$ and carry an action  of a strict Lie 2-group $\Gamma$. As a first step we introduce, for $\gamma$ the Lie 2-algebra of $\Gamma$, a theory of $\gamma$-valued differential forms over Lie groupoids, furnishing a differential graded Lie algebra with an adjoint action of  the 2-group $\Gamma$. Using this new language, the definition of a connection on a principal $\Gamma$-2-bundle (\cref{def:connection}) becomes so simple that we can repeat it here in almost one line: a  \emph{connection} on $\inf P$ is a $\gamma$-valued 1-form $\Omega \in \Omega^1(\inf P,\gamma)$ such that the equation
\begin{equation*}
R^{*}\Omega = \mathrm{Ad}_{\pr_\Gamma}^{-1}(\pr_\inf P^{*}\Omega) + \pr_\Gamma^{*}\Theta
\end{equation*}
holds over $\inf P \times \Gamma$, where $\pr_{\inf P}$ and $\pr_{\Gamma}$ are the projections to the two factors, and  $R:\inf P \times \Gamma \to \inf P$ is the action. Further, $\Theta\in\Omega^1(\Gamma,\gamma)$ is a canonical $\gamma$-valued 1-form that we discover on every Lie 2-group $\Gamma$; a higher-categorical analog of the Maurer-Cartan form of an ordinary Lie group.
We remark that above equation is literally the condition that characterizes connections on ordinary principal bundles. Our hope is that due to this analogy with ordinary gauge theory, our new approach  conceptually illuminates higher gauge theory and makes it better accessible.

Before we explain the results of this paper in more detail, let us try to overview some steps in the development of principal 2-bundles. 
The idea of locally trivial fibre 2-bundles in the sense of a total space Lie groupoid has been introduced by
Bartels \cite{bartels} and Baez-Schreiber \cite{baez2}. A characterization of \emph{principal} fibre 2-bundles has then been developed in terms of transition data, or cocycles, with respect to an open cover, in work of Schreiber and Baez  \cite{schreiber4,baez2}. This approach leads directly to Giraud's non-abelian cohomology \cite{giraud} as the corresponding classifying theory.

The first realization of a principal 2-bundle in the sense used here, has been developed by Wockel \cite{wockel1}. Wockel also describes a classification of principal 2-bundles up to Morita equivalence by non-abelian cohomology.  Schommer-Pries  embedded Wockel's principal 2-bundles into a more general framework, and assembled a whole  bicategory of principal 2-bundles \cite{pries2}.

Different approaches   are non-abelian bundle gerbes \cite{aschieri}, which are equivalent to principal 2-bundles as 2-stacks \cite{Nikolaus}, and  $G$-gerbes \cite{Laurent-Gengoux}, whose equivalence to principal 2-bundles was shown in \cite{ginot1}. Here, $\Gamma$  is the automorphism 2-group of a Lie group $G$.

The theory of connections on the before mentioned versions of principal 2-bundles is well-developed in several aspects, in particular by Breen-Messing \cite{breen1}, Baez and  Schreiber \cite{schreiber4,baez2,Schreiber2011}, and also Jurco-Sämann-Wolf \cite{Jurco2015} in terms of transition data,  by Aschieri-Cantini-Jurco in terms of bundle gerbes \cite{aschieri}, and by Laurent-Gengoux-Stiénon-Xu in terms of $G$-gerbes  \cite{Laurent-Gengoux}. However, no treatment of connections on Wockel's principal 2-bundles  is available. The present article closes this gap.

One other aspect that is not much discussed is the question of existence of connections on non-abelian gerbes. Only for abelian 2-groups, existence of connections was proved by Murray \cite{murray}, and \cite{Laurent-Gengoux} contains a discussion for $G$-gerbes. Using our new approach, we makes at least a  small contribution to this question (\cref{th:existence}).
A further motivation for this article is the increasing number of physical applications of connections on non-abelian gerbes, see e.g.  \cite{Gukova,Fiorenza,Parzygnat2015,Jurco}, for which our new perspective might be useful. 

\noindent
Let us now describe in more detail the results of this paper.
\begin{itemize}

\item 
We define a whole  bicategory of principal 2-bundles with connection, whose 1-morphisms are anafunctors (a certain version of a Morita equivalence, a.k.a. Hilsum-Skandalis maps or bibundles) equipped with additional differential form data.

\item
We detect four different classes of connections, ordered by increasing generality: flat, fake-flat, regular, and fully general. Here, \quot{fake-flat} has precisely the meaning of Breen-Messing \cite{breen1}, and \quot{regular} corresponds to the usual connections from the transition data picture. The fully general connections are not so attractive from the transition data point of view, but arise naturally in our global picture. 

\item
We provide equivalences between our four classes of principal 2-bundles with connection and corresponding versions of non-abelian differential cohomology, established by explicit reconstruction and extraction procedures.

\item
We prove an existence result for connections (in the fully general sense) on principal $\Gamma$-bundles, under a mild assumption on the $\Gamma$-action. This assumption is, for instance, satisfied in the case of an abelian 2-group, so that our result is an extension of Murray's existence result \cite{murray}.  

\end{itemize}

The organization of the paper is straightforward. In \cref{sec:pre} we recall some important concepts and results from the theory of Lie groupoids and Lie 2-groups.  In \cref{sec:2bundles} we set up the theory of principal 2-bundles on the basis of \cite{wockel1,Nikolaus}. As a new tool for working with principal 2-bundles we introduce the notion of a \emph{\splitting} for total spaces of principal 2-bundles and anafunctors (\cref{def:Rsection}). This tool will be used frequently in the forthcoming discussion and subsequent papers.
In \cref{sec:diffforms} we introduce our new notion of Lie 2-algebra-valued differential forms on Lie groupoids; this section is disconnected from the theory of principal 2-bundles, and might be useful for other purposes. In \cref{sec:conn} we introduce and study connections on principal 2-bundles. For the convenience of the reader we include as a one-page appendix a formulary for calculations in strict Lie 2-algebras. 

There are two subsequent papers that continue our new approach: in \cite{Waldorf2017} we  construct the parallel transport of a connection in a principal 2-bundle, using horizontal lifts of paths and surfaces to its  total space. In a second paper we will prove that the theory of  connections on principal 2-bundles developed here is equivalent (as 2-stacks) to connections on non-abelian bundle gerbes, and so obtain a fully consistent and complete picture.

\paragraph{Acknowledgements.} This work was supported by the German Research Foundation under project code WA 3300/1-1.

\setsecnumdepth{2}

\section{Preliminaries}

\label{sec:pre}

\subsection{Lie 2-groups and Lie 2-algebras}

\label{sec:lie2}

We fix the following notation  which will be used throughout the present article. 
If $\inf X$ is a Lie groupoid, we denote by $s,t:\mor{\inf X} \to \ob{\inf X}$ the source and target maps, respectively, and by 
\begin{equation*}
\circ : \mor{\inf X} \ttimes s t \mor{\inf X} \to \mor{\inf X}
\end{equation*}
the composition.
We recall that a (strict) Lie 2-group is a Lie groupoid $\Gamma$ with smooth functors
\begin{equation*}
m: \Gamma \times \Gamma \to \Gamma
\quand
i:\Gamma \to \Gamma
\end{equation*}
that  satisfy strictly the axioms of multiplication and inversion of a group, see e.g. \cite{baez7,baez2}.  Lie 2-groups are in one-to-one correspondence with crossed modules (of Lie groups): quadruples $(G,H,t,\alpha)$ consisting of Lie groups $G$ and $H$, a Lie group homomorphism $t:H \to G$, and a smooth action $\alpha:G \times H \to H$ of $G$ on $H$ by Lie group homomorphisms, such that
\begin{equation*}
\alpha(t(h),x)=hxh^{-1}
\quand
t(\alpha(g,h))=gt(h)g^{-1}
\end{equation*}
for all $g\in G$ and $h,x\in H$. The correspondence identifies $\ob{\Gamma}=G$ and $\mor{\Gamma}=H \ltimes G$ (the semi-direct product  with respect to $\alpha$),  $s(h,g)=g$ and $t(h,g)=t(h)g$ (the double usage of $t$ is unavoidable), and $(h_2,g_2)\circ (h_1,g_1)=(h_2h_1,g_1)$. We will instantly switch to the crossed module perspective whenever it seems to be  convenient.

For a crossed module  $(G,H,t,\alpha)$
we  denote by  $\alpha_g \in \mathrm{Aut}(H)$  the action of a fixed $g\in G$ on $H$. For $h \in H$ we consider the map $\tilde \alpha_h: G \to H$ defined by $\tilde \alpha_h(g) := h^{-1}\alpha(g,h)$, which is not a Lie group homomorphism but still satisfies $\tilde \alpha_h(1)=1$.

Passing  to the Lie algebras $\mathfrak{g}$ and $\mathfrak{h}$  of $G$ and $H$, respectively, the differential of $t$ gives a Lie algebra homomorphism $t_{*}:\mathfrak{h}\to \mathfrak{g}$, and the differential of $G \to \mathrm{Aut}(H):g \mapsto \alpha_g$ gives a Lie algebra action $\alpha_{*}:\mathfrak{g} \times \mathfrak{h} \to \mathfrak{h}$ of $\mathfrak{g}$ on $\mathfrak{h}$ by derivations, satisfying
\begin{equation*}
\alpha_{*}(t_{*}(Y_1),Y_2) = [Y_1,Y_2]
\quand
t_{*}\alpha_{*}(X,Y) = [X,t_{*}(Y)]
\end{equation*}
for all $X\in \mathfrak{g}$ and $Y_1,Y_2\in \mathfrak{h}$. 
\begin{comment}
This means that $\alpha_{*}$ is bilinear and satisfies
\begin{eqnarray}
\alpha_{*}([X_1,X_2],Y)&=& \alpha_{*}(X_1,\alpha_{*}(X_2,Y))-\alpha_{*}(X_2,\alpha_{*}(X_1,Y))
\label{cm3}
\\
\alpha_{*}(X,[Y_1,Y_2])&=&[\alpha_{*}(X,Y_1),Y_2]+[Y_1,\alpha_{*}(X,Y_2)]\text{,}
\label{cm4}
\end{eqnarray}
\end{comment}
The collection $\gamma := (\mathfrak{h},\mathfrak{g},t_{*},\alpha_{*})$ forms the \emph{Lie 2-algebra} of the Lie 2-group $\Gamma$. Apart from the usual adjoint actions of $H$ on $\mathfrak{h}$ and $G$ on $\mathfrak{g}$, we have -- for $g\in G$ -- the differential of $\alpha_g$, which is a Lie algebra homomorphism $(\alpha_g)_{*}: \mathfrak{h} \to \mathfrak{h}$,  and -- for $h\in H$ -- the differential of $\tilde\alpha_h$, which is a linear map $(\tilde \alpha_h)_{*}: \mathfrak{g} \to \mathfrak{h}$.

\subsection{Non-abelian differential cohomology}

\label{sec:nonabdiffcoho}

Non-abelian differential cohomology unifies Giraud's non-abelian cohomology \cite{giraud} and differential cohomology, see e.g. \cite{breen1,schreiber2}. Let $\Gamma$ be a Lie 2-group with  crossed module $(G,H,t,\alpha)$.  Let $M$ be a smooth manifold and let $\mathcal{U}=\{U_i\}_{i\in I}$ be a cover of $M$ by open sets. 

\begin{definition}
A differential $\Gamma$-cocycle for $\mathcal{U}$ consists of the following data:
\begin{enumerate}
\item[(a)] 
On every open set $U_i$, a 1-form $A_i \in \Omega^1(U_i,\mathfrak{g})$ and a 2-form $B_i \in \Omega^2(U_i,\mathfrak{h})$.

\item[(b)]
On every two-fold intersection $U_i \cap U_j$, a smooth map $g_{ij}:U_i \cap U_j \to G$ and a 1-form $\varphi_{ij} \in \Omega^1(U_i \cap U_j,\mathfrak{h})$.

\item[(c)]
On every three-fold intersection $U_i \cap U_j \cap U_k$, a smooth map $
a_{ijk}:U_i \cap U_j \cap U_k \to H$. 
\end{enumerate}  
The cocycle conditions are the following:
\begin{enumerate}

\item
Over every two-fold intersection $U_i \cap U_j$,
\begin{align}
A_j + t_{*} (\varphi_{ij}) &= \mathrm{Ad}_{g_{ij}}(A_i) - g_{ij}^{*}\bar\theta 
\label{eq:transconnco}
\\
\label{eq:transcurvco}
B_j + \mathrm{d}\varphi_{ij} + \frac{1}{2}[\varphi_{ij} \wedge \varphi_{ij}]+ \alpha_{*}(A_j \wedge \varphi_{ij})&=(\alpha_{g_{ij}})_{*} (B_i)
\end{align}

\item
Over every three-fold intersection $U_i \cap U_j \cap U_k$,
\begin{align}
\label{eq:transtrans}
g_{ik} &= (t \circ a_{ijk}) \cdot g_{jk}g_{ij}
\\
\label{eq:transconn2co}
\mathrm{Ad}_{a_{ijk}}^{-1}(\varphi_{ik}) +(\tilde \alpha_{a_{ijk}})_{*}(A_k) &=  \varphi_{jk}+(\alpha_{g_{jk}})_{*} (\varphi_{ij}) -a_{ijk}^{*}\theta\text{.}
\end{align}

\item
Over every four-fold intersection $U_i \cap U_j \cap U_k \cap U_l$,
\begin{equation}
\label{31c}
a_{ikl} \alpha(g_{kl},a_{ijk})=a_{ijl}a_{jkl}\text{.}
\end{equation}

\end{enumerate} 
\end{definition}
A differential $\Gamma$-cocycle is called \emph{generalized} if \cref{eq:transcurvco} is not required. Any differential $\Gamma$-cocycle
furnishes two notions of curvature:
\begin{enumerate}[(a)]

\item 
the \quot{3-curvature}
\begin{equation*}
\mathrm{curv}(A_i,B_i) := \mathrm{d}B_i + \alpha_{*}( A_i\wedge B_i)\in\Omega^3
(U_i,\mathfrak{h})\end{equation*}

\item
the \quot{fake-curvature}
\begin{equation*}
\mathrm{fcurv}(A_i,B_i) := \mathrm{d}A_i +\frac{1}{2} [A_i
\wedge A_i] - t_{*} ( B_i)\in\Omega^2(U_i,\mathfrak{g}) 
\end{equation*}

\end{enumerate}
We say that a differential $\Gamma$-cocycle is \emph{fake-flat}, if   $\mathrm{fcurv}(A_i,B_i)=0$, and \emph{flat}, if it is fake-flat and $\mathrm{curv}(A_i,B_i) =0$. 
Two differential $\Gamma$-cocycles $(A,B,g,\varphi,a)$ and $(A',B',g',\varphi',a')$ are called \emph{equivalent}, if the following structure exists: 
\begin{enumerate}
\item[(a)]
On every open set $U_i$, a smooth map $h_{i}:U_{i} \to G$ and a 1-form $\phi_{i}\in\Omega^1(U_i,\mathfrak{h})$.
\item[(b)]
On every two-fold intersection $U_i \cap U_j$, a smooth map $e_{ij}:U_i \cap U_j \to H$.
\end{enumerate}
The following conditions have to be satisfied:
\begin{enumerate}
\item 
Over every open set $U_i$,
\begin{align}
\label{32a}
A'_i+ t_{*}(\phi_i) &= \mathrm{Ad}_{h_i}(A_i)  - h_i^{*}\bar\theta
 \\
 \label{eq:equivtranscurv}
 B_i' + \alpha_{*}(A_i' \wedge \phi_i) + \mathrm{d}\phi_i + \frac{1}{2}[\phi_i \wedge \phi_i] &= (\alpha_{h_i})_{*}(B_i)
\end{align}

\item
Over every two-fold intersection $V_i \cap V_j$,
\begin{align}
\label{32c}
g_{ij}'h_i &= t(e_{ij})h_jg_{ij}
\\
\label{32d}
\mathrm{Ad}_{e_{ij}}^{-1}(\varphi_{ij}'+ (\alpha_{g_{ij}'})_{*}(\phi_i))+ (\tilde\alpha_{e_{ij}})_{*}(A'_{j})   &= \phi_j + (\alpha_{h_j})_{*}(\varphi_{ij}) 
- e_{ij}^{*}\theta
\end{align}

\item
Over every three-fold
 intersection $V_i \cap V_j \cap V_k$,
\begin{equation}
\label{32f}
a_{ijk}'\alpha(g'_{jk},e_{ij})e_{jk} = e_{ik}\alpha(h_k,a_{ijk})\text{.}
\end{equation}
\end{enumerate}
An equivalence is called \emph{generalized} if \cref{eq:equivtranscurv} is not required. 

\begin{remark}
\label{rem:normalized}
A  differential $\Gamma$-cocycle is called \emph{normalized}, if $g_{ii}=1$ and $a_{iij}=a_{ijj}=a_{iji}=1$ for all $i,j\in I$. Conditions $g_{ii}=a_{ijj}=a_{iij}=1$ are easy to achieve by passing to an equivalent cocycle using $e_{ii} := a_{iii}$ and $e_{ij}:=1$ for $i\neq j$. 
\begin{comment}
Indeed, then we have
\begin{equation*}
g_{ii}' \eqcref{32c} t(e_{ii})g_{ii}=t(a_{iii})g_{ii}\eqcref{eq:transtrans}1 \end{equation*}
and
\begin{equation*}
a_{iii}'\eqcref{32f} e_{ii}a_{iii}e_{ii}^{-1}e_{ii}^{-1}=1\text{.}
\end{equation*}
The cocycle condition \cref{31c} for the new cocycle then gives for $l=k=j$
\begin{equation*}
a'_{ikk} =a'_{ikk}a'_{kkk}\alpha(g'_{kk},a'_{ikk})^{-1}=1
\end{equation*}
and for $i=j=k$ 
\begin{equation*}
a_{iil}' =a'_{iil}a'_{iil}\alpha(g'_{il},a'_{iii})^{-1}=a'_{iil}a'_{iil}\text{,}
\end{equation*}
hence $a_{iil}'=1$. 
\end{comment}
For condition $a_{iji}=1$ we choose a total order on the index set $I$ of the open cover, eventually after discarding some open sets\footnote{A smooth manifold is second-countable, hence Lindelöf; therefore, every open cover has a countable subcover. Countable sets have total orders (independently of the Axiom of Choice).}. 
\begin{comment}
A countable set, by definition, has an injective map to $\N$. Total orders can be induced along injective maps. 
\end{comment}
Then, we set
\begin{equation*}
e_{ij} := \begin{cases}
1 & \text{ if } i\leq j \\
a_{jij} & \text{ if }i>j\text{.} \\
\end{cases}
\end{equation*}
Passing to an equivalent cocycle preserves $g_{ii}=a_{iij}=a_{ijj}=1$ and achieves $a_{iji}=1$. 
\begin{comment}
Indeed, we have from \cref{32c,32f}
\begin{align*}
g_{ii}' &= t(e_{ii})g_{ii}=g_{ii}
\\
a_{iij}' &= e_{ij}a_{iij}e_{ij}^{-1}\alpha(g'_{ij},e_{ii})^{-1}= e_{ij}e_{ij}^{-1}\alpha(g'_{ij},e_{ii})^{-1}=1
\\
a_{ijj}' &= e_{ij}a_{ijj}e_{jj}^{-1}\alpha(g'_{jj},e_{ij})^{-1}= e_{ij}\alpha(g'_{jj},e_{ij})^{-1}=1
\end{align*}
We get from \cref{32f} for $i\leq j$:
\begin{equation*}
a_{iji}' = a_{iji}e_{ji}^{-1}\alpha(g'_{ji},e_{ij})^{-1}=a_{iji}e_{ji}^{-1}=a_{iji}a_{iji}^{-1}=1 \end{equation*}
For $i > j$, we have by \cref{32c}:
\begin{equation}
\label{eq:norm:3}
g_{ji}' = t(e_{ji})g_{ji}=g_{ji}
\end{equation}
Then we get
\begin{equation}
\label{eq:norm:4}
a_{iji}\eqcref{31c} \alpha(g_{ji},a_{jij})\eqcref{eq:norm:3}\alpha(g_{ji}',a_{jij})\text{.}
\end{equation}
We conclude
\begin{equation*}
a_{iji}' \eqcref{32f} a_{iji}e_{ji}^{-1}\alpha(g'_{ji},e_{ij})^{-1}=
a_{iji}\alpha(g'_{ji},e_{ij})^{-1} =a_{iji}\alpha(g'_{ji},a_{jij})^{-1} \eqcref{eq:norm:4}1
\end{equation*}
\end{comment}        
Summarizing, every (generalized/fake-flat) differential $\Gamma$-cocycle is equivalent to a normalized (generalized/fake-flat) differential $\Gamma$-cocycle. 
This result slightly improves \cite[Lemma 4.1.4]{schreiber2}.
\end{remark}

We denote by $\hat \h^1(\mathcal{U},\Gamma)$ the set of equivalence classes of differential $\Gamma$-cocycles, and define the differential cohomology with values in $\Gamma$ as the direct limit
\begin{equation*}
\hat \h^1(M,\Gamma) := \lim_{\overrightarrow{\mathcal{U}}} \hat\h^1(\mathcal{U},\Gamma)
\end{equation*}
over refinements of covers. Analogously we proceed with equivalence classes of \emph{fake-flat} differential $\Gamma$-cocycles, leading to a subset
\begin{equation*}
\hat\h^1(M,\Gamma)^{\ff} \subset \hat \h^1(M,\Gamma)\text{.}
\end{equation*} 
Finally we consider \emph{generalized} equivalence classes of \emph{generalized} differential $\Gamma$-cocycles, making up a set $\hat\h^1(M,\Gamma)^{\gen}$, for which the inclusion of differential $\Gamma$-cocycles into generalized ones induces a well-defined map
\begin{equation*}
i: \hat\h^1(M,\Gamma) \to \hat\h^1(M,\Gamma)^{\gen}\text{.}
\end{equation*} 
This map is in general not injective (see \cref{ex:bueins}).

Giraud's non-abelian cohomology $\h^1(M,\Gamma)$ is defined in the same way as non-abelian differential cohomology, just without the differential forms and without all conditions involving differential forms. Discarding the differential forms defines maps from differential non-abelian cohomology to Giraud's non-abelian cohomology. These fit into  a commutative diagram
\begin{equation*}
\alxydim{}{\hat\h^1(M,\Gamma)^{\ff} \ar[dr]_{p^{\ff}} \ar@{^(->}[r] & \hat\h^1(M,\Gamma)\ar[d]^{p} \ar[r]^-{i} & \hat \h^1(M,\Gamma)^{\gen} \ar[dl]^{p^{\gen}} \\ & \h^1(M,\Gamma) &}
\end{equation*}
in which  $p^{gen}$ is surjective (\cref{co:pgen}), and $p^{\ff}$ is in general not surjective (\cref{ex:gdis}). It seems to be  an open question whether $p$ is surjective.

\begin{example}
\label{ex:gdis}
We consider an ordinary Lie group $G$ and $\Gamma=\idmorph G$, i.e. $\Gamma$ has only identity morphisms. This means that $H$ is the trivial group. A (generalized or not) differential $\idmorph G$-cocycle consists only of the 1-forms $A_i\in\Omega^1(U_i,\mathfrak{g})$ and the smooth maps $g_{ij}:U_i \cap U_j \to G$, which satisfy the ordinary cocycle condition $g_{ik} = g_{jk}g_{ij}$ and are gauge transformations: $A_j  = \mathrm{Ad}_{g_{ij}}(A_i) - g_{ij}^{*}\bar\theta$. The 3-curvature is zero, and the fake-curvature is $\mathrm{d}A_i+\frac{1}{2}[A_i \wedge A_i]$. A (generalized or not) equivalence consists of smooth maps $h_i:U_i \to G$ such that $A'_i = \mathrm{Ad}_{h_i}(A_i)  - h_i^{*}\bar\theta$ and $g_{ij}'h_i = h_jg_{ij}$. We conclude that
\begin{equation*}
 \hat \h^1(M,\idmorph G)^{\gen} =\hat\h^1(M,\idmorph G) = \hat\h^1(M,G) \quand
\hat\h^1(M,\idmorph{G})^{\ff}=\hat\h^1(M,G)^{\flat} \text{,}
\end{equation*}
where $\hat\h^1(M,G)$ and $\hat\h^1(M,G)^{\flat}$ are the ordinary differential cohomology groups that classify principal $G$-bundles with (flat) connections up to connection-preserving isomorphisms.
\end{example}

\begin{example}
\label{ex:bueins}
We consider $\Gamma=B\ueins$, i.e. $\Gamma$ has only one object with automorphism group $\ueins$. This means that $H=\ueins$ and $G$ is the trivial group. We identify $\mathfrak{h}=\R$. A differential $B\ueins$-cocycle consists  of the 2-forms $B_i\in \Omega^2(U_i)$, the 1-form $\varphi_{ij} \in \Omega^1(U_i \cap U_j)$ and the smooth map $
a_{ijk}:U_i \cap U_j \cap U_k \to \ueins$, which satisfy
\begin{equation}
\label{eq:bueins:gen}
B_j + \mathrm{d}\varphi_{ij} =B_i
\end{equation} 
and $\varphi_{ik}  =  \varphi_{jk}+\varphi_{ij} -a_{ijk}^{*}\theta$ as well as the cocycle condition $a_{ikl} a_{ijk}=a_{ijl}a_{jkl}$. Generalized means that \cref{eq:bueins:gen} is not present. The fake-curvature is zero, and the 3-curvature is $\mathrm{d}B_i$. An equivalence consists of 1-forms $\phi_{i}\in\Omega^1(U_i,\mathfrak{h})$ and smooth maps $e_{ij}:U_i \cap U_j \to H$ such that 
\begin{equation}
\label{eq:bueins:genequiv}
B_i' + \mathrm{d}\phi_i = B_i
\end{equation} 
and $\varphi_{ij}'+ \phi_i   = \phi_j + \varphi_{ij} 
- e_{ij}^{*}\theta$ as well as $a_{ijk}'e_{ij}e_{jk} = e_{ik}a_{ijk}$.
Generalized means that \cref{eq:bueins:genequiv} is not present. We conclude that
\begin{equation*}
\hat\h^1(M,B\ueins)^{\ff}=\hat\h^1(M,B\ueins) = \hat\h^3(M) 
\end{equation*}
is the ordinary differential cohomology (Deligne cohomology), whereas one can show that
\begin{equation*}
\hat\h^1(M,B\ueins)^{\gen}= \h^3(M,\Z)\text{.}
\end{equation*}
\end{example}

Finally, we prove a lemma about the maps \begin{equation*}
j:\hat\h^1(M,G)\to \hat\h^1(M,\Gamma)
\quand
j^{\flat}:\hat\h^1(M,G)^{\flat} \to \hat\h^1(M,\Gamma)^{\ff} \end{equation*}
induced by the inclusion $\idmorph{G} \subset \Gamma$ and the identifications of \cref{ex:gdis}. We will re-interpret the lemma in global language in \cref{ex:gbunredcon,co:gbunredcon}. 

\begin{lemma}
\label{lem:redGbun}
Let $f:M \to N$ be a smooth map with $\mathrm{rk}(T_xf)\leq 1$ for all $x\in M$. Then, the pullback along $f$ factors through $j$ and $j^{\flat}$; i.e., there exist  maps 
\begin{equation*}
j_G : \hat\h^1(N,\Gamma)^{\gen} \to \hat\h^1(M,G)
\quand
j_G^{\flat} : \hat\h^1(N,\Gamma)^{\ff}  \to \hat\h^1(M,G)
\end{equation*}
such that the following three diagrams commute:
\begin{equation*}
\alxydim{}{ & \hat\h^1(N,\Gamma)^{\gen} \ar[d]^{f^{*}} \ar[dl]_{j_G} \\ \hat\h^1(M,G) \ar[r]_-{i\circ j} & \hat\h^1(M,\Gamma)^{\gen}}
\quad
\alxydim{}{ & \hat\h^1(N,\Gamma) \ar[d]^{f^{*}} \ar[dl]_{j_G \circ i} \\ \hat\h^1(M,G) \ar[r]_-{j} & \hat\h^1(M,\Gamma)}
\quad
\alxydim{}{ & \hat\h^1(N,\Gamma)^{\ff} \ar[d]^{f^{*}} \ar[dl]_{j_G^{\flat}} \\ \hat\h^1(M,G)^{\flat} \ar[r]_-{j^{\flat}} & \hat\h^1(M,\Gamma)^{\ff}}
\end{equation*}
\end{lemma}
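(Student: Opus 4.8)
The plan is to build $j_G$ on the level of cocycles, by pulling a (generalized) differential $\Gamma$-cocycle back along $f$ and then reducing the result to a differential $\idmorph{G}$-cocycle in the sense of \cref{ex:gdis}. Everything rests on two consequences of $\mathrm{rk}(T_xf)\leq 1$. The analytic one is that $f^{*}\omega=0$ for every form $\omega$ of degree $\geq 2$, and, more generally, that every wedge of two $1$-forms pulls back to zero: at a point $x$ the vectors $T_xf(v_1),T_xf(v_2)$ lie in a line, hence are linearly dependent, so alternating or bracketed expressions such as $f^{*}[A\wedge A]$, $f^{*}[\varphi\wedge\varphi]$ and $f^{*}\alpha_{*}(A\wedge\varphi)$ all vanish. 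The topological one is that $f(M)$ has covering dimension at most $1$ (it is a countable union of images of strata of locally constant rank $\leq 1$), so that $f$ factors through a set on which all \v{C}ech cohomology in degrees $\geq 2$ vanishes.

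Concretely, I start with a (generalized) differential $\Gamma$-cocycle $(A,B,g,\varphi,a)$ on $N$ for a cover $\{V_i\}$ and pull it back to the cover $U_i:=f^{-1}(V_i)$ of $M$. By the analytic observation the $2$-form $B$ disappears and I obtain the data $(f^{*}A,0,f^{*}g,f^{*}\varphi,f^{*}a)$. The goal is to present this as (generalized-)equivalent to a cocycle of the form $(A',0,g',0,1)$, i.e.\ one lying in the image of $j$, and to set $j_G[(A,B,g,\varphi,a)]:=[(A',g')]$; the equivalence will be given by data $(h_i,\phi_i,e_{ij})$ with $h_i:=1$.

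The reduction has two steps. First I trivialize the gluing datum. The maps $f^{*}a$ form a twisted \v{C}ech $2$-cocycle (this is exactly \cref{31c}), which is trivializable over $M$ because $f$ factors through the at most $1$-dimensional set $f(M)$; concretely, after refining the cover, I solve for $e_{ij}:U_i\cap U_j\to H$ the equation obtained from \cref{32f} with $a'=1$, $h=1$, namely $e_{ik}=e_{jk}\,\alpha(f^{*}g_{jk},e_{ij})\,(f^{*}a_{ijk})^{-1}$, and put $g'_{ij}:=t(e_{ij})\,f^{*}g_{ij}$. Using the crossed-module identity $t(\alpha(g,h))=g\,t(h)\,g^{-1}$ together with the pulled-back \cref{eq:transtrans}, a short computation shows $g'_{ik}=g'_{jk}g'_{ij}$ and that $(1,e_{ij})$ satisfies \cref{32c,32f}. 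Second I remove the remaining $1$-form: with $h=1$, $\varphi'=0$, \cref{32d} becomes a twisted \v{C}ech coboundary equation for the $\phi_i\in\Omega^1(U_i,\mathfrak{h})$ whose right-hand side is built from $f^{*}\varphi$, and it is solvable by a partition of unity since the associated sheaf of $\mathfrak{h}$-valued $1$-forms is fine. Setting $A'_i:=f^{*}A_i-t_{*}(\phi_i)$ according to \cref{32a}, the pulled-back \cref{eq:transconnco} — whose $t_{*}(f^{*}\varphi_{ij})$ term is precisely absorbed by the $\phi_i$ — collapses to the $G$-gauge relation $A'_j=\mathrm{Ad}_{g'_{ij}}(A'_i)-(g'_{ij})^{*}\bar\theta$. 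Hence $(A',g')$ is a differential $\idmorph{G}$-cocycle, and $(1,\phi_i,e_{ij})$ is by construction a generalized equivalence from $f^{*}(A,B,g,\varphi,a)$ to $j(A',g')$, which gives the first diagram.

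Well-definedness of $j_G$ (independence of the choices of $e_{ij},\phi_i$ and of the cover, and compatibility with equivalences on $N$) follows because two admissible choices differ by a $G$-valued gauge transformation, i.e.\ by an equivalence of $\idmorph{G}$-cocycles. For the second and third diagrams one must upgrade the generalized equivalence above to an ordinary one, i.e.\ additionally verify \cref{eq:equivtranscurv}; with $B=B'=0$ and $h=1$ this is the Maurer-Cartan-type condition $\mathrm{d}\phi_i+\tfrac12[\phi_i\wedge\phi_i]+\alpha_{*}(A'_i\wedge\phi_i)=0$, which one can meet because the non-generalized source forces $f^{*}\varphi$ to be closed (pull back \cref{eq:transcurvco}) and, in the fake-flat case, additionally forces $f^{*}A$ to be flat, so that the reduced connection $A'$ is flat and the class lands in $\hat\h^1(M,G)^{\flat}$ as required by \cref{ex:gdis}. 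The decisive and genuinely nontrivial point — the one place where the rank hypothesis is used beyond the vanishing of $2$-forms — is the first reduction step: the trivialization of the non-abelian gluing cocycle $f^{*}a$, equivalently the vanishing of the obstruction to reducing the pulled-back principal $\Gamma$-$2$-bundle to a principal $G$-bundle. I expect this to be the main obstacle, and it is resolved exactly by $\mathrm{rk}(T_xf)\leq 1$ forcing $f$ to factor through a set of covering dimension at most $1$.
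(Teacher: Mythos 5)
Your overall strategy matches the paper's: use $\mathrm{rk}(Tf)\leq 1$ both analytically (pullbacks of $2$-forms and of wedges of $1$-forms vanish) and topologically (the covering dimension of $f(M)$ is small, so after refinement no point of $f(M)$ lies in three distinct cover elements), then gauge away $\varphi$ by a partition-of-unity construction and let $a$ disappear on the refined cover. Two things go wrong in your execution, though. First, a minor but necessary omission: the degenerate overlaps (repeated indices) do not disappear under the refinement, so the values $a_{iij},a_{ijj},a_{iji},g_{ii}$ still enter the cocycle identities on double intersections; the paper disposes of these by first passing to a \emph{normalized} cocycle (\cref{rem:normalized}), a step your argument needs but does not perform. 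Without it, your equation $e_{ik}=e_{jk}\,\alpha(g_{jk},e_{ij})\,a_{ijk}^{-1}$ is not vacuous and your claim that $(f^{*}g,f^{*}a)$ reduces to a $G$-cocycle does not yet follow.

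The second issue is a genuine gap in the second and third diagrams. You build $\phi_i$ by a partition of unity \emph{on $M$, after pulling back}, and then claim the equivalence can be upgraded to a non-generalized one because \quot{$f^{*}\varphi$ is closed}. But $f^{*}\varphi$ is closed for \emph{any} source (its exterior derivative is the pullback of a $2$-form on $N$), and this is irrelevant to \cref{eq:equivtranscurv}, which with $h_i=1$ and vanishing $B$'s demands $\mathrm{d}\phi_i+\tfrac12[\phi_i\wedge\phi_i]+\alpha_{*}(A_i'\wedge\phi_i)=0$. For $\phi_i=\sum_k\psi_k\,(\alpha_{g_{ik}^{-1}})_{*}(f^{*}\varphi_{ik})$ with $\{\psi_k\}$ a partition of unity on $M$, the terms $\sum_k\mathrm{d}\psi_k\wedge(\cdots)$ in $\mathrm{d}\phi_i$ do not factor through $Tf$ and do not vanish, so the equivalence you produce is only a generalized one. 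The paper avoids this by reversing the order of operations: the gauge transformation $\phi_i$ (and the partition of unity) is constructed on the refined cover of $N$, the change of representative is carried out there — where \cref{eq:equivtranscurv} merely \emph{defines} the new $B_i'$ — and only afterwards is everything pulled back along $f$, at which point every $2$-form in sight, including $\mathrm{d}\phi_i$, $[\phi_i\wedge\phi_i]$, $\alpha_{*}(A_i'\wedge\phi_i)$ and $B_i'$, is a pullback from $N$ and dies. Your argument can be repaired by the same move (or by insisting that the partition of unity on $M$ be pulled back from one on $N$), but as written the non-generalized and fake-flat cases are not established.
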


\begin{proof}
Consider a  generalized differential $\Gamma$-cocycle $\xi=(A,B,g,\varphi,a)$ with respect to an open cover $\mathcal{V}=\{V_i\}_{i\in J}$. By \cref{rem:normalized} we can assume that it is normalized. Let $X:=f(M)\subset N$.  By \cite[Theorem 2]{sard1} and  \cite[Proposition 1.3]{Church1963} the Lebesgue covering dimension of $X$ is bounded by $2$. Thus, there exists a refinement $\{W_{i}\}_{i\in I}$ of $\mathcal{V}$, such that no point $x\in X$ is contained in more that two open sets.  
\begin{comment}
We let $X$ be equipped with the subspace topology, so that $\{V_{i} \cap X\}_{i\in J}$ is an open cover of $X$. 
Because of the covering dimension, there exists a refinement $W_i \subset V_{r(i)} \cap X$. This, in turn, means that there exist open subsets $\tilde W_i \subset M$ with $W_i = \tilde W_i \cap X$. Now we construct the new cover of $M$ consisting of the open sets $\tilde W_i$ and $V_i \setminus X$ (these are open because $X\subset M$ is closed, as it is a compact subset of a Hausdorff space).  
\end{comment}
The restriction of the given $\Gamma$-cocycle to this refinement is again denoted by $(A,B,g,\varphi,a)$. 
Let $\{\psi_{i}\}_{i\in I}$ be a smooth partition of unity subordinated to the open cover $\{W_{i}\}_{i\in I}$. 
We define
\begin{equation*}
\phi_{i} :=  \sum_{j\in I} \psi_{j} (\alpha_{g_{ij}^{-1}})_{*}(\varphi_{ij})  \in \Omega^1(W_i,\mathfrak{h})\text{.}
\end{equation*}
The cocycle conditions \cref{eq:transtrans,eq:transconn2co} imply
\begin{equation}
\label{eq:thinlemma}
\phi_j- (\alpha_{g_{ij}})_{*}(\phi_i)= -\varphi_{ij} +\sum_{k\in I} \psi_{k} (\alpha_{g_{jk}^{-1}})_{*}\left ((\tilde\alpha_{a_{ijk}})_{*}(A_k)+a_{ijk}^{*}\theta \right )\text{.} 
\end{equation}
\begin{comment}
Indeed
\begin{align*}
\phi_j- (\alpha_{g_{ij}})_{*}(\phi_i) &=  \sum_{k\in I} \psi_{k} ((\alpha_{g_{jk}^{-1}})_{*}(\varphi_{jk})  - (\alpha_{g_{ij}})_{*}(\alpha_{g_{ik}^{-1}})_{*}(\varphi_{ik})  )
\\&\eqcref{eq:transtrans}\sum_{k\in I} \psi_{k} (\alpha_{g_{jk}^{-1}})_{*}\left (\varphi_{jk}  - \mathrm{Ad}^{-1}_{a_{ijk}}(\varphi_{ik})  \right )
\\&\eqcref{eq:transconn2co}\sum_{k\in I} \psi_{k} (\alpha_{g_{jk}^{-1}})_{*}\left (-(\alpha_{g_{jk}})_{*}(\varphi_{ij}) +(\tilde\alpha_{a_{ijk}})_{*}(A_k)+a_{ijk}^{*}\theta \right )
\\&= -\varphi_{ij} +\sum_{k\in I} \psi_{k} (\alpha_{g_{jk}^{-1}})_{*}\left ((\tilde\alpha_{a_{ijk}})_{*}(A_k)+a_{ijk}^{*}\theta \right )
\end{align*}
\end{comment}
Now we change the $\xi$ using equivalence data $\phi_i$, $h_i := 1$ and $e_{ij} := 1$, and obtain a new representative $(A',B',g',\varphi',a')$, with $g_{ij}'=g_{ij}$ due to \cref{32c}  and then
\begin{equation}
\label{eq:thinpullback:5}
\varphi_{ij}'= \phi_j - (\alpha_{g_{ij}'})_{*}(\phi_i)   +\varphi_{ij}=\sum_{k\in I} \psi_{k} (\alpha_{g_{jk}^{-1}})_{*}\left ((\tilde\alpha_{a_{ijk}})_{*}(A_k)+a_{ijk}^{*}\theta \right )
\end{equation}
due to \cref{32d,eq:thinlemma}. Then we pullback under the map $f:M \to N$. By construction, the pullback cover has no non-trivial 3-fold intersections, so that the map $a$ disappears due to its normalization. In particular, we obtain $\varphi_{ij}=0$ due to \cref{eq:thinpullback:5}. Further, the pullback of a 2-form under $f$ vanishes. Thus, we have
\begin{equation*}
f^{*}(A',B',g',\varphi',a')=(\tilde A, 0,\tilde g,0,1)\text{.}
\end{equation*}
This is a differential $\idmorph{G}$-cocycle on $M$, i.e. $j_G(\xi) := (\tilde A,\tilde g)$ is a differential $G$-cocycle, and the diagram on the left  commutes. If $\xi$ is non-generalized, the same construction goes through, noticing that above change of representative is also a non-generalized equivalence. Hence, the diagram in the middle commutes, too. Finally, if $\xi$ is fake-flat, then $(\tilde A,\tilde g)$ is flat; this defines $j_G^{\flat}$ and makes the third diagram commutative.
\end{proof}

\subsection{The bicategory of Lie groupoids}

We recall that there is a bicategory with objects Lie groupoids, 1-morphisms  anafunctors, and 2-morphisms smooth transformations, as explained in detail in \cite[Section 2.3]{Nikolaus} on the basis of \cite{lerman1,metzler}.
This bicategory  is equivalent to the bicategory of differentiable stacks  \cite{pronk}.

We recall some basic notation and terminology. A \emph{right action} of a Lie groupoid $\inf X$ on a smooth manifold $M$
consists of smooth maps $\alpha: M \to \ob{\inf X}$ and  
$\circ : M \ttimes \alpha t \mor{\inf X} \to M$
such that
\begin{equation*}
(x \circ g)\circ h= (x \circ (g \circ h))
\quad\text{, }\quad
x \circ \id_{\alpha(x)}=x
\quad\text{ and }\quad
\alpha(x \circ g) = s(g)
\end{equation*}
for all possible $g,h\in \mor{\inf X}$ and $x\in M$. The map $\alpha$ is  called  \emph{anchor}.  A \emph{left action} of $\inf X$ on $M$ is a right action of the opposite Lie groupoid. A smooth map $f:M \to M'$ between manifolds with $\inf X$-actions is called \emph{$\inf X$-equivariant} if 
\begin{equation*} 
\alpha' \circ f=\alpha
\quad\text{ and }\quad
f(x \circ g) =  f(x) \circ g\text{.}
\end{equation*}
A \emph{principal $\inf X$-bundle over $M$} is a smooth manifold $P$ with a surjective submersion $\pi: P \to M$ and a right $\inf X$-action that respects the projection $\pi$, such that
\begin{equation*}
P  \ttimes \alpha{t} \mor{\inf X} \to P \times_M P : (p,g) \mapsto (p,p \circ g)
\end{equation*}
is a diffeomorphism.
We refer to \cite[Section 2.2]{Nikolaus} for some examples. 

\begin{definition}
Let $\inf X$ and $\inf Y$ be Lie groupoids. 
\begin{enumerate}[(a)]
\item 
An \emph{anafunctor}\ $F: \inf X \to \inf Y$
is a smooth manifold $F$, called \quot{total space}, a left action of $\inf X$ on $F$, and  a right action  of $\inf Y$ on $F$ such that the actions commute and the left anchor $\alpha_l:F \to \ob{\inf X}$ together with the right action of $\inf Y$ is a principal $\inf Y$-bundle over $\ob{\inf X}$. 

\item

A \emph{transformation} between anafunctors $f: F \Rightarrow F'$ is a smooth map $f: F \to F'$ which is $\inf X$-equivariant, $\inf Y$-equivariant, and preserves the anchors. 
\end{enumerate}
\end{definition}

\begin{remark}
\label{re:strucana}
\begin{enumerate}[(a)]

\item 
\label{def:compana}
The composition of anafunctors is defined as follows.  Let $\inf X$, $\inf Y$ and $\mathcal{Z}$ be Lie groupoids, and
$F:\inf X \to \inf Y$
and
$G: \inf Y \to \mathcal{Z}$
be anafunctors.
 The composition
$G \circ F: \inf X \to \mathcal{Z}$
is the anafunctor with total space
\begin{equation*}
 (F \ttimes{\alpha_r}{\alpha_l} G) / \inf Y\text{,}
\end{equation*}
where the quotient identifies $(f,\eta \circ g) \sim (f \circ \eta,g)$ for all $\eta\in \mor{\inf Y}$ with $\alpha_r(f)=t(\eta)$ and $\alpha_l(g)=s(\eta)$.
The anchors are  $(f,g) \mapsto \alpha_l(f)$ and $(f,g) \mapsto \alpha_r(g)$, and the actions are $\gamma \circ(f,g) := (\gamma \circ f,g)$ and $(f,g) \circ \gamma := (f,g \circ \gamma)$.

\item
\label{defweak}
An anafunctor $F:\inf X \to \inf Y$ is called  \emph{weak equivalence}, if there exists an anafunctor $G: \inf Y \to \inf X$ together with transformations $G \circ F \cong \id_{\inf X}$ and $F \circ G \cong \id_\inf Y$. 

\item
\label{th:weakinverse}
It is straightforward to check that an anafunctor $F:\inf X \to \inf Y$ is a weak equivalence if and only if $\alpha_r: F \to \ob{\inf Y}$ is a principal $\mor{\inf X}$-bundle. In this case, one can obtain an inverse anafunctor $F^{-1}:\inf Y \to \inf X$ with the same smooth manifold $F$, but the anchors exchanged and the actions exchanged and inverted.
\end{enumerate}
\end{remark}
 
For  Lie groupoids $\inf X$ and $\inf Y$, anafunctors $F: \inf X \to \inf Y$ and transformations form a groupoid $\af(\inf X,\inf Y)$. The composition defined in \cref{def:compana} extends to a functor
\begin{equation*}
\af(\inf X,\inf Y) \times \af(\inf Y,\inf Z) \to \af(\inf X,\inf Z)\text{.}
\end{equation*}
Equipped with this functor as the composition, Lie groupoids, anafunctors, and transformations form a bicategory \cite{pronk}. Weak equivalences are the 1-isomorphisms in this bicategory.

\begin{remark}
\begin{enumerate}[(a)]

\item 
\label{rem:smoothfunctor:a}
For Lie groupoids $\mathcal{X}$ and $\mathcal{Y}$, we denote by  $\fun^{\infty}(\inf X,\inf Y)$ the category of smooth functors and smooth natural transformations. Anafunctors generalize smooth  functors in terms of a full and faithful functor 
\begin{equation*}
\fun^{\infty}(\inf X,\inf Y) \to \af (\inf X,\inf Y)\text{.}
\end{equation*}
Indeed, if $\phi:\inf X \to \inf Y$ is a smooth functor, we define an anafunctor with total space $F := \ob{\inf X} \ttimes \phi t \mor{\inf Y}$,  anchors  $\alpha_l(x,g) := x$ and $\alpha_r(x,g) := s(g)$, and  actions  $f \circ (x,g) := (t(f),\phi(f) \circ g)$ and $(x,g) \circ f := (x, g \circ f)$. 
\begin{comment}
This is compatible with composition. Indeed, if $\phi':\mathcal{Y} \to \mathcal{Z}$ is a second smooth functor,  $F'$ is the anafunctor associated to $\phi'$, and $\tilde F$ is the anafunctor associated to $\phi'\circ\phi$, then we have a transformation $F'\circ F \Rightarrow \tilde F$ induced by
\begin{equation*}
F \ttimes{\alpha_r}{\alpha_l} F' \to \tilde F:((x,g),(x',g')) \mapsto (x,\phi'(g) \circ g')
\end{equation*}
where $x=\alpha_l(x,g)=\alpha_r(x',g')=s(g')$ and $\phi'(x')=t(g')$ and $\phi(x)=t(g)$ and $x'=s(g)$.
Its inverse is
\begin{equation*}
\tilde F\to F \ttimes{\alpha_r}{\alpha_l} F': (x,g)\mapsto ((x,\id_{\phi(x)}),(\phi(x),g))\text{.}
\end{equation*}
\end{comment}
Likewise, a smooth natural transformation $\eta:\phi\Rightarrow \phi'$ defines a transformation $f_{\eta}:F \Rightarrow F'$ by $f_{\eta}(x,g) := (x,\eta(x) \circ g)$.

\item
\label{th:weak}
A smooth functor $\phi: \inf X \to \inf Y$ induces a weak equivalence under \cref{rem:smoothfunctor:a*} if and only if it is \emph{smoothly essentially surjective}, i.e., the map
\begin{equation*}
s \circ \mathrm{pr}_2: \ob{\inf X} \ttimes {\phi}{t} \mor{\inf Y} \to \ob{\inf Y}
\end{equation*}
is a surjective submersion, and it is \emph{smoothly fully faithful}, i.e., the diagram
\begin{equation*}
\alxydim{@C=1.5cm@R=1.3cm}{\mor{\inf X} \ar[r]^{\phi} \ar[d]_{s \times t} & \mor{\inf Y} \ar[d]^{s \times t} \\ \ob{\inf X} \times \ob{\inf X} \ar[r]_-{\phi \times \phi} & \ob{\inf Y} \times \ob{\inf Y}}
\end{equation*}
is a pullback diagram; see \cite[Lemma 3.34]{lerman1}, \cite[Proposition 60]{metzler}.

\item
\label{rem:smoothfunctor:b}
The following result will be used in \cref{sec:local} and might be useful in other situations. Suppose $F: \mathcal{X} \to \mathcal{Y}$ is an anafunctor, $\phi:\mathcal{X} \to \mathcal{Y}$ is a smooth functor, and $F_{\phi}$ is the anafunctor associated to $\phi$ via \cref{rem:smoothfunctor:a*}. Then, a transformation $f: F_{\phi} \Rightarrow F$ is the same as a smooth map
$\tilde f: \ob{\mathcal{X}} \to F$
satisfying
\begin{equation*}
\alpha_l(\tilde f(x)) = x
\quomma
\alpha_r ( \tilde f(x)) =  \phi(x)
\quand
\alpha\circ \tilde f(x)\circ \beta=\tilde f(t(\alpha))\circ \phi(\alpha)\circ \beta
\end{equation*}
for all $x\in \ob{\mathcal{X}}$, and appropriate $\alpha\in \mor{\mathcal{X}}$ and $\beta\in \mor{\mathcal{Y}}$. The correspondence between $f$ and $\tilde f$ is established by $\tilde f(x) = f(x,\phi(\id_x))$.

\end{enumerate}
\end{remark}

\subsection{Lie 2-group actions and equivariance}

Next we discuss a version of the bicategory of Lie groupoids that is equivariant under the action of a  Lie 2-group $\Gamma$, on the basis of \cite{Nikolaus}. It will be used in order to model the 2-group action on the total space groupoid of a principal 2-bundle. \begin{comment}
The following definition was given in \cite[Section 6.1 \& Appendix A]{Nikolaus}. \end{comment}

\begin{definition}
\label{def:rightaction}
\label{def:actionanafunctor}
\begin{enumerate}[(a)]

\item 
 A \emph{smooth right action} of $\Gamma$ on a Lie groupoid $\inf X$ is a smooth functor
$R: \inf X \times \Gamma \to \inf X$
such that $R(p,1)=p$ and $R(\rho,\id_1)=\rho$ for all $p\in \ob {\inf X}$ and $\rho\in \mor{\inf X}$,  and such that the diagram
\begin{equation*}
\alxydim{@=1.3cm}{\inf X \times \Gamma \times \Gamma \ar[r]^-{\id \times m} \ar[d]_{R \times \id} & \inf X \times \Gamma  \ar[d]^{R} \\ \inf X \times \Gamma \ar[r]_{m} & \inf X}
\end{equation*}
of smooth functors is commutative (strictly, on the nose).

\item
\label{def:actionanafunctor:b}
If $\inf X$ and $\inf Y$ are Lie groupoids with smooth right $\Gamma$-actions, then a $\Gamma$-\emph{equivariant anafunctor}
 $F:\inf X \to \inf Y$ is an anafunctor $F$ together with a  smooth action
$\rho: F \times \mor{\Gamma} \to F$
of the Lie group $\mor{\Gamma}$ on its total space $F$ that preserves the anchors in the sense that the diagrams
\begin{equation*}
\alxydim{@R=1.3cm}{F \times \mor{\Gamma} \ar[d]_{\alpha_l \times t} \ar[r]^-{\rho} & F \ar[d]^{\alpha_l} \\ \ob{\inf X} \times \ob{\Gamma} \ar[r]_-{R} & \ob{\inf X}}
\quand
\alxydim{@R=1.3cm}{F \times \mor{\Gamma} \ar[r]^-{\rho} \ar[d]_{\alpha_r \times s} & F \ar[d]^{\alpha_r} \\ \ob{\inf Y} \times \ob{\Gamma} \ar[r]_-{R} & \ob{\inf Y}}
\end{equation*}
are commutative, and  is compatible with the $\inf X$- and $\inf Y$-actions in the sense that the identity
\begin{equation*}
\rho(\chi \circ f \circ \eta,\gamma_l \circ \gamma \circ \gamma_r) =R(\chi,\gamma_l) \circ  \rho(f,\gamma) \circ R(\eta,\gamma_r)
\end{equation*}
holds for all appropriate  $\chi\in \mor{\inf X}$, $\eta\in \mor{\inf Y}$, $f\in F$, and $\gamma_l,\gamma,\gamma_r\in \mor{\Gamma}$.

\item
If $F_1,F_2:\inf X \to \inf Y$ are $\Gamma$-equivariant anafunctors, a transformation $f: F_1 \Rightarrow F_2$ is called $\Gamma$-\emph{equivariant} if the map $f:F_1 \to F_2$ between total spaces is $\mor{\Gamma}$-equivariant. 
\end{enumerate}
\end{definition}

\begin{remark}
\begin{enumerate}[(a)]
\item 
\label{rem:compequiv}
The structure of \cref{def:rightaction} forms a bicategory. For instance, the composition $G \circ F$ of $\Gamma$-equivariant anafunctors  $F:\inf X \to \inf Y$ and $G:\inf Y \to \mathcal{Z}$ with actions  $\rho:F \times \mor{\Gamma} \to F$ and $\tau: G \times \mor{\Gamma} \to G$, respectively, inherits a $\mor{\Gamma}$-action induced by
\begin{equation}
\label{eq:companafunctorsaction}
(F  \ttimes{\alpha_r}{\alpha_l}  G) \times  \mor{\Gamma} \to (F  \ttimes{\alpha_r}{\alpha_l}  G): ((f,g),\gamma) \mapsto (\rho(f,\gamma),\tau(g,\id_{s(\gamma)}))\text{.}
\end{equation}
\begin{comment}
The identity anafunctor is equipped with the trivial $\mor{\Gamma}$-action on its total space. 
\end{comment}
On the inverse $F^{-1}$ of a weak equivalence $F:\inf X \to \inf Y$, $\mor\Gamma$ acts through inverses (with respect to the groupoid composition of $\Gamma$).

\item
\label{rem:equivsmoothfunctor}
Consider a smooth functor $\phi: \inf X \to \inf Y$ between Lie groupoids with $\Gamma$-actions that is  equivariant in the sense that the diagram
\begin{equation*}
\alxydim{}{\inf X \times \Gamma \ar[r]^-{\phi \times \id} \ar[d]_{R} & \inf Y \times \Gamma \ar[d]^{R} \\ \inf X \ar[r]_{\phi} & \inf Y}
\end{equation*}
is strictly commutative. Then, the associated anafunctor  $F= \ob{\inf X} \ttimes \phi t \mor{\inf Y}$ is $\Gamma$-equivariant under the $\mor{\Gamma}$-action on $F$  given by
\begin{equation*}
\rho: F \times \mor{\Gamma} \to F: ((x,\eta),\gamma) \mapsto  (R(x,t(\gamma)),R(\eta,\gamma))\text{.}
\end{equation*}
\begin{comment}
Indeed, we check well-definedness: we have $\phi(x)=t(\eta)$ and obtain
\begin{equation*}
\phi(R_1(x,t(\gamma)))=R_2(\phi(x),t(\gamma))=R_2(t(\eta),t(\gamma))=t(R_2(\eta,\gamma))\text{.}
\end{equation*}
It is an action:
\begin{eqnarray*}
\rho(\rho((x,\eta),\gamma_1),\gamma_2) &=& \rho((R_1(x,t(\gamma_1)),R_2(\eta,\gamma_1)),\gamma_2)
\\&=& (R_1(R_1(x,t(\gamma_1)),t(\gamma_2)),R_2(R_2(\eta,\gamma_1),\gamma_2))
\\&=& (R_1(x,t(\gamma_1 \cdot \gamma_2)),R_2(\eta,\gamma_1 \cdot \gamma_2))
\\&=& \rho((x,\eta),\gamma_1 \cdot \gamma_2)
\end{eqnarray*}
Now the conditions:
\begin{itemize}

\item 
Anchor preservation:
\begin{equation*}
\alpha_l(\rho((x,\eta),\gamma)) =R_1(x,t(\gamma))= R_1(\alpha_l(x,\eta),t(\gamma))
\end{equation*}
and
\begin{equation*}
\alpha_r(\rho((x,\eta),\gamma)) =s(R_2(\eta,\gamma))=R_2(s(\eta),s(\gamma))= R_2(\alpha_r(x,\eta),s(\gamma))
\end{equation*}

\item
Action compatibility:
\begin{eqnarray*}
\rho(\chi \circ (x,\eta) \circ \eta',\gamma_l \circ \gamma \circ \gamma_r) &=& \rho((t(\chi),\phi(\chi) \circ \eta\circ \eta'),\gamma_l \circ \gamma \circ \gamma_r)
\\&=& (R_1(t(\chi),t(\gamma_l)),R_2(\phi(\chi) \circ \eta\circ \eta',\gamma_l \circ \gamma \circ \gamma_r))
\\&=& (t(R_1(\chi,\gamma_l)),R_2(\phi(\chi) \circ \eta\circ \eta',\gamma_l \circ \gamma \circ \gamma_r))
\\&=& (t(R_1(\chi,\gamma_l) ),\phi(R_1(\chi,\gamma_l) ) \circ R_2(\eta,\gamma)\circ R_2(\eta',\gamma_r)) 
\\&=& R_1(\chi,\gamma_l) \circ  \rho((x,\eta),\gamma) \circ R_2(\eta',\gamma_r)
\end{eqnarray*}

\end{itemize}
\end{comment}
Likewise, if $\beta: \phi_1 \Rightarrow \phi_2$ is a smooth natural transformation between $\Gamma$-equivariant smooth functors, and satisfies the equivariance condition
\begin{equation*}
\beta(R(x,g))=R(\beta(x),\id_{g})
\end{equation*}
for all $x\in \ob{\inf X}$ and $g\in G$, 
then the associated transformation is $\Gamma$-equivariant.
\begin{comment}
Indeed,
\begin{eqnarray*}
\beta(\rho_1((x,\eta),\gamma)) &=&\beta((R_1(x,t(\gamma)),R_2(\eta,\gamma)))
\\&=& (R_1(x,t(\gamma)),\beta(R_1(x,t(\gamma))) \circ R_2(\eta,\gamma))
\\&=& (R_1(x,t(\gamma)),R_2(\beta(x),\id_{t(\gamma)}) \circ R_2(\eta , \gamma))
\\&=& (R_1(x,t(\gamma)),R_2(\beta(x) \circ \eta,\id_{t(\gamma)} \circ \gamma))
\\&=& (R_1(x,t(\gamma)),R_2(\beta(x) \circ \eta,\gamma))
\\&=& \rho_2((x,\beta(x) \circ \eta),\gamma)
\\&=& \rho_2(\beta(x,\eta),\gamma)
\end{eqnarray*}
\end{comment}
Finally, if  $F:\mathcal{X} \to \mathcal{Y}$ is a $\Gamma$-equivariant anafunctor and $\phi:\mathcal{X} \to \mathcal{Y}$ is a $\Gamma$-equivariant smooth functor, then the transformation $f: F_{\phi} \Rightarrow F$ induced from a smooth map $\tilde f: \ob{\mathcal{X}} \to F$ as in \cref{rem:smoothfunctor:b} is $\Gamma$-equivariant if and only if that map satisfies $\tilde f(R(x,g))=\tilde f(x)\cdot \id_g$ for all $x\in \ob{\mathcal{X}}$ and $g\in G$.
\begin{comment}
We check
\begin{align*}
\eta(x,\nu)\cdot\gamma 
&=(\tilde\eta(x)\circ\nu)\cdot\gamma
\\&=(\tilde\eta(x)\circ\nu)\cdot(\id_{t(\gamma)} \circ \gamma)
\\&=(\tilde\eta(x)\cdot \id_{t(\gamma)})\circ R(\nu, \gamma)
\\&=\tilde\eta(R(x,t(\gamma))\circ R(\nu,\gamma)
\\&=\eta(R(x,t(\gamma)),\id_{R(\phi(x),t(\gamma)})\circ R(\nu,\gamma)=\eta(R(x,t(\gamma)),R(\nu,\gamma))
\\&=\eta((x,\nu)\cdot\gamma)
\end{align*}
\end{comment}

\item
In \cite[Section 6.1]{Nikolaus} a more abstract definition of $\Gamma$-equivariant anafunctors was given, and it is shown in \cite[Appendix A]{Nikolaus} it is equivalent to the one given here.
\end{enumerate}
\end{remark}

\setsecnumdepth{2}

\section{Principal 2-bundles}

\label{sec:2bundles}

\subsection{The bicategory of principal 2-bundles}

In this section we review the notion of a principal 2-bundle for a strict Lie 2-group $\Gamma$ on the basis of \cite{wockel1,Nikolaus}.

Let $M$ be a smooth manifold, $\idmorph M$ the Lie groupoid with objects $M$ and only identity morphisms,  and let
 $\inf P$ be a Lie groupoid. We say that a smooth functor $\pi\maps \inf P \to \idmorph M$ is a \emph{surjective submersion functor}, if $\pi: \ob{\inf P} \to M$ is a surjective submersion. 
Let $\pi\maps \inf P \to \idmorph M$ be a surjective submersion functor, and let $\inf Q$ be a Lie groupoid equipped with some smooth functor $\chi:\inf Q \to \idmorph M$. Then, the fibre product $\inf P \times_M \inf Q$ is defined to be the full sub-Lie groupoid of $\inf P \times \inf Q$ over the submanifold $\ob{\inf P} \times_M \ob{\inf Q} \subset \ob{\inf P} \times \ob{\inf Q}$.

\begin{definition}
\label{def:zwoabun}
A \emph{principal $\Gamma$-2-bundle over $M$} is a Lie groupoid $\inf P$, a surjective submersion  functor $\pi: \inf P \to \idmorph M$,  and a smooth right action $R$ of $\Gamma$ on $\inf P$ that preserves  $\pi$,
such that the smooth functor
\begin{equation*}
\tau :=(\mathrm{pr}_1, R) : \inf P \times \Gamma \to \inf P \times_M \inf P \end{equation*}
is a weak equivalence.
\end{definition}

We collect the following four facts about a principal 2-bundle $\inf P$ over $M$.
The first fact comes from the condition that the bundle projection $\pi: \inf P \to \idmorph M$ is a surjective submersion functor:

\begin{lemma}
\label{lem:exsections}
Every point $x\in M$ has an open neighborhood $x\in U \subset M$ supporting a section, i.e. a smooth map $s: U \to \ob{\inf P}$  such that $\pi \circ s=\id_U$. \end{lemma}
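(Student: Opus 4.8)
The plan is to reduce the statement entirely to the defining property of a surjective submersion functor, namely that $\pi\maps \ob{\inf P} \to M$ is a surjective submersion, and then to invoke the standard fact that surjective submersions between smooth manifolds admit smooth local sections. The morphisms of $\inf P$ and the $\Gamma$-action play no role, since the section is only required to land in $\ob{\inf P}$ and to satisfy $\pi\circ s = \id_U$.

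First I would fix $x\in M$. By surjectivity of $\pi$ on objects there is a point $p\in\ob{\inf P}$ with $\pi(p)=x$. Next I would use that $\pi$ is a submersion at $p$, i.e. that the differential $T_p\pi\maps T_p\ob{\inf P} \to T_xM$ is surjective. The local normal form for submersions (equivalently, the implicit function theorem) then supplies coordinate charts around $p$ and around $x$ in which $\pi$ is the standard projection $\R^{n+k}\to\R^{n}$, $(a,b)\mapsto a$. In these coordinates the assignment $a\mapsto(a,0)$ is a smooth section defined on a neighborhood of $x$; transporting it back through the charts yields a smooth map $s\maps U \to \ob{\inf P}$ on an open neighborhood $U\ni x$ with $s(x)=p$ and $\pi\circ s=\id_U$, as required.

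There is essentially no obstacle here: the entire content is the classical differential-geometric fact that submersions are, locally, projections, and the lemma is a direct application of it. The only point I would take care to emphasize is that \quot{surjective submersion functor} was \emph{defined} to mean precisely that $\pi$ is a surjective submersion on objects, so that both surjectivity (used to produce the point $p$) and the submersion property (used to build the section $s$) are available from the hypothesis.
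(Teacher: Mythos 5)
Your argument is correct and is exactly the reasoning the paper intends: the lemma is stated as an immediate consequence of $\pi$ being a surjective submersion on objects, and the paper gives no further proof beyond that observation. Your spelling-out of the local normal form for submersions is the standard justification and matches the paper's (implicit) approach.
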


 As a functor $\pi:\inf P \to \idmorph{M}$ necessarily sends every morphism $\rho$ of $\inf P$ to an identity morphism, we obtain the second fact:

\begin{lemma}
\label{lem:morphvert}
Every morphism $\rho$ of $\inf P$ is \quot{vertical} in the sense that $\pi(s(\rho))=\pi(t(\rho))$. \end{lemma}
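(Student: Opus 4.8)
The plan is to unwind what it means for $\pi$ to be a smooth functor into the groupoid $\idmorph M$, whose only morphisms are the identities. First I would record the two pieces of functoriality that are relevant: $\pi$ carries each morphism $\rho\in\mor{\inf P}$ to a morphism $\pi(\rho)\in\mor{\idmorph M}$, and it intertwines the source and target maps, so that $s(\pi(\rho))=\pi(s(\rho))$ and $t(\pi(\rho))=\pi(t(\rho))$.

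Next I would invoke the defining feature of $\idmorph M$: every one of its morphisms is an identity $\id_m$ for some $m\in M$, and for such a morphism one has $s(\id_m)=t(\id_m)=m$. Since $\pi(\rho)$ is a morphism of $\idmorph M$, there is therefore a unique $m\in M$ with $\pi(\rho)=\id_m$; this is precisely the observation stated just before the lemma, that $\pi$ sends every morphism of $\inf P$ to an identity morphism.

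Finally I would combine these two steps directly:
\begin{equation*}
\pi(s(\rho))=s(\pi(\rho))=s(\id_m)=m=t(\id_m)=t(\pi(\rho))=\pi(t(\rho))\text{,}
\end{equation*}
which is exactly the asserted verticality $\pi(s(\rho))=\pi(t(\rho))$.

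There is essentially no obstacle here: the statement is an immediate consequence of functoriality together with the absence of non-identity morphisms in $\idmorph M$. The only point meriting any care is the bookkeeping that a functor of Lie groupoids commutes with the source and target maps, which is part of the very definition of such a functor and was implicitly used already in defining the surjective submersion functor $\pi$.
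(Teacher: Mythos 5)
Your proof is correct and follows exactly the same reasoning the paper uses: since $\idmorph M$ has only identity morphisms, functoriality of $\pi$ forces $\pi(\rho)=\id_m$ for some $m$, and compatibility with source and target gives $\pi(s(\rho))=m=\pi(t(\rho))$. The paper states this in a single sentence preceding the lemma, so there is nothing further to compare.
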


The next two lemmata give a precise formulation of the way in which  the $\Gamma$-action on $\inf P$ is \quot{fibrewise free and transitive}. On the level of morphisms, one uses that $\tau$, as a weak equivalence, is \quot{smoothly fully faithfully} (\cref{th:weak}), which  implies the following:

\begin{lemma}
\label{lem:actmorph}
Suppose $\rho_1,\rho_2 \in \mor{\inf P}$  and $g_1,g_2\in G$ such that
$s(\rho_2)=R(s(\rho_1),g_{1})$ and $t(\rho_2)=R(t(\rho_1),g_2)$. Then, there exists a unique $h\in H$ such that $R(\rho_1,(h,g_1))=\rho_2$ and $t(h)g_1=g_2$.
Moreover, if $\rho_1,\rho_2$ and $g_1,g_2$ depend smoothly on a parameter $x\in X$, where $X$ is a smooth manifold, then $h$ also depends smoothly on this parameter.
\begin{comment}
More explicitly, let $X$ be a smooth manifold equipped with smooth maps $\rho_1,\rho_2: X \to  \mor{\inf P}$ and $g_1,g_2:X \to G$ such that $s(\rho_2(x))=R(s(\rho_1(x)),g_{1}(x))$ and $t(\rho_2(x))=R(t(\rho_1(x)),g_2(x))$. Then there exists a unique smooth map $h:X \to H$ such that $R(\rho_1(x),(h(x),g_1(x)))=\rho_2(x)$ and $g_2(x)=t(h(x))g_1(x)$. \end{comment}
\end{lemma}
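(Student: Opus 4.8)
The plan is to derive the entire statement from the single structural fact built into the definition of a principal 2-bundle, namely that the smooth functor $\tau=(\mathrm{pr}_1,R):\inf P \times \Gamma \to \inf P \times_M \inf P$ is a weak equivalence. Since $\tau$ is a smooth functor, \cref{th:weak} tells us that it is in particular \emph{smoothly fully faithful}; concretely this means that $\mor{\inf P \times \Gamma}$ is, \emph{as a smooth manifold}, the fibre product of $\mor{\inf P \times_M \inf P}$ with $\ob{\inf P \times \Gamma}\times \ob{\inf P \times \Gamma}$ over $\ob{\inf P \times_M \inf P}\times \ob{\inf P \times_M \inf P}$ along $s\times t$ and $\tau\times\tau$. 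In other words, to give a morphism of $\inf P \times \Gamma$ is the same as to give a morphism $\psi$ of $\inf P \times_M \inf P$ together with a lift of its source and target to objects of $\inf P \times \Gamma$ along $\tau$. I would feed the data of the lemma into exactly this universal property.

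First I would assemble the input. Put $\psi:=(\rho_1,\rho_2)$. This is genuinely a morphism of the fibre-product groupoid $\inf P \times_M \inf P$, not merely of $\inf P \times \inf P$: by \cref{lem:morphvert} every morphism of $\inf P$ is vertical, and since $R$ preserves $\pi$, the hypotheses $s(\rho_2)=R(s(\rho_1),g_1)$ and $t(\rho_2)=R(t(\rho_1),g_2)$ give $\pi(s(\rho_1))=\pi(s(\rho_2))$ and $\pi(t(\rho_1))=\pi(t(\rho_2))$, so both $s(\psi)=(s(\rho_1),s(\rho_2))$ and $t(\psi)=(t(\rho_1),t(\rho_2))$ lie in $\ob{\inf P}\times_M \ob{\inf P}$. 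On the lifting side I would take the objects $(s(\rho_1),g_1)$ and $(t(\rho_1),g_2)$ of $\inf P \times \Gamma$; since $\tau$ sends an object $(p,g)$ to $(p,R(p,g))$, the very hypotheses of the lemma state that these two objects map under $\tau$ to $s(\psi)$ and $t(\psi)$ respectively.

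The pullback property then yields a unique $\Xi\in \mor{\inf P}\times\mor{\Gamma}$ with $\tau(\Xi)=\psi$, $s(\Xi)=(s(\rho_1),g_1)$ and $t(\Xi)=(t(\rho_1),g_2)$. Writing $\Xi=(\sigma,(h,g))$ in the crossed-module description $\mor{\Gamma}=H\ltimes G$, where $s(h,g)=g$ and $t(h,g)=t(h)g$, I would read off the conclusions directly: the source condition forces $g=g_1$ (and $s(\sigma)=s(\rho_1)$), the target condition forces $t(h)g_1=g_2$ (and $t(\sigma)=t(\rho_1)$), and $\tau(\Xi)=(\sigma,R(\sigma,(h,g)))=(\rho_1,\rho_2)$ forces $\sigma=\rho_1$ together with $R(\rho_1,(h,g_1))=\rho_2$. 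Thus the single element $h\in H$ is precisely the one claimed, and its uniqueness is inherited from the uniqueness of $\Xi$. For the smooth dependence I would simply note that the pullback is taken in smooth manifolds, so $\mor{\inf P \times \Gamma}$ is diffeomorphic to the fibre-product manifold; a smooth family $(\rho_1,\rho_2,g_1,g_2)$ parametrized by $X$ defines a smooth map of $X$ into that fibre product, whose composite with the inverse diffeomorphism is $\Xi$, and hence $h$, as a smooth function of the parameter.

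I do not expect a genuine obstacle here: the content is entirely correct bookkeeping against the universal property. The two points that need care are verifying that $\psi=(\rho_1,\rho_2)$ really lands in $\inf P \times_M \inf P$ (this is exactly where verticality and the $\pi$-invariance of $R$ enter), and observing from the semidirect-product structure that the $G$-component of the extracted morphism is pinned to $g_1$ by the source condition, so that all the content is carried by the one element $h\in H$. Once these are in place, both uniqueness and smooth dependence come for free from the pullback being formed in the smooth category.
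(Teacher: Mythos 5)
Your proposal is correct and follows essentially the same route as the paper's own argument: the lemma is exactly the rephrasing of the fact that $\tau=(\mathrm{pr}_1,R)$ is smoothly fully faithful, i.e.\ that $\mor{\inf P}\times\mor{\Gamma}$ is the smooth pullback of $\mor{\inf P\times_M\inf P}$ along $s\times t$ and $\tau\times\tau$, into which one feeds $(\rho_1,\rho_2)$ upstairs and $(s(\rho_1),g_1,t(\rho_1),g_2)$ downstairs. Your additional bookkeeping (verticality of morphisms to land in $\inf P\times_M\inf P$, and reading off $g=g_1$ from the semidirect-product source map) is exactly the right way to fill in the details.
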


\begin{comment}
\begin{proof}
This rephrases the fact that the functor $\tau$ is smoothly fully faithfully: which means that the diagram
\begin{equation*}
\tiny
\alxydim{@C=0cm@R=1.3cm}{(\rho_1,\gamma) &\in &\mor{\inf P} \times \mor{\Gamma} \ar[rr]^{\tau} \ar[d] & \hspace{0.6cm} & \mor{\inf P} \times_M \mor{\inf P} \ar[d] & \ni & (\rho_1,\rho_2) \ar@{|->}[d] \\ ((s(\rho_1),s(\gamma)),(t(\rho_1),t(\gamma))) & \in & \ob{\inf P} \times \ob{\Gamma} \times \ob{\inf P} \times \ob{\Gamma} \ar[rr]_-{\tau \times \tau} && (\ob{\inf P} \times_M \ob{\inf P}) \times (\ob{\inf P} \times_M \ob{\inf P}) & \ni & (s(\rho_1),s(\rho_2),(t(\rho_1),t(\rho_2)))}
\end{equation*}
is a pullback diagram.
If we specify $(\rho_1,\rho_2)$ in the upper right corner, and $(s(\rho_1),g_1,t(\rho_1),g_2)$ in the lower left corner, we have coincidence in the lower right corner, under the assumptions that 
$s(\rho_2)=R(s(\rho_1),g_{1})$ and $t(\rho_2)=R(t(\rho_1),g_2)$. Hence, there exists $\gamma=(h,g_1)$ with $\rho_2=R(\rho_1,(h,g_1))$ and $g_2=t(h)g_1$.
\end{proof}
\end{comment}

On the level of objects the situation is more complicated. We  introduce the following terminology, which will be an important tool throughout this article:

\begin{definition}
\label{def:Rsection}
A  \underline{{\splitting}} of $\inf P$ over a subset $U \subset \ob{\inf P} \times_M \ob{\inf P}$ is a smooth map $\sigma:U \to \mor{\inf P}$, such that there exists a smooth map $g: U \to G$ with
\begin{equation*}
t(\sigma(x,y))=x
\quand
s(\sigma(x,y))=R(y,g(x,y))
\end{equation*} 
for all $(x,y)\in U$.
The map $g$ is called a \emph{transition function} for $\sigma$.
\end{definition}

Thus, a \splitting\ is a combination of morphisms of $\inf P$ and the $G$-action, that allows to pass from one object $x$ of $\inf P$ to another object $y$ in the same fibre.
\begin{comment}
See \cref{fig:Psection}.
\begin{figure}[t] 
\begin{equation*}
\alxydim{@R=1.8cm@C=1cm}{ \mor{\inf P} \ar[d]_{t}  & \mor{\inf P} \times G \ar[l]_-{\pr_1} \ar[r]^-{s \times \id} & \ob{\inf P} \times G \ar[d]^{R} \\ \ob{\inf P} & U  \ar[u]_{\sigma \times g^{-1}} \ar[l]^-{\pr_1} \ar[ul]_{\sigma} \ar[r]_-{\pr_2} & \ob{\inf P} }
\end{equation*}
\caption{A picture of a \splitting\ $\sigma$ with transition function $g$. All subdiagrams are commutative. }
\label{fig:Psection}
\end{figure}
\end{comment}
The following lemma describes the way in which this passage is \quot{fibrewise free and transitive}.

\begin{lemma}
\label{lem:existenceRsections}
Every open and contractible subset $U\subset \ob{\inf P} \times_M \ob{\inf P}$ supports \splitting s. If $\sigma$ and $\sigma'$ are \splitting s over an arbitrary open subset  $U\subset \ob{\inf P} \times_M \ob{\inf P}$ with transition functions $g$ and $g'$, respectively, then there exists a unique smooth map $h:U \to H$ such that $R(\sigma',(h,g'^{-1}g))=\sigma$ and $gt(h)=g'$.\end{lemma}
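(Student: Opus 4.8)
The plan is to extract both assertions from the single defining property that
$\tau=(\pr_1,R):\inf P\times\Gamma\to\inf P\times_M\inf P$ is a weak equivalence: the existence of a \splitting\ from smooth essential surjectivity of $\tau$, and the uniqueness statement from smooth full faithfulness, the latter already packaged for us in \cref{lem:actmorph}.

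For existence, I would use that, $\tau$ being a weak equivalence, \cref{th:weak} makes the map
\[
s\circ\pr_2:\ob{\inf P}\times G\,\ttimes{\tau}{t}\,\mor{\inf P\times_M\inf P}\to\ob{\inf P}\times_M\ob{\inf P}
\]
a surjective submersion, where a point of its domain is a datum $((p,a),(\rho_1,\rho_2))$ with $t(\rho_1)=p$ and $t(\rho_2)=R(p,a)$, sent to $(s(\rho_1),s(\rho_2))$. Choosing a smooth section of this submersion over $U$ assigns to each $(x,y)\in U$, smoothly, elements $p\in\ob{\inf P}$, $a\in G$ and morphisms $\rho_1,\rho_2$ of $\inf P$ with $s(\rho_1)=x$, $s(\rho_2)=y$, $t(\rho_1)=p$ and $t(\rho_2)=R(p,a)$. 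Writing $\id_{a^{-1}}=(1,a^{-1})\in\mor\Gamma$ and using the action axiom $R(R(p,a),a^{-1})=R(p,1)=p$, the composite
\[
\sigma(x,y):=\rho_1^{-1}\circ R(\rho_2,\id_{a^{-1}})
\]
is a morphism from $R(y,a^{-1})$ to $x$, hence a \splitting\ over $U$ with transition function $g(x,y):=a(x,y)^{-1}$; smoothness of $\sigma$ is inherited from the section and from smoothness of $R$, composition and inversion.

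For the uniqueness part I would apply \cref{lem:actmorph} pointwise, taking as parameter manifold $X:=U$ (a manifold since $\pi$ is a submersion, so $\ob{\inf P}\times_M\ob{\inf P}$ is one). Set $\rho_1:=\sigma'$, $\rho_2:=\sigma$, $g_1:=g'^{-1}g$ and $g_2:=1$. The hypotheses hold: $t(\sigma)=x=t(\sigma')=R(t(\sigma'),1)$ gives the target condition with $g_2=1$, and $s(\sigma)=R(y,g)=R(R(y,g'),g'^{-1}g)=R(s(\sigma'),g_1)$ gives the source condition. Thus \cref{lem:actmorph} yields a unique smooth $h:U\to H$ with $R(\sigma',(h,g'^{-1}g))=\sigma$ and $t(h)\,g'^{-1}g=1$; the latter rearranges to $g\,t(h)=g'$, and smoothness of $h$ is precisely the parametrized conclusion of \cref{lem:actmorph}. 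This establishes both the existence of $h$ and its uniqueness, together with the two asserted identities.

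The only genuinely delicate point is the globalisation in the existence half: smooth essential surjectivity produces sections of the above submersion only locally, and I must promote these to a single section over all of $U$. This is exactly where contractibility of $U$ enters — equivalently, one invokes that the principal $(\inf P\times\Gamma)$-bundle structure on $\alpha_r$ furnished by \cref{th:weakinverse} is trivial over the contractible base $U$ and hence admits a global section. Everything else is a direct unwinding of \cref{def:Rsection}, and the uniqueness half needs no globalisation at all, since \cref{lem:actmorph} is already formulated over an arbitrary parameter manifold.
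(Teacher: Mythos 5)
Your uniqueness argument coincides with the paper's: the same substitution $\rho_1=\sigma'$, $\rho_2=\sigma$, $g_1=g'^{-1}g$, $g_2=1$ into \cref{lem:actmorph}, and your verification of its hypotheses and of $gt(h)=g'$ is correct. Likewise, the construction of $\sigma(x,y)=\rho_1^{-1}\circ R(\rho_2,\id_{a^{-1}})$ from a section of $s\circ\pr_2$, and the check that it is a \splitting\ with transition function $a^{-1}$, are fine. The gap is exactly at the point you flag as delicate, and the justification you offer there does not work. Smooth essential surjectivity of $\tau$ only gives \emph{local} sections of the surjective submersion $s\circ\pr_2$, and a surjective submersion over a contractible base need not admit a global section (project the disjoint union of the two lines $\{y=0,\ x<1\}$ and $\{y=1,\ x>0\}$ onto the $x$-axis). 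The principal-bundle structure from \cref{th:weakinverse} does not rescue this: that is a principal bundle for the \emph{Lie groupoid} $\inf P\times\Gamma$, with $\inf P$ arbitrary, and principal bundles for general Lie groupoids are not trivialisable over contractible bases. For instance, for the \v{C}ech groupoid of a cover $\{V_i\}$ of $N$, any smooth $f:U\to N$ whose image lies in no single $V_i$ produces a principal bundle over a contractible $U$ with no global section. Triviality over contractible bases is a feature of principal bundles for Lie \emph{groups}, not of arbitrary groupoid torsor bundles.

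The paper closes precisely this gap by repackaging the data: it shows, citing \cite[Lemma 7.1.1]{Nikolaus}, that $P:=\mor{\inf P}\times G$ with anchor $(\rho,g)\mapsto(t(\rho),R(s(\rho),g^{-1}))$ and a suitable $\Gamma$-action is a principal $\Gamma$-bundle over $\ob{\inf P}\times_M\ob{\inf P}$ whose sections correspond bijectively to pairs of a \splitting\ and a transition function. The decisive point is that $\Gamma$ is a Lie $2$-group, so $t:\mor{\Gamma}\to\ob{\Gamma}$ is the trivial $H$-bundle $H\times G\to G$; a principal $\Gamma$-bundle is therefore in particular a principal $H$-bundle, and \emph{that} is what trivialises over a contractible $U$. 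Your bundle over-parametrises the same data -- it maps onto the paper's $P$ via $(p,a,\rho_1,\rho_2)\mapsto(\rho_1^{-1}\circ R(\rho_2,\id_{a^{-1}}),a^{-1})$ -- so any repair of your globalisation step along these lines essentially reproduces the paper's proof. You need either to carry out that reduction explicitly or to supply some other argument for the global section; as written, the existence half is not established.
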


\begin{comment}
We could also write
\begin{equation*}
\sigma'=R(\sigma,(\alpha(g^{-1}g',h^{-1}),g^{-1}g'))=R(\sigma,(\alpha(t(h),h^{-1}),t(h)))=R(\sigma,(h^{-1},t(h)))
\end{equation*}
\end{comment}

\begin{proof}
We define $P:= \mor{\inf P} \times G$ equipped with the map
\begin{equation*}
\chi: P \to \ob{\inf P} \times_M \ob{\inf P}: (\rho,g) \mapsto  (t(\rho),R(s(\rho),g^{-1}))\text{.} \end{equation*}
We define a $\Gamma$-action on $P$ with anchor $\alpha(\rho,g):= g$ and 
\begin{equation}
\label{eq:lem:existence:1}
(\rho,g) \circ (h,g') := (R(\rho,(\alpha(g^{-1},h),g^{-1}g')),g')\text{.}
\end{equation} 
By \cite[Lemma 7.1.1]{Nikolaus}, $P$ is a principal $\Gamma$-bundle over $\ob{\inf P} \times_M \ob{\inf P}$. It is easy to check that writing a section  $\tilde\sigma: U \to P$ of $P$  as a pair $(\sigma,g)$ with $\sigma:U \to \mor{\inf P}$ and $g:U \to G$ establishes a bijection between sections of $P$ and pairs of \splitting s and transition functions.
\begin{comment}
Indeed, suppose $\tilde\sigma:U \to P$ is a local section. Write $\tilde\sigma=(\sigma,g)$. Then, we have
\begin{equation*}
(x,y)=\chi(\tilde\sigma(x,y))= (t(\sigma(x,y)),R(s(\sigma(x,y)),g(x,y)^{-1}))\text{.}
\end{equation*}
This shows that $\sigma$ is a \splitting\ with transition function $g$.
\end{comment}
Now, the fact that a $\Gamma$-bundle has local sections over contractible open sets implies  the existence of \splitting s.
\begin{comment}
A $\Gamma$-bundle is an $H$-bundle, and $H$-bundles admit sections over contractible open sets: choose a contraction and a connection. 
\end{comment} 
Existence and uniqueness of the map $h$ follow from \cref{lem:actmorph}. 
\end{proof}

\begin{remark}
\label{re:splitting}
\Splitting s split the tangent spaces to $\mor{\inf P}$ into a \quot{horizontal} part (the image of the \splitting) and a \quot{vertical} part (the orbit of the $\Gamma$-action).  Indeed, let $\rho\in \mor{\inf P}$, and let $\sigma:U \to \mor{\inf P}$ be a \splitting\ defined on an open neighborhood $U \subset \ob{\inf P} \times_M \ob{\inf P}$ of $u:=(t(\rho),s(\rho))$, together with a transition function $g:U \to G$. Then, there exists a unique $h\in H$ with $\rho=R(\sigma(u), (h,g(u)))$, and a splitting
\begin{equation}
\label{eq:re:splitting:1}
T_{\rho}\mor{\inf P} = TR_{h,g(u)} T\sigma(T_uU)   \oplus TR_{\sigma(u)}(T_hH \oplus T_{g(u)}G)\text{.}
\end{equation}
Here $R_{\rho}: \mor{\Gamma} \to \mor{\inf P}$ is the action on the fixed $\rho$, and $R_{h,g}: \mor{\inf P} \to \mor{\inf P}$ is the action by fixed  $(h,g)\in \mor{\Gamma}$.
In order to see \cref{eq:re:splitting:1}, let a tangent vector $\tilde v \in T_{\rho}\mor{\inf P}$ be represented by a smooth curve $\tilde\rho$ in $\mor{\inf P}$ with $\tilde\rho(0)=\rho$. Let $\gamma_1,\gamma_2$ be the curves in $\ob{\inf P}$ defined by $\gamma_1 := t \circ \tilde\rho$ and $\gamma_2 :=s \circ  \tilde\rho$, and let $v_1,v_2$ be the corresponding tangent vectors. The pair $(\gamma_1,\gamma_2)$ is a curve in $\ob{\inf P} \times_M \ob{\inf P}$, and we can assume that it is in $U$. We apply  \cref{lem:actmorph}
to the families $\rho_1(t):= \sigma(\gamma_1(t),\gamma_2(t))$ and $\rho_2(t):= \tilde\rho(t)$, as well as $g_1(t):= g(\gamma_1(t),\gamma_2(t))$ and $g_2(t):= 1$.
 Hence, there exists a unique smooth curve $\eta$ in $H$ through $h:=\eta(0)$, with $t\circ \eta\cdot g =1$ and 
\begin{equation}
\label{eq:re:splitting:2}
\tilde\rho(t) = R(\sigma(\gamma_1(t),\gamma_2(t)), (\eta(t),g(\gamma_1(t),\gamma_2(t))))\text{.}
\end{equation}
In particular, $\rho=R(\sigma(u), (h,g(u)))$ at $t=0$.
Taking derivatives in \cref{eq:re:splitting:2} yields
\begin{equation*}
\tilde v =T_{u} R_{h,g(u)}T\sigma(v_1,v_2) + T_{h,g(u)} R_{\sigma(u)}(w,T_{u}g(v_1,v_2)) \text{,}
\end{equation*} 
where $w\in T_hH$ is the tangent vector to the curve $\eta$. This is the claimed splitting.
\end{remark}

Next we discuss the bicategory of principal $\Gamma$-2-bundles, starting with the 1-morphisms.

\begin{definition}
A \emph{1-morphism} between principal $\Gamma$-2-bundles is a  $\Gamma$-equivariant anafunctor 
$F: \inf P_1 \to \inf P_2$
such that $\pi_2 \circ F =\pi_1$. 
\end{definition}

One can show that every 1-morphism between principal $\Gamma$-2-bundles is automatically invertible \cite[Corollary 6.2.4]{Nikolaus}; in particular, every 1-morphism is a weak equivalence. Similar to \cref{def:Rsection}, we say that a \emph{\splitting} of $F$ over a subset $U \subset \ob{\inf P_1} \times_M \ob{\inf P_2}$ is a smooth map $\sigma:U \to F$ such that there exists a smooth map $g: U \to G$ with
\begin{equation*}
\alpha_l(\sigma(x_1,x_2))=x_1
\quand
\alpha_r(\sigma(x_1,x_2))=R(x_2,g(x_1,x_2))
\end{equation*} 
for all $(x_1,x_2)\in U$.
The map $g$ is called a \emph{transition function} for $\sigma$.

\begin{lemma}
\label{lem:Fsections}
Every open and contractible subset $U\subset \ob{\inf P_1} \times_M \ob{\inf P_2}$ supports \splitting s. Moreover, if $\sigma_1$ and $\sigma_2$ are \splitting s over an arbitrary open subset  $U\subset \ob{\inf P_1} \times_M \ob{\inf P_2}$ with transition functions $g_1$ and $g_2$, respectively, then there exists a unique smooth map $h:U \to H$ such that $g_1 = (t \circ h)\cdot g_2$ and $\sigma_2=\rho(\sigma_1,(\alpha(g_1^{-1},h),g_1^{-1}g_{2}))$, where $\rho$ is the $\mor{\Gamma}$-action on $F$. 
\end{lemma}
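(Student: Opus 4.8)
The plan is to follow the proof of \cref{lem:existenceRsections} line by line, with the total space $F$ of the anafunctor playing the role of $\mor{\inf P}$, the anchors $\alpha_l,\alpha_r$ playing the roles of the target and source maps $t,s$, and the $\mor{\Gamma}$-action $\rho$ on $F$ playing the role of the action $R$ on morphisms. Concretely, I would put $P := F \times G$ and equip it with the projection
\begin{equation*}
\chi: P \to \ob{\inf P_1} \times_M \ob{\inf P_2}: (f,g) \mapsto (\alpha_l(f),R(\alpha_r(f),g^{-1}))\text{,}
\end{equation*}
which lands in the fibre product since $F$ respects the bundle projections, i.e. $\pi_2\circ\alpha_r=\pi_1\circ\alpha_l$, and $R$ preserves $\pi_2$. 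I would then define a $\Gamma$-action on $P$ with anchor $\alpha(f,g):=g$ and composition
\begin{equation*}
(f,g)\circ(h,g'):=(\rho(f,(\alpha(g^{-1},h),g^{-1}g')),g')\text{,}
\end{equation*}
copying \cref{eq:lem:existence:1}.

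The main obstacle is to show that $P$ is a principal $\Gamma$-bundle over $\ob{\inf P_1}\times_M\ob{\inf P_2}$, because here the required fibrewise freeness and transitivity draws on two inputs at once. The first is that every $1$-morphism $F$ is a weak equivalence, so that $\alpha_l:F\to\ob{\inf P_1}$ is a principal $\inf P_2$-bundle (by definition of an anafunctor) and $\alpha_r:F\to\ob{\inf P_2}$ is a principal $\mor{\inf P_1}$-bundle (\cref{th:weakinverse}); the second is the principal $2$-bundle structure of $\inf P_2$. For surjectivity and local triviality of $\chi$ over a point $(x_1,x_2)$ I would choose $f\in F$ with $\alpha_l(f)=x_1$, observe that $\alpha_r(f)$ lies in the same $\pi_2$-fibre as $x_2$, apply a \splitting\ of $\inf P_2$ from \cref{lem:existenceRsections} at $(\alpha_r(f),x_2)$, and use the right $\inf P_2$-action on $F$ to move $\alpha_r(f)$ into the $G$-orbit of $x_2$; freeness and transitivity of the $\Gamma$-action along the fibres then follow from the corresponding properties encoded in the two principal bundles above. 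Alternatively, this entire verification should be subsumed by the general construction \cite[Lemma 7.1.1]{Nikolaus}, which I would cite directly if its hypotheses are seen to cover the present anafunctor situation.

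Granting that $P$ is a principal $\Gamma$-bundle, the remaining steps are formal and identical to \cref{lem:existenceRsections}. Writing a local section $\tilde\sigma:U\to P$ as a pair $(\sigma,g)$ with $\sigma:U\to F$ and $g:U\to G$ sets up a bijection between sections of $P$ and pairs consisting of a \splitting\ $\sigma$ of $F$ and a transition function $g$; existence of \splitting s over contractible $U$ then follows because a principal $\Gamma$-bundle restricts to a principal $H$-bundle, and $H$-bundles admit local sections over contractible sets. For the uniqueness clause I would use that any two sections $\tilde\sigma_1=(\sigma_1,g_1)$ and $\tilde\sigma_2=(\sigma_2,g_2)$ of a principal $\Gamma$-bundle over the same $U$ differ by a unique smooth gauge transformation $\gamma=(h,g):U\to\mor{\Gamma}$ with $\tilde\sigma_2=\tilde\sigma_1\circ\gamma$. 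Reading off the anchor forces $g=g_2$ and $g_1=(t\circ h)\cdot g_2$, while the composition law above yields $\sigma_2=\rho(\sigma_1,(\alpha(g_1^{-1},h),g_1^{-1}g_2))$; these are exactly the asserted identities, and uniqueness of $h$ is the uniqueness of $\gamma$.
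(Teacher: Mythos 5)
Your proposal is correct and follows essentially the same route as the paper: the paper also forms $Q := F\times G$ with exactly the map $\chi$ and $\Gamma$-action you write down, invokes \cite[Lemma 7.1.4]{Nikolaus} (the anafunctor analogue of the Lemma 7.1.1 you guessed) to conclude that $Q$ is a principal $\Gamma$-bundle over $\ob{\inf P_1}\times_M\ob{\inf P_2}$, and then reads off existence and uniqueness from the section/\splitting\ bijection exactly as you do. Your sketched direct verification of the principal-bundle property is a reasonable substitute for that citation but is not needed.
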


\begin{proof}
We define $Q := F \times G$ equipped with the map 
\begin{equation*}
\chi: Q \to \ob{\inf P_1} \times_M \ob{\inf P_2}:  (f,g) \mapsto  (\alpha_l(f), R(\alpha_r(f),g^{-1}))\text{.}
\end{equation*}
We define a $\Gamma$-action on $Q$ with anchor $\alpha(f,g):= g$ and
\begin{equation*}
(f,g) \circ (h,g') := (\rho(f,(\alpha(g^{-1},h),g^{-1}g')),g')\text{,}
\end{equation*}
By \cite[Lemma 7.1.4]{Nikolaus} $Q$ is a principal $\Gamma$-bundle over $\ob{\inf P_1} \times_M \ob{\inf P_2}$. Writing a section $\tilde\sigma: U \to Q$  of $Q$ as $\tilde\sigma=(\sigma,g)$ for smooth maps $\sigma:U \to F$ and $g:U \to G$ establishes a bijection between sections of $Q$ and pairs of \splitting s and transition functions. Now the claims follows from the general properties of principal bundles.
\end{proof}

\begin{remark}
\label{re:splittingF}
\Splitting s split the tangent spaces to $F$ into a \quot{horizontal} part (the image of the \splitting) and a \quot{vertical part} (the orbit of the $\mor\Gamma$-action). Indeed, let $f\in F$ and $U \subset \ob{\inf P_1} \times_M \ob{\inf P_2}$ be an open neighborhood of $u := (\alpha_l(f),\alpha_r(f))$, with a \splitting\ $\sigma:U \to F$  with transition function $g$. Then, there exists a unique $h\in H$ such that $f=\rho(\sigma(u),(h,g(u)^{-1}))$ and
\begin{equation}
\label{eq:re:splittingF:1}
T_fF =T\rho_{h,g(u)^{-1}}(T\sigma(T_uU)) \oplus T\rho_{\sigma(u)} (T_{h,g(u)^{-1}}\mor{\Gamma})\text{.} \end{equation}
Here $\rho_{h,g}: F \to F$ is the action on $F$ with fixed $(h,g)\in \mor\Gamma$, and $\rho_{f}: \mor{\Gamma} \to F$ the action on a fixed $f\in F$. 
In order to see \cref{eq:re:splittingF:1}, let a tangent vector $v \in T_{f}F$ be represented by a smooth curve $\phi$ in $F$ with $\phi(0)=f$. Let $\gamma_1,\gamma_2$ be the curves  defined by $\gamma_1 := \alpha_l \circ \phi$ and $\gamma_2 :=  \alpha_r \circ \phi$, and let $v_1,v_2$ be the corresponding tangent vectors. The pair $(\gamma_1,\gamma_2)$ is a curve in $\ob{\inf P_1} \times_M \ob{\inf P_2}$, and we can assume that it is in $U$. Since $\alpha_l(\phi(t))=\alpha_l(\sigma(\gamma_1(t),\gamma_2(t)))$ and $\alpha_l:F \to \ob{\inf P_1}$ is a principal $\inf P_2$-bundle, there exists a unique curve $\tilde\rho$ in $\mor{\inf P_2}$ such that $\phi(t)= \sigma(\gamma_1(t),\gamma_2(t)) \circ \tilde\rho(t)$. 
\begin{comment}
In particular, 
\begin{equation*}
s(\tilde\rho(t))=\alpha_r(\phi(t))=\gamma_2(t)
\quand
t(\tilde\rho(t)) = \alpha_r(\sigma(\gamma_1(t),\gamma_2(t)))=R(\gamma_2(t),g(\gamma_1(t),\gamma_2(t)))\text{,}
\end{equation*}
\end{comment}
Comparing $\tilde\rho(t)$ with $\id_{\gamma_2(t)}$ via \cref{lem:actmorph}, we obtain a unique curve $\eta$ in $H$ such that
\begin{equation*}
\phi(t) =\rho(\sigma(\gamma_1(t),\gamma_2(t)),(\eta(t),g(\gamma_1(t),\gamma_2(t))^{-1})\text{.}
\end{equation*}  
\begin{comment}
Indeed, consider $\rho_2(t) := \tilde\rho(t)$, $\rho_1(t) := \id_{\gamma_2(t)}$, $g_1(t) :=1$ and $g_2(t) :=  g(\gamma_1(t),\gamma_2(t))$.
We have
\begin{equation*}
s(\rho_2(t))=s(\tilde\rho(t))=\gamma_2(t)=R(s(\rho_1(t)),g_1(t))
\end{equation*}
and
\begin{equation*}
t(\rho_2(t))=t(\tilde\rho(t))= R(\gamma_2(t),g(\gamma_1(t),\gamma_2(t))) =R(t(\rho_1(t)),g_2(t))\text{.}
\end{equation*}
By \cref{lem:actmorph} we obtain a curve $\eta'$ in $H$ such that $R(\id_{\gamma_2(t)},(\eta'(t),1)= \tilde\rho(t)$ and $t(\eta'(t))=g(\gamma_1(t),\gamma_2(t))$. Hence 
\begin{align*}
\phi(t) &= \sigma(\gamma_1(t),\gamma_2(t)) \circ R(\id_{\gamma_2(t)},(\eta'(t),1)
\\&= \sigma(\gamma_1(t),\gamma_2(t)) \circ R(\id_{R(\gamma_2(t),g(\gamma_1(t),\gamma_2(t)))},(\alpha(g(\gamma_1(t),\gamma_2(t))^{-1},\eta'(t)),g(\gamma_1(t),\gamma_2(t))^{-1})
\\& = \rho(\sigma(\gamma_1(t),\gamma_2(t)),(\alpha(g(\gamma_1(t),\gamma_2(t))^{-1},\eta'(t)),g(\gamma_1(t),\gamma_2(t))^{-1})
\\& = \rho(\sigma(\gamma_1(t),\gamma_2(t)),(\eta(t),g(\gamma_1(t),\gamma_2(t))^{-1})
\end{align*}
where $\eta(t):= \alpha(g(\gamma_1(t),\gamma_2(t))^{-1},\eta'(t))$, and $\rho: F \times \mor{\Gamma} \to F$  is the action on $F$. 
\end{comment}
Putting $h := \eta(0)$ and letting $w\in T_hH$ be the tangent vector of $\eta$, we obtain by taking derivatives
\begin{equation*}
v = T_{\sigma(u)}\rho_{h,g(u)^{-1}}(T_u\sigma(v_1,v_2)) + T_{h,g(u)^{-1}}\rho_{\sigma(u)} (w,T_ug^{-1}(v_1,v_2))\text{.}
\end{equation*}
This is the claimed splitting.  
\end{remark}

Our final definition is:
 
\begin{definition}
A \emph{2-morphism} between 1-morphisms is a $\Gamma$-equivariant transformation.
\end{definition}

Principal $\Gamma$-2-bundles over $M$ form a bigroupoid that we denote by $\zweibun\Gamma M$. Moreover, the assignment
\begin{equation*}
M \mapsto \zweibun \Gamma M
\end{equation*}
is a stack  over the site of smooth  manifolds \cite[Theorem 6.2.1]{Nikolaus}.

\subsection{Classification by non-abelian cohomology}

\label{sec:2bundleclass}

The following result has been obtain in \cite{wockel1} and  \cite[Theorems 5.3.2 \& 7.1]{Nikolaus}:

\begin{theorem}
\label{thm:classcoho}
Principal $\Gamma$-2-bundles over $M$ are classified by Giraud's  non-abelian cohomology, in terms of a bijection
\begin{equation*}
\hc 0 (\zweibun\Gamma M) \cong  \h^1(M,\Gamma)\text{,}
\end{equation*}
where $\hc 0$ denotes the set of isomorphism classes of a bicategory.
\end{theorem}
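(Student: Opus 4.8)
The plan is to exhibit mutually inverse maps between $\hc 0(\zweibun\Gamma M)$ and $\h^1(M,\Gamma)$ by an \emph{extraction} procedure (bundle to cocycle) and a \emph{reconstruction} procedure (cocycle to bundle), using \splitting s as the main bookkeeping device and invoking the stack property of $M \mapsto \zweibun\Gamma M$ to organize the global gluing. Since this classification has already been established in \cite{wockel1} and \cite[Theorems 5.3.2 \& 7.1]{Nikolaus}, I would present it as a streamlined version adapted to the \splitting\ language developed above.

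First I would set up the extraction. Given a principal $\Gamma$-2-bundle $\inf P$, \cref{lem:exsections} supplies an open cover $\{U_i\}_{i\in I}$ of $M$ together with sections $s_i : U_i \to \ob{\inf P}$; after refining to contractible sets I may assume each relevant overlap is contractible. Over a double overlap $U_i \cap U_j$, the pair $(s_i,s_j)$ lands in $\ob{\inf P}\times_M\ob{\inf P}$, so \cref{lem:existenceRsections} furnishes a \splitting\ $\sigma_{ij}$ along $(s_i,s_j)$ with transition function $g_{ij}: U_i \cap U_j \to G$, satisfying $t(\sigma_{ij})=s_i$ and $s(\sigma_{ij})=R(s_j,g_{ij})$. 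On a triple overlap, the two morphisms $s_k \rightsquigarrow s_i$ obtained, on one hand, by composing $\sigma_{ij}$ with a $G$-translate of $\sigma_{jk}$ and, on the other hand, directly via $\sigma_{ik}$, both terminate at $s_i$ with sources differing by the $G$-action; the uniqueness clause of \cref{lem:actmorph} then yields a unique smooth map $a_{ijk}: U_i \cap U_j \cap U_k \to H$ relating them. Unwinding the definitions gives precisely the Giraud cocycle conditions $g_{ik}=(t\circ a_{ijk})\cdot g_{jk}g_{ij}$ (the group part of \cref{eq:transtrans}) and $a_{ikl}\,\alpha(g_{kl},a_{ijk})=a_{ijl}a_{jkl}$ (\cref{31c}), so $(g,a)$ is a $\Gamma$-cocycle for $\{U_i\}$.

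For the reconstruction I would run this in reverse. Given a $\Gamma$-cocycle $(g_{ij},a_{ijk})$ for a cover $\{U_i\}$, the trivial local bundles $U_i \times \Gamma$ are glued along $1$-morphisms determined by the $g_{ij}$ and along coherence $2$-morphisms determined by the $a_{ijk}$; the condition \cref{31c} is exactly the cocycle coherence required of these gluing data, so the stack property \cite[Theorem 6.2.1]{Nikolaus} assembles them into a global principal $\Gamma$-2-bundle. I would then verify well-definedness: a different choice of cover, sections, or \splitting s changes the extracted cocycle only by a Giraud equivalence, the comparison data being produced once more by \cref{lem:existenceRsections,lem:actmorph}, so extraction descends to a map into $\h^1(M,\Gamma)$; conversely, equivalent cocycles reconstruct to isomorphic bundles. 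A direct comparison then shows the two procedures are mutually inverse on isomorphism classes.

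The main obstacle is the reconstruction step, specifically checking that the glued Lie groupoid is genuinely a principal $\Gamma$-2-bundle, i.e.\ that the functor $\tau=(\pr_1,R):\inf P\times\Gamma \to \inf P\times_M\inf P$ is a weak equivalence in the sense of \cref{def:zwoabun}. This is where the fibrewise freeness and transitivity of the $\Gamma$-action encoded in \cref{lem:actmorph} and \cref{lem:existenceRsections} must be used in an essential way, and it is the technical heart already carried out in \cite{wockel1,Nikolaus}; accordingly I would cite those references for this verification rather than reprove the descent from scratch.
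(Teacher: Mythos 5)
Your proposal is correct in outline and shares the extraction half with the paper almost verbatim (sections from \cref{lem:exsections}, \splitting s from \cref{lem:existenceRsections}, uniqueness from \cref{lem:actmorph} producing $a_{ijk}$ and the two cocycle conditions). Where you genuinely diverge is the reconstruction: you glue the trivial bundles $\inf I|_{U_i}$ along $1$-morphisms coming from $g_{ij}$ and $2$-morphisms from $a_{ijk}$ and invoke the $2$-stack property of \cite[Theorem 6.2.1]{Nikolaus}, deferring the weak-equivalence check for $\tau$ to the literature. The paper instead writes down a completely explicit Lie groupoid $\inf P_{(g,a)}$ with $\ob{\inf P_{(g,a)}}=\coprod_i U_i\times G$ and $\mor{\inf P_{(g,a)}}=\coprod_{i,j}(U_i\cap U_j)\times H\times G$, with explicit composition and $\Gamma$-action, and verifies the principality directly via \cref{th:weak}. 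Your descent argument is shorter and conceptually cleaner, but the paper's explicit model is what makes the later differential refinement in \cref{sec:classdiff} possible: the connection forms, the canonical \splitting s $\sigma_{ij}(x)=(i,j,x,1,g_{ij}(x))$, and the comparison functor $\phi:\inf P_{(g,a)}\to\inf P$ are all read off from the explicit formulas, so the explicitness is not incidental. One place where your sketch is thinner than it should be: injectivity of the reconstruction map requires showing that a $1$-isomorphism $F:\inf P_{(g,a)}\to\inf P_{(g',a')}$ forces the cocycles to be equivalent; this is not the same as well-definedness of extraction under changes of choices within one bundle, and the paper proves it by choosing \splitting s $\sigma_i$ of the anafunctor $F$ itself via \cref{lem:Fsections} and comparing two evaluations of $\sigma_{ij}\circ R(\sigma_{jk},\id_{g_{ij}})\circ\rho(\sigma_k,\id_{g_{jk}g_{ij}})$. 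You should make that step explicit rather than subsuming it under ``a direct comparison.''
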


In the remainder of this section we give a new  direct proof of \cref{thm:classcoho}, which we will later refine to a situation \quot{with connections}.  Given a $\Gamma$-cocycle  $(g,a)$ with respect to an open cover $\mathcal{U}=\{U_i\}_{i\in I}$, we define a Lie groupoid $\inf P_{(g,a)}$ with 
\begin{equation*}
\ob{\inf P_{(g,a)}} := \coprod_{i\in I} U_i \times G
\quand
\mor{\inf P_{(g,a)}} := \coprod_{i,j\in I} (U_i \cap U_j) \times H \times G\text{.}
\end{equation*}
Target map and source map are defined by 
\begin{equation*}
s(i,j,x,h,g) := (j,x,g)
\quand
t(i,j,x,h,g) := (i,x,g_{ij}(x)^{-1}t(h)g)\text{,}
\end{equation*}
and the composition is
\begin{equation*}
(i,j,x,h_2,g_2) \circ (j,k,x,h_1,g_1) := (i,k,x,a_{ijk}(x)\alpha(g_{jk}(x),h_2)h_1, g_1)\text{.}
\end{equation*}
A smooth functor $\pi: \inf P_{(g,a)} \to \idmorph{M}$ 
is defined by $\pi(i,x,g):= x$. 
A smooth $\Gamma$-action is defined by 
\begin{equation*}
R((i,x,g),g') := (i,x,gg')
\quand
R((i,j,x,h,g),(h',g')):= (i,j,x,h\alpha(g,h'),gg')\text{.}
\end{equation*}
We have three things to check:
\begin{enumerate}[1.)]

\item 
$\inf P_{(g,a)}$ is a principal $\Gamma$-2-bundle. The main part is to show that the functor $\tau$ in \cref{def:zwoabun} is a weak equivalence; this is straightforward and can be done using \cref{th:weak}.

\item
If another $\Gamma$-cocycle $(g',a')$ is equivalent to $(g,a)$ via equivalence data $(h,e)$, then a smooth functor $\phi: \inf P_{(g,a)} \to \inf P_{(g',a')}$ is defined by \begin{equation*}
\phi(i,x,g):= (i,x,h_i(x)g)
\quand
\phi(i,j,x,h,g) := (i,j,x,e_{ij}(x)\alpha(h_j(x),h),h_j(x)g)\text{.}
\end{equation*}
The conditions for a functor are straightforward to check using \cref{32c,32f} for $(h,e)$.
\begin{comment}
For the target map, we have
\begin{multline*}
t(F(i,j,x,h,g))=t(i,j,x,e_{ij}(x)\alpha(h_j,h),h_j(x)g)\\=(i,x,g_{ij}'(x)^{-1}t(e_{ij}(x)\alpha(h_j,h))h_j(x)g)=(i,x,h_i(x)g_{ij}(x)^{-1}t(h)g)\\=F(i,x,g_{ij}(x)^{-1}t(h)g)=F(t(i,j,x,h,g))
\end{multline*}
Here we have used
\begin{equation*}
t(e_{ij})=g_{ij}'h_ig_{ij}^{-1}h_j^{-1}\text{.}
\end{equation*}
For the source map, we have
\begin{multline*}
s(F(i,j,x,h,g))=s(i,j,x,e_{ij}(x)\alpha(h_j,h),h_j(x)g)\\=(j,x,h_j(x)g)=F(j,x,g)=F(s(i,j,x,h,g))
\end{multline*}
For the composition, we have
\begin{align*}
&\mquad F((i,j,x,h_2,g_2) \circ (j,k,x,h_1,g_1)) \\&= F(i,k,x,a_{ijk}(x)\alpha(g_{jk}(x),h_2)h_1, g_1)
\\&= (i,k,x,e_{ik}(x)\alpha(h_k(x),a_{ijk}(x)\alpha(g_{jk}(x),h_2)h_1),h_k(x)g_1)
\\&= (i,k,x,e_{ik}(x)\alpha(h_k(x),a_{ijk}(x))e_{jk}(x)^{-1}\alpha(g'_{jk}(x)h_j,h_2)e_{jk}(x)\alpha(h_k(x),h_1),h_k(x)g_1)
\\&= (i,k,x,a_{ijk}'(x)\alpha(g'_{jk}(x),e_{ij}(x)\alpha(h_j,h_2))e_{jk}(x)\alpha(h_k(x),h_1),h_k(x)g_1)
\\&= (i,j,x,e_{ij}(x)\alpha(h_j,h_2),h_j(x)g_2) \circ (j,k,x,e_{jk}(x)\alpha(h_k(x),h_1),h_k(x)g_1)
\\&= F(i,j,x,h_2,g_2) \circ F(j,k,x,h_1,g_1)
\end{align*}
Here we have used that
\begin{equation*}
a_{ijk}'\alpha(g'_{jk},e_{ij}) = e_{ik}\alpha(h_k,a_{ijk})e_{jk}^{-1}
\end{equation*}
and that
\begin{equation*}
e_{jk}^{-1}\alpha(g'_{jk}h_j,h_2)e_{jk}=\alpha(t(e_{jk}^{-1})g'_{jk}h_j,h_2)=\alpha(h_k g_{jk},h_2)\text{.}
\end{equation*}
\end{comment}
It is also straightforward to check that $\phi$ is $\Gamma$-equivariant.
\begin{comment}
On the level of objects, this is obvious, and on the level of morphisms, this is
\begin{align*}
R(F(i,j,x,h,g),(h',g')) &= R((i,j,x,e_{ij}(x)\alpha(h_j(x),h),h_j(x)g),(h',g'))
\\&= (i,j,x,e_{ij}(x)\alpha(h_j(x),h)\alpha(h_j(x)g,h'),h_j(x)gg')
\\&= (i,j,x,e_{ij}(x)\alpha(h_j(x),h\alpha(g,h')),h_j(x)gg')
\\&= F((i,j,x,h\alpha(g,h'),gg'))
\\&= F(R((i,j,x,h,g),(h',g')))
\end{align*}
\end{comment}
Thus, by \cref{rem:equivsmoothfunctor} $\phi$ induces a 1-isomorphism $F: \inf P_{(g,a)} \to \inf P_{(g',a')}$.

\item
If $\mathcal{U}'$ is a refinement of $\mathcal{U}$, and $(g',a')$ is the refined $\Gamma$-cocycle, then the evident smooth functor $\phi: \inf P_{(g',a')} \to \inf P_{(g,a)}$ is obviously $\Gamma$-equivariant, and hence a 1-isomorphism.

\end{enumerate}
Now we have defined a \quot{reconstruction} map
\begin{equation}
\label{eq:reconmap}
\h^1(M,\Gamma) \to \hc 0 (\zweibun\Gamma M\text{.}
\end{equation}
In order to show that it is surjective, we extract a $\Gamma$-cocycle from a given principal $\Gamma$-2-bundle $\inf P$ over $M$.
We make the following choices:
\begin{enumerate}[1.)]

\item 
A cover $\{U_i\}_{i\in I}$ of $M$ by contractible open sets with contractible double intersections, together with smooth sections $s_i: U_i \to \ob{\inf P}$.

\item
\label{choices:b}
\Splitting s $\sigma_{ij}:U_i \cap U_j \to \mor{\inf P}$ along $(s_i,s_j):U_i \cap U_j \to \ob{\inf P} \times_M \ob{\inf P}$, together with transitions functions $g_{ij}:U_i \cap U_j \to G$.

\end{enumerate}
The sections $s_i$ exist due to \cref{lem:exsections}, and the \splitting s exist due to \cref{lem:existenceRsections}. Over a triple intersection $U_i \cap U_j \cap U_k$ it is straightforward to check that $\sigma_{ijk}:=\sigma_{ij} \circ R(\sigma_{jk},\id_{g_{ij}})$ is a \splitting\ along $(s_i,s_k)$ with associated transition function $ g_{jk}g_{ij}$. Comparing with the \splitting\ $\sigma_{ik}$ using \cref{lem:existenceRsections}, we obtain a unique smooth map
$a_{ijk}:U_i \cap U_j \cap U_k \to H$  satisfying
\begin{equation}
\label{eq:local:sub1}
t(a_{ijk})g_{jk}g_{ij}=g_{ik}
\quand
\sigma_{ijk}=R(\sigma_{ik},(\alpha(g_{ik}^{-1},a_{ijk}),g_{ik}^{-1}g_{jk}g_{ij}))\text{.}
\end{equation}
The first equation is the cocycle condition of \cref{eq:transtrans}.
We compute the expression
\begin{equation*}
\sigma_{ij} \circ R(\sigma_{jk},\id_{g_{ij}})\circ R(\sigma_{kl},\id_{g_{jk}g_{ij}})
\end{equation*}
in two ways:  (1) by substituting using \cref{eq:local:sub1}  $\sigma_{jkl}$ and then $\sigma_{ijl}$, leading to
\begin{equation*}
R(\sigma_{il},(\alpha(g_{il}^{-1},a_{ijl}a_{jkl}),g_{il}^{-1}g_{kl}g_{jk}g_{ij}))\text{,}
\end{equation*} 
\begin{comment}
Indeed,
\begin{eqnarray*}
&&\hspace{-1cm}\tilde \sigma_{ij} \circ R(\tilde\sigma_{jk},\id_{g_{ij}})\circ R(\tilde\sigma_{kl},\id_{g_{jk}g_{ij}}) \\&=& \tilde \sigma_{ij} \circ R(\tilde\sigma_{jk} \circ R(\tilde\sigma_{kl},\id_{g_{jk}}),\id_{g_{ij}})
\\&=&  \tilde \sigma_{ij} \circ R(R(\tilde\sigma_{jl},(\alpha(g_{jl}^{-1},a_{jkl}),g_{jl}^{-1}g_{kl}g_{jk})),\id_{g_{ij}})
\\&=& \tilde \sigma_{ij} \circ R(\tilde\sigma_{jl},(\alpha(g_{jl}^{-1},a_{jkl}),g_{jl}^{-1}g_{kl}g_{jk}g_{ij}))
\\&=& \tilde \sigma_{ij} \circ R(\tilde\sigma_{jl},\id_{g_{ij}}) \circ R(\id,(\alpha(g_{jl}^{-1},a_{jkl}),g_{jl}^{-1}g_{kl}g_{jk}g_{ij}))
\\&=& R(\tilde\sigma_{il},(\alpha(g_{il}^{-1},a_{ijl}),g_{il}^{-1}g_{jl}g_{ij})) \circ R(R(\id,\id_{g_{jl}^{-1}g_{il}}),(1,g_{il}^{-1}g_{jl})(\alpha(g_{jl}^{-1},a_{jkl}),g_{jl}^{-1}g_{kl}g_{jk}g_{ij}))
\\&=& R(\tilde\sigma_{il},(\alpha(g_{il}^{-1},a_{ijl})\alpha(g_{il}^{-1},a_{jkl}),g_{il}^{-1}g_{kl}g_{jk}g_{ij}))
\\&=& R(\tilde\sigma_{il},(\alpha(g_{il}^{-1},a_{ijl}a_{jkl}),g_{il}^{-1}g_{kl}g_{jk}g_{ij}))
\end{eqnarray*}
\end{comment}
and (2) by substituting $\sigma_{ijk}$ and then $\sigma_{ikl}$, leading to \begin{equation*}
R(\sigma_{il},(\alpha(g_{il}^{-1},a_{ikl}\alpha(g_{kl},a_{ijk})),g_{il}^{-1}g_{kl}g_{jk}g_{ij}))\text{.}
\end{equation*}
\begin{comment} 
Indeed,
\begin{eqnarray*}
&&\hspace{-1cm}\tilde \sigma_{ij} \circ R(\tilde\sigma_{jk},\id_{g_{ij}})\circ R(\tilde\sigma_{kl},\id_{g_{jk}g_{ij}}) \\&=& R(\tilde\sigma_{ik},(\alpha(g_{ik}^{-1},a_{ijk}),g_{ik}^{-1}g_{jk}g_{ij}))\circ R(R(\tilde\sigma_{kl},\id_{g_{ik}}),\id_{g_{ik}^{-1}g_{jk}g_{ij}})
\\&=&  R(\tilde\sigma_{ik} \circ R(\tilde\sigma_{kl},\id_{g_{ik}}),(\alpha(g_{ik}^{-1},a_{ijk}),g_{ik}^{-1}g_{jk}g_{ij}))
\\&=&  R(R(\tilde\sigma_{il},(\alpha(g_{il}^{-1},a_{ikl}),g_{il}^{-1}g_{kl}g_{ik})),(\alpha(g_{ik}^{-1},a_{ijk}),g_{ik}^{-1}g_{jk}g_{ij}))
\\&=&  R(\tilde\sigma_{il},(\alpha(g_{il}^{-1},a_{ikl}),g_{il}^{-1}g_{kl}g_{ik})\cdot(\alpha(g_{ik}^{-1},a_{ijk}),g_{ik}^{-1}g_{jk}g_{ij}))
\\&=&  R(\tilde\sigma_{il},(\alpha(g_{il}^{-1},a_{ikl}\alpha(g_{kl},a_{ijk})),g_{il}^{-1}g_{kl}g_{jk}g_{ij}))
\end{eqnarray*}
\end{comment}
Equating the two results, the uniqueness in \cref{lem:actmorph} implies
$a_{ijl}a_{jkl}=a_{ikl}\alpha(g_{kl},a_{ijk})$, 
this is cocycle condition \cref{31c}.
There is a smooth $\Gamma$-equivariant functor  $\phi: \inf P_{(g,a)} \to \inf P$ defined by
\begin{equation*}
\phi(i,x,g) := R(s_i(x),g)
\quand
\phi(i,j,x,h,g) := R(\sigma_{ij}(x),(\alpha(g_{ij}(x)^{-1},h),g_{ij}(x)^{-1}g))\text{.}
\end{equation*}
It induces a 1-isomorphism and hence  shows that the map of \cref{eq:reconmap} is surjective. 

It remains to show that the map  of \cref{eq:reconmap} is injective. We consider two $\Gamma$-cocycles $(g,a)$ and $(g',a')$ with respect to the same open cover, and a 1-morphism $F: \inf P_{(g,a)} \to \inf P_{(g',a')}$. Note that $\inf P_{(g,a)}$ has the sections $s_i(x):=(i,x,1)$, and the \splitting s $\sigma_{ij}(x)=(i,j,x,1,g_{ij}(x))$ with transition functions $g_{ij}$. Similarly  $\inf P_{(g',a')}$ has  sections $s_i'$ and \splitting s $\sigma_{ij}'$. 
 We choose \splitting s $\sigma_i:U_i \to F$ of $F$ along $(s_i,s_i'):U_i \to \ob{\inf P} \times_M \ob{\inf P'}$ with transition functions $h_i: U_i \to G$. Then, we obtain two \splitting s of $F$ along $(s_i,s_j'):U_i \cap U_j \to F$, namely $\sigma_i \circ R(\sigma'_{ij},\id_{h_i})$, with transition function $g_{ij}'h_i$, and $\sigma_{ij}\circ \rho(\sigma_j,\id_{g_{ij}})$, with transition function $h_jg_{ij}$. By \cref{lem:Fsections}, there exists a unique smooth map $e_{ij}:U_i \cap U_j \to H$ such that 
\begin{equation}
\label{eq:local:sub0}
t(e_{ij})h_jg_{ij}=g_{ij}'h_i
\end{equation}
and 
\begin{equation}
\label{eq:local:sub2}
\sigma_{ij}\circ \rho(\sigma_j,\id_{g_{ij}}) = \rho(\sigma_i \circ R(\sigma'_{ij},\id_{h_i}),(\alpha(h_i^{-1}g_{ij}'^{-1},e_{ij}),h_i^{-1}g_{ij}'^{-1}h_jg_{ij}))\text{.}
\end{equation}
Now we compute $\sigma_{ij} \circ R(\sigma_{jk},\id_{g_{ij}}) \circ \rho(\sigma_k,\id_{g_{jk}g_{ij}})$ in two different ways: (1) by substituting $\sigma'_{jk}$, then $\sigma'_{ij}$ via \cref{eq:local:sub2}, and then $\sigma_{ik}'$ via \cref{eq:local:sub1},and (2) by substituting $\sigma_{ik}$ via \cref{eq:local:sub1} and then $\sigma_{ik}'$ via \cref{eq:local:sub2}.
\begin{comment}
\begin{eqnarray*}
&&\hspace{-2cm}\sigma_{ij} \circ R(\sigma_{jk},\id_{g_{ij}}) \circ R(\sigma_k,\id_{g_{jk}g_{ij}})
\\&=& R(\sigma_{ik},(\alpha(g_{ik}^{-1},a_{ijk}),g_{ik}^{-1}g_{jk}g_{ij})) \circ R(R(\sigma_k,\id_{g_{ik}}),\id_{g_{ik}^{-1}g_{jk}g_{ij}})
\\&=& R(\sigma_{ik} \circ R(\sigma_k,\id_{g_{ik}}),(\alpha(g_{ik}^{-1},a_{ijk}),g_{ik}^{-1}g_{jk}g_{ij})) \\&=& R(\sigma_i \circ R(\sigma'_{ik},\id_{h_i}),(\alpha(h_i^{-1}g_{ik}'^{-1},e_{ik}),h_i^{-1}g_{ik}'^{-1}h_kg_{ik}))\cdot (\alpha(g_{ik}^{-1},a_{ijk}),g_{ik}^{-1}g_{jk}g_{ij})) 
\\&=& R(\sigma_i \circ R(\sigma'_{ik},\id_{h_i}),(\alpha(h_i^{-1}g_{ik}'^{-1},e_{ik})\alpha(h_i^{-1}g_{ik}'^{-1}h_k,a_{ijk}),h_i^{-1}g_{ik}'^{-1}h_kg_{jk}g_{ij})) \\&=& \sigma_i \circ R(R(\sigma'_{ik},\id_{g_{ik}'^{-1}}),(e_{ik}\alpha(h_k,a_{ijk}),h_kg_{jk}g_{ij})) \end{eqnarray*}
\end{comment}
Comparing the two results using that the right $\inf P'$-action  on $F$ is free and the uniqueness of \cref{lem:actmorph} we can then conclude
\begin{equation}
\label{eq:local:sub3}
a_{ijk}'\alpha(g'_{jk},e_{ij})e_{jk} = e_{ik}\alpha(h_k,a_{ijk})\text{.}
\end{equation} 
\Cref{eq:local:sub0,eq:local:sub3} show that the $\Gamma$-cocycles $(g,a)$ and $(g',a')$ are equivalent. This concludes the proof of \cref{thm:classcoho}.

\begin{remark}
If $\Gamma$ is smoothly separable, there is a classifying space for principal $\Gamma$-2-bundles, namely the classifying space of the geometric realization of $\Gamma$, $B|\Gamma|$. Thus, there is a bijection
\begin{equation*}
\hc 0 (\zweibun\Gamma M) \cong [M,B|\Gamma|]
\end{equation*}
with the set of homotopy classes of continuous maps from $M$ to $B|\Gamma|$,
see \cite{baez8} and \cite[Section 4]{Nikolaus}.
\end{remark}

\setsecnumdepth{2}

\section{Lie 2-algebra-valued differential forms on Lie groupoids}

\label{sec:diffforms}

\subsection{The differential graded Lie algebra of forms}

We start with an arbitrary Lie 2-algebra $\gamma = (\mathfrak{h},\mathfrak{g},t_{*},\alpha_{*})$ in the formalism described in \cref{sec:lie2}. We give a short motivation for our \cref{def:forms} below.

We  regard $\gamma$ as a cochain complex $(\gamma_{\bullet},t_{\bullet})$ with $\gamma_{-1} := \mathfrak{h}$, $\gamma_0 := \mathfrak{g}$ and $\gamma_{k}=0$ for all $k\neq 0,-1$. The only non-trivial differential is $t_{-1} := t_{*}$. 
If $\inf P$  is a Lie groupoid, its nerve is a simplicial manifold $\inf P_{\bullet}$ with $\inf P_0 := \ob{\inf P}$, $\inf P_1 := \mor{\inf P}$, and  $\inf P_k \df \mor{\inf P} \ttimes st \mor{\inf P} \ttimes s{}...\times_t \mor{\inf P}$ the manifolds of $k$ composable morphisms.  We  denote the alternating sum over the pullbacks along the face maps by $\Delta_{j}:\Omega^*(\inf P_j) \to \Omega^{*}(\inf P_{j+1})$, satisfying $\Delta_{j+1} \circ \Delta_j=0$.  For instance, $\Delta_0 = t^{*}-s^{*}$, and $\Delta_1=\pr_1^{*}-c^{*}+\pr_2^{*}$ where $c: \inf P_2  \to \inf P_1$ is the composition. 
We form the total complex
\begin{equation*}
T^{p} :=\bigoplus_{p=i+j+k} \Omega^{i}(\inf P_j,\gamma_k)
\end{equation*}
and equip it with the differential
\begin{equation*}
\tau_{ijk} := \mathrm{d}_i + (-1)^{i}\Delta_j + (-1)^{i+j}t_k: T^{p} \to T^{p+1}\text{.} 
\end{equation*}
We want to discard all elements of form degree smaller than the total degree (\quot{virtual forms}), because these components do not naturally appear in the context of connections on principal 2-bundles. Hence, we decompose 
\begin{equation*}
T^{p} = T^{p}_{true} \oplus T^{p}_{virt}
\end{equation*}
with $T^{p}_{true}$  consisting of those summands with form degree $i\geq p$. 
\begin{comment}
\begin{equation*}
T^{p}_{true} :=\bigoplus_{p=i+j+k,i\geq p} \Omega^{i}(\inf P_j,\gamma_k) \quand
T^{p}_{virt} :=\bigoplus_{p=i+j+k,i < p} \Omega^{i}(\inf P_j,\gamma_k)\text{.}
\end{equation*}
\end{comment}
The differential $\tau$ does not restrict to one on $T^{p}_{true}$. Thus, we also decompose
\begin{equation*}
\tau|_{T^{p}_{true}} = \mathrm{D} \oplus \mathrm{D}_{virt}
\end{equation*}
with $\mathrm{D}: T^{p}_{true} \to T^{p+1}_{true}$ and $\mathrm{D}_{virt}:T^{p}_{true} \to T^{p+1}_{virt}$
and define
\begin{equation*}
\Omega^p(\inf P,\gamma) := \mathrm{kern}(\mathrm{D}_{virt}) \subset T^{p}_{true}\text{.}
\end{equation*}
By construction, we obtain a well-defined differential $\mathrm{D}:\Omega^p(\inf P,\gamma) \to \Omega^{p+1}(\inf P,\gamma)$ satisfying $\mathrm{D}^2=0$. 
Spelling everything out, we obtain the following definition:

\begin{definition}
\label{def:forms}
A $p$-form $\Psi\in\Omega^p(\inf P,\gamma)$ consists of three components $\Psi=(\fa\Psi,\fb\Psi,\fc\Psi)$, which are ordinary differential forms
\begin{equation*}
\fa\Psi\in\Omega^{p}(\inf P_{0},\mathfrak{g})
\quomma
\fb\Psi\in\Omega^{p}(\inf P_{1},\mathfrak{h})
\quand
\fc\Psi\in\Omega^{p+1}(\inf P_{0},\mathfrak{h})\text{,}
\end{equation*}
such that $\Delta \fa\Psi = t_{*}(\fb\Psi)$ and $\Delta\fb\Psi = 0$. 
The differential of a $p$-form $\Psi$ is the $(p+1)$-form $\mathrm{D}\Psi$ with components
\begin{eqnarray*}
\fa{(\mathrm{D}\Psi)} &=&  \mathrm{d}\fa\Psi -(-1)^{p}t_{*}(\fc\Psi)
\\
\fb{(\mathrm{D}\Psi)} &=& \mathrm{d}\fb\Psi -(-1)^{p}\Delta(\fc\Psi)
\\
\fc{(\mathrm{D}\Psi)} &=& \mathrm{d}\fc\Psi\text{.}
\end{eqnarray*}
\end{definition}

\begin{comment}
We remark that $\Delta\fb\Psi=0$ implies $\id^{*}\fb\Psi = 0$ and  $i^{*}\fb\Psi = -\Psi$, where $i:\inf P_1 \to \inf P_1$ is the inversion.
For composable $\rho_1,\rho_2\in \mor{\inf P}$ we have $0=\Delta \fb\Psi|_{\rho_1,\rho_2}= \fb\Psi_{\rho_1}-\fb\Psi_{\rho_1\circ \rho_2}+ \fb\Psi_{\rho_2}$. For $\rho_2=\rho_1^{-1}$ we get $\fb\Psi_{\rho_1}=-\fb\Psi_{\rho_2}$. 
\end{comment}

Next we define a graded product. 
Suppose $\Psi\in \Omega^{k}(\inf P,\gamma)$ and $\Phi\in\Omega^l(\inf P,\gamma)$. We define $[\Psi\wedge \Phi]\in\Omega^{k+l}(\inf P,\gamma)$  in the following way:
\begin{eqnarray*}
\fa{[\Psi\wedge \Phi]} &=& [\fa\Psi \wedge \fa\Phi] 
\\
\fb{[\Psi\wedge \Phi]} &=& [\fb\Psi\wedge \fb\Phi]+ \alpha_{*}(s^{*}\fa\Psi \wedge \fb\Phi)- (-1)^{kl}  \alpha_{*}(s^{*}\fa\Phi \wedge \fb\Psi) 
\\
\fc{[\Psi\wedge \Phi]} &=&  \alpha_{*}(\fa\Psi \wedge \fc\Phi)- (-1)^{kl} \alpha_{*}(\fa\Phi \wedge \fc\Psi)\text{.}  
\end{eqnarray*} 
\begin{comment}
We check the conditions \erf{eq:condform}:
\begin{eqnarray*}
\Delta(\fa{[\Psi\wedge \Phi]}) &=& [t^{*}\fa\Psi \wedge t^{*} \fa\Phi]-[s^{*}\fa\Psi \wedge s^{*} \fa\Phi]
\\&=& [\Delta\fa\Psi \wedge t^{*} \fa\Phi]+[s^{*}\fa\Psi \wedge t^{*} \fa\Phi]-[s^{*}\fa\Psi \wedge s^{*} \fa\Phi]
\\&=&  [\Delta\fa\Psi \wedge \Delta \fa\Phi]+[s^{*}\fa\Psi \wedge \Delta \fa\Phi]- (-1)^{kl}  [s^{*}\fa\Phi \wedge \Delta\fa\Psi ]
\\&=&[t_{*}\fb\Psi\wedge t_{*}\fb\Phi]+t_{*} \alpha_{*}(s^{*}\fa\Psi \wedge \fb\Phi)- (-1)^{kl}  t_{*} \alpha_{*}(s^{*}\fa\Phi \wedge \fb\Psi)
\\&=& t_{*}(\fb{[\Psi\wedge \Phi]})
\\
\Delta(\fb{[\Psi\wedge \Phi]} ) &=&  [\Delta\fb\Psi\wedge \Delta\fb\Phi]+ \alpha_{*}(\Delta s^{*}\fa\Psi \wedge\Delta \fb\Phi)+ \alpha_{*}(\Delta s^{*}\fa\Phi \wedge\Delta \fb\Psi)
\\&=&0 
\end{eqnarray*}
\end{comment}
Well-definedness of this definition, bilinearity  and the following lemma are straightforward to check. 

\begin{lemma}
\label{lem:bracket}
The graded product equips $\gamma$-valued differential forms with the structure of a differential graded Lie algebra, i.e. for $\Psi\in\Omega^k(\inf P,\gamma)$, $\Phi\in\Omega^l(\inf P,\gamma)$ and $\Xi\in\Omega^{i}(\inf P,\gamma)$ we have:
\begin{enumerate}[(a)]

\item
\label{eq:comm}
It is graded-anti-commutative, 
\begin{equation*}
[\Phi\wedge \Psi]=-(-1)^{kl}[\Psi \wedge \Phi]\text{.}
\end{equation*}

\item
\label{eq:jacobi}
It satisfies a graded Jacobi identity,
\begin{equation*}
(-1)^{ki}[[\Psi\wedge \Phi]\wedge \Xi] + (-1)^{li}[[\Xi \wedge \Psi] \wedge \Phi] + (-1)^{kl}[[\Phi\wedge \Xi]\wedge \Psi]=0\text{.}
\end{equation*}

\item
\label{eq:leibnitz}
It satisfies a Leibnitz rule with respect to the differential,
\begin{equation*}
\mathrm{D}[\Psi \wedge \Phi] = [\mathrm{D}\Psi \wedge \Phi]+(-1)^{k}[\Psi\wedge \mathrm{D}\Phi]\text{.}
\end{equation*}

\end{enumerate}
\end{lemma}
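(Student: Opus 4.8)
The plan is to verify each of the three assertions componentwise: the bracket $[\,\cdot\wedge\cdot\,]$, the differential $\mathrm{D}$, and the three structure maps $t_{*}$, $\alpha_{*}$, $\Delta$ all act separately on the three components $\fa\Psi$, $\fb\Psi$, $\fc\Psi$ and are $\mathbb{R}$-bilinear, so it suffices to check graded-anticommutativity, the graded Jacobi identity, and the Leibniz rule one component at a time. In each case the $\fa$-component reduces to the classical statement for the $\mathfrak{g}$-valued de Rham complex with its Maurer--Cartan bracket, which is standard; the content lies in the $\fb$- and $\fc$-components. The algebraic facts about $\gamma$ that I expect to use are: graded-antisymmetry and the graded Jacobi identity for the bracket-valued wedge products on $\mathfrak{g}$- and $\mathfrak{h}$-valued forms; that $\alpha_{*}$ is a Lie algebra \emph{action}, i.e. $\alpha_{*}([X_1,X_2],Y)=\alpha_{*}(X_1,\alpha_{*}(X_2,Y))-\alpha_{*}(X_2,\alpha_{*}(X_1,Y))$, which acts by \emph{derivations}, i.e. $\alpha_{*}(X,[Y_1,Y_2])=[\alpha_{*}(X,Y_1),Y_2]+[Y_1,\alpha_{*}(X,Y_2)]$; and the two crossed-module identities $\alpha_{*}(t_{*}(Y_1),Y_2)=[Y_1,Y_2]$ and $t_{*}\alpha_{*}(X,Y)=[X,t_{*}(Y)]$ recorded in \cref{sec:lie2}.

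For graded-anticommutativity I would simply swap $\Psi$ and $\Phi$ in each defining formula: the pure bracket terms flip by graded-antisymmetry of the $\mathfrak{g}$- and $\mathfrak{h}$-brackets, while the two $\alpha_{*}$-summands in the $\fb$- and $\fc$-formulas exchange roles, so that the Koszul sign $(-1)^{kl}$ comes out as required. This is immediate and uses nothing beyond bilinearity of $\alpha_{*}$.

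The Jacobi identity is the main obstacle, and it is concentrated in the $\fb$-component. Expanding the cyclic sum, the $\fc$-component produces only terms of the form $\alpha_{*}([\fa\Psi\wedge\fa\Phi]\wedge\fc\Xi)$ and nested terms $\alpha_{*}(\fa\Xi\wedge\alpha_{*}(\fa\Psi\wedge\fc\Phi))$; the action axiom rewrites the former into nested $\alpha_{*}$'s, after which the graded cyclic sum cancels in pairs. In the $\fb$-component one meets simultaneously the triple $\mathfrak{h}$-bracket, the \quot{mixed} terms $[\alpha_{*}(s^{*}\fa\Psi\wedge\fb\Phi)\wedge\fb\Xi]$ and $\alpha_{*}(s^{*}\fa\Xi\wedge[\fb\Psi\wedge\fb\Phi])$, and the doubly-$\alpha_{*}$ contributions. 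Here the Jacobi identity for the $\mathfrak{h}$-bracket handles the triple bracket, the derivation axiom moves $\alpha_{*}$ across the $\mathfrak{h}$-bracket in the mixed terms, and the action axiom collapses the doubly-$\alpha_{*}$ terms; I expect the entire difficulty to be the Koszul-sign bookkeeping needed to see that, after these rewritings, the three cyclically permuted groups of terms cancel. No $t_{*}$-identity enters here, since $t_{*}$ does not appear in the bracket.

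For the Leibniz rule I would expand $\mathrm{D}[\Psi\wedge\Phi]$ and $[\mathrm{D}\Psi\wedge\Phi]+(-1)^{k}[\Psi\wedge\mathrm{D}\Phi]$ componentwise and match them using the ordinary Leibniz rule for $\mathrm{d}$, the fact that $s^{*}$ and $t^{*}$ commute with $\mathrm{d}$, and the expansion of $\Delta=t^{*}-s^{*}$ on a product. This is where the crossed-module identities and the defining constraint $\Delta\fa\Psi=t_{*}(\fb\Psi)$ genuinely enter: in the $\fa$-component the term $t_{*}\alpha_{*}(\fa\Psi\wedge\fc\Phi)$ is converted to $[\fa\Psi\wedge t_{*}(\fc\Phi)]$ via $t_{*}\alpha_{*}(X,Y)=[X,t_{*}(Y)]$; in the $\fc$-component the contributions arising from $\alpha_{*}(t_{*}(\fc\Psi)\wedge\fc\Phi)=[\fc\Psi\wedge\fc\Phi]$ must cancel against each other by graded-antisymmetry; and in the $\fb$-component, applying $\Delta$ to $\fc{[\Psi\wedge\Phi]}$ forces a term $t^{*}\fa\Psi$, which the constraint rewrites as $s^{*}\fa\Psi+t_{*}(\fb\Psi)$, whereupon the identity $\alpha_{*}(t_{*}(Y),Z)=[Y,Z]$ produces exactly the bracket terms needed to match the expansion of $\mathrm{d}\fb{[\Psi\wedge\Phi]}$. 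Once these substitutions are made, matching the two sides is again a matter of tracking the signs $(-1)^{k}$ and $(-1)^{l}$ coming from $\mathrm{d}$ and $\Delta$, and no further algebraic input is required.
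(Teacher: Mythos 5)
Your proposal is correct and follows essentially the same route as the paper, which declares the lemma "straightforward to check" and whose own (suppressed) verification is exactly the componentwise computation you outline: the $\fa$-component is classical, graded-anticommutativity follows from bilinearity and graded antisymmetry of the $\mathfrak{g}$- and $\mathfrak{h}$-brackets, the Jacobi identity uses only the action and derivation axioms for $\alpha_{*}$, and the Leibniz rule is where the crossed-module identities and the constraint $\Delta\fa\Psi=t_{*}(\fb\Psi)$ enter, precisely as you indicate. No discrepancies.
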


\begin{example}
We consider $\Gamma=\idmorph{G}$ for a Lie group $G$.
\begin{enumerate}[(a)]

\item
\label{ex:forms:codis}
A $p$-form $\Psi\in\Omega^p(\inf P,\gamma)$ only has the component $\fa\Psi\in\Omega^{p}(\ob{\inf P},\mathfrak{g})$ subject to the condition $\Delta\fa\Psi=0$. Differential and Lie bracket are the ordinary ones.

\item
If $\inf P=X_{dis}$  we obtain $\Omega^p(X_{dis},\gamma)=\Omega^p(X,\mathfrak{g})$ as differential graded Lie algebras.

\item
If $\inf P=\act MK$ is the action groupoid associated to the action of a Lie group $K$ on a smooth manifold $M$, 
we obtain $K$-basic $\mathfrak{g}$-valued forms on $M$,  
\begin{equation*}
\Omega^p(\act MK,\gamma)=\Omega^{p}(M,\mathfrak{g})^{K\text{-}basic}\text{,}
\end{equation*}
as differential graded Lie algebras.
\end{enumerate}
\end{example}

\begin{example}
We consider    $\Gamma=BA,$ for an abelian Lie group $A$ with Lie algebra $\mathfrak{a}$.
\begin{enumerate}[(a)]

\item
\label{ex:forms:codisecht}
A  $p$-form $\Psi\in\Omega^p(\inf P,\gamma)$ has  components $\fb\Psi\in\Omega^{p}(\mor{\inf P},\mathfrak{a})$
and $\fc\Psi\in\Omega^{p+1}(\ob{\inf P},\mathfrak{a})$, subject to the condition $\Delta\fb\Psi=0$. 
The differential  is given by $\fb{(\mathrm{D}\Psi)} = \mathrm{d}\fb\Psi -(-1)^{p}\Delta(\fc\Psi)$ and $\fc{(\mathrm{D}\Psi)} = \mathrm{d}\fc\Psi$.

\item
For the action groupoid $\inf P=\act\ast K$ we obtain \quot{multiplicative} forms on $K$:
\begin{equation*}
\Omega^p(\act\ast K,\gamma) =  \{ \psi\in \Omega^p(K,\mathfrak{a}) \sep  \forall g_1,g_2\in K: \psi_{g_1} + \psi_{g_2}= \psi_{g_1g_2} \}\text{.}
\end{equation*}

\end{enumerate}
\end{example}

\label{sec:wedge}

\begin{example}
\label{ex:mc}
Every Lie 2-group $\Gamma$ supports a canonical, $\gamma$-valued \quot{Maurer-Cartan} $1$-form $\Theta \in \Omega^1(\Gamma,\gamma)$. Its non-trivial components are   $\fa\Theta\in\Omega^1(G,\mathfrak{g})$ and $\fb\Theta\in\Omega^1(H \times G,\mathfrak{h})$, given by 
\begin{equation*}
\fa\Theta := \theta^{G} \in \Omega^1(G,\mathfrak{g})
\quand
\fb\Theta := (\alpha_{\pr_G^{-1}})_{*}(\pr_H^{*}\theta^{H}) \in \Omega^1(H \times G,\mathfrak{h})\text{,}
\end{equation*}
where $\theta^{G}$ and $\theta^{H}$ are the ordinary Maurer-Cartan forms of the Lie groups $G$ and $H$, respectively.
\begin{comment}
Indeed, we have
\begin{multline*}
\Delta \fa\Theta|_{h,g} = \theta_{t(h)g} - \theta_{g}=g^{-1}\theta_{t(h)}g=g^{-1}t_{*}(\theta_h)g\\=t_{*}(\alpha(g^{-1},\theta_h))=t_{*}(\alpha_{g^{-1}})_{*}\theta_h= t_{*}(\fb\Theta)\text{.}
\end{multline*}
Further, we have for elements $(h_2,g_2)$, $(h_1,g_1)$ with $g_2=t(h_1)g_1$
\begin{eqnarray*}
\Delta(\fb\Theta)|_{(h_2,g_2),(h_1,g_1)}  &=& \fb\Theta|_{(h_1,g_1)} + \fb\Theta|_{(h_2,g_2)} - \fb\Theta|_{(h_2h_1,g_1)} \\ &=& (\alpha_{g_1^{-1}})_{*}(\theta_{h_1})+(\alpha_{g_2^{-1}})_{*}(\theta_{h_2}) -(\alpha_{g_1^{-1}})_{*}(\theta_{h_2h_1}) \\&=&(\alpha_{g_1^{-1}})_{*} (\theta_{h_1} + h_1^{-1}\theta_{h_2}h_1-\theta_{h_2h_1})
\\&=& 0\text{.}
\end{eqnarray*}
\end{comment}
$\Theta$ satisfies the Maurer-Cartan equation,
\begin{equation}
\label{eq:mc}
\mathrm{D}\Theta + \frac{1}{2}[\Theta \wedge \Theta]=0\text{.}
\end{equation}
\begin{comment}
Indeed,
\begin{eqnarray*}
\fa{\mathrm{curv}(\Theta)} &=& \mathrm{d}\fa\Theta +\frac{1}{2}[\fa\Theta \wedge \fa\Theta] 
\\&=& \mathrm{d}\theta^{G} +\frac{1}{2}[\theta^{G} \wedge \theta^{G}]
\\&=&0 
\\
\fb{\mathrm{curv}(\Theta)} &=& \mathrm{d}((\alpha_{\pr_2^{-1}})_{*}(\pr_1^{*}\theta))+\frac{1}{2}  [(\alpha_{\pr_2^{-1}})_{*}\pr_1^{*}\theta \wedge (\alpha_{\pr_2^{-1}})_{*}\pr_1^{*}\theta]+ \alpha_{*}(\pr_2^{*}\theta\wedge(\alpha_{\pr_2^{-1}})_{*}\pr_1^{*}\theta )
\\
&=& (\alpha_{\pr_2^{-1}})_{*}(\pr_1^{*}\mathrm{d}\theta+\frac{1}{2}  [\pr_1^{*}\theta \wedge\pr_1^{*}\theta])\\ &=&0
\\
\fc{\mathrm{curv}(\Theta)} &=&  0\text{.}
\end{eqnarray*} 
\end{comment}
\begin{comment}
For $\Gamma=B\ueins$ we have $\mathfrak{h}=\R$ and $\mathfrak{g}=0$. The Maurer-Cartan form $\Theta$ has only one non-trivial component, $\fb\Theta = \theta\in\Omega^1(\ueins)$. For $\Gamma=G_{dis}$ we have $\mathfrak{h}=0$, so that $\Theta$ has only the component $\fa\Theta=\theta\in\Omega^1(G,\mathfrak{g})$. \end{comment}
\end{example}

\begin{remark}
If $\phi: \inf P \to \inf Q$   is a smooth functor between Lie groupoids, then we obtain an obvious pullback map
\begin{equation*}
\phi^{*}:\Omega^k(\inf Q,\gamma) \to \Omega^{k}(\inf P,\gamma)
\end{equation*}
defined component-wise. It is linear and commutes with $\mathrm{D}$ and the wedge product. One can show that if $\eta:\phi_1 \Rightarrow \phi_2$ is a smooth natural transformation and $\Phi\in\Omega^p(\inf Q,\gamma)$ is closed, then there exists $\Phi_{\eta}\in\Omega^{p-1}(\inf P,\gamma)$ such that a \quot{homotopy formula} holds,
\begin{equation*}
\phi_2^{*}\Phi - \phi_1^{*}\Phi = \mathrm{D}\Phi_{\eta}\text{.}
\end{equation*}
We extend the pullback in \cref{sec:pullbackana} to anafunctors. 
\end{remark}

\subsection{Adjoint action}

If $X$ is a smooth manifold equipped with a smooth map $g:X \to G$, then we have an adjoint action $\mathrm{Ad}_g: \Omega^p(X,\mathfrak{g}) \to \Omega^{p}(X,\mathfrak{g})$ on ordinary differential forms defined at each point $x\in X$ by $\mathrm{Ad}_g(\varphi)_x := \mathrm{Ad}_{g(x)}(\varphi_x)$. 
We generalize this to $\gamma$-valued differential forms on a  Lie groupoid $\inf P$, where $\gamma$ is now the Lie 2-algebra of a Lie 2-group $\Gamma$.  

Suppose we have a smooth functor $F: \inf P \to \Gamma$. We  write $F_0: \ob{\inf P} \to G$ for its map of objects, as well as   $F_G:= \pr_G \circ F:  \mor{\inf P} \to G$ and $F_H := \pr_H \circ F: \mor{\inf P} \to H$ on the level of morphisms. 
\begin{comment}
We note that
\begin{eqnarray*}
F_G &=& \pr_G \circ F = s \circ F = F_0 \circ s
\\
F_0 \circ t &=& t\circ F_1 = (t\circ F_H)\cdot F_G
\end{eqnarray*}
\end{comment}
We define
        a linear map\begin{equation*}
\mathrm{Ad}_F: \Omega^k(\inf P,\gamma) \to \Omega^k(\inf P,\gamma)
\end{equation*}
in the following way:
\begin{eqnarray*}
\fa{\mathrm{Ad}_F(\Psi)} &=& \mathrm{Ad}_{F_0}(\fa\Psi) 
\\
\fb{\mathrm{Ad}_F(\Psi)} &=& \mathrm{Ad}_{F_H}((\alpha_{F_G})_{*} (\fb\Psi))+(\tilde\alpha_{F_H^{-1}})_{*}( \mathrm{Ad}_{F_G}(s^{*}\fa\Psi)) 
\\
\fc{\mathrm{Ad}_F(\Psi)} &=&  (\alpha_{F_0})_{*}(\fc\Psi)
\end{eqnarray*}
Well-definedness and the following lemma are straightforward to check.

\begin{lemma}
\label{lem:adjoint}
The adjoint action has the following properties:
\begin{enumerate}[(a)]

\item 
It is an action in the sense that 
\begin{equation*}
\mathrm{Ad}_1=\id
\quand
\mathrm{Ad}_{F \cdot F'} = \mathrm{Ad}_{F} \circ \mathrm{Ad}_{F'}\text{,}
\end{equation*}
where $1$ is the constant functor with values $1\in \ob{\Gamma}$ and $\id_1\in \mor{\Gamma}$, and $F \cdot F'$ is the point-wise product of  two $\Gamma$-valued functors. In particular,  $\mathrm{Ad}^{-1}_F=\mathrm{Ad}_{F^{-1}}$.

\item
It respects the Lie bracket:
\begin{equation*}
\mathrm{Ad}_F([\Psi \wedge \Phi]) = [\mathrm{Ad}_F(\Psi) \wedge \mathrm{Ad}_{F}(\Phi)]\text{.}
\end{equation*}

\item
\label{eq:adjder}
Its derivative is given by
\begin{equation*}
\mathrm{D}(\mathrm{Ad}_F^{-1}(\Psi)) = \mathrm{Ad}_{F}^{-1}(\mathrm{D}\Psi)-[ F^{*}\Theta \wedge \mathrm{Ad}_F^{-1}(\Psi)]\text{,}
\end{equation*}
where  $\Theta \in \Omega^1(\Gamma,\gamma)$ is the Maurer-Cartan form of \cref{ex:mc}.  
\end{enumerate}
\end{lemma}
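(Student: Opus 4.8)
The plan is to verify all three statements componentwise, testing each against the three slots $\fa{(\cdot)}$, $\fb{(\cdot)}$, $\fc{(\cdot)}$ of \cref{def:forms} and reducing everything to the crossed-module identities (as collected in the appendix formulary) together with the classical derivative formula for Lie-algebra-valued forms. Throughout I abbreviate $a := F_H$, $b := F_0$, $c := F_G$, and I record at the outset that the inverse functor $F^{-1}$ (inversion in $\Gamma$, i.e.\ in $H \ltimes G$) has components $(F^{-1})_0 = b^{-1}$, $(F^{-1})_G = c^{-1}$ and $(F^{-1})_H = \alpha(c^{-1},a^{-1})$; part (a) is what lets me rewrite $\mathrm{Ad}_F^{-1}$ as $\mathrm{Ad}_{F^{-1}}$ and plug these in explicitly when I reach (c).

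For part (a), the identity $\mathrm{Ad}_1=\id$ is immediate once one notes $\tilde\alpha_1\equiv 1$, so $(\tilde\alpha_1)_{*}=0$ kills the correction term in the $\fb$-slot. For multiplicativity I compute the crossed-module product $(F\cdot F')_0=F_0F'_0$, $(F\cdot F')_G=F_GF'_G$, $(F\cdot F')_H=F_H\cdot\alpha(F_G,F'_H)$ and match $\mathrm{Ad}_{F\cdot F'}$ with $\mathrm{Ad}_F\circ\mathrm{Ad}_{F'}$ slot by slot; the only nontrivial matching is the $\fb$-slot, which uses the infinitesimal crossed-homomorphism rule $(\tilde\alpha_{h_1h_2})_{*}=\mathrm{Ad}_{h_2^{-1}}\circ(\tilde\alpha_{h_1})_{*}+(\tilde\alpha_{h_2})_{*}$ obtained by differentiating $\tilde\alpha_{h_1h_2}(g)=\mathrm{Ad}_{h_2^{-1}}(\tilde\alpha_{h_1}(g))\cdot\tilde\alpha_{h_2}(g)$. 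Part (b) is similarly purely algebraic: the $\fa$-slot is the classical fact that $\mathrm{Ad}_{F_0}$ preserves $[\,\cdot\,,\,\cdot\,]$, the $\fc$-slot uses that each $(\alpha_g)_{*}$ is a Lie-algebra automorphism of $\mathfrak{h}$ intertwining $\alpha_{*}$, and the $\fb$-slot combines these with the relations among $t_{*}$, $\alpha_{*}$ and $\tilde\alpha$.

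Part (c) carries the real content, and here too I treat the three slots separately; the backbone is the classical identity that for any representation $\pi\maps G\to\mathrm{GL}(V)$ with derivative $\pi_{*}$ and any smooth $g\maps X\to G$ one has $\mathrm{d}(\pi(g^{-1})\omega)=\pi(g^{-1})\mathrm{d}\omega-\pi_{*}(g^{*}\theta^{G})\wedge\pi(g^{-1})\omega$. Applying this to $\pi=\mathrm{Ad}$ on $\mathfrak{g}$ settles the $\fa$-slot, once the two $t_{*}$-terms are reconciled through the differentiated crossed-module relation $t_{*}\circ(\alpha_g)_{*}=\mathrm{Ad}_g\circ t_{*}$ (taken at $g=b^{-1}$); applying it to the representation $g\mapsto(\alpha_g)_{*}$ on $\mathfrak{h}$ settles the $\fc$-slot, where the bracket term reduces to a single summand because the Maurer–Cartan form has no $\fc$-part, $\fc{(F^{*}\Theta)}=0$ (\cref{ex:mc}).

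The $\fb$-slot of (c) is the main obstacle. There $\fb{(\mathrm{Ad}_F^{-1}\Psi)}$ is a sum of a genuine $\fb\Psi$-term and a $\tilde\alpha$-correction built from $s^{*}\fa\Psi$, and applying $\fb{(\mathrm{D}(\cdot))}=\mathrm{d}(\cdot)-(-1)^{p}\Delta\fc{(\cdot)}$ produces a proliferation of terms. Matching them against $\fb{(\mathrm{Ad}_F^{-1}\mathrm{D}\Psi)}-\fb{[F^{*}\Theta\wedge\mathrm{Ad}_F^{-1}\Psi]}$ requires simultaneously the explicit shape $\fb\Theta=(\alpha_{\pr_G^{-1}})_{*}(\pr_H^{*}\theta^{H})$ of the Maurer–Cartan form, the two defining constraints $\Delta\fa\Psi=t_{*}(\fb\Psi)$ and $\Delta\fb\Psi=0$ of a $\gamma$-valued form (these are precisely what convert an $s^{*}$ into a $t^{*}$ and let the $\mathfrak{g}$- and $\mathfrak{h}$-pieces interact), and the infinitesimal $\tilde\alpha$-identities, in particular $t_{*}\circ(\tilde\alpha_h)_{*}=\mathrm{Ad}_{t(h)^{-1}}-\id$. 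I expect the bookkeeping of the signs $(-1)^{p}$ and of the left/right translations hidden inside $\mathrm{Ad}_{F_H}$ and $(\alpha_{F_G})_{*}$ to be the genuinely delicate part; a cleaner route is likely to establish first the ``forward'' version $\mathrm{D}(\mathrm{Ad}_F\Phi)=\mathrm{Ad}_F(\mathrm{D}\Phi)+[\mathrm{Ad}_F(F^{*}\Theta)\wedge\mathrm{Ad}_F\Phi]$, in which $\mathrm{Ad}_F(F^{*}\Theta)$ plays the role of a pulled-back right Maurer–Cartan form, and then pass to $\mathrm{Ad}_F^{-1}$ by substituting $\Phi=\mathrm{Ad}_F^{-1}\Psi$ and invoking parts (a) and (b).
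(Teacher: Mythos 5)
Your proposal is correct and follows essentially the same route as the paper's own (unpublished, commented-out) proof: a slot-by-slot verification of all three parts in the components $\fa{(\cdot)},\fb{(\cdot)},\fc{(\cdot)}$, reducing everything to the crossed-module identities of the appendix formulary, the classical derivative formula for $\mathrm{Ad}_g^{-1}$ and $(\alpha_{g^{-1}})_*$, and the explicit form of $\fb\Theta$; you also correctly isolate the $\fb$-slot of (c) as the only genuinely laborious step, which is exactly where the paper's computation concentrates its effort.
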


\begin{example}
\label{ex:mcmult}
Let $m: \Gamma \times \Gamma \to \Gamma$ be the multiplication of $\Gamma$, and let $\Theta \in \Omega^1(\Gamma,\gamma)$ be the Maurer-Cartan form on the Lie 2-group $\Gamma$, see \cref{ex:mc}.  We have
\begin{equation*}
m^{*}\Theta = \mathrm{Ad}^{-1}_{\pr_2}(\pr_1^{*}\Theta) + \pr_2^{*}\Theta\text{.}
\end{equation*}
\end{example}

\subsection{Pullback along anafunctors}

\label{sec:pullbackana}

We want to pull back differential forms along anafunctors. Not to much surprise, this pullback will be relative to additional structure on the anafunctor.

\begin{definition}
\label{def:pullback}
Let $F:\inf X \to \inf Y$ be an anafunctor and 
let $\Phi\in\Omega^p(\inf Y,\gamma)$.
A \emph{$\Phi$-pullback} on $F$ is a pair $\nu=(\nu_0,\nu_1)$ of a $p$-form $\nu_0\in \Omega^{p}(F,\mathfrak{h})$ and  a $(p+1)$-form $\nu_1\in\Omega^{p+1}(F,\mathfrak{h})$ that satisfy over $F \ttimes {\alpha_r}t \mor{\inf Y}$ the conditions
\begin{align}
\label{eq:conana:cr}
\rho_r^{*}\nu_0&= \pr_F^{*}\nu_0+ \pr_{\mor{\inf Y}}^{*}\fb\Phi
\\
\label{eq:conana:cs}
\rho_r^{*}\nu_1 &= \pr_F^{*}\nu_1 + \pr_{\mor{\inf Y}}^{*}\Delta\fc\Phi\text{,}
\end{align}
where $\rho_r$ denotes the right $\inf Y$-action.
\end{definition}

\begin{lemma}
\label{lem:pullexists}
Let $\Phi\in \Omega^{p}(\inf Y,\gamma)$.
Every anafunctor $F:\inf X \to \inf Y$ admits a $\Phi$-pullback, and the set of all  $\Phi$-pullbacks on $F$ is an affine space. \end{lemma}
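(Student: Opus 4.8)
The plan is to establish the affine-space claim first, since it is formal, and then to construct a $\Phi$-pullback explicitly by a local-sections-and-partition-of-unity argument. For the affine structure, suppose $\nu=(\nu_0,\nu_1)$ and $\nu'=(\nu_0',\nu_1')$ are two $\Phi$-pullbacks. Subtracting \cref{eq:conana:cr,eq:conana:cs} for the two shows that the difference $\mu:=\nu-\nu'$ satisfies the homogeneous conditions $\rho_r^{*}\mu_0=\pr_F^{*}\mu_0$ and $\rho_r^{*}\mu_1=\pr_F^{*}\mu_1$; that is, $\mu_0$ and $\mu_1$ are $\inf Y$-invariant $\mathfrak{h}$-valued forms on $F$. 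Conversely, adding such an invariant pair to a $\Phi$-pullback produces another $\Phi$-pullback. Hence, once non-emptiness is established, the set of $\Phi$-pullbacks is an affine space modelled on the vector space of pairs of $\inf Y$-invariant forms in $\Omega^{p}(F,\mathfrak{h})\oplus\Omega^{p+1}(F,\mathfrak{h})$.

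For existence, I would use that $\alpha_l:F\to\ob{\inf X}$ is a principal $\inf Y$-bundle, in particular a surjective submersion. First I would choose an open cover $\{U_i\}_{i\in I}$ of $\ob{\inf X}$ together with smooth sections $s_i:U_i\to F$ of $\alpha_l$. Over $F|_{U_i}:=\alpha_l^{-1}(U_i)$ the principal bundle structure—the diffeomorphism $F\ttimes{\alpha_r}{t}\mor{\inf Y}\to F\times_{\ob{\inf X}}F$, $(f,\eta)\mapsto(f,f\circ\eta)$—yields a unique smooth map $\tilde s_i:F|_{U_i}\to\mor{\inf Y}$ with $f=s_i(\alpha_l(f))\circ\tilde s_i(f)$. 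I then set
\[
\nu_0^{i}:=\tilde s_i^{*}\fb\Phi\in\Omega^{p}(F|_{U_i},\mathfrak{h})
\quand
\nu_1^{i}:=\tilde s_i^{*}\Delta\fc\Phi\in\Omega^{p+1}(F|_{U_i},\mathfrak{h})\text{.}
\]

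The key point is that each local pair already satisfies \cref{eq:conana:cr,eq:conana:cs}. This rests on the fact that, because $\Delta\fb\Phi=0$ holds by \cref{def:forms} and $\Delta\circ\Delta=0$ for the simplicial differential, both $\fb\Phi$ and $\Delta\fc\Phi$ are \emph{additive} under composition: $c^{*}\fb\Phi=\pr_1^{*}\fb\Phi+\pr_2^{*}\fb\Phi$ on $\mor{\inf Y}\ttimes s t\mor{\inf Y}$, and likewise for $\Delta\fc\Phi$. Since $\tilde s_i(f\circ\zeta)=\tilde s_i(f)\circ\zeta$ by uniqueness of the factorization, one has $\tilde s_i\circ\rho_r=c\circ(\tilde s_i\times\id)$, and pulling the additivity identity back along $\tilde s_i\times\id$ produces exactly $\rho_r^{*}\nu_0^{i}=\pr_F^{*}\nu_0^{i}+\pr_{\mor{\inf Y}}^{*}\fb\Phi$, and the analogous identity for $\nu_1^{i}$.

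Finally I would glue. Choosing a partition of unity $\{\psi_i\}$ on $\ob{\inf X}$ subordinate to $\{U_i\}$, I set $\nu_0:=\sum_i(\alpha_l^{*}\psi_i)\,\nu_0^{i}$ and $\nu_1:=\sum_i(\alpha_l^{*}\psi_i)\,\nu_1^{i}$, with each summand extended by zero. Because $\alpha_l\circ\rho_r=\alpha_l\circ\pr_F$, the weights $\alpha_l^{*}\psi_i$ are $\inf Y$-invariant, so the inhomogeneous terms recombine with total weight $\sum_i\psi_i=1$ and the affine transformation laws are preserved, giving a global $\Phi$-pullback. The main obstacle is the local equivariance verification of the third step: the construction works precisely because the inhomogeneous terms $\fb\Phi$ and $\Delta\fc\Phi$ are additive cocycles, which is exactly what lets the local sections yield honest solutions and what lets the invariant partition-of-unity weights glue them without destroying the affine transformation behaviour.
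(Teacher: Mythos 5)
Your proposal is correct and follows essentially the same route as the paper: the paper also constructs the pullback from local trivializations of the principal $\inf Y$-bundle $\alpha_l:F\to\ob{\inf X}$ (your $\tilde s_i$ is exactly the equivariant projection $l_\alpha$ of such a trivialization), glues with a partition of unity pulled back along $\alpha_l$, and exhibits the affine structure via the free transitive action of forms on $\ob{\inf X}$ pulled back along $\alpha_l$ — which is the same model space as your $\inf Y$-invariant forms on $F$, since those descend along the bundle projection.
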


\begin{proof}
We choose an open cover $\mathcal{V}=\{V_\alpha\}_{\alpha\in A}$ of $\ob{\inf X}$ together with local trivializations $F|_{V_{\alpha}} \cong V_{\!\alpha}\, \ttimes{y_{\alpha}}{t}\mor{\inf Y}$ of $F$, as a principal  $\inf Y$-bundle over $\ob{\inf X}$. We denote by $l_\alpha: F|_{V_\alpha} \to \mor{\inf Y}$ the projection, which is  equivariant in the sense that $l_\alpha(f \circ \eta)=l_\alpha(f)\circ \eta$ for all $f\in F|_{V_\alpha}$ and $\eta\in \mor{\inf Y}$. Let $\{\phi_{\alpha}\}$ be a partition of unity subordinate to $\mathcal{V}$. Define
\begin{equation*}
\nu_0 := \sum_{\alpha\in A} \alpha_l^{*}\phi_{\alpha}\cdot l_\alpha^{*}\fb\Phi
\quand
\nu_1 := \sum_{\alpha\in A} \alpha_l^{*}\phi_{\alpha}\cdot l_\alpha^{*}\Delta\fc\Phi\text{.}
\end{equation*}
Using the equivariance of $l_{\alpha}$, the two conditions of \cref{def:pullback} are easy to check. 
\begin{comment}
Indeed,
\begin{align*}
\nu_0|_{f \circ \eta} &= \sum_{\alpha\in A} \phi_{\alpha}(\alpha_l(f \circ \eta))\cdot \fb\Phi_{l_\alpha(f \circ \eta)}
\\&= \sum_{\alpha\in A} \phi_{\alpha}(\alpha_l(f))\cdot \fb\Phi_{l_\alpha(f) \circ \eta}
\\&= \sum_{\alpha\in A} \phi_{\alpha}(\alpha_l(f))\cdot \fb\Phi_{l_\alpha(f)}+\sum_{\alpha\in A} \phi_{\alpha}(\alpha_l(f))\cdot \fb\Phi_{\eta}
\\&= \nu_0|_{f} + \fb\Phi_{\eta}\text{.}
\end{align*}
and
\begin{align*}
\nu_0|_{f \circ \eta} &= \sum_{\alpha\in A} \phi_{\alpha}(\alpha_l(f \circ \eta))\cdot \Delta\fc\Phi_{l_\alpha(f \circ \eta)}
\\&= \sum_{\alpha\in A} \phi_{\alpha}(\alpha_l(f))\cdot \Delta\fc\Phi_{l_\alpha(f) \circ \eta}
\\&= \sum_{\alpha\in A} \phi_{\alpha}(\alpha_l(f))\cdot (\fc\Phi_{t(l_{\alpha}(f))} -\fc\Phi_{s(l_{\alpha}(f))} +\fc\Phi_{t(\eta)} -\fc\Phi_{s(\eta)})
\\&= \sum_{\alpha\in A} \phi_{\alpha}(\alpha_l(f))\cdot \Delta\fc\Phi_{l_\alpha(f)}+\sum_{\alpha\in A} \phi_{\alpha}(\alpha_l(f))\cdot \Delta\fc\Phi_{\eta}
\\&= \nu_0|_{f} + \Delta\fc\Phi_{\eta}\text{.}
\end{align*}
Here we have used that $s(l_{\alpha}(f)) = \alpha_r(f)=t(\eta)$.
\end{comment}
We have an action of $\Omega^{p}(\ob{\inf X},\mathfrak{h}) \oplus \Omega^{p+1}(\ob{\inf X},\mathfrak{h})$ on the set of $\Phi$-pullbacks, where $(\kappa_0,\kappa_1)$ take a $\Phi$-pullback $\nu$ to  $(\nu_0+ \alpha_l^{*}\kappa_0,\nu_1+\alpha_l^{*}\kappa_1)$. It is straightforward to see that this action is free and  transitive. 
\begin{comment}
Indeed, we have, since the left anchor is preserved by the right action,
\begin{equation*}
\rho_r^{*}(\nu_0+ \alpha_l^{*}\kappa_0) = \pr_F^{*}\nu_0+ \pr_{\mor{\inf Y}}^{*}\fb\Phi + \pr_F^{*}\alpha_l^{*}\kappa_0\text{.}
\end{equation*}
Conversely, suppose $\nu$ and $\nu'$ are $\Phi$-pullback forms on $F$. Then we have for $(f_1,f_2)\in F \times_{\ob{\inf X}}F$ with $f_2=\rho_r(f_1,\eta)$:
\begin{equation*}
(\nu'_0-\nu_0)_{f_2}-(\nu'_0-\nu_0)_{f_1} = \nu'_0|_{\rho_r(f_1,\eta)}-\nu_0|_{\rho_r(f_1,\eta)}-\nu_0'|_{f_1}+\nu_0|_{f_1} =0\text{.}
\end{equation*}
Thus, there exists a unique $\kappa_0\in\Omega^p(\ob{\inf X},\mathfrak{h})$ with $\alpha_l^{*}\kappa_0=\nu'_0-\nu_0$.
The same argument works for $\nu_1$.
\end{comment} 
\end{proof}

\begin{lemma}
\label{lem:pullback}
Suppose $\nu$ is a $\Phi$-pullback on $F$. Then there exists a unique $\Psi\in \Omega^p(\inf X,\gamma)$ such that 
\begin{align}
\label{eq:conana:a}
t_{*}(\nu_0) &= \alpha_l^{*}\fa\Psi-\alpha_r^{*}\fa\Phi
\\
\label{eq:conana:7}
\rho_l^{*}\nu_0&= \pr_{\mor{\inf X}}^{*}\fb\Psi+\pr_F^{*}\nu_0
\\
\label{eq:conana:6}
\nu_1 &= \alpha_l^{*}\fc\Psi-\alpha_r^{*}\fc\Phi
\text{.}
\end{align}
\end{lemma}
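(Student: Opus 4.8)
The plan is to construct the three components $\fa\Psi$, $\fb\Psi$, $\fc\Psi$ separately by descent along principal bundle projections, exploiting that the left anchor $\alpha_l\maps F \to \ob{\inf X}$ is a principal $\inf Y$-bundle. I will use throughout the standard descent criterion: if $\pi\maps P \to B$ is a principal $\inf Y$-bundle with action $\rho$ and anchor $\beta$, then a differential form $\omega$ on $P$ is the pullback of a (unique) form on $B$ precisely when $\rho^{*}\omega=\pr_P^{*}\omega$ over $P\ttimes{\beta}{t}\mor{\inf Y}$; this is immediate from the defining diffeomorphism $P\ttimes{\beta}{t}\mor{\inf Y}\cong P\times_B P$, under which the two projections become $\pr_P$ and $\rho$, together with the injectivity of pullback along a surjective submersion. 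I will apply this criterion three times and then verify that the resulting triple lies in $\Omega^p(\inf X,\gamma)$.

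For $\fa\Psi$ I claim the $\mathfrak{g}$-valued $p$-form $t_{*}(\nu_0)+\alpha_r^{*}\fa\Phi$ on $F$ descends along $\alpha_l$. Applying $t_{*}$ to \cref{eq:conana:cr} and using the structure relation $t_{*}\fb\Phi=\Delta\fa\Phi=t^{*}\fa\Phi-s^{*}\fa\Phi$ of $\Phi$ from \cref{def:forms}, together with the identities $\alpha_r\circ\rho_r=s\circ\pr_{\mor{\inf Y}}$ and $\alpha_r\circ\pr_F=t\circ\pr_{\mor{\inf Y}}$ valid over $F\ttimes{\alpha_r}{t}\mor{\inf Y}$, the $s^{*}\fa\Phi$ contributions cancel and the descent condition $\rho_r^{*}(t_{*}\nu_0+\alpha_r^{*}\fa\Phi)=\pr_F^{*}(t_{*}\nu_0+\alpha_r^{*}\fa\Phi)$ drops out. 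This yields the unique $\fa\Psi$ with \cref{eq:conana:a}. An entirely parallel computation, now using \cref{eq:conana:cs} and $\Delta\fc\Phi=t^{*}\fc\Phi-s^{*}\fc\Phi$, shows that $\nu_1+\alpha_r^{*}\fc\Phi$ descends along $\alpha_l$ and produces the unique $\fc\Psi$ with \cref{eq:conana:6}.

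The construction of $\fb\Psi$ is the delicate step. Here I descend along $\pr_{\mor{\inf X}}\maps \mor{\inf X}\ttimes{s}{\alpha_l}F \to \mor{\inf X}$, which is the pullback of the principal bundle $F$ along $s$ and hence again a principal $\inf Y$-bundle (the action being $\tilde\rho_r$, sending $(\chi,f,\eta)$ to $(\chi,f\circ\eta)$). The candidate is $\omega:=\rho_l^{*}\nu_0-\pr_F^{*}\nu_0$, and I must verify $\tilde\rho_r^{*}\omega=\pr^{*}\omega$. The key input is that the left $\inf X$- and right $\inf Y$-actions on $F$ commute, so $\chi\circ(f\circ\eta)=(\chi\circ f)\circ\eta$; applying \cref{eq:conana:cr} once to $\chi\circ f$ (via the map $(\chi,f,\eta)\mapsto(\chi\circ f,\eta)$, legitimate since the left action preserves the right anchor) and once to $f$ shows that $\tilde\rho_r^{*}\rho_l^{*}\nu_0$ and $\tilde\rho_r^{*}\pr_F^{*}\nu_0$ each acquire one copy of $\pr_{\mor{\inf Y}}^{*}\fb\Phi$, and these cancel in $\omega$, leaving exactly $\pr^{*}\omega$. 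Descent then gives the unique $\fb\Psi$ satisfying \cref{eq:conana:7}.

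It remains to check that $\Psi=(\fa\Psi,\fb\Psi,\fc\Psi)$ satisfies the two constraints of \cref{def:forms}. For $\Delta\fa\Psi=t_{*}\fb\Psi$ I pull back to $\mor{\inf X}\ttimes{s}{\alpha_l}F$, where $t\circ\pr_{\mor{\inf X}}=\alpha_l\circ\rho_l$ and $s\circ\pr_{\mor{\inf X}}=\alpha_l\circ\pr_F$; substituting \cref{eq:conana:a,eq:conana:7} and observing that the two $\alpha_r^{*}\fa\Phi$ terms agree because $\alpha_r\circ\rho_l=\alpha_r\circ\pr_F$ reduces the identity to $t_{*}(\rho_l^{*}\nu_0-\pr_F^{*}\nu_0)=\pr_{\mor{\inf X}}^{*}t_{*}\fb\Psi$, whence injectivity of $\pr_{\mor{\inf X}}^{*}$ gives the claim. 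For $\Delta\fb\Psi=0$ I pull back to the manifold of triples $(\chi_1,\chi_2,f)$ with $(\chi_1,\chi_2)\in\inf X_2$ and $s(\chi_2)=\alpha_l(f)$, and compare the three maps to $\mor{\inf X}\ttimes{s}{\alpha_l}F$ induced by $\pr_1$, the composition $c$, and $\pr_2$; substituting \cref{eq:conana:7} turns the alternating sum $\pr_1^{*}\fb\Psi-c^{*}\fb\Psi+\pr_2^{*}\fb\Psi$ into pullbacks of $\nu_0$ along $(\chi_1\circ\chi_2)\circ f$, $\chi_2\circ f$ and $f$ that cancel termwise. Uniqueness of all three components is automatic, since each defining relation determines its component after an injective pullback. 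The only genuinely subtle point is the cancellation of the $\fb\Phi$-terms in the $\fb\Psi$ descent, which rests on the commutativity of the two actions; everything else is bookkeeping with the descent criterion.
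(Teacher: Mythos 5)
Your proof is correct and follows essentially the same route as the paper: you descend $t_{*}(\nu_0)+\alpha_r^{*}\fa\Phi$ and $\nu_1+\alpha_r^{*}\fc\Phi$ along $\alpha_l$ and the difference $\rho_l^{*}\nu_0-\pr_F^{*}\nu_0$ along $\pr_{\mor{\inf X}}$, with the same cancellations of the $\fb\Phi$- and $\fc\Phi$-terms via \cref{eq:conana:cr,eq:conana:cs}. The only cosmetic difference is that the paper builds $\fb\Psi$ from local sections of $\alpha_l$ and checks independence of the choice, whereas you invoke the invariance criterion on the pulled-back principal $\inf Y$-bundle $\mor{\inf X}\ttimes{s}{\alpha_l}F\to\mor{\inf X}$ -- two equivalent phrasings of the same descent argument.
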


\begin{proof}
Uniqueness is clear because the pullbacks of $\fa\Psi$, $\fb\Psi$ and $\fc\Psi$ along surjective submersions ($\alpha_l$ and $\pr_{\mor{\inf X}}$) are fixed. For existence, consider $\fa\psi := t_{*}(\nu_0) + \alpha_r^{*}\fa\Phi \in \Omega^p(F,\mathfrak{g})$. Over $F \times_{\ob{\inf X}} F$ we have an identity $\pr_1^{*}\fa\psi = \pr_2^{*}\fa\psi$.
\begin{comment}
We have the diffeomorphism $F \lli{\alpha_r}\times_{t} \mor{\inf Y} \to  F \times_{\ob{\inf X}} F$ and can write $(f_1,f_2)=(f_1,\rho_r(f_1,\eta))$, where $\alpha_r(f_1)=t(\eta)$ and $\alpha_r(f_2)=\alpha_r(\rho_r(f_1,\eta))=s(\eta)$.
\begin{eqnarray*}
(t_{*}\nu_0+ \alpha_r^{*}\fa\Phi)_{f_2}-(t_{*}\nu_0+ \alpha_r^{*}\fa\Phi)_{f_1} 
&=& t_{*}\nu_0|_{f_2} + \fa\Phi_{\alpha_r(f_2)}-t_{*}\nu_0|_{f_1} - \fa\Phi_{\alpha_r^{*}(f_1)}
\\&=& t_{*}\nu_0|_{\rho_r(f_1,\eta)} + \fa\Phi_{s(\eta)}-t_{*}\nu_0|_{f_1} - \fa\Phi_{t(\eta)}
\\&=& t_{*}(\fb\Phi_{\eta}) + t_{*}\nu_0|_{f}  + \fa\Phi_{s(\eta)}-t_{*}\nu_0|_{f_1} - \fa\Phi_{t(\eta)}
 \\&=&  0\text{.}
\end{eqnarray*}
\end{comment}
Since  differential forms form a sheaf, there exists  $\fa\Psi \in \Omega^p(\ob{\inf X},\mathfrak{g})$ such that $\alpha_l^{*}\fa\Psi = \fa\psi$. This shows \cref{eq:conana:a}. 
Next we consider an open subset $U \subset \ob{\inf X}$ with a section $\sigma:U \to F$ against $\alpha_l$. On $V:= s^{-1}(U)\subset \mor{\inf X}$ we have a smooth map
\begin{equation*}
\tilde\sigma: V \to \mor{\inf X} \ttimes{s}{\alpha_l} F: v \mapsto (v,\sigma(s(v)))\text{.}
\end{equation*}
We define $\fb\psi := \tilde\sigma^{*}(\rho_l^{*}\nu-\pr_{F}^{*}\nu_0)\in\Omega^{p}(V,\mathfrak{h})$. \Cref{eq:conana:cr}  implies that $\fb\psi$ is independent of the choice of $\sigma$.
\begin{comment}
If $\tau: U \to F$ is another section, we obtain a smooth map $\gamma:U \to \mor{\inf Y}$ such that $t(\gamma(u))=\alpha_r(\sigma(u))$ and $\rho_r(\sigma(u), \gamma(u))=\tau(u)$. Then
\begin{eqnarray*}
\fb\psi_{\tau}|_{v} 
&=& \nu_0|_{\rho_l(v,\tau(s(v)))}-\nu_0|_{\tau(s(v))}
\\ &=& \nu_0|_{\rho_l(v,\rho_r(\sigma(s(v)), \gamma(s(v))))}-\nu_0|_{\rho_r(\sigma(s(v)), \\ &=& \nu_0|_{\rho_r(\rho_l(v,\sigma(s(v))),\gamma(s(v)))}-\nu_0|_{\rho_r(\sigma(s(v)), \gamma(s(v)))}
\\ &=& \nu_0|_{\rho_l(v,\sigma(s(v)))}+\fb\Phi_{\gamma(s(v))}-\nu_0|_{\sigma(s(v))}-\fb\Phi_{\gamma(s(v))}
\\&=& \nu_0|_{\rho_l(v,\sigma(s(v)))}-\nu_0|_{\sigma(s(v))}
\\&=& \fb\psi_{\sigma}|_{v}
\end{eqnarray*}
\end{comment} 
Hence we obtain $\fb\Psi\in\Omega^p(\mor{\inf X},\mathfrak{h})$ such that $\fb\Psi|_V=\fb\psi$. It is straightforward to check the conditions $\Delta\fa\Psi = t_{*}(\fb\Psi)$ and $\Delta\fb\Psi = 0$.
 The definition of $\fb\psi$ implies \cref{eq:conana:7}.
 \begin{comment}
For $\chi\in \mor{\inf X}$ and $f_s\in F$ such that $\alpha_l(f_s)=s(\chi)$ we set $f_t := \rho_l(\chi,f_s)$ and obtain $\alpha_l(f_t)=t(\chi)$ and $\alpha_r(f_t)=\alpha_r(\rho_l(\chi,f_s))=\alpha_r(f_s)$ and hence
\begin{eqnarray*}
(\Delta\fa\Psi)_{\chi}&=&\fa\Psi_{t(\chi)} -
\fa\Psi_{s(\chi)}
\\&=&\fa\psi_{f_t} - \fa\psi_{f_s}
\\&=& t_{*}(\nu_0|_{f_t}) + \fa\Phi_{\alpha_r(f_t)} - t_{*}(\nu_0|_{f_s}) - \fa\Phi_{\alpha_r(f_s)} \\&=& t_{*}(\nu_0|_{\rho_l(\chi,f_s)}) - t_{*}(\nu_0|_{f_s})
\\&=& t_{*}(\fb\Psi_{\chi})\text{.}
\end{eqnarray*}
For composable $\chi_1,\chi_2\in \mor{\inf X}$ choose $f_2\in F$ such that $\alpha_l(f_2)=s(\chi_2)$. Set $f_1 := \rho_l(\chi_2,f_2)$. Then we have $\alpha_l(f_1)=t(\chi_2)=s(\chi_1)$, and thus
\begin{eqnarray*}
(\Delta\fb\Psi)_{\chi_1,\chi_2} &=& \fb\Psi_{\chi_1}-\fb\Psi_{\chi_1\circ \chi_2}+\fb\Psi_{\chi_2}
\\&=&  \nu_0|_{\rho_l(\chi_1,f_1)}-\nu_0|_{f_1}-\nu_0|_{\rho_l(\chi_1\circ \chi_2,f_2)}+\nu_0|_{f_2}+\nu_0|_{\rho_l(\chi_2,f_2)}-\nu_0|_{f_2}
\\&=& 0\text{.}
\end{eqnarray*}
\end{comment}
Finally, we consider $\fc\psi := \alpha_r^{*}\fc\Phi + \nu_1 \in \Omega^{p+1}(F,\mathfrak{h})$. \Cref{eq:conana:cs} implies over $F \times_{\ob{\inf X}} F$  an identity $\pr_1^{*}\fc\psi = \pr_2^{*}\fc\psi$.
\begin{comment}
We have the diffeomorphism $F \ttimes {\alpha_r}{t} \mor{\inf Y} \to  F \times_{\ob{\inf X}} F$ and can write $(f_1,f_2)=(f_1,\rho_r(f_1,\eta))$, where $\alpha_r(f_1)=t(\eta)$ and $\alpha_r(f_2)=\alpha_r(\rho_r(f_1,\eta))=s(\eta)$.
\begin{eqnarray*}
\fc\psi_{f_2}-\fc\psi_{f_1} &=& \fc\Phi_{\alpha_r(f_2)} + \nu_1|_{f_2}-\fc\Phi_{\alpha_r(f_1)} - \nu_1|_{f_1}
\\&=& \fc\Phi_{s(\eta)} + \nu_1|_{\rho_r(f_1,\eta)}-\fc\Phi_{t(\eta)} - \nu_1|_{f_1}
\\&=& 0
\end{eqnarray*}
\end{comment}
Thus, there exists  $\fc\Psi\in\Omega^{p+1}(\ob{\inf X},\mathfrak{h})$ such that $\alpha_l^{*}\fc\Psi=\fc\psi$. This shows \cref{eq:conana:6}.
\end{proof}

We write $F_{\nu}^{*}\Phi := \Psi$ for the unique $p$-form of \cref{lem:pullback}. 
\begin{comment}
If $\nu$ is changed to $(\nu_0+\alpha_l^{*}\kappa_0,\nu_1+\alpha_l^{*}\kappa_1)$, then $\Psi$ changes to a form with components $\fa\Psi + t_{*}(\kappa_0)$, $\fb\Psi + \Delta\kappa_0$ and $\fc\Psi + \kappa_1$.
\end{comment}
The following two lemmata are straightforward to deduce from the definitions. The first describes the compatibility of the pullback with the Lie algebra structure on differential forms.

\begin{lemma}
\label{lem:pullback:rules}
Let $F:\inf X \to \inf Y$ be an anafunctor. We consider differential forms $\Phi\in \Omega^k(\inf Y,\gamma)$ and $\Phi'\in\Omega^l(\inf Y,\gamma)$, a $\Phi$-pullback  $\nu$ on $F$, a $\Phi'$-pullback $\nu'$ on $F$, and $s\in \R$. 
\begin{enumerate}[(a)]

\item
$\nu+\nu' := (\nu_0+\nu_0',\nu_1+\nu_1')$ is a $(\Phi + \Phi')$-pullback on $F$, and
$F^{*}_{\nu+\nu'}(\Phi + \Phi') = F^{*}_{\nu}\Phi + F^{*}_{\nu'}\Phi'$.

\item
$s\nu := (s\nu_0,s\nu_1)$ is a $(s\Phi)$-pullback on $F$, and $F^{*}_{s\nu}(s\Phi)=sF^{*}_{\nu}\Phi$.

\item
$\mathrm{D}\nu := (\mathrm{d}\nu_0- (-1)^{k}\nu_1,\mathrm{d}\nu_1)$ is a $\mathrm{D}\Phi$-pullback on $F$, and
$\mathrm{D}(F_{\nu}^{*}\Phi) = F_{\mathrm{D}\nu}^{*}(\mathrm{D}\Phi)$.

\item
$[\nu\wedge \nu']$ defined by 
\begin{align*}
[\nu\wedge \nu']_0 &:= [\nu_0\wedge \nu'_0]-(-1)^{kl}\alpha_{*}(\alpha_r^{*}\fa{\Phi'}\wedge \nu_0)+\alpha_{*}(\alpha_r^{*}\fa\Phi \wedge\nu'_0)
\\
[\nu\wedge \nu']_1 &:= [\nu_0 \wedge \nu'_1 ]+ (-1)^{l} [\nu_1  \wedge \nu'_0]+[\nu_0 \wedge \alpha_r^{*} \fc{\Phi'}] + (-1)^{l} [\alpha_r^{*}\fc\Phi \wedge \nu'_0]\\&\hspace{5cm}+\alpha_{*}(\alpha_r^{*} \fa\Phi\wedge \nu'_1 )- (-1)^{kl} \alpha_{*}(\alpha_r^{*} \fa{\Phi'}\wedge \nu_1 )
\end{align*}
is a $[\Phi \wedge \Phi']$-pullback on $F$, and $F^{*}_{[\nu\wedge \nu']}([\Phi \wedge \Phi']) = [F^{*}_{\nu}\Phi \wedge F_{\nu'}^{*}\Phi']$. 
\end{enumerate}
\end{lemma}

The second lemma describes the compatibility of the pullback with the structure of anafunctors: composition, inversion, and transformations (see \cref{re:strucana}).   

\begin{lemma}
\begin{enumerate}[(a)]
\item 
\label{lem:pullback:comp}
Suppose $F:\inf X \to \inf Y$ and $G:\inf Y \to \mathcal{Z}$ are anafunctors. If $\Phi\in\Omega^k(\mathcal{Z},\gamma)$, $\nu$ is a $\Phi$-pullback on $G$, and $\nu'$ is a $(G_{\nu}^{*}\Phi)$-pullback on $F$, then the forms  $\tilde\nu_0:= \pr_F^{*}\nu_0'+ \pr_G^{*}\nu_0$ and $\tilde\nu_1:= \pr_F^{*}\nu_1'+ \pr_G^{*}\nu_1$ on $F \ttimes {\alpha_r}{\alpha_l} G$ descend to the total space $(F \ttimes {\alpha_r}{\alpha_l} G)/\inf Y$ of $G \circ F$, and $\nu'\circ \nu :=(\tilde\nu_0,\tilde\nu_1)$ is a $\Phi$-pullback on $G \circ F$ such that
\begin{equation*}
(G \circ F)_{\nu'\circ \nu}^{*}\Phi=F^{*}_{\nu'}(G^{*}_{\nu}\Phi)\text{.}
\end{equation*} 

\item
\label{lem:pullback:inv}
Suppose an anafunctor $F: \inf X \to \inf Y$ is a weak equivalence. If $\Phi\in \Omega^k(\inf Y,\gamma)$, $\nu$ is a $\Phi$-pullback on $F$, and $\Psi := F_{\nu}^{*}\Phi$, then $-\nu := (-\nu_0,-\nu_1)$ is a $\Psi$-pullback on the inverse anafunctor $F^{-1}$, and $(F^{-1}_{-\nu})^{*}\Psi = \Phi$. 
\begin{comment}
The following relations hold:
\begin{align}
\label{eq:pullinv:a}
\nu_0|_{f \circ \eta}&= \nu_0|_f+ \fb\Phi_{\eta}
\\
\label{eq:pullinv:b}
\nu_1|_{f \circ \eta} &= \nu_1|_f + \Delta\fc\Phi_{\eta}\text{.}
\\
\label{eq:pullinv:c}
t_{*}(\nu_0|_f) &= \fa\Psi_{\alpha_l(f)}-\fa\Phi_{\alpha_r(f)}
\\
\label{eq:pullinv:d}
\nu_0|_{\chi\circ f}&= \fb\Psi_{\chi}+\nu_0|_f
\\
\label{eq:pullinv:e}
\nu_1|_f &= \fc\Psi_{\alpha_l(f)}-\fc\Phi_{\alpha_r(f)}
\text{.}
\end{align}
In order to show that $-\nu$ is a $\Psi$-pullback on $F^{-1}$, we show:
\begin{align*}
-\nu_0|_{\chi^{-1} \circ f}&\eqcref{eq:pullinv:d} -\fb\Psi_{\chi^{-1}}-\nu_0|_f= -\nu_0|_f+ \fb\Psi_{\chi}
\\
-\nu_1|_{\chi^{-1}\circ f} &\eqcref{eq:pullinv:e}-\fc\Psi_{s(\chi)}-\fc\Phi_{\alpha_r(f)}=-\fc\Psi_{\alpha_l(f)}+\fc\Phi_{\alpha_r(f)}+\fc\Psi_{t(\chi)}-\fc\Psi_{s(\chi)}\eqcref{eq:pullinv:e}- \nu_1|_f + \Delta\fc\Psi_{\chi}\text{.}
\end{align*}
In order to show that $\Phi=(F^{-1}_{-\nu})^{*}\Psi$ we show:
\begin{align*}
t_{*}(-\nu_0|_f) &\eqcref{eq:pullinv:c} \fa\Phi_{\alpha_r(f)}-\fa\Psi_{\alpha_l(f)}
\\
-\nu_0|_{f\circ \eta^{-1}}&\eqcref{eq:pullinv:a} \fb\Phi_{\eta}-\nu_0|_f
\\
-\nu_1|_f &\eqcref{eq:pullinv:e} \fc\Phi_{\alpha_r(f)}-\fc\Psi_{\alpha_l(f)}
\end{align*}
\end{comment}

\item
Suppose $f: F \Rightarrow G$ is a transformation between anafunctors $F,G: \inf X \to \inf Y$. If  $\Phi\in\Omega^k(\inf Y,\gamma)$ and $\nu$ is a $\Phi$-pullback on $G$, then $f^{*}\nu :=(f^{*}\nu_0,f^{*}\nu_1)$ is a $\Phi$-pullback on $F$ such that $F^{*}_{f^{*}\nu}\Phi=G^{*}_{\nu}\Phi$.

\end{enumerate}
\end{lemma}

\begin{remark}
\label{re:canpullback}
Let $\mathcal{X},\mathcal{Y}$ be Lie groupoids and $\Phi\in \Omega^k(\inf Y,\gamma)$.
\begin{enumerate}[(a)]

\item 
\label{re:canpullback:a}
If $\phi:\inf X \to \inf Y$ is a smooth functor, then the  anafunctor $F$ associated to $\phi$ has a canonical $\Phi$-pullback $\nu$ such that $F^{*}_{\nu}\Phi=\phi^{*}\Phi$. More concretely, recall that the total space of $F$ is $F=\ob{\inf X} \ttimes \phi t \mor{\inf Y}$. Then, the canonical $\Phi$-pullback $\nu$ is given by  $\nu_0 := \pr_{2}^{*}\fb\Phi$ and $\nu_1 :=- \pr_{2}^{*}s^{*}\fc\Phi + \pr_{1}^{*}\phi^{*}\fc\Phi$. \begin{comment}
Indeed, 
\begin{eqnarray*}
\nu_{\rho_r((x,\eta),\eta')} &=& \nu_{(x,\eta\circ \eta')}=\fb\Phi_{\eta\circ\eta'} = \fb\Phi_{\eta} + \fb\Phi_{\eta'} = \nu_{(x,\eta)} + \fb\Phi_{\eta'}
\\ \mu_{\rho_r((x,\eta),\eta')}&=&\mu_{(x,\eta\circ\eta')}=-\fc\Phi_{s(\eta')}+\fc\Phi_{\phi(x)}=-\fc\Phi_{s(\eta)}+\fc\Phi_{t(\eta')}-\fc\Phi_{s(\eta')}+\fc\Phi_{\phi(x)}=\mu_{(x,\eta)}+\Delta\fc\Phi_{\eta'} \text{.}
\end{eqnarray*}
Indeed,
\begin{equation*}
\fa\psi_{(x,\eta)} = t_{*}(\nu_{(x,\eta)}) + \fa\Phi_{\alpha_r(x,\eta)} = t_{*}(\fb\Phi_{\eta})+\fa\Phi_{s(\eta)}=\fa\Phi_{t(\eta)}=\fa\Phi_{\phi(x)}=\fa\Phi_{\phi(\alpha_l(x,\eta))}
\end{equation*}
and thus $\fa\Psi=\phi^{*}\fa\Phi$. Further, we have a canonical section $\sigma:\inf X_0 \to F:x \mapsto (x,\id_{\phi(x)})$, and hence
\begin{equation*}
\fb\psi_{\chi} = \nu_{\rho_l(\chi,(s(\chi),\id_{s(\chi)}))}-\nu_{(s(\chi),\id_{s(\chi)})} =\nu_{(t(\chi),\phi(\chi))}-\nu_{(s(\chi),\id_{s(\chi)})}=\fb\Phi_{\phi(\chi)}\text{.}
\end{equation*}
This means $\fb\Psi = \phi^{*}\fb\Phi$. Finally, we have
\begin{equation*}
\fc\psi_{x,\eta} = \fc\Phi_{s(\eta)} - \fc\Phi_{s(\eta)} +\fc\Phi_{\phi(x)}=\fc\Phi_{\phi(x)}
\end{equation*}
Then we have $\fc\Psi = \phi^{*}\fc\Phi$.
\end{comment}

\item
\label{re:canpullback:b}
Using the affine space structure of \cref{lem:pullexists}, the canonical $\Phi$-pullback $\nu$ on $F$ can be shifted by $\kappa=(\kappa_0,\kappa_1)$, where $\kappa_0\in \Omega^k(\inf X_0,\mathfrak{h})$ and $\kappa_1\in\Omega^{k+1}(\inf X_0,\mathfrak{h})$, namely to  $\nu^\kappa :=(\nu_0 + \pr_1^{*}\kappa_0,\nu_1+\pr_1^{*}\kappa_1)$.
\begin{comment}
This is a $\Phi$-pullback:
\begin{align*}
\nu_0^{\kappa}|_{(x,\eta)\circ \eta'}&= \nu_0^{\kappa}|_{(x,\eta\circ\eta')}= \nu_0|_{(x,\eta\circ\eta')}+\kappa_0|_x=\nu_0|_{(x,\eta)}+\kappa_0|_{x}+ \fb\Phi_{\eta'}= \nu_0^{\kappa}|_{(x,\eta)}+ \fb\Phi_{\eta'}
\\
\nu_1^{\kappa}|_{(x,\eta)\circ\eta'} &= \nu_1^{\kappa}|_{(x,\eta\circ\eta')}=\nu_1|_{(x,\eta\circ\eta')}+\kappa_1|_{x} = \nu_1|_{(x,\eta)}+\kappa_1|_x+\Delta\fc\Phi|_{\eta'} =\nu_1^{\kappa}|_{(x,\eta)}+\Delta\fc\Phi_{\eta'}\text{.}
\end{align*}
\end{comment}
For the shifted pullback $\nu^{\kappa}$ we find 
\begin{equation}
\label{eq:shiftedpullback}
\fa {(F_{\nu^{\kappa}}^{*}\Phi)} = \phi^{*}\fa\Phi + t_{*}(\kappa_0)
\quomma
\fb {(F_{\nu^{\kappa}}^{*}\Phi)} = \phi^{*}\fb\Phi + \Delta\kappa_0
\quomma
\fc {(F_{\nu^{\kappa}}^{*}\Phi)} = \phi^{*}\fc\Phi + \kappa_1\text{.}
\end{equation}
\begin{comment}
Indeed, 
\begin{align*}
t_{*}(\nu_0^{\kappa}|_{(x,\eta)})=t_{*}(\fb\Phi_{\eta}+\kappa_0|_x) &= \fa\Phi_{\phi(x)} + t_{*}(\kappa_0|_{x})-\fa\Phi_{s(\eta)}
\\
\nu_0^{\kappa}|_{\chi \circ (x,\eta)}=\nu_0|_{t(\chi),\phi(\chi) \circ \eta} +\kappa_0|_{t(\chi)}&= \fb\Phi_{\phi(\chi)\circ\eta} +\kappa_0|_{t(\chi)}\\&=\fb\Phi_{\phi(\chi)}+\fb\Phi_{\eta}+\kappa_0|_{t(\chi)}-\kappa_0|_{s(\chi)}+\kappa_0|_{s(\chi)}\\&=\fb{\Phi}|_{\phi(\chi)}+\Delta\kappa_0|_{\chi}+\nu_0|_{(x,\eta)}+\kappa_0|_x=\fb{\Phi}|_{\phi(\chi)}+\Delta\kappa_0|_{\chi}+\nu_0^{\kappa}|_{(x,\eta)}
\\
\nu_1^{\kappa}|_{(x,\eta)}=\nu_1|_{(x,\eta)}+\kappa_1|_x &= \fc\Phi_{\phi(x)} + \kappa_1|_x-\fc\Phi_{s(\eta)}
\text{.}
\end{align*}
\end{comment}

\item
\label{re:canpullback:c}
If $\eta:\phi\Rightarrow \phi'$ is a smooth natural transformation, then the induced transformation $f:F \Rightarrow F'$ between the corresponding anafunctors
in general does not preserve the canonical $\Phi$-pullbacks $\nu$ and $\nu'$, i.e., $f^{*}\nu' \neq \nu$. 
\begin{comment}
Indeed, 
\begin{align*}
f^{*}\nu_0'|_{(x,\beta)} -\nu_0|_{(x,\beta)}&=\nu_0'|_{f(x,\beta)}  -\nu_0|_{(x,\beta)}\\
&=\nu_0'|_{(x,\eta(x) \circ \beta)}-\nu_0|_{(x,\beta)}\\
&= \fb\Phi_{\eta(x)\circ\beta}-\fb\Phi_{\beta}\\
&=\fb\Phi_{\eta(x)} \end{align*}
and
\begin{align*}
f^{*}\nu_1'|_{(x,\beta)}-\nu_1|_{(x,\beta)}&=\nu_1'|_{f(x,\beta)}-\nu_1|_{(x,\beta)}\\
&=\nu_1'|_{x,\eta(x) \circ  \beta}-\nu_1|_{(x,\beta)}\\
&=-\fc\Phi|_{s(\beta)} + \fc\Phi|_{\phi'(x)}+\fc\Phi|_{s(\beta)}-\fc\Phi|_{\phi(x)}
\\
&= \fc\Phi|_{\phi'(x)}- \fc\Phi|_{\phi(x)}
\\&= \Delta\fc\Phi|_{\eta(x)}
\end{align*}
\end{comment}
However, if $\kappa$ and $\kappa'$ are shifts for the canonical $\Phi$-pullbacks, then $f$ satisfies $f^{*}(\nu'^{\kappa'})=\nu^{\kappa}$ if and only if $\eta^{*}\fb\Phi=\kappa_0 - \kappa_0'$ and $\eta^{*}\Delta\fc\Phi=\kappa_1-\kappa_1'$.
\begin{comment}
Indeed,
\begin{align*}
f^{*}\nu_0'|_{(x,\beta)} &=\nu_0'|_{f(x,\beta)} \\& =\nu_0'|_{(x,\eta(x) \circ \beta)}\\&= \fb\Phi_{\eta(x)\circ\beta}+ \kappa_0'|_x\\&=\fb\Phi_{\eta(x)}+\fb\Phi_{\beta} + \kappa_0'|_x\\&= \fb\Phi_{\beta}+\kappa_0|_x\\&= \nu_0|_{(x,\beta)}
\\
f^{*}\nu_1'|_{(x,\beta)} &=\nu_1'|_{f(x,\beta)}  \\&=\nu_1'|_{(x,\eta(x) \circ \beta)}\\&= -\fc\Phi_{s(\beta)} + \fc\Phi|_{\phi'(x)}+ \kappa_1'|_x
\\&= - \fc\Phi_{s(\beta)}+\fc\Phi_{\phi(x)}+ \Delta\fc\Phi|_{\eta(x)}+\kappa_1'|_x
\\&=- \fc\Phi_{s(\beta)}+\fc\Phi_{\phi(x)}+\kappa_1|_x\\&= \nu_1|_{(x,\beta)}
\end{align*}
\end{comment}

\item
\label{re:canpullback:d}
Suppose $F':\mathcal{X} \to \mathcal{Y}$ is an anafunctor
equipped with a $\Phi$-pullback $\nu'=(\nu_0',\nu_1')$ and $f: F \Rightarrow F'$ is the transformation induced by a smooth map $\tilde f: \ob{\mathcal{X}} \to F'$ via \cref{rem:smoothfunctor:b}. Then, we have $f^{*}\nu' = \nu^{\kappa}$, where the shift $\kappa=(\kappa_0,\kappa_1)$ is defined by $\kappa_0 := \tilde f^{*}\nu_0'$ and $\kappa_1 := \tilde f^{*}\nu_1'$. 
\begin{comment}
Indeed,
\begin{align*}
\eta^{*}\nu_0'|_{x,\beta} &= \nu_0'|_{\eta(x,\id_{\phi(x)} \circ \beta)}
\\&= \nu_0'|_{\eta(x,\id_{\phi(x)})\circ \beta}
\\&= \nu_0'|_{\tilde\eta(x)} + \fb\Phi_{\beta}
\\&=\fb\Phi_{\beta} + \kappa_0|_x
\\
\eta^{*}\nu_1'|_{x,\beta} &= \nu_1'|_{\eta(x,\id_{\phi(x)} \circ \beta)}
\\&= \nu_1'|_{\eta(x,\id_{\phi(x)})\circ \beta}
\\&= \nu_1'|_{\tilde\eta(x)} + \Delta\fc\Phi_{\beta}
\\&=-\fc\Phi_{s(\beta)}+\fc\Phi_{t(\beta)} + \kappa_1|_x
\\&=-\fc\Phi_{s(\beta)}+\fc\Phi_{\phi(x)} + \kappa_1|_x
\end{align*}
\end{comment}
\end{enumerate}
\end{remark}

\section{Connections on principal 2-bundles}

\label{sec:conn}

\setsecnumdepth{2}

In this section, $M$ is a smooth manifold and $\Gamma$ is a Lie 2-group, with associated crossed module $(G,H,t,\alpha)$ and Lie 2-algebra $\gamma$.

\subsection{Connections and curvature}

Let $\inf P$ be a principal $\Gamma$-2-bundle over $M$. 

\begin{definition}
\label{def:connection}
A \emph{connection} on $\inf P$ is a $\gamma$-valued 1-form $\Omega \in \Omega^1(\inf P,\gamma)$ such that
\begin{equation*}
R^{*}\Omega = \mathrm{Ad}_{\pr_\Gamma}^{-1}(\pr_\inf P^{*}\Omega) + \pr_\Gamma^{*}\Theta
\end{equation*}
over $\inf P \times \Gamma$, where $\pr_{\inf P}$ and $\pr_{\Gamma}$ are the projections to the two factors, and $\Theta$ is the Maurer-Cartan form on $\Gamma$.
\end{definition}

Let us spell out explicitly all structure and conditions that are packed into \cref{def:connection}. A connection $\Omega$ consists of the following components:
\begin{equation*}
\fa\Omega\in\Omega^1(\ob{\inf P},\mathfrak{g})
\quomma 
\fb\Omega\in\Omega^1(\mor{\inf P},\mathfrak{h})
\quand
\fc\Omega\in\Omega^2(\ob{\inf P},\mathfrak{h})\text{.}
\end{equation*}
These satisfy the following conditions:
\begin{align}
R^{*}\fa\Omega &= \mathrm{Ad}_{g_0}^{-1}(p_0^{*}\fa\Omega) + g_0^{*}\theta &&\text{over }\ob{\inf P}\times \ob{\Gamma} 
\label{eq:conform:a}
\\
R^{*}\fb\Omega 
&=(\alpha_{g_1^{-1}})_{*}\left (\mathrm{Ad}_{h}^{-1}(p_1^{*}\fb\Omega)+(\tilde \alpha_{h})_{*}(p_1^{*}s^{*}\fa\Omega) +h^{*}\theta \right)\hspace{-1em} &&\text{over }\mor{\inf P}\times \mor{\Gamma}
\label{eq:conform:b}
\\
\label{eq:conform:c}
R^{*}\fc\Omega &= (\alpha_{g_0^{-1}})_{*}(p_0^{*}\fc\Omega)
&&\text{over }\ob{\inf P}\times \ob{\Gamma}
\text{.}\end{align}
Here we have used the maps  $p_0,g_0$ defined on $\ob{\inf P} \times \ob{\Gamma}$ by $p_0(p,g):=p$ and $g_0(p,g):=g$, as well as the maps $h,p_1,g_1$ defined on $\mor{\inf P} \times \mor{\Gamma}$ by $h(\rho,(h,g)):= h$, $p_1(\rho,(h,g)):=\rho$, and $g_1(\rho,(h,g)):= g$.

\begin{remark}
For $g\in G$ we have a functor $R_g:\inf P \to \inf P: p \mapsto R(p,g)$ and \cref{def:connection} implies  
$R_g^{*}\Omega = \mathrm{Ad}_g^{-1}(\Omega)$. \begin{comment}
We spell this out:
\begin{align*}
R_g^{*}\fa\Omega &= \mathrm{Ad}_{g}^{-1}(\fa\Omega) 
\\
R_{\id_g}^{*}\fb\Omega 
&=(\alpha_{g^{-1}})_{*} (\fb\Omega )
\\
R_g^{*}\fc\Omega &= (\alpha_{g^{-1}})_{*}(\fc\Omega)
\text{.}
\end{align*}
\end{comment}
Similarly, for $p\in \ob{\inf P}$ we have a functor $R_p:\Gamma \to \inf P: g \mapsto R(p,g)$, and we obtain $R_{p}^{*}\Omega=\Theta$. \begin{comment}
We spell this out:
\begin{align*}
R_p^{*}\fa\Omega &=  \theta
\\
R_{\id_p}^{*}\fb\Omega 
&=(\alpha_{g_1^{-1}})_{*}\left (h^{*}\theta \right)
\\
R_p^{*}\fc\Omega &= 0
\text{.}
\end{align*}
\end{comment}
The two equations
\begin{equation}
\label{eq:connection:sep}
R_g^{*}\Omega = \mathrm{Ad}_g^{-1}(\Omega)
\quand
R_{p}^{*}\Omega=\Theta
\end{equation}
are well-known for connections in ordinary principal bundles, where they are in fact equivalent to the combined equation analogous to the one in \cref{def:connection}. For connections on principal 2-bundles, however, \cref{eq:connection:sep} is not equivalent to \cref{def:connection} (consider, e.g., $\Gamma=B\ueins$). \begin{comment}
For $\Gamma=B\ueins$ \cref{def:connection} says $\fb\Omega_{R(\rho,(h,1))} 
&=&\fb\Omega_{\rho} +h^{*}\theta$; see \cref{ex:bueinsconn}. We have $\fa\Omega=0$ and $\fc\Omega$ is arbitrary. On the other hand, the condition $R_g^{*}\Omega = \mathrm{Ad}_g^{-1}(\Omega)$ is empty, while $R_{p}^{*}\Omega=\Theta$ gives $\fb\Omega_{R(\id_p,(h,1))} 
&=h^{*}\theta$. These are certainly not equivalent. \end{comment} 
\end{remark}

\begin{definition}
The 2-form
\begin{equation*}
\mathrm{curv}(\Omega) := \mathrm{D}\Omega + \frac{1}{2}[\Omega\wedge \Omega] \in \Omega^2(\inf P,\gamma)
\end{equation*}
is called the curvature of $\Omega$.
\end{definition}

Thus, the curvature has the following components:
\begin{align}
\label{eq:curv:a}
\fa{\mathrm{curv}(\Omega)} &= \mathrm{d}\fa\Omega +\frac{1}{2}[\fa\Omega \wedge \fa\Omega] +t_{*}(\fc\Omega)
\\
\label{eq:curv:b}
\fb{\mathrm{curv}(\Omega)} &= \Delta\fc\Omega +\mathrm{d}\fb\Omega+\frac{1}{2}[\fb\Omega\wedge \fb\Omega]+ \alpha_{*}(s^{*}\fa\Omega \wedge \fb\Omega)
\\
\label{eq:curv:c}
\fc{\mathrm{curv}(\Omega)} &=  \mathrm{d}\fc\Omega+\alpha_{*}(\fa\Omega \wedge \fc\Omega)\text{.}
\end{align}
The following statement can be deduced abstractly from the definitions,  \cref{lem:bracket,lem:adjoint,eq:mc}. 

\begin{theorem}
\label{th:curv}
\begin{enumerate}[(a)]
\item 
The curvature 
satisfies a Bianchi identity:
$
\mathrm{D}\mathrm{curv}(\Omega) = [\mathrm{curv}(\Omega) \wedge \Omega]$.

\item
\label{th:curv:b}
The curvature  is $R$-invariant:
$R^{*}\mathrm{curv}(\Omega) = \mathrm{Ad}_{\pr_{\Gamma}}^{-1}(\pr_{\inf P}^{*}\mathrm{curv}(\Omega))$. 

\end{enumerate}
\end{theorem}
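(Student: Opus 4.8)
The plan is to derive both statements formally, using only the differential graded Lie algebra structure of \cref{lem:bracket}, the properties of the adjoint action collected in \cref{lem:adjoint}, and the Maurer-Cartan equation \cref{eq:mc}; no passage to the explicit components is required. The one structural fact I use repeatedly is that pullback along a smooth functor is linear and commutes with both $\mathrm{D}$ and the graded bracket, so that the three functors $R$, $\pr_{\inf P}$ and $\pr_{\Gamma}$ on $\inf P \times \Gamma$ all intertwine the dgla operations.

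For part (a) I would first expand $\mathrm{D}\,\mathrm{curv}(\Omega) = \mathrm{D}^{2}\Omega + \tfrac12 \mathrm{D}[\Omega \wedge \Omega]$. The term $\mathrm{D}^{2}\Omega$ vanishes, and by the Leibniz rule \cref{eq:leibnitz} together with the graded anticommutativity \cref{eq:comm} applied to the $1$-form $\Omega$ and the $2$-form $\mathrm{D}\Omega$ one finds $\mathrm{D}[\Omega \wedge \Omega] = 2[\mathrm{D}\Omega \wedge \Omega]$, so that $\mathrm{D}\,\mathrm{curv}(\Omega) = [\mathrm{D}\Omega \wedge \Omega]$. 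On the other hand $[\mathrm{curv}(\Omega) \wedge \Omega] = [\mathrm{D}\Omega \wedge \Omega] + \tfrac12[[\Omega \wedge \Omega] \wedge \Omega]$, and the cubic term is killed by the graded Jacobi identity \cref{eq:jacobi}: specializing $\Psi = \Phi = \Xi = \Omega$ gives $-3[[\Omega \wedge \Omega] \wedge \Omega] = 0$. Comparing the two sides yields the Bianchi identity.

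For part (b) I would use that $R$ is a smooth functor to write $R^{*}\mathrm{curv}(\Omega) = \mathrm{D}(R^{*}\Omega) + \tfrac12[R^{*}\Omega \wedge R^{*}\Omega]$ and then substitute the defining condition $R^{*}\Omega = \mathrm{Ad}_{\pr_{\Gamma}}^{-1}(\pr_{\inf P}^{*}\Omega) + \pr_{\Gamma}^{*}\Theta$ of \cref{def:connection}. Abbreviating $u := \mathrm{Ad}_{\pr_{\Gamma}}^{-1}(\pr_{\inf P}^{*}\Omega)$ and $T := \pr_{\Gamma}^{*}\Theta$, the derivative formula \cref{eq:adjder} (with functor $\pr_{\Gamma}$, so that $\pr_{\Gamma}^{*}\Theta = T$) gives $\mathrm{D}u = \mathrm{Ad}_{\pr_{\Gamma}}^{-1}(\pr_{\inf P}^{*}\mathrm{D}\Omega) - [T \wedge u]$, while the Maurer-Cartan equation \cref{eq:mc} pulled back along $\pr_{\Gamma}$ gives $\mathrm{D}T = -\tfrac12[T \wedge T]$. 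Expanding the bracket for the two $1$-forms $u$ and $T$ produces $\tfrac12[u \wedge u] + [T \wedge u] + \tfrac12[T \wedge T]$. Summing, the two $[T \wedge u]$ terms cancel and the two $[T \wedge T]$ terms cancel, leaving $\mathrm{Ad}_{\pr_{\Gamma}}^{-1}(\pr_{\inf P}^{*}\mathrm{D}\Omega) + \tfrac12[u \wedge u]$. Finally, since the adjoint action respects the bracket (\cref{lem:adjoint}), I rewrite $\tfrac12[u \wedge u] = \mathrm{Ad}_{\pr_{\Gamma}}^{-1}(\pr_{\inf P}^{*}\tfrac12[\Omega \wedge \Omega])$ and recombine with the first term to obtain $\mathrm{Ad}_{\pr_{\Gamma}}^{-1}(\pr_{\inf P}^{*}\mathrm{curv}(\Omega))$.

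Both arguments are mechanical, so the main obstacle is purely bookkeeping: keeping the Koszul signs of the graded brackets correct, and applying \cref{eq:adjder} in precisely the $\mathrm{Ad}^{-1}$ form in which it is stated. The conceptual heart lies in the cancellation in part (b): it is exactly the inhomogeneous correction term $-[T \wedge u]$ from \cref{eq:adjder} together with the $-\tfrac12[T \wedge T]$ from the Maurer-Cartan equation that annihilate the non-covariant contributions, leaving a manifestly $\mathrm{Ad}$-covariant curvature.
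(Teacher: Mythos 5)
Your proposal is correct and follows essentially the same route as the paper: part (a) via $\mathrm{D}^2=0$, the Leibniz rule, graded anticommutativity, and the vanishing of $[[\Omega\wedge\Omega]\wedge\Omega]$ by the graded Jacobi identity; part (b) by substituting the connection condition of \cref{def:connection} and cancelling the inhomogeneous terms using \cref{eq:adjder} and the Maurer--Cartan equation \cref{eq:mc}. The sign bookkeeping in your expansion is accurate, so nothing needs to be added.
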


\begin{comment}
\begin{proof}
The first is proved as follows:
\begin{equation*}
\mathrm{D}\mathrm{curv}(\Psi) = \mathrm{D}\mathrm{D}\Psi + \frac{1}{2}\mathrm{D}[\Psi \wedge \Psi]
\eqcref{eq:comm,eq:leibnitz} [\mathrm{D}\Psi \wedge \Psi]
= [\mathrm{curv}(\Psi) \wedge \Psi]\text{,}
\end{equation*}
since $[[\Psi \wedge \Psi] \wedge \Psi] = 0$ by \cref{eq:jacobi}.  The second follows from \cref{def:connection,eq:adjder,eq:mc}.
\end{proof}
\end{comment}

\begin{definition}
A connection $\Omega$ on a principal $\Gamma$-2-bundle $\inf P$ is called:
\begin{enumerate}[(a)]

\item
\emph{flat} if $\mathrm{curv}(\Omega)=0$. 

\item
\emph{fake-flat} if  $\fa{\mathrm{curv}(\Omega)}=0$ and $\fb{\mathrm{curv}(\Omega)}=0$.

\item 
\emph{regular}, if every point $(p,q)\in \ob{\inf P} \times_M \ob{\inf P}$ has an open neighborhood $U$ supporting a \splitting\ $\sigma$ with $\sigma^{*}\fb{\mathrm{curv}(\Omega)}=0$. 
\end{enumerate}
\end{definition}

\begin{remark}
It follows directly from the definition of a connection that connections form a convex subset of $\Omega^1(\inf P,\gamma)$.
\begin{comment}
Indeed, let $\Omega_1$ and $\Omega_2$ be connections on $\inf P$, and $t\in [0,1]$. Consider $\tilde\Omega = (1-t)\Omega_1 + t\Omega_2$. That $\tilde\Omega$ is a connection follows directly via \cref{def:connection} from the linearity of pullback and adjoint action.
Indeed,
\begin{align*}
R^{*}\tilde\Omega &=(1-t)R^{*}\Omega_1 + tR^{*}\Omega_2 \\&=(1-t)(\mathrm{Ad}_{\pr_\Gamma}^{-1}(\pr_\inf P^{*} \Omega_1) + \pr_\Gamma^{*}\Theta) + t(\mathrm{Ad}_{\pr_\Gamma}^{-1}(\pr_\inf P^{*}\Omega_2) + \pr_\Gamma^{*}\Theta)\\&= \mathrm{Ad}_{\pr_\Gamma}^{-1}(\pr_\inf P^{*}\tilde \Omega) + \pr_\Gamma^{*}\Theta
\end{align*}
\end{comment}
However, regular, fake-flat, or flat connections do in general not form a convex subset.
\begin{comment}
The curvature is non-linear: we find
\begin{equation*}
\mathrm{curv}(\tilde\Omega)= (1-t)\mathrm{curv}(\Omega_1)+ t \mathrm{curv}(\Omega_2)+ \frac{1}{2}(t^2-t) \left ( [\Omega_1+\Omega_2 \wedge \Omega_1+\Omega_2 ]\text{.}
\end{equation*}
Indeed, 
\begin{align*}
\mathrm{curv}(\tilde\Omega) &= \mathrm{D}\tilde\Omega + \frac{1}{2}[\tilde\Omega\wedge \tilde\Omega]
\\&= \mathrm{D}((1-t)\Omega_1 + t\Omega_2) + \frac{1}{2}[((1-t)\Omega_1 + t\Omega_2)\wedge ((1-t)\Omega_1 + t\Omega_2)]
\\&= (1-t)\mathrm{D}\Omega_1 + t\mathrm{D}\Omega_2 + \frac{1}{2}(1-t)^2[\Omega_1 \wedge \Omega_1 ]+ \frac{1}{2}t^2[\Omega_2\wedge  \Omega_2]+ (1-t)t[\Omega_1 \wedge  \Omega_2]
\\&= (1-t)(\mathrm{D}\Omega_1 +\frac{1}{2}[\Omega_1\wedge \Omega_1])+ t (\mathrm{D}\Omega_2 +\frac{1}{2}[\Omega_2\wedge \Omega_2])
\\&\qquad+ \frac{1}{2}(-t+t^2)[\Omega_1 \wedge \Omega_1 ]+ \frac{1}{2}(t^2-t)[\Omega_2\wedge  \Omega_2]+ (1-t)t[\Omega_1 \wedge  \Omega_2]
\\&= (1-t)\mathrm{curv}(\Omega_1)+ t \mathrm{curv}(\Omega_2)+ \frac{1}{2}(t^2-t) \left ( [\Omega_1 \wedge \Omega_1 ]+[\Omega_2\wedge  \Omega_2]-2[\Omega_1 \wedge  \Omega_2]\right )
\\&= (1-t)\mathrm{curv}(\Omega_1)+ t \mathrm{curv}(\Omega_2)+ \frac{1}{2}(t^2-t) \left ( [\Omega_1+\Omega_2 \wedge \Omega_1+\Omega_2 ]
\end{align*}
\end{comment}
\end{remark}

\begin{remark}
\label{re:regfake}
Obviously, flat implies fake-flat and fake-flat implies regular. Conversely, if $\Omega$ is regular and $\fa{\mathrm{curv}(\Omega)}=0$, then $\Omega$ is fake-flat. Indeed, the vanishing of $\fa{\mathrm{curv}(\Omega)}$ implies via \cref{th:curv:b}
\begin{equation}
\label{eq:re:regfake:1}
R^{*}\fb{\mathrm{curv}(\Omega)} = (\alpha_{g_1^{-1}})_{*} (\mathrm{Ad}_{h}^{-1}(\fb{\pr_{p_1}^{*}\mathrm{curv}(\Omega)}) )\text{.} 
\end{equation}
\begin{comment}
Indeed,
\begin{eqnarray*}
R^{*}\fb{\mathrm{curv}(\Omega)} &=& \mathrm{Ad}_{\pr_{\Gamma}}^{-1}(\pr_{\inf P}^{*}\mathrm{curv}(\Omega))
\\&=& (\alpha_{g_1^{-1}})_{*}\left (\mathrm{Ad}_{h}^{-1}(\fb{\pr_{\inf P}^{*}\mathrm{curv}(\Omega)})+(\tilde\alpha_{h})_{*}(s^{*}\fa{\pr_{\inf P}^{*}\mathrm{curv}(\Omega)})\right ) 
\\&=& (\alpha_{g_1^{-1}})_{*}\left (\mathrm{Ad}_{h}^{-1}(\fb{\pr_{\inf P}^{*}\mathrm{curv}(\Omega)})\right )
\end{eqnarray*}
\end{comment}
\begin{comment}
This means, explicitly, for $\rho\in\mor{\inf P}$, $(h,g)\in \mor{\Gamma}$ and $\tilde v_1,\tilde v_2\in T_{\rho}\mor{\inf P}$, $w_1,w_2\in T_hH$, $u_1,u_2\in T_gG$:
\begin{multline*}
\fb{\mathrm{curv}(\Omega)}|_{R(\rho,(h,g))}(T_{\rho,(h,g)}R(\tilde v_1,(w_1,u_1)),T_{\rho,(h,g)}R(\tilde v_2,(w_2,u_2))) \\= (\alpha_{g^{-1}})_{*} (\mathrm{Ad}_{h}^{-1}(\fb{\mathrm{curv}(\Omega)}|_{\rho}(\tilde v_1,\tilde v_2)) )\text{.}
\end{multline*}
\end{comment}
For $\rho\in \mor{\inf P}$ consider the splitting of $T_{\rho}\mor{\inf P}$ of \cref{re:splitting} with respect to a \splitting\ $\sigma$ of $\inf P$ with $\sigma^{*}\fb{\mathrm{curv}(\Omega)}=0$. \Cref{eq:re:regfake:1} implies that $\fb{\mathrm{curv}(\Omega)}$ vanishes when applied to at least one \quot{vertical} tangent vector, i.e. one in the image of $T\mor{\Gamma}$.
\begin{comment}
Let $w\in T_hH$ and $u\in T_gG$. Then, 
\begin{equation*}
T_{(h,g)}R_{\rho}(w,u) =T_{\rho,(h,g)}R(0,(w,u))\text{.}
\end{equation*}
Indeed, if $w$ and $u$ are represented by smooth curves $\omega$ and $\gamma$, respectively, then
\begin{equation*}
T_{(h,g)}R_{\rho}(\ddt 1t0 (\omega,\gamma))=\ddt 1t0 R_{\rho}(\omega,\gamma)=\ddt 1t0 R(\rho,(\omega,\gamma))=T_{\rho,(h,g)}R(\ddt 1t0 (\rho,(\omega,\gamma)))\text{.}
\end{equation*}
We conclude that $\fb{\mathrm{curv}(\Omega)}|_{R(\rho,(h,g))}(\tilde v_1,\tilde v_2)$ is zero whenever  $\tilde v_1$ or $\tilde v_2$ is of the form $T_{(h,g)}R_{\rho}(w,u)$. \end{comment}
\Cref{eq:re:regfake:1} implies further that
\begin{equation}
\sigma^{*}R_{h,g}^{*}\fb{\mathrm{curv}(\Omega)} = (\alpha_{g^{-1}})_{*} (\mathrm{Ad}_{h}^{-1}(\sigma^{*}\fb{\mathrm{curv}(\Omega)}) )=0\text{,} 
\end{equation}
where $R_{h,g}: \mor{\inf P} \to \mor{\inf P}$ is the action with fixed $(h,g)\in \mor{\Gamma}$. 
Hence, regularity of $\Omega$ implies that $\fb{\mathrm{curv}(\Omega)}$ vanishes when applied to two \quot{horizontal} tangent vectors. In summary, the splitting into horizontal and vertical parts gives $\fb{\mathrm{curv}(\Omega)}=0$.
\begin{comment}
Decomposing another tangent vector $\tilde v'\in T_{\rho}\mor{\inf P}$ in the same way, and inserting into $\fb{\mathrm{curv}(\Omega)}$, we have
\begin{align*}
\fb{\mathrm{curv}(\Omega)}|_{\rho}(\tilde v_1,\tilde v_2) 
\\&= \fb{\mathrm{curv}(\Omega)}|_{R(\sigma(u),(h,g(u)))}(T_{u} R_{h,g(u)}(v_1,v_2),T_{u} R_{h,g(u)}(v_1',v_2'))
\\&= (R_{h,g(u)}^{*}\fb{\mathrm{curv}(\Omega)})|_{u}((v_1,v_2),(v_1',v_2'))
\\&=0
\end{align*}
\end{comment}
\end{remark}

\begin{example}
\label{ex:trivialconn}
The \emph{trivial principal $\Gamma$-2-bundle} over $M$ is defined by  $\inf I := \idmorph{M} \times \Gamma$, with the action by multiplication on the second factor. Using the formula of \cref{ex:mcmult} for the Maurer-Cartan form $\Theta$ one can check that $\Omega := \pr_\Gamma^{*}\Theta$ is a  connection on $\inf I$; it is flat due to \cref{eq:mc}. More generally, for  $\Psi\in\Omega^1(\idmorph M,\gamma)$ 
\begin{equation*}
\Omega_{\Psi} := \mathrm{Ad}^{-1}_{\pr_\Gamma}(\pr_M^{*}\Psi) + \pr_\Gamma^{*}\Theta\text{.} 
\end{equation*}
is a connection on $\inf I$. 
\begin{comment}
Explicitly, we have
\begin{align*}
\fa{\Omega}_{\Psi} &= \mathrm{Ad}_{g_0}^{-1}(\fa\Psi) + g_0^{*}\theta 
\\
\fb{\Omega}_{\Psi} &= (\alpha_{g_1^{-1}})_{*} ( (l_{h}^{-1} \circ \alpha_{h})_{*}(s^{*}\fa\Psi)+h^{*}\theta) 
\\
\fc{\Omega}_{\Psi} &= (\alpha_{g_0^{-1}})_{*}(\fc\Psi) 
\end{align*}
\end{comment}
With \cref{lem:bracket,lem:adjoint,eq:mc} one can check that
\begin{equation}
\label{eq:ex:trivbun:1}
\mathrm{curv}(\Omega_{\Psi}) = \mathrm{Ad}_{\pr_\Gamma}^{-1}(\pr_M^{*}(\mathrm{D}\Psi+\frac{1}{2}[\Psi \wedge \Psi]))\text{.} 
\end{equation}
\begin{comment}
Indeed
\begin{align*}
\mathrm{curv}(\Omega_{\Psi}) 
&= \mathrm{D}\Omega_{\Psi} + \frac{1}{2}[\Omega_{\Psi}\wedge \Omega_{\Psi}] \\&= \mathrm{D}(\mathrm{Ad}^{-1}_{\pr_\Gamma}(\pr_M^{*}\Psi) + \pr_\Gamma^{*}\Theta) + \frac{1}{2}[(\mathrm{Ad}^{-1}_{\pr_\Gamma}(\pr_M^{*}\Psi) + \pr_\Gamma^{*}\Theta)\wedge (\mathrm{Ad}^{-1}_{\pr_\Gamma}(\pr_M^{*}\Psi) + \pr_\Gamma^{*}\Theta)]  
\\&= \mathrm{D}\mathrm{Ad}^{-1}_{\pr_\Gamma}(\pr_M^{*}\Psi) + \pr_\Gamma^{*}\mathrm{D}\Theta + \frac{1}{2}[\mathrm{Ad}^{-1}_{\pr_\Gamma}(\pr_M^{*}\Psi) \wedge \mathrm{Ad}^{-1}_{\pr_\Gamma}(\pr_M^{*}\Psi) ] \\&\qquad+ [\mathrm{Ad}^{-1}_{\pr_\Gamma}(\pr_M^{*}\Psi) \wedge  \pr_\Gamma^{*}\Theta]  + \frac{1}{2}[ \pr_\Gamma^{*}\Theta\wedge  \pr_\Gamma^{*}\Theta]   
\\&= \mathrm{Ad}_{\pr_\Gamma}^{-1}(\mathrm{D}\pr_M^{*}\Psi)-[ \pr_\Gamma^{*}\Theta \wedge \mathrm{Ad}_{\pr_\Gamma}^{-1}(\pr_M^{*}\Psi)] + \frac{1}{2}[\mathrm{Ad}^{-1}_{\pr_\Gamma}(\pr_M^{*}\Psi) \wedge \mathrm{Ad}^{-1}_{\pr_\Gamma}(\pr_M^{*}\Psi)] \\&\qquad+ [\mathrm{Ad}^{-1}_{\pr_\Gamma}(\pr_M^{*}\Psi) \wedge  \pr_\Gamma^{*}\Theta] 
\\&= \mathrm{Ad}_{\pr_\Gamma}^{-1}(\pr_M^{*}(\mathrm{D}\Psi+\frac{1}{2}[\Psi \wedge \Psi]))
\end{align*}
\end{comment}
This has the following implications:
\begin{itemize}

\item 
$\Omega_{\Psi}$ is  regular. To see this,  we get from \cref{eq:ex:trivbun:1} the formula
\begin{equation*}
\fb{\mathrm{curv}(\Omega_{\Psi})} = (\alpha_{g_1^{-1}})_{*}\left ((\tilde\alpha_{h})_{*}(\pr_M^{*}(\mathrm{D}\Psi+\frac{1}{2}[\Psi \wedge \Psi])\fa{})\right)\text{.}
\end{equation*}
Consider the global \splitting\ $\sigma((x,g_1),(x,g_2)) := (\id_x,g_1)$. Since $h \circ \sigma=1$ we have $(\tilde\alpha_{h \circ \sigma})_{*}=0$ and thus $\sigma^{*}\fb{\mathrm{curv}(\Omega_{\Psi})}=0$. 

\item
$\Omega_{\Psi}$ is fake-flat if and only if $(\mathrm{D}\Psi+\frac{1}{2}[\Psi \wedge \Psi])\fa{}=0$. 
\begin{comment}
To see this, we get the formula
\begin{equation*}
\fa{\mathrm{curv}(\Omega_{\Psi})} =\mathrm{Ad}_{g_0}^{-1}(\pr_M^{*}(\mathrm{D}\Psi+\frac{1}{2}[\Psi \wedge \Psi])\fa{})\text{.}
\end{equation*}
Now, the \quot{if}-part is clear. 

\end{comment}

\item
$\Omega_{\Psi}$ is flat if and only if $\mathrm{D}\Psi+\frac{1}{2}[\Psi \wedge \Psi]=0$.

\end{itemize} 
Finally, it is easy to check that
the assignment
$\Psi \mapsto \Omega_{\Psi}$
establishes a bijection between $\Omega^1(\idmorph{M},\gamma)$ and the set of connections on $\inf I$.
\begin{comment}
Indeed, this is a connection: on $\idmorph M \times \Gamma \times \Gamma$ with $R=\id \times m$ we have
\begin{multline*}
R^{*}\Omega_{\Psi} = \mathrm{Ad}^{-1}_{\pr_2\cdot \pr_3}(\pr_1^{*}\Psi)+ (\pr_2\cdot \pr_3)^{*}\Theta=  \mathrm{Ad}_{\pr_3}^{-1}(\mathrm{Ad}^{-1}_{\pr_2}(\pr_1^{*}\Psi)) +\mathrm{Ad}_{\pr_3}^{-1}( \pr_2^{*}\Theta) + \pr_3^{*}\Theta \\= \mathrm{Ad}_{\pr_3}^{-1}(\pr_{12}^{*}\Omega_{\Psi}) + \pr_3^{*}\Theta\text{.}
\end{multline*}
Converse, if $\Omega$ is a connection on $\inf I$, consider $\Psi := s^{*}\Omega$, where $s := (\id,1):\idmorph M \to \idmorph M \times \Gamma$ is the canonical section. Then, $\Omega = \Omega_{\Psi}$. 
Indeed,
\begin{equation*}
\Omega_{\Psi} = \mathrm{Ad}^{-1}_{\pr_\Gamma}(\pr_M^{*}s^{*}\Omega) + \pr_\Gamma^{*}\Theta=\mathrm{Ad}^{-1}_{\pr_\Gamma}(p_M^{*}\Omega) + \pr_\Gamma^{*}\Theta=\Omega\text{.}
\end{equation*}
Here, $p_M: \idmorph{M} \times \Gamma \to \idmorph{M} \times \Gamma$ is $p_M(x,g):=(x,1)$. Note that $\id_{\idmorph{M} \times \Gamma} = R(p_M,\pr_{\Gamma})$.
We have, since $\fb\Psi=0$,
\begin{align*}
\fa{\Omega}_{\Psi} &= \mathrm{Ad}_{g_0}^{-1}(\fa\Psi) + g_0^{*}\theta 
\\
\fb{\Omega}_{\Psi} &= (\alpha_{g_1^{-1}})_{*} ( (l_{h}^{-1} \circ \alpha_{h})_{*}(s^{*}\fa\Psi)+h^{*}\theta) 
\\
\fc{\Omega}_{\Psi} &= (\alpha_{g_0^{-1}})_{*}(\fc\Psi) 
\end{align*}
\end{comment}
In particular, every connection on $\inf I$ is regular.
\end{example}

\begin{example}
Let $\Gamma = \idmorph{G}$, and $\Omega$ be a connection on a principal $\Gamma$-2-bundle. By \cref{ex:forms:codis} we have $\fb\Omega=\fc\Omega=0$, and the only condition for $\Omega$ is $R^{*}\fa\Omega = \mathrm{Ad}_{g_0}^{-1}(p_0^{*}\fa\Omega) + g_0^{*}\theta$. The curvature components are $\fa{\mathrm{curv}(\Omega)} = \mathrm{d}\fa\Omega +\frac{1}{2}[\fa\Omega \wedge \fa\Omega]$, while $\fb{\mathrm{curv}(\Omega)}=0$ and $\fc{\mathrm{curv}(\Omega)}=0$. Thus, every connection is automatically regular, and flat is equivalent to fake-flat. 
\end{example}

\begin{example}
\label{ex:bueinsconn}
Let $\Gamma = B\ueins$, and $\Omega$ be a connection on a principal $\Gamma$-2-bundle $\mathcal{P}$. By \cref{ex:forms:codisecht} we have $\fa\Omega=0$,  $\fb\Omega\in\Omega^{p}(\inf P_{1})$ subject to the condition $\Delta\fb\Omega=0$,
and $\fc\Omega\in\Omega^{p+1}(\inf P_{0})$. \Cref{def:connection} implies only that $R^{*}\fb\Omega
=p_1^{*}\fb\Omega +h^{*}\theta$. 
The non-trivial curvature-components are
\begin{equation*}
\fb{\mathrm{curv}(\Omega)} = \Delta\fc\Omega +\mathrm{d}\fb\Omega
\quand
\fc{\mathrm{curv}(\Omega)} =  \mathrm{d}\fc\Omega\text{.}
\end{equation*}
Thus, with \cref{re:regfake}, regular is equivalent to fake-flat. 
\end{example}

\begin{example}
\label{ex:gbunred}
Consider an ordinary principal $G$-bundle $P$ over $M$. Let $\act PH$ be the action groupoid for the right $H$-action on $P$ induced along $t:H \to G$. That is,  $\ob{\act PH}=P$ and $\mor{\act PH}=P \times H$,  source and target maps are $s(p,h)= p$ and $t(p,h)= pt(h)$, and composition is $(p_2,h_2) \circ (p_1,h_1) = (p_1,h_1h_2)$.  We define a functor
\begin{equation*}
R: \act PH \times \Gamma \to \act PH
\end{equation*}
by $R(p,g):=pg$ and $R((p,h),(h',g)) := (pg,\alpha(g^{-1},hh'))$. It is straightforward to check that this is a $\Gamma$-action.
\begin{comment}
For functoriality, we check
\begin{align*}
R((p_2,h_2),(h_2',g_2))\circ R((p_1,h_1),(h_1',g_1)) &=(p_2g_2,\alpha(g_2^{-1},h_2h_2'))\circ (p_1g_1,\alpha(g_1^{-1},h_1h'_1))
\\&= (p_1g_1,\alpha(g_1^{-1},h_1h'_1)\alpha(g_2^{-1},h_2h_2'))
\\&= (p_1g_1,\alpha(g_1^{-1},h_1h_2h'_2h_1'))
\\&= R((p_1,h_1h_2),(h_2'h_1',g_1))
\\&= R((p_2,h_2)\circ (p_1,h_1),(h_2',g_2)\circ (h_1',g_1))
\end{align*}
In the middle we have used that $g_2=t(h_1')g_1$.
We check
\begin{align*}
((p,h)\cdot(h_1,g_1))\cdot(h_2,g_2) &= (pg_1,\alpha(g_1^{-1},hh_1))\cdot (h_2,g_2)
\\&= (pg_1g_2,\alpha(g_2^{-1},\alpha(g_1^{-1},hh_1)h_2)
\\&= (pg_1g_2,\alpha(g_2^{-1}g_1^{-1},hh_1\alpha(g_1,h_2))
\\&= (p,h) \cdot (h_1\alpha(g_1,h_2),g_1g_2)\text{.}
\end{align*}
Let $U\subset M$ and supporting a section $s: U \to P$. 
\end{comment}
This construction is compatible with pullbacks, and is functorial in the sense that bundle morphisms $P \to P'$ induce smooth $\Gamma$-equivariant functors $\act PH \to \act {P'}H$. 
That $\act PH$ is a principal $\Gamma$-2-bundle can now be seen from the criteria of \cite[Prop. 6.2.8]{Nikolaus}: if $U\subset M$ is open and supports a section into $P$, we obtain the required 1-morphism 
\begin{equation*}
\inf I \cong \act{(U\times G)}H \cong \act{P|_U}H\cong (\act PH)|_U\text{.}
\end{equation*}
Next we assume that $\omega\in\Omega^1(P,\mathfrak{g})$ is a connection on $P$. 
We put 
\begin{equation*}
\fa\Omega := \omega
\quomma
\fb\Omega := (\tilde\alpha_{\pr_H})_{*}(\pr_P^{*}\omega)+\pr_H^{*}\theta \quand
\fc\Omega := 0\text{.}
\end{equation*}
It is straightforward to check that this is a connection on $\act PH$, 
\begin{comment}
Indeed, \cref{eq:conform:a} is satisfied because $\omega$ is a connection on $P$, and \cref{eq:conform:c} is clear. For \cref{eq:conform:b,} we have to show
\begin{align*}
(R^{*}\fb\Omega)_{(p,h)\cdot (h',g)} &= \fb\Omega_{(pg,\alpha(g^{-1},hh'))} \\&= (\tilde\alpha_{\alpha(g^{-1},hh')})_{*}(\omega_{pg})+\theta_{\alpha(g^{-1},hh')} 
\\&=  (\alpha_{g^{-1}})_{*} \left ( (\tilde\alpha_{hh'})_{*}( \mathrm{Ad}_g(\omega_{pg})+\theta_{g^{-1}})+\theta_{hh'}\right ) 
\\&=  (\alpha_{g^{-1}})_{*} \left ( \mathrm{   Ad}_{h'}^{-1}  (\tilde\alpha_{h} )_{*}(\omega_p  )+(\tilde\alpha_{h'})_{*}(\omega_p  
)+\mathrm{Ad}_{h'}^{-1}(\theta_{h})+\theta_{h'}\right ) 
\\&=(\alpha_{g^{-1}})_{*}\left (\mathrm{Ad}_{h'}^{-1}((\tilde\alpha_{h})_{*}(\omega_p)+\theta_h)+(\tilde \alpha_{h'})_{*}(\omega_p) +h'^{*}\theta \right)
\\&=(\alpha_{g^{-1}})_{*}\left (\mathrm{Ad}_{h'}^{-1}(\fb\Omega_{(p,h)})+(\tilde \alpha_{h'})_{*}(\fa\Omega_{p}) +h'^{*}\theta \right)
\end{align*}
Here we have used
\begin{align*}
(\tilde\alpha_{h_1h_2})_{*} &=  \mathrm{   Ad}_{h_2}^{-1} \circ (\tilde\alpha_{h_1} )_{*}+(\tilde\alpha_{h_2})_{*}
\\
(\tilde\alpha_{\alpha(g,h)})_{*} &=  (\alpha_g)_{*} \circ (\tilde\alpha_{h})_{*}\circ \mathrm{Ad}_g^{-1}
\\
\theta_{\alpha(g,h)} &= (\alpha_g)_{*}((l_h^{-1} \circ \alpha_h)_{*}(\theta_g) + \theta_h)
\end{align*}
\end{comment}
and to compute its curvature via \cref{eq:curv:a,eq:curv:b,eq:curv:c}:
\begin{align*}
\fa{\mathrm{curv}(\Omega)} = \psi
\quomma
\fb{\mathrm{curv}(\Omega)} = (\tilde\alpha_{\pr_H})_{*}(\pr_P^{*}\psi)
\quand
\fc{\mathrm{curv}(\Omega)} = 0\text{,}
\end{align*}
where $\psi := \mathrm{d}\omega+\frac{1}{2}[\omega\wedge\omega]\in\Omega^2(P,\mathfrak{g})$ is the ordinary curvature of the connection $\omega$.
\begin{comment}
Everything is obvious except for:
\begin{align*}
\fb{\mathrm{curv}(\Omega)}_{p,h} &=\mathrm{d}\fb\Omega_{p,h}+\frac{1}{2}[\fb\Omega_{p,h}\wedge \fb\Omega_{p,h}]+ \alpha_{*}(\omega_p \wedge \fb\Omega_{p,h}) 
\\&=  \mathrm{d}((\tilde\alpha_h)_{*}(\omega_p)+\theta_h)+\frac{1}{2}[((\tilde\alpha_h)_{*}(\omega_p)+\theta_h)\wedge ((\tilde\alpha_h)_{*}(\omega_p)+\theta_h)]
\\&\qquad+ \alpha_{*}(\omega_p \wedge ((\tilde\alpha_h)_{*}(\omega_p)+\theta_h))
\\&=  \mathrm{d}(\tilde\alpha_h)_{*}(\omega_p)+\mathrm{d}\theta_h+\frac{1}{2}[(\tilde\alpha_h)_{*}(\omega_p)\wedge (\tilde\alpha_h)_{*}(\omega_p)]+\frac{1}{2}[\theta_h\wedge \theta_h]+[(\tilde\alpha_h)_{*}(\omega_p)\wedge \theta_h]
\\&\qquad+ \alpha_{*}(\omega_p \wedge (\tilde\alpha_h)_{*}(\omega_p))+ \alpha_{*}(\omega_p \wedge \theta_h)
\\&=  (\tilde\alpha_h)_{*}(\mathrm{d}\omega_p)  +\frac{1}{2}[(\tilde\alpha_h)_{*}(\omega_p)\wedge (\tilde\alpha_h)_{*}(\omega_p)]+ \alpha_{*}(\omega_p \wedge (\tilde\alpha_h)_{*}(\omega_p))
\\&=  (\tilde\alpha_h)_{*}(\mathrm{d}\omega_p)  +\frac{1}{2}(\tilde\alpha_h)_{*}([\omega_p \wedge\omega_p])
\end{align*}
\end{comment} 
The connection $\Omega$ is always regular: consider over $P\times_MP$ the (global) transition span with $\sigma(p,p'):=(p,1)$ and $g:P \times_M P \to G$ defined such that $p'g(p,p')=p$. 
\begin{comment}
This is a transition span:
\begin{equation*}
t(\sigma(p,p'))=p
\quand
s(\sigma(p,p'))=p=R(p',g(p,p'))\text{.}
\end{equation*}
\end{comment}
Obviously, we have $\sigma^{*}\fb{\mathrm{curv}(\Omega)}=0$. Further, and even more obviously, we have
\begin{equation*}
\Omega\text{ is flat} 
\quad\gdw\quad
\Omega\text{ is fake-flat}
\quad\gdw\quad
\omega\text{ is flat.}
\end{equation*}
\end{example}

\subsection{Connection-preserving morphisms}

We recall from \cref{sec:pullbackana} that pulling back a $\gamma$-valued differential form along an anafunctor requires the choice of a pullback (see \cref{def:pullback}). 
\begin{definition}
If $\inf P_1$ and $\inf P_2$ are principal $\Gamma$-bundles over $M$ equipped with connections $\Omega_1$ and $\Omega_2$, respectively, then an $\Omega_2$-pullback $\nu$ on a 1-morphism $F: \inf P_1 \to \inf P_2$ is  called \emph{connection-preserving}, if $\Omega_1 = F_{\nu}^{*}\Omega_2$. If $F': \inf P_1 \to \inf P_2$ is a second 1-morphism equipped with an $\Omega_2$-pullback $\nu'$, then a 2-morphism $f:F \Rightarrow F'$ is called \emph{connection-preserving}, if $f^{*}\nu'=\nu$.
\end{definition}

Next we introduce two further conditions for an $\Omega_2$-pullback on $F$. Firstly, we require compatibility with the $\Gamma$-equivariance of $F$.  

\begin{definition}
Let $F: \inf P_1 \to \inf P_2$ be a $\Gamma$-equivariant anafunctor and let $\Omega_2\in \Omega^1(\inf P_2,\gamma)$ be a 1-form. An $\Omega_2$-pullback $\nu=(\nu_0,\nu_1)$ on $F$  is called \emph{connective},  if over $F \times \mor{\Gamma}$ we have
\begin{align}
\label{eq:conana:b}
\rho^{*}\nu_0 &= (\alpha_{g^{-1}})_{*}\left (\mathrm{Ad}_{h}^{-1}(\pr_F^{*}\nu_0)+(\tilde\alpha_{h})_{*}(\pr_F^{*}\alpha_r^{*}\fa{\Omega}_2) +h^{*}\theta \right) 
\\
\label{eq:conana:mu}
\rho^{*}\nu_1 &=  (\alpha_{g^{-1}})_{*} \big ( \mathrm{Ad}_h^{-1} (\pr_F^{*}\nu_1)+(\tilde \alpha_h)_{*}(t_{*} (\pr_F^{*}\alpha_r^{*}\fc\Omega_2)) \big )
 \text{,}
\end{align}
where $\rho: F \times \mor{\Gamma} \to F$ is the $\mor{\Gamma}$-action,
and the maps $g,h,\pr_F$ are defined by $g(f,(h',g')):= g'$, $h(f,(h',g')):= h'$ and $\pr_F(f,(h',g')):= f$. 
\end{definition}

\begin{proposition}
\label{prop:exconnpull}
Connective $\Omega_2$-pullbacks form a convex subset in the affine space of all $\Omega_2$-pullbacks. 
If the $G$-action on $\ob{\inf P_1}$ is free and proper, then connective $\Omega_2$-pullbacks exist.  
\end{proposition}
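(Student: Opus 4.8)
The statement has two parts. The convexity claim is the easy half, and I would dispatch it first. Given two connective $\Omega_2$-pullbacks $\nu=(\nu_0,\nu_1)$ and $\nu'=(\nu_0',\nu_1')$ and $t\in[0,1]$, set $\tilde\nu := (1-t)\nu + t\nu'$. Since the affine structure on $\Omega_2$-pullbacks from \cref{lem:pullexists} is componentwise addition of $\mathfrak{h}$-valued forms pulled back from $\ob{\inf P_1}$, and since both defining equations \cref{eq:conana:b,eq:conana:mu} are \emph{affine} in $(\nu_0,\nu_1)$ — the right-hand sides are sums of a linear term in $\nu$ and the inhomogeneous pieces $(\tilde\alpha_h)_*(\ldots)$, $h^*\theta$ that do not depend on $\nu$ — a convex combination again satisfies them, because the inhomogeneous terms are reproduced by $(1-t)+t=1$. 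I would spell this out for one equation and note the other is identical. The same affineness shows connective pullbacks form a convex subset, not merely a subset closed under the two given operations.

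The substance is the existence half. The plan is to build a connective $\Omega_2$-pullback from local data and glue with a partition of unity, mimicking the proof of \cref{lem:pullexists} but carrying the extra equivariance constraints \cref{eq:conana:b,eq:conana:mu}. First I would use freeness and properness of the $G$-action on $\ob{\inf P_1}$ to pass to a $G$-invariant open cover and a $G$-\emph{invariant} partition of unity: properness gives that $\ob{\inf P_1}/G$ is a manifold and $\ob{\inf P_1}\to\ob{\inf P_1}/G$ is a principal $G$-bundle, so I can choose a locally finite cover of the base, pull it back, and average (or pull back) a partition of unity to obtain $G$-invariant functions $\phi_\alpha$. Next, over each cover element I choose a local \splitting\ of $F$ (via \cref{lem:Fsections}), equivalently a local section against $\alpha_l$, yielding equivariant projection maps $l_\alpha:F|_{V_\alpha}\to\mor{\inf P_2}$ as in \cref{lem:pullexists}. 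The naive averaged forms $\sum_\alpha \alpha_l^*\phi_\alpha\cdot l_\alpha^*\fb\Phi$ already satisfy the $\inf P_2$-equivariance conditions \cref{eq:conana:cr,eq:conana:cs} of a $\Phi$-pullback; the issue is that they need not satisfy the $\Gamma$-equivariance conditions \cref{eq:conana:b,eq:conana:mu}.

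The decisive step, and the main obstacle, is enforcing the $\Gamma$-equivariance. Here I would exploit the $G$-invariance of the cover and partition of unity together with the compatibility of the $\mor{\Gamma}$-action with the anchors and the $\inf P_2$-action (\cref{def:rightaction}). The idea is to average the candidate pullback over the $G$-action: because the right-hand sides of \cref{eq:conana:b,eq:conana:mu} are precisely the transformation law dictated by the adjoint action $\mathrm{Ad}$ and the Maurer-Cartan form $\Theta$ (compare \cref{eq:conform:b,eq:conform:c} defining a connection, and the defining equation of $\Omega_2$ itself as a connection), the condition that $\nu$ be connective is exactly the statement that $F^*_\nu\Omega_2$ is $R$-equivariant in the same way $\Omega_1$ is. Concretely, I would show that if one chooses the local \splitting s so that their transition functions are suitably normalized and uses the freeness of the $G$-action to split tangent spaces (as in \cref{re:splittingF}), then requiring the two equivariance equations reduces to a pointwise \emph{linear-algebraic} solvability condition for $\nu_0$, $\nu_1$ along the $\mor{\Gamma}$-orbit directions — and this is solvable because the orbit directions are complementary to the horizontal image of the \splitting. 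The freeness of the $G$-action is what guarantees these orbit directions are genuinely transverse (so there is no overdetermination), and properness is what makes the global averaging/gluing converge to a smooth form. I expect the genuinely delicate point to be checking that the locally defined, $\Gamma$-corrected forms agree on overlaps; this should follow from the uniqueness clause in \cref{lem:Fsections} (two \splitting s differ by a unique $H$-valued map), which controls precisely how $l_\alpha^*\fb\Phi$ and $l_\beta^*\fb\Phi$ differ, combined with $\Delta\fb\Phi=0$ to kill the correction terms after summing against the partition of unity.
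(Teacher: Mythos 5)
Your convexity argument is correct and is exactly what the paper does: \cref{eq:conana:b,eq:conana:mu} are affine in $(\nu_0,\nu_1)$, so a convex combination reproduces the inhomogeneous terms because the coefficients sum to $1$. The existence half, however, has a genuine gap at precisely the step you flag as decisive. First, \quot{averaging the candidate pullback over the $G$-action} is not an available operation: $G$ is an arbitrary, in general noncompact, Lie group, so there is no invariant measure to average against; and a $G$-invariant partition of unity does not help, because the local trivializations $l_\alpha$ of $F$ as a principal $\inf P_2$-bundle have no reason to intertwine the $\mor{\Gamma}$-action, so the summands $\alpha_l^{*}\phi_\alpha\cdot l_\alpha^{*}\fb\Phi$ fail equivariance for reasons a $G$-invariant weighting cannot repair. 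Second, the claim that the equivariance equations reduce to a \quot{pointwise linear-algebraic solvability condition along the orbit directions} mischaracterizes \cref{eq:conana:b,eq:conana:mu}: these relate the value of $\nu$ at $\rho(f,\gamma)$ to its value at $f$ for \emph{every} $\gamma$, i.e.\ they are coherence conditions along entire $G$-orbits, not pointwise conditions, and freeness/properness are not used for transversality or \quot{convergence of the averaging}.

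The argument that actually works is different in two respects. One starts from an arbitrary $\Omega_2$-pullback $\nu$ provided by \cref{lem:pullexists} and first observes that equivariance under elements of the form $(h,t(h)^{-1})\in\mor{\Gamma}$ is automatic: such elements act through the right $\inf P_2$-action, $\rho(f,(h,t(h)^{-1}))=f\circ R(\id_{\alpha_r(f)},(h,t(h)^{-1}))$, so \cref{eq:conana:cr,eq:conana:cs} together with the fact that $\Omega_2$ is a connection already yield \cref{eq:conana:b,eq:conana:mu} for them; since $(h,g)=(h,t(h)^{-1})\cdot(1,t(h)g)$, only equivariance under $G\subset\mor{\Gamma}$ remains. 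The failure of that $G$-equivariance for $\nu_0$ is a 1-form on $F\times G$ satisfying a cocycle identity over $F\times G\times G$; it descends along $\alpha_l\times\id$ to $\ob{\inf P_1}\times G$, and here is where freeness and properness enter: they make $\ob{\inf P_1}\to X:=\ob{\inf P_1}/G$ a principal $G$-bundle, so that $\ob{\inf P_1}\times G^{k}\cong\ob{\inf P_1}^{[k]}_{X}$ and the descended form is a degree-one cocycle in the complex $\Omega^1(X)\to\Omega^1(\ob{\inf P_1})\to\Omega^1(\ob{\inf P_1}^{[2]}_X)\to\cdots$, which is exact \cite{murray}. Hence the cocycle equals $R^{*}\kappa_0-\pr_{\ob{\inf P_1}}^{*}\kappa_0$ for some $\kappa_0\in\Omega^1(\ob{\inf P_1})$; treating $\nu_1$ the same way gives $\kappa_1$, and the shifted pullback $\nu+\alpha_l^{*}(\kappa_0,\kappa_1)$ is connective. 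Your proposal is missing both the reduction to the $G$-direction and this cocycle-plus-exactness mechanism, and the averaging it substitutes for them does not exist.
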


\begin{proof}
Convexity follows immediately.
\begin{comment}
Indeed,
\begin{align*}
\rho^{*}((1-t)\nu_0+t\nu_0')&=(1-t)(\alpha_{g^{-1}})_{*}\left (\mathrm{Ad}_{h}^{-1}(\pr_F^{*}\nu_0)+(\tilde\alpha_{h})_{*}(\pr_F^{*}\alpha_r^{*}\fa{\Omega}_2) +h^{*}\theta \right) \\ &\qquad+t(\alpha_{g^{-1}})_{*}\left (\mathrm{Ad}_{h}^{-1}(\pr_F^{*}\nu_0')+(\tilde\alpha_{h})_{*}(\pr_F^{*}\alpha_r^{*}\fa{\Omega}_2) +h^{*}\theta \right) 
\\&=(\alpha_{g^{-1}})_{*}\left (\mathrm{Ad}_{h}^{-1}(\pr_F^{*}((1-t)\nu_0+t\nu_0'))+(\tilde\alpha_{h})_{*}(\pr_F^{*}\alpha_r^{*}\fa{\Omega}_2) +h^{*}\theta \right) 
\end{align*}
\end{comment}
By \cref{lem:pullexists} there exists an $\Omega_2$-pullback $\nu$, but in general it will not be connective. We can write any element $(h,g)\in \mor{\Gamma}$ as $(h,g)=(h,t(h)^{-1})\cdot (1,t(h)g)$. The action of elements of the form of the first factor can be rewritten as
\begin{equation*}
\rho(f,(h,t(h)^{-1})) =  f \circ R(\id_{\alpha_r(f)},(h,t(h)^{-1}))\text{,}
\end{equation*}
using the compatibility between the $\mor{\Gamma}$-action and the right $\inf P_2$-action. From the conditions on $\Omega_2$-pullbacks and the fact that $\Omega_2$ is a connection, one can deduce that $\nu$ satisfies \cref{eq:conana:b,eq:conana:mu} for group elements of the form $(h,t(h)^{-1})$. \begin{comment}
Indeed,
\begin{align*}
\nu_0|_{\rho(f,(h,t(h)^{-1}))}&=\nu_0|_{f \circ R(\id_{\alpha_r(f)},(h,t(h)^{-1}))}
\\&=\nu_0|_f + \fb\Omega_2|_{ R(\id_{\alpha_r(f)},(h,t(h)^{-1}))}
\\&=\nu_0|_f + (\alpha_{t(h)})_{*}\left ((\tilde \alpha_{h})_{*}(\fa\Omega_2|_{\alpha_r(f)}) +h^{*}\theta \right)
\\&=(\alpha_{t(h)})_{*}\left (\mathrm{Ad}_h^{-1}(\nu_0|_f) + (\tilde \alpha_{h})_{*}(\fa\Omega_2|_{\alpha_r(f)}) +h^{*}\theta \right)
\end{align*}
and
\begin{align*}
\nu_1|_{\rho(f,(h,t(h)^{-1}))}&=\nu_1|_{f \circ R(\id_{\alpha_r(f)},(h,t(h)^{-1}))}
\\&=\nu_1|_f + (\Delta\fc\Omega_2)|_{ R(\id_{\alpha_r(f)},(h,t(h)^{-1}))}
\\&=\nu_1|_f + \fc\Omega_2|_{ \alpha_r(f)}-\fc\Omega_2|_{ R(\alpha_r(f),t(h)^{-1})}
\\&=\nu_1|_f + \fc\Omega_2|_{ \alpha_r(f)}-(\alpha_{t(h)})_{*}(\fc\Omega_2|_{ \alpha_r(f)})
\\&=\nu_1|_f +(\alpha_{t(h)})_{*} (\mathrm{Ad}_h^{-1}(\fc\Omega_2|_{ \alpha_r(f)})-\fc\Omega_2|_{ \alpha_r(f)})
\\&=(\alpha_{t(h)})_{*}\left (\mathrm{Ad}_h^{-1}(\nu_0|_f) + (\tilde \alpha_{h})_{*}(t_{*} (\fc\Omega_2|_{\alpha_r(f)})) \right)
\end{align*}
\end{comment}
Thus, we can reduce the problem of finding connective $\Omega_2$-pullbacks to the problem of finding an $\Omega_2$-pullback that satisfies \cref{eq:conana:b,eq:conana:mu} with respect to the $G$-action induced along the identity map $\id_{\Gamma}:G \to \mor{\Gamma}$ of $\Gamma$. We first treat $\nu_0$.
Consider the 1-form
\begin{equation*}
\varphi := (\id_{F} \times \id_{\Gamma})\rho^{*}\nu_0 - (\alpha_{\pr_G^{-1}})_{*}(\pr_F^{*}\nu_0) \in \Omega^1(F \times G,\mathfrak{h})
\end{equation*}
which measures the failure in \cref{eq:conana:b}.
\begin{comment}
In point-wise notation, this is
\begin{align*}
\varphi_{f,g} := \nu_{\rho(f,(1,g))} - (\alpha_{g^{-1}})_{*}(\nu_f) \in \Omega^1(F \times G,\mathfrak{h})
\end{align*}
\end{comment}
The 1-form $\varphi$  satisfies a cocycle condition over $F \times G \times G$, namely
\begin{equation}
\label{eq:cocyclephi}
(\id_F \times m)^{*}\varphi = (\rho \times \id_G)^{*}\varphi + \pr_{12}^{*}\varphi\text{.}
\end{equation}
\begin{comment}
In the point-wise notation, this is 
\begin{equation*}
\varphi_{f,gg'}=\varphi_{\rho(f,g),g'}+\varphi_{f,g}\text{.}
\end{equation*}
\end{comment}
That $\Omega_2$ is a connection implies that $\varphi$ descends along  $\alpha_l \times \id: F \times G \to \ob{\inf P_1} \times G$. 
\begin{comment}
We check
\begin{align*}
\varphi_{f \circ \rho,g}&=\nu_{\rho(f\circ \rho,(1,g))} - (\alpha_{g^{-1}})_{*}(\nu_{f\circ \rho}) 
\\&=\nu_{\rho(f,(1,g)) \circ R(\rho,(1,g))} - (\alpha_{g^{-1}})_{*}(\nu_{f}+\fb\Omega_2|_{\rho})
\\&=\nu_{\rho(f,(1,g))}+\fb\Omega_2|_{R(\rho,(1,g))} - (\alpha_{g^{-1}})_{*}(\nu_{f})- (\alpha_{g^{-1}})_{*}(\fb\Omega_2|_{\rho})
\\&=  \nu_{\rho(f,(1,g))} - (\alpha_{g^{-1}})_{*}(\nu_f)
\\&= \varphi_{f,g}
\end{align*}
\end{comment}
Thus, we have a unique $\psi\in\Omega^1(\ob{\inf P_1}\times G,\mathfrak{h})$ such that $(\alpha_l \times \id)^{*}\psi=\varphi$. The cocycle condition \cref{eq:cocyclephi} implies a analogous condition for $\psi$ over $\ob{\inf P_1}\times G \times G$. Our assumptions for the $G$-action guarantee that $\ob{\inf P_1}$ is a principal $G$-bundle over the quotient $X:= \ob{\inf P_1}/G$. We obtain diffeomorphisms $\ob{\inf P_1} \times G^{k} \cong \ob{\inf P_1}^{[k]}_X$, under which $\psi$ becomes a cocycle in the complex
\begin{equation*}
\Omega^1(X) \to \Omega^1(\ob{\inf P_1}) \to \Omega^1(\ob{\inf P_1}_X^{[2]})\to \Omega^1(\ob{\inf P_1}_X^{[3]})\to...
\end{equation*}
This complex is exact \cite{murray}, so that there exists $\kappa_0\in\Omega^1(\ob{\inf P_1})$ such that $\psi = R^{*}\kappa_0-\pr_{\ob{\inf P_1}}^{*}\kappa_0$. Treating $\nu_1$ the very same way, we obtain $\kappa_1\in\Omega^2(\ob{\inf P_1})$.
It is then straightforward to check that the shifted pullback $\nu' := \nu + \alpha_l^{*}(\kappa_0,\kappa_1)$ is an $\Omega_2$-pullback and satisfies  \cref{eq:conana:b,eq:conana:mu} with respect to the induced $G$-action. 
\begin{comment}
Indeed,
\begin{align*}
\rho^{*}(\nu_i+\alpha_l^{*}\kappa_i)-\pr_F^{*}(\nu_i+\alpha_l^{*}\kappa_i)&=\rho^{*}\nu_i -\pr_F^{*}\nu_i+ (\alpha_l \times t)^{*}(R^{*}\kappa_i-\pr_{\ob{\inf P_1}}^{*}\kappa_i)
\\&= \varphi + (\alpha_l \times t)^{*}\psi
\\&=0\text{.}
\end{align*}
\end{comment}
\end{proof}

Secondly, we introduce a notion of fake-curvature for pullbacks of 1-forms, and then impose flatness conditions.

\begin{definition}
\label{def:curvpullback}
Let $F: \inf P_1 \to \inf P_2$ be an anafunctor, let $\Omega_2\in \Omega^1(\inf P_2,\gamma)$ be a 1-form, and let $\nu=(\nu_0,\nu_1)$ be an $\Omega_2$-pullback on $F$. 
\begin{enumerate}[(a)]

\item 
The 2-form
\begin{equation*}
\mathrm{fcurv}(\nu):=\mathrm{d}\nu_0+ \frac{1}{2}[\nu_0\wedge \nu_0]+ \alpha_{*}(\alpha_r^{*}\fa\Omega_2 \wedge \nu_0)+\nu_1 \in \Omega^2(F,\mathfrak{h})
\end{equation*}
is called the \emph{fake-curvature} of $\nu$, and $\nu$ is called \emph{fake-flat}, if $\mathrm{fcurv}(\nu)=0$. 
\item
$\nu$ is called \emph{regular}, if every point $(p_1,p_2)\in \ob{\inf P_1} \times_M \ob{\inf P_2}$ has an open neighborhood $U$ supporting a \splitting\ $\sigma$ of $F$ with $\sigma^{*}\mathrm{fcurv}(\nu)=0$.

\item
The 3-form
\begin{equation*}
\mathrm{curv}(\nu) := \mathrm{d}\nu_1+ [\nu_0 \wedge \nu_1]+\alpha_{*}(\alpha_r^{*} \fa\Omega_2\wedge \nu_1 ) \in \Omega^3(F,\mathfrak{h})
\end{equation*}
is called the \emph{curvature} of $\nu$, and $\nu$ is called \emph{flat}, if $\mathrm{fcurv}(\nu)=0$ and $\mathrm{curv}(\nu)=0$.

\end{enumerate}
\end{definition}

\begin{remark}
\label{re:curvpull}
Fake-curvature and curvature in \cref{def:curvpullback} satisfy
\begin{equation*}
\mathrm{D}\nu+\frac{1}{2}[\nu\wedge\nu] = (\mathrm{fcurv}(\nu),\mathrm{curv}(\nu))
\end{equation*}
with respect to the operations on pullbacks introduced in \cref{lem:pullback:rules}. In particular, $\nu' := (\mathrm{fcurv}(\nu),\mathrm{curv}(\nu))$ is a $\mathrm{curv}(\Omega_2)$-pullback on $F$, and we have $\mathrm{curv}(\Omega_1) = F^{*}_{\nu'}(\mathrm{curv}(\Omega_2))$.
\begin{comment}
Indeed,
\begin{equation*}
\mathrm{curv}(\Omega_1)=\mathrm{D}F_{\nu}^{*}\Omega_2 + \frac{1}{2}[F_{\nu}^{*}\Omega_2\wedge F_{\nu}^{*}\Omega_2] = F_{\mathrm{D}\nu}^{*}(\mathrm{D}\Omega_2) + \frac{1}{2}F_{[\nu \wedge \nu]}^{*}([\Omega_2 \wedge \Omega_2]) = F^{*}_{\mathrm{D}\nu + \frac{1}{2}[\nu \wedge \nu]}(\mathrm{curv}(\Omega_2))
\end{equation*}
where $\mathrm{D}\nu = (\mathrm{d}\nu_0+\nu_1,\mathrm{d}\nu_1)$ and
\begin{align*}
[\nu\wedge \nu]_0 &= [\nu_0\wedge \nu_0]+2\alpha_{*}(\alpha_r^{*}\fa{\Omega}_2\wedge \nu_0)
\\
[\nu\wedge \nu]_1 &=2 [\nu_0 \wedge \nu_1]+2\alpha_{*}(\alpha_r^{*} \fa\Omega_2\wedge \nu_1 )\text{,}
\end{align*}
so that
\begin{equation*}
\nu' :=\mathrm{D}\nu + \frac{1}{2}[\nu \wedge \nu] =\left(\mathrm{fcurv}(\nu), \mathrm{d}\nu_1+ [\nu_0 \wedge \nu_1]+\alpha_{*}(\alpha_r^{*} \fa\Omega_2\wedge \nu_1 )\right )\text{.}
\end{equation*}
\end{comment}
\end{remark}

\begin{remark}
Fake-flat obviously implies regular. Conversely,  if $\Omega_2$ is a fake-flat connection on $\inf P_2$, and $\nu$ is regular and connective, then $\nu$ is fake-flat. In order to see this, we first derive using connectivity of $\nu$ the following transformation rule of the fake-curvature under the $\mor\Gamma$-action on $F$:
\begin{equation}
\label{eq:transcurv}
\rho^{*}\mathrm{fcurv}(\nu)= (\alpha_{g^{-1}})_{*}\big(\mathrm{Ad}_h^{-1}(\pr_F^{*}\mathrm{fcurv}(\nu))
+(\tilde\alpha_h)_{*}(\pr_F^{*}\alpha_r^{*}\fa{\mathrm{curv}(\Omega_2)}) \big )\text{.}
\end{equation}
Fake-flatness of $\Omega_2$ and \cref{eq:transcurv} imply
\begin{equation}
\label{eq:re:fakeflatmorph:1}
\rho^{*}\mathrm{fcurv}(\nu)= (\alpha_{g^{-1}})_{*}\big(\mathrm{Ad}_h^{-1}(\pr_F^{*}\mathrm{fcurv}(\nu))\text{.}
\end{equation}
For $f\in F$ consider the splitting of $T_{f}F$ of \cref{re:splittingF} with respect to a \splitting $\sigma$ with $\sigma^{*}\mathrm{fcurv}(\nu)=0$. \Cref{eq:re:fakeflatmorph:1} implies that $\mathrm{fcurv}(\nu)$ vanishes when applied to at least one \quot{vertical} tangent vector, i.e. one in the image of $T\mor{\Gamma}$.
\begin{comment}
Let $w\in T_hH$ and $u\in T_gG$. Then, 
\begin{equation*}
T_{(h,g)}R_{\rho}(w,u) =T_{\rho,(h,g)}R(0,(w,u))\text{.}
\end{equation*}
Indeed, if $w$ and $u$ are represented by smooth curves $\omega$ and $\gamma$, respectively, then
\begin{equation*}
T_{(h,g)}R_{\rho}(\ddt 1t0 (\omega,\gamma))=\ddt 1t0 R_{\rho}(\omega,\gamma)=\ddt 1t0 R(\rho,(\omega,\gamma))=T_{\rho,(h,g)}R(\ddt 1t0 (\rho,(\omega,\gamma)))\text{.}
\end{equation*}
We conclude that $\fb{\mathrm{curv}(\Omega)}|_{R(\rho,(h,g))}(\tilde v_1,\tilde v_2)$ is zero whenever  $\tilde v_1$ or $\tilde v_2$ is of the form $T_{(h,g)}R_{\rho}(w,u)$. \end{comment}
\Cref{eq:re:fakeflatmorph:1} implies further that
\begin{equation*}
\sigma^{*}\rho_{h,g}^{*}\mathrm{fcurv}(\nu) = (\alpha_{g^{-1}})_{*} (\mathrm{Ad}_{h}^{-1}(\sigma^{*}\mathrm{fcurv}(\nu)) )\text{.} 
\end{equation*}
Hence, regularity of $\nu$ implies that $\mathrm{fcurv}(\nu)$ vanishes when applied to two \quot{horizontal} tangent vectors. In summary, the splitting into horizontal and vertical parts gives $\mathrm{fcurv}(\nu)=0$.
\end{remark}

\begin{remark}
If $\Omega_2$ is fake-flat and $F: \inf P_1 \to \inf P_2$ admits a connective $\Omega_2$-pullback, then $F$ also admits a connective and fake-flat $\Omega_2$-pullback. In order to see this, we derive the following transformation law under the right $\inf P_2$-action $\rho_r$, for the fake-curvature of  a  $\Omega_2$-pullback $\nu=(\nu_0,\nu_1)$:
\begin{equation*}
\rho_r^{*}\mathrm{fcurv}(\nu)= \pr_F^{*} \mathrm{fcurv}(\nu) + \pr_{\mor{\inf P_2}}^{*} \fb{\mathrm{curv}}(\Omega_2)\text{.}
\end{equation*}
\begin{comment}
Indeed,
\begin{align*}
\mathrm{fcurv}(\nu)|_{f \circ \eta} &= \mathrm{d}\nu_0|_{f \circ \eta}+ \frac{1}{2}[\nu_0|_{f \circ \eta}\wedge \nu_0|_{f \circ \eta}]+ \alpha_{*}(\fa\Omega_2 |_{s(\eta)}\wedge \nu_0|_{f \circ \eta})+\nu_1|_{f \circ \eta}
\\&= \mathrm{d}\nu_0|_{f} +\mathrm{d} \fb\Omega_2|_{\eta}+ \frac{1}{2}[\nu_0|_f \wedge \nu_0|_f ]+ \frac{1}{2}[ \fb\Omega_2|_{\eta}\wedge  \fb\Omega_2|_{\eta}]+ [\nu_0|_f \wedge  \fb\Omega_2|_{\eta}]
\\&\qquad + \alpha_{*}(\fa\Omega_2 |_{s(\eta)}\wedge  \nu_0|_f )+ \alpha_{*}(\fa\Omega_2 |_{s(\eta)}\wedge  \fb\Omega_2|_{\eta})+\nu_1|_f + (\Delta\fc\Omega_2)_{\eta}
\\&= \mathrm{d}\nu_0|_{f} + \frac{1}{2}[\nu_0|_f \wedge \nu_0|_f ]+ \alpha_{*}(\fa\Omega_2 |_{t(\eta)}\wedge  \nu_0|_f )+\nu_1|_f
\\&\qquad+\mathrm{d} \fb\Omega_2|_{\eta}+ \frac{1}{2}[ \fb\Omega_2|_{\eta}\wedge  \fb\Omega_2|_{\eta}]+ \alpha_{*}(\fa\Omega_2 |_{s(\eta)}\wedge  \fb\Omega_2|_{\eta}) + (\Delta\fc\Omega_2)_{\eta}
\\&\qquad + \alpha_{*}(\fa\Omega_2 |_{s(\eta)}\wedge  \nu_0|_f )+ [\nu_0|_f \wedge  \fb\Omega_2|_{\eta}]- \alpha_{*}(\fa\Omega_2 |_{t(\eta)}\wedge  \nu_0|_f )
\\&= \mathrm{fcurv}(\nu)|_{f} + \fb{\mathrm{curv}}(\Omega_2)|_\eta
\end{align*}
\end{comment}
Since $\Omega_2$ is fake-flat, $\mathrm{fcurv}(\nu)$ descends to a 2-form $\kappa\in \Omega^2(\ob{\inf P_1},\mathfrak{h})$ with $\alpha_{l}^{*}\kappa =  \mathrm{fcurv}(\nu)$. The shifted pullback $\nu^{\kappa}=(\nu,\nu_1-\alpha_l^{*}\kappa_1)=(\nu_0,\nu_1-\mathrm{fcurv}(\nu))$ is then obviously  fake-flat. For connectivity of the new pullback $\nu^{\kappa}$, we only have to check \cref{eq:conana:mu}, i.e.
\begin{equation*}
\rho^{*}\nu_1^{\kappa} =  (\alpha_{g^{-1}})_{*} \big ( \mathrm{Ad}_h^{-1} (\pr_F^{*}\nu_1^{\kappa})+(\tilde \alpha_h)_{*}(t_{*} (\pr_F^{*}\alpha_r^{*}\fc\Omega_2)) \big )\text{.}
\end{equation*} 
This follows directly from the connectivity of $\nu_1$ and 
\cref{eq:re:fakeflatmorph:1}.

\end{remark}

We recall that the composition of anafunctors comes along with a composition  of pullbacks on anafunctors, see \cref{lem:pullback:comp}. 

\begin{proposition}
The composition of pullbacks on 1-morphisms between principal $\Gamma$-2-bundles preserves the following conditions for pullbacks: \quot{connection-preserving}, \quot{connective}, \quot{regular}, and \quot{fake-flat}.
\end{proposition}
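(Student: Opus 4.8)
The plan is to verify each of the four properties by tracking how it behaves under the composition formula for pullbacks given in \cref{lem:pullback:comp}. Recall that if $\nu$ is a $\Phi$-pullback on $G:\inf P_2 \to \inf P_3$ and $\nu'$ is a $(G_{\nu}^{*}\Omega_3)$-pullback on $F:\inf P_1 \to \inf P_2$, then the composite $\nu'\circ \nu$ has components $\tilde\nu_0= \pr_F^{*}\nu_0'+ \pr_G^{*}\nu_0$ and $\tilde\nu_1= \pr_F^{*}\nu_1'+ \pr_G^{*}\nu_1$ on the fibre product $F \ttimes{\alpha_r}{\alpha_l}G$, descending to $G \circ F$. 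The overall strategy is that each of the four conditions is either a naturality statement that follows immediately from the identity $(G \circ F)^{*}_{\nu'\circ \nu}\Omega_3=F^{*}_{\nu'}(G^{*}_{\nu}\Omega_3)$, or a local/pointwise statement whose additive structure in $\tilde\nu$ makes it transparent.

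First I would treat \quot{connection-preserving}, which is the easiest: if $\nu$ is connection-preserving on $G$ and $\nu'$ on $F$, then $G^{*}_{\nu}\Omega_3=\Omega_2$ and $F^{*}_{\nu'}\Omega_2=\Omega_1$, so by \cref{lem:pullback:comp} we get $(G \circ F)^{*}_{\nu'\circ \nu}\Omega_3=F^{*}_{\nu'}(G^{*}_{\nu}\Omega_3)=F^{*}_{\nu'}\Omega_2=\Omega_1$; hence $\nu'\circ \nu$ is connection-preserving. Next, for \quot{connective} I would work over $(F \ttimes{\alpha_r}{\alpha_l}G) \times \mor{\Gamma}$ using the induced $\mor{\Gamma}$-action from \cref{eq:companafunctorsaction}, namely $((f,g),\gamma) \mapsto (\rho(f,\gamma),\tau(g,\id_{s(\gamma)}))$. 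On this composite action, the element $\gamma=(h',g')$ acts on the $F$-factor by the full $(h',g')$ and on the $G$-factor by $(1,g')$. I would substitute the defining relations \cref{eq:conana:b,eq:conana:mu} for $\nu'$ on $F$ (with the full element) and for $\nu$ on $G$ (with the element $(1,g')$, for which $h=1$ so $\mathrm{Ad}_1=\id$ and $\tilde\alpha_1=0$), and check that the sum $\tilde\nu_0=\pr_F^{*}\nu_0'+\pr_G^{*}\nu_0$ again satisfies \cref{eq:conana:b}; the $\fa\Omega_2=G^{*}_{\nu}\Omega_3$ term appearing in the $F$-relation must be matched against the $\pr_G$-contributions, and this matching is exactly what connection-preservation of $\nu$ supplies. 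The analogous computation handles $\tilde\nu_1$ and \cref{eq:conana:mu}.

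For \quot{fake-flat} and \quot{regular} I would exploit \cref{re:curvpull}, which identifies $(\mathrm{fcurv}(\nu),\mathrm{curv}(\nu))$ with $\mathrm{D}\nu+\tfrac{1}{2}[\nu\wedge\nu]$, a $\mathrm{curv}(\Omega_3)$-pullback. Since composition of pullbacks is compatible with $\mathrm{D}$ and the bracket (\cref{lem:pullback:rules,lem:pullback:comp}), the fake-curvature pair of the composite is the composite of the fake-curvature pairs: concretely $\mathrm{fcurv}(\nu'\circ \nu)=\pr_F^{*}\mathrm{fcurv}(\nu')+\pr_G^{*}\mathrm{fcurv}(\nu)$ after descent. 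Thus if both $\nu$ and $\nu'$ are fake-flat, so is $\nu'\circ\nu$. For regularity the argument is local: given $(p_1,p_3)\in \ob{\inf P_1}\times_M\ob{\inf P_3}$, I would pick a point $p_2$ in the common fibre and take \splitting s $\sigma'$ of $F$ along $(p_1,p_2)$ and $\sigma$ of $G$ along $(p_2,p_3)$ on which the respective fake-curvatures pull back to zero, then assemble a \splitting\ of $G\circ F$ from the pair $(\sigma',\sigma)$ and use the additive formula for $\mathrm{fcurv}(\nu'\circ\nu)$ to conclude its pullback vanishes. The main obstacle I anticipate is bookkeeping in the connective case: one must correctly interpret which group element acts on which factor under \cref{eq:companafunctorsaction}, and verify that the mixed term $(\tilde\alpha_h)_{*}(\pr_F^{*}\alpha_r^{*}\fa\Omega_2)$ in \cref{eq:conana:b} for $F$ is consistent with the fact that $\fa\Omega_2$ is itself recovered as $\fa{(G^{*}_{\nu}\Omega_3)}$ via \cref{eq:conana:a}; everything else is a direct consequence of the additive structure of composition and the naturality identity in \cref{lem:pullback:comp}.
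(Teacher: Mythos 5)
Your proposal is correct and follows essentially the same route as the paper: connection-preservation is immediate from the composition lemma, connectivity is the direct check against the induced $\mor{\Gamma}$-action (where the identity $\alpha_l^{*}\fa{\Omega}_2 = t_{*}(\nu_0)+\alpha_r^{*}\fa{\Omega}_3$ supplied by connection-preservation does exactly the matching you describe), fake-flatness follows from the additive formula $\mathrm{fcurv}(\nu'\circ\nu)=\pr_F^{*}\mathrm{fcurv}(\nu')+\pr_G^{*}\mathrm{fcurv}(\nu)$, and regularity is obtained by gluing transition spans through an intermediate point. The one place needing care beyond your sketch is the regularity step: to assemble a transition span of the composite one must choose the intermediate point in $\ob{\inf P_2}$ smoothly via a local section of $\pi_2$, and twist the second transition span by $\id_{g_{12}}$ so that the pair actually lies in the fibre product $F\ttimes{\alpha_r}{\alpha_l}G$ --- exactly as the paper does.
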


\begin{proof}
That the composite pullback is connection-preserving is the content  of \cref{lem:pullback:comp}. Connectivity is straightforward to check using the formulas for the composite pullback (\cref{lem:pullback:comp}) and the definition of the  $\mor{\Gamma}$-action on the composite anafunctor (\cref{lem:pullback:rules}).
Concerning fake-flatness, one can deduce the formula
\begin{equation}
\label{eq:prop:compo:1}
\mathrm{fcurv}(\nu' \circ \nu) = \pr_{F_2}^{*} \mathrm{fcurv}(\nu') + \pr_{F_1}^{*} \mathrm{fcurv}(\nu)
\end{equation}
for the fake-curvature of the composite pullback $\nu' \circ \nu$  on a composite  $F' \circ F$ of anafunctors $F:\inf P_1 \to \inf P_2$ and $F': \inf P_2 \to \inf P_3$. 
\begin{comment}
Indeed
\begin{eqnarray*}
 \mathrm{curv}_{\tilde\nu,\tilde\mu}(G\circ F)_{(f,g)}&=&\mathrm{d}\tilde\nu_{(f,g)}+ \frac{1}{2}[\tilde\nu _{(f,g)}\wedge \tilde \nu_{(f,g)}]+ \alpha_{*}(\fa\Omega_3|_{\alpha_r(f,g)} \wedge \tilde \nu_{(f,g)})+\tilde \mu_{(f,g)}
\\&=& \mathrm{d}\nu'_{f}+\mathrm{d}\nu_g+ \frac{1}{2}[\nu' _{f}+\nu_g\wedge \nu' _{f}+\nu_g]+ \alpha_{*}(\fa\Omega_3|_{\alpha_r(f,g)} \wedge \nu' _{f}+\nu_g)+\mu' _{f}+\mu_g
\\&=& \mathrm{d}\nu'_{f}+ \frac{1}{2}[\nu' _{f}\wedge \nu' _{f}]+\mu' _{f}+ \alpha_{*}(\fa\Omega_2|_{\alpha_l(g)} - t_{*}(\nu_g) \wedge \nu' _{f})\\&&\qquad+\mathrm{d}\nu_g+ \frac{1}{2}[\nu_g\wedge \nu_g]+ \alpha_{*}(\fa\Omega_3|_{\alpha_r(g)} \wedge \nu_g)+\mu_g + [\nu' _{f}\wedge \nu_g]
\\&=&  \mathrm{curv}_{\nu',\mu'}(F)_{f}+ \mathrm{curv}_{\nu,\mu}(G)_{g}\text{.} \end{eqnarray*}
\end{comment}
This shows that fake-flat pullbacks compose to fake-flat ones. For regularity, consider $p_i\in \ob{\inf P_i}$ for $i=1,2,3$ all projecting to the same point $x\in M$. Choose an open neighborhood $U_{12}$ of $(p_1,p_2)$ with a \splitting\ $\sigma_{12}$ of $F$ such that $\sigma_{12}^{*}\mathrm{fcurv}(\nu)=0$, 
and an open neighborhood $U_{23}$ of $(p_2,p_3)$ with a \splitting\ $\sigma_{23}$ of $F$ such that $\sigma_{23}^{*}\mathrm{fcurv}(\nu')=0$. Choose further an open neighborhood $V$ of $x$ with a section $\tau: V \to \ob{\inf P_2}$ such that $\tau(x)=p_2$. For $x_i\in \ob{\inf P_i}$ such that $\pi_i(x_i)\in V$ we write $x_i^{\tau} := \tau(\pi_i(x_i))$.
We define the open neighborhood
\begin{equation*}
U := \{ (x_1,x_3) \in  \ob{\inf P_1} \times_M \ob{\inf P_3} \sep \pi_1(x_1),\pi_3(x_3)\in V \text{ , } (x_1,x_1^{\tau})\in U_{12}\text{ , }(x_3^{\tau},x_3)\in U_{23} \}
\end{equation*}
of $(p_1,p_3)$. 
\begin{comment}
This is open: we first define $\tilde U := \pi_{13}^{-1}(V)$, where $\pi_{13}: \ob{\inf P_1} \times_M \ob{\inf P_3} \to M$ is the projection. We have a map $\tilde\sigma: \tilde U \to (\ob{\inf P_1} \times_M \ob{\inf P_2}) \times_{\ob{\inf P_2}} (\ob{\inf P_2} \times_M \ob{\inf P_3})$ defined by $\tilde\sigma(x_1,x_3) := ((x_1,\sigma(\pi_1(x_1))),(\sigma(\pi_2(x_2)),x_3))$. Let $\tilde\sigma_1$ and $\tilde\sigma_2$  be the projections to $\ob{\inf P_1} \times_M \ob{\inf P_2}$ and $\ob{\inf P_2} \times_M \ob{\inf P_3}$, respectively. Then, $U := \tilde\sigma_1^{-1}(U_{12}) \cap \tilde\sigma_2^{-1}(U_{23})$. Obviously, $\pi_1(p_1),\pi_3(p_3)\in V$, and $p_1^{\tau}=p_3^{\tau}=\tau(x)=p_2$, so that $(p_1,p_1^{\tau})\in U_{12}$ and $(p_3^{\tau},p_3) \in U_{23}$. 
\end{comment}
We find a smooth map $\sigma:U \to F \ttimes {\alpha_r}{\alpha_l} G$ defined by 
\begin{equation*}
\sigma(x_1,x_3)\mapsto  (\sigma_{12}(x_1,x_1^{\tau}) , \sigma_{23}(x_3^{\tau},x_3) \cdot \id_{g_{12}(x_1,x_1^{\tau})})\text{,}
\end{equation*}
where $g_{12}:U_{12} \to G$ is a transition function for $\sigma_{12}$.
\begin{comment}
This  is well-defined:
\begin{multline*}
\alpha_{r}(\sigma_{12}(x_1,x_1^{\tau}))=R(x_1^{\tau},g_{12}(x_1,x_1^{\tau}))=R(x_3^{\tau},g_{12}(x_1,x_1^{\tau}))\\=R(\alpha_l(\sigma_{23}(x_3^{\tau},x_3)),g_{12}(x_1,x_1^{\tau}))=\alpha_l(\sigma_{23}(x_3^{\tau},x_3) \cdot \id_{g_{12}(x_1,x_1^{\tau})})\text{.}
\end{multline*}
\end{comment}
This is a \splitting\ of $F'\circ F$,
\begin{comment}
Indeed, 
\begin{equation*}
\alpha_l(\sigma(x_1,x_3))=\alpha_l(\sigma_{12}(x_1,x_1^{\tau}))=x_1
\end{equation*}
and 
\begin{multline*}
\alpha_r(\sigma(x_1,x_3))=\alpha_r(\sigma_{23}(x_3^{\tau},x_3) \cdot \id_{g_{12}(x_1,x_1^{\tau})}) \\
= R(\alpha_r(\sigma_{23}(x_3^{\tau},x_3)),g_{12}(x_1,x_1^{\tau}))=R(x_3,g_{23}(x_3^{\tau},x_3)g_{12}(x_1,x_1^{\tau}))\text{,}
\end{multline*}
so that $(x_1,x_2)\mapsto g_{23}(x_3^{\tau},x_3)g_{12}(x_1,x_1^{\tau})$ is a transition function for $\sigma$. 
\end{comment}
and \cref{eq:prop:compo:1} implies that $\sigma^{*}\mathrm{curv}(\nu'\circ \nu)=0$. This shows that $\nu'\circ \nu$ is regular.
\begin{comment}
Indeed, we have
\begin{eqnarray*}
\mathrm{curv}_{\tilde\nu,\tilde\mu}(G\circ F)_{\sigma(x_1,x_3)} &=&\mathrm{curv}_{\nu',\mu'}(F)_{\sigma_{12}(x_1,p_2)}+ \mathrm{curv}_{\nu,\mu}(G)_{\sigma_{23}(p_2,x_3) \cdot \id_{g_{12}(x_1,p_2)}}
\\&=& (\alpha_{g_{12}(x_1,p_2)^{-1}})_{*}\big(\mathrm{curv}_{\nu,\mu}(G)_{\sigma_{23}(p_2,x_3)}
 \big )
 \\&=& 0\text{.}
\end{eqnarray*}
\end{comment}
\end{proof}

\noindent
Now we are in position to set up three bicategories of principal $\Gamma$-2-bundles with connection:
\begin{enumerate}[(a)]

\item 
A bicategory $\zweibuncon\Gamma M$ consisting of principal $\Gamma$-2-bundles with connections,  1-morphisms  with connective, connection-preserving pullbacks, and connection-preserving 2-morphisms. 

\item
A bicategory $\zweibunconreg\Gamma M$ consisting of principal $\Gamma$-2-bundles with regular connections,  1-morphisms  with regular, connective, connection-preserving pullbacks, and connection-preserving 2-morphisms. 
\item
A bicategory $\zweibunconff\Gamma M$ consisting of principal $\Gamma$-2-bundles with fake-flat connections,  1-morphisms with fake-flat, connective, connection-preserving pullbacks, and connection-preserving 2-morphisms. 

\end{enumerate}
We show in the next section that these three bicategories are classified by the three versions of non-abelian differential cohomology we have described in \cref{sec:nonabdiffcoho}. Moreover, it is straightforward to see that they form presheaves of bicategories,
\begin{equation*}
\zweibuncon\Gamma-
\quomma
\zweibunconreg\Gamma-
\quand
\zweibunconff\Gamma-
\end{equation*}
over the category of smooth manifolds, i.e., there are consistent pullback 2-functors along smooth maps. We will show in a separate paper that these presheaves are in fact 2-stacks.
 
\begin{remark}
\label{re:inv1morph}
In each of the three bicategories of principal $\Gamma$-2-bundles, every 1-morphism in invertible. In order to see this, we recall from \cite[Corollary 6.2.4]{Nikolaus} that every 1-morphism $F: \inf P_1 \to \inf P_2$  between the underlying principal $\Gamma$-2-bundles is invertible; in particular, a weak equivalence. By \cref{lem:pullback:inv}, every connection-preserving pullback $\nu$ on  $F$ induces a connection-preserving pullback $-\nu$ on $F^{-1}$. It is straightforward to check that $-\nu$ inherits each of the properties \quot{connective}, \quot{regular} and \quot{fake-flat} from $\nu$.
\end{remark}

\begin{remark}
\label{rem:smoothfunctorconnection}
Suppose $\phi: \inf P_1 \to \inf P_2$ is a smooth functor between principal $\Gamma$-2-bundles that is strictly equivariant with respect to the $\Gamma$-actions. Suppose $\Omega_1$ and $\Omega_2$ are connections on $\inf P_1$ and $\inf P_2$, respectively. The canonical $\Omega_2$-pullback $\nu$ 
on the associated anafunctor $F:= \ob{\inf P_1}  \ttimes \phi t \mor{\inf P_2}$ (see \cref{re:canpullback:a}) has the following properties:
\begin{enumerate}[(a)]

\item 
\label{rem:smoothfunctorconnection:a}
It is connection-preserving if and only if $\phi^{*}\Omega_2=\Omega_1$ (because $\phi^{*}\Omega_2=F_{\nu}^{*}\Omega_2$).

\item
\label{rem:smoothfunctorconnection:b}
It is always connective (a straightforward calculation).
\begin{comment}
Indeed, we have
\begin{eqnarray*}
\nu_0|_{\rho((x,\eta),(h,g))} &=&  \nu_{R_1(x,t((h,g))),R_2(\eta,(h,g))}
\\&=& \fb\Phi_{R_2(\eta,(h,g))}
\\&=& (\alpha_{g^{-1}})_{*}\left (\mathrm{Ad}_{h}^{-1}(\fb\Phi_{\eta})+(l_{h}^{-1} \circ \alpha_{h})_{*}(\fa\Phi_{s(\eta)}) +\theta_h \right) 
\\&=& (\alpha_{g^{-1}})_{*}\left (\mathrm{Ad}_{h}^{-1}(\nu_{x,\eta})+(l_{h}^{-1} \circ \alpha_{h})_{*}(\fa\Phi_{s(\eta)}) +\theta_h \right) 
\end{eqnarray*}
and
\begin{eqnarray*}
\nu_1|_{\rho((x,\eta),(h,g))} &=& \mu_{R_1(x,t((h,g))),R_2(\eta,(h,g))}
\\&=& - \fc\Phi_{s(R_2(\eta,(h,g))} + \fc\Phi_{\phi(R_1(x,t((h,g))))}
\\&=& - \fc\Phi_{R_2(s(\eta),g)} + \fc\Phi_{R_2(\phi(x),t(h)g)}
\\&=&- (\alpha_{g^{-1}})_{*}( \fc\Phi_{s(\eta)}) + (\alpha_{g^{-1}t(h)^{-1}})_{*} (\fc\Phi_{\phi(x)})
\\&=& (\alpha_{g^{-1}})_{*} \big ( \mathrm{Ad}_h^{-1} (\mu_{x,\eta})+\mathrm{Ad}_h^{-1}(\fc\Phi_{s(\eta)})- \fc\Phi_{s(\eta)} \big )
\\&=& (\alpha_{g^{-1}})_{*} \big ( \mathrm{Ad}_h^{-1} (\mu_{x,\eta})+(l_h^{-1}\circ \alpha_h)_{*}(t_{*} (\fc\Phi_{s(\eta)})) \big )
\end{eqnarray*}
\end{comment}

\item
\label{re:canpullback:fake}
Its fake-curvature is $\mathrm{fcurv}(\nu) = \pr_{\mor{\inf P_2}}^{*} \fb{\mathrm{curv}(\Omega_2)}$; \begin{comment}
Indeed,
\begin{eqnarray*}
&&\hspace{-2cm} \mathrm{d}\nu_{x,\eta}+ \frac{1}{2}[\nu_{x,\eta} \wedge \nu_{x,\eta}]+ \alpha_{*}(\fa\Phi_2|_{s(\eta)} \wedge \nu_{x,\eta})+\mu_{x,\eta} 
\\&=& \mathrm{d}\fb\Phi_{\eta}+ \frac{1}{2}[\fb\Phi_{\eta} \wedge \fb\Phi_{\eta}]+ \alpha_{*}(\fa\Phi_2|_{s(\eta)} \wedge\fb\Phi_{\eta})-\fc\Phi_{s(\eta)}+\fc\Phi_{\phi(x)}
\end{eqnarray*}
\end{comment}
hence it is fake-flat if $\Omega_2$ is fake-flat.

\item
\label{re:canpullback:reg}
It is regular if $\Omega_2$ is regular. To see this, suppose  $\sigma: U \to \inf P_2$ is a \splitting. Define $U' := (\phi \times \id)^{-1}(U)$ and $\tau(x_1,x_2) := (x_1,\sigma(\phi(x_1),x_2))$. This is a \splitting\ of $F$,
\begin{comment}
We have 
\begin{equation*}
\alpha_l(\tau(x_1,x_2)) = x_1
\quand
\alpha_r(\tau(x_1,x_2)) 
\end{equation*}
\end{comment}
and from \cref{re:canpullback:fake*} we get $\tau^{*}\mathrm{fcurv}(\nu) = (\phi\times \id)^{*}\sigma^{*}\fb{\mathrm{curv}(\Omega_2)}$. 
\end{enumerate}
If the canonical $\Omega_2$-pullback $\nu$ is shifted  by $\kappa=(\kappa_0,\kappa_1)$ in the sense of \cref{re:canpullback:b},  then the shifted pullback $\nu^{\kappa}$ has the following properties:
\begin{enumerate}[(a)]

\setcounter{enumi}{4}

\item 
It is connection-preserving if and only if 
\begin{equation*}
\fa\Omega_1 = \phi^{*}\fa\Omega_2 + t_{*}(\kappa_0)
\quomma
\fb\Omega_1 = \phi^{*}\fb\Omega_2 + \Delta\kappa_0
\quand
\fc\Omega_1=\phi^{*}\fc\Omega_2 + \kappa_1\text{;}
\end{equation*}
this is a reformulation of \cref{eq:shiftedpullback}.

\item
\label{re:canpullback:shifted:connective}
It is connective if  $\kappa_0$ and $\kappa_1$ are $R$-invariant in the sense that    $R^{*}\kappa_i = (\alpha_{\pr_2^{-1}})_{*}(\pr_{1}^{*}\kappa_i)$ over $\ob{\inf P_1} \times G$, for $i=0,1$; this is a straightforward calculation.
\begin{comment}
Indeed, then we have
\begin{equation*}
\kappa_i|_{\pr_1(\rho((x,\eta),(h,g)))} &=\kappa_i|_{R_1(x,t(h)g)}=(\alpha_{g^{-1}})_{*}(\mathrm{Ad}_h^{-1}(\kappa_i|_x))
\end{equation*}
and the $\kappa_i$ drop out of the conditions for connectivity.
\end{comment}

\item
\label{re:canpullback:shifted:fc}
Its fake-curvature is 
$\mathrm{fcurv}(\nu^{\kappa})=\mathrm{fcurv}(\nu) + \pr_1^{*}\big ( \mathrm{d}\kappa_0+ \frac{1}{2}[\kappa_0\wedge \kappa_0]+ \alpha_{*}(\phi^{*}\fa\Omega_2 \wedge \kappa_0)+\kappa_1   \big )$.
\begin{comment}
Indeed,
\begin{eqnarray*}
&&\hspace{-2cm} \mathrm{d}\nu_{x,\eta} + \mathrm{d}\kappa_0|_x+ \frac{1}{2}[\nu_{x,\eta} + \kappa_0|_x \wedge \nu_{x,\eta}+ \kappa_0|_x]+ \alpha_{*}(\fa\Phi_2|_{s(\eta)} \wedge \nu_{x,\eta}+ \kappa_0|_x)+\nu_1|_{x,\eta}+ \kappa_1|_x
\\&=& \mathrm{d}\nu_{x,\eta} + \frac{1}{2}[\nu_{x,\eta}  \wedge \nu_{x,\eta}]+ \alpha_{*}(\fa\Phi_2|_{s(\eta)} \wedge \nu_{x,\eta})+\nu_1|_{x,\eta}
 \\&&\qquad + \mathrm{d}\kappa_0|_x+ \frac{1}{2}[\kappa_0|_x \wedge \kappa_0|_x]+ [\fb\Phi_2|_{\eta} \wedge  \kappa_0|_x]+ \alpha_{*}(\fa\Phi_2|_{s(\eta)} \wedge  \kappa_0|_x)+ \kappa_1|_x 
\\&=& \mathrm{d}\nu_{x,\eta} + \frac{1}{2}[\nu_{x,\eta}  \wedge \nu_{x,\eta}]+ \alpha_{*}(\fa\Phi_2|_{s(\eta)} \wedge \nu_{x,\eta})+\nu_1|_{x,\eta}
 \\&&\qquad + \mathrm{d}\kappa_0|_x+ \frac{1}{2}[\kappa_0|_x \wedge \kappa_0|_x]+\alpha_{*} (t_{*}(\fb\Phi_2|_{\eta}) \wedge  \kappa_0|_x)+ \alpha_{*}(\fa\Phi_2|_{s(\eta)} \wedge  \kappa_0|_x)+ \kappa_1|_x 
\end{eqnarray*}
\end{comment}
Hence, using \cref{re:canpullback:fake*,re:canpullback:reg*}, we see that $\nu^{\kappa}$ is regular/fake-flat if 
\begin{equation}
\label{eq:canpullback:shifted}
\mathrm{d}\kappa_0+ \frac{1}{2}[\kappa_0\wedge \kappa_0]+ \alpha_{*}(\phi^{*}\fa\Omega_2 \wedge \kappa_0)+\kappa_1=0
\end{equation}
and $\Omega_2$ is regular/fake-flat. 
\begin{comment}
Indeed, we have $(\pr_1 \circ \tau)(x_1,x_2) = x_1$ and hence
\begin{equation*}
\tau^{*}\mathrm{fcurv}(\nu^{\kappa}) = \tau^{*}\mathrm{fcurv}(\nu)= \pr_1^{*}(\mathrm{d}\kappa_0+ \frac{1}{2}[\kappa_0\wedge \kappa_0]+ \alpha_{*}(\phi^{*}\fa\Omega_2 \wedge \kappa_0)+\kappa_1)\text{.}
\end{equation*}
\end{comment}

\end{enumerate}
\end{remark}

\begin{example}
\label{ex:gbunredcon}
We take up \cref{ex:gbunred}, where we  associated a principal $\Gamma$-2-bundle $\act PH$ over $M$ with regular connection $\Omega$ to each ordinary principal $G$-bundle $P$ over $M$ with connection $\omega$. As remarked there, every bundle morphism $\varphi:P\to P'$ induces a $\Gamma$-equivariant smooth functor $\tilde \varphi: \act PH \to \act{P'}H$, acting as $\varphi$ on the level of objects, and as $(p,h)\mapsto (\varphi(p),h)$ on the level of morphisms. If $\varphi$ is connection-preserving, then it is obvious from the definition of the induced connections $\Omega$ and $\Omega'$ that $\tilde\varphi^{*}\Omega'=\Omega$. Hence, by \cref{rem:smoothfunctorconnection} the canonical pullback on the induced anafunctor is connective and connection-preserving, it is regular because $\Omega'$ is regular, and it is flat if $\omega$ is flat. 
Summarizing, we have constructed   functors
\begin{equation*}
\buncon GM \to \zweibunconreg\Gamma M
\quand
\buncon GM^{\flat} \to \zweibunconff\Gamma M
\text{.}
\end{equation*}
These functors consistently realize ordinary gauge theory as a special case of higher gauge theory. Using \cref{lem:redGbun} we will provide in \cref{co:gbunredcon} a sufficient condition for being in the image of these functors.
\end{example}

Next we discuss how connections can be \quot{induced} along 1-morphisms between principal $\Gamma$-2-bundles.

\begin{proposition}
\label{re:inducedconn}
Let $F:\inf P_1\to\inf P_2$ be a 1-morphism between principal $\Gamma$-bundles,  $\Omega_2$  be a connection on $\inf P_2$,  $\nu$ be a connective  $\Omega_2$-pullback on $F$, and $\Omega_1 := F_{\nu}^{*}\Omega_2 \in \Omega^1(\inf P_1,\gamma)$. 
\begin{enumerate}[(a)]

\item 
\label{re:pullbackcond:a}
$\Omega_1$ is a connection (and $\nu$ is connection-preserving).

\item
\label{re:pullbackcond:c}
If  $\nu$ and $\Omega_2$ are fake-flat, then $\Omega_1$ is fake-flat.

\item
\label{re:pullbackcond:d}
If  $\nu$ and $\Omega_2$ are flat, then $\Omega_1$ is flat.

\item
\label{re:pullbackcond:b}
If $\nu$  and $\Omega_2$ are regular, then $\Omega_1$ is regular.

\end{enumerate}
\end{proposition}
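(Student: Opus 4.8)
That the $\Omega_2$-pullback $\nu$ is connection-preserving is automatic, since \emph{connection-preserving} means exactly $\Omega_1=F^*_\nu\Omega_2$, which is how $\Omega_1$ was defined; the substance of \cref{re:pullbackcond:a} is that $\Omega_1$ is a connection. The plan is to verify the three component equations \cref{eq:conform:a,eq:conform:b,eq:conform:c} for $\Omega_1$, the point being that the connective conditions \cref{eq:conana:b,eq:conana:mu} on $\nu$ are built to mirror the connection conditions. For the $\fa$- and $\fc$-components, living over $\ob{\inf P_1}\times G$, I would pull back along the surjective submersion $\alpha_l\times\id_G$ and use the auxiliary map $\tilde R\maps F\times G\to F$, $(f,g)\mapsto\rho(f,\id_g)$, which by anchor preservation satisfies $\alpha_l\circ\tilde R=R\circ(\alpha_l\times\id_G)$ and $\alpha_r\circ\tilde R=R\circ(\alpha_r\times\id_G)$. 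Starting from $\alpha_l^*\fa\Omega_1=t_{*}(\nu_0)+\alpha_r^*\fa\Omega_2$ (\cref{eq:conana:a}), applying $\tilde R^*$, and inserting the $h=1$ specializations of \cref{eq:conana:b,eq:conana:mu} (where $\mathrm{Ad}_1^{-1}=\id$ and $(\tilde\alpha_1)_{*}=0$), the connection equation \cref{eq:conform:a} for $\Omega_2$, and the crossed-module identity $t_{*}\circ(\alpha_{g^{-1}})_{*}=\mathrm{Ad}_{g^{-1}}\circ t_{*}$, the $\fa\Omega_2$-terms cancel and \cref{eq:conform:a} for $\Omega_1$ drops out after removing the submersion; the $\fc$-component runs identically using \cref{eq:conana:6,eq:conform:c}. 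The $\fb$-component, over $\mor{\inf P_1}\times\mor{\Gamma}$, is analogous but one pulls back along $\pr_{\mor{\inf P_1}}$ from $\mor{\inf P_1}\ttimes s{\alpha_l}F$, uses \cref{eq:conana:7} to express $\fb\Omega_1$ through $\nu_0$, and must invoke the full equivariance compatibility of $\rho$ with the left $\inf P_1$-action and the $\Gamma$-actions from \cref{def:actionanafunctor:b}, now for general $(h,g)$.

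For \cref{re:pullbackcond:c,re:pullbackcond:d} I would avoid all computation and instead invoke \cref{re:curvpull}: the pair $\nu':=(\mathrm{fcurv}(\nu),\mathrm{curv}(\nu))$ is a $\mathrm{curv}(\Omega_2)$-pullback on $F$ with $\mathrm{curv}(\Omega_1)=F^*_{\nu'}(\mathrm{curv}(\Omega_2))$. Feeding $\nu'$ and $\mathrm{curv}(\Omega_2)$ into the defining relations of \cref{lem:pullback} yields $t_{*}(\mathrm{fcurv}(\nu))=\alpha_l^*\fa{\mathrm{curv}(\Omega_1)}-\alpha_r^*\fa{\mathrm{curv}(\Omega_2)}$, the relation $\rho_l^*\mathrm{fcurv}(\nu)=\pr_{\mor{\inf P_1}}^*\fb{\mathrm{curv}(\Omega_1)}+\pr_F^*\mathrm{fcurv}(\nu)$, and $\mathrm{curv}(\nu)=\alpha_l^*\fc{\mathrm{curv}(\Omega_1)}-\alpha_r^*\fc{\mathrm{curv}(\Omega_2)}$. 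For \cref{re:pullbackcond:c}, the hypotheses $\mathrm{fcurv}(\nu)=0$ and $\fa{\mathrm{curv}(\Omega_2)}=0$ immediately force $\alpha_l^*\fa{\mathrm{curv}(\Omega_1)}=0$ and $\pr_{\mor{\inf P_1}}^*\fb{\mathrm{curv}(\Omega_1)}=0$, hence $\fa{\mathrm{curv}(\Omega_1)}=\fb{\mathrm{curv}(\Omega_1)}=0$ since $\alpha_l$ and $\pr_{\mor{\inf P_1}}$ are surjective submersions. For \cref{re:pullbackcond:d} one has $\nu'=0$ and $\mathrm{curv}(\Omega_2)=0$, so all three relations are solved by the zero form and the uniqueness clause of \cref{lem:pullback} gives $\mathrm{curv}(\Omega_1)=0$.

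The genuine obstacle is \cref{re:pullbackcond:b}, where I must manufacture a \splitting\ of $\inf P_1$ out of \splitting s of $F$. Given $(p,q)\in\ob{\inf P_1}\times_M\ob{\inf P_1}$, I would choose $f\in F$ with $\alpha_l(f)=q$, set $q_2:=\alpha_r(f)$, and pick (via \cref{lem:exsections} for $\inf P_2$) a section $\tau$ through $q_2$. Regularity of $\nu$ supplies \splitting s $\sigma^{(1)}$ of $F$ near $(p,q_2)$ and $\sigma^{(2)}$ near $(q,q_2)$ with $\sigma^{(i)*}\mathrm{fcurv}(\nu)=0$. For $(x,y)$ near $(p,q)$ I set $z:=\tau(\pi_1(x))=\tau(\pi_1(y))$, $f_1:=\sigma^{(1)}(x,z)$, and $f_2:=\rho(\sigma^{(2)}(y,z),\id_g)$ with $g$ chosen so that $\alpha_r(f_1)=\alpha_r(f_2)$. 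As $F$ is a weak equivalence, $\alpha_r\maps F\to\ob{\inf P_2}$ is a principal $\mor{\inf P_1}$-bundle (\cref{th:weakinverse}), so there is a unique $\chi\in\mor{\inf P_1}$ with $f_1=\chi\circ f_2$; define $\tilde\sigma(x,y):=\chi$. A short anchor check gives $t(\chi)=x$ and $s(\chi)=R(y,g)$, so $\tilde\sigma$ is a \splitting. To see $\tilde\sigma^*\fb{\mathrm{curv}(\Omega_1)}=0$, the map $(x,y)\mapsto(\tilde\sigma(x,y),f_2(x,y))$ lands in $\mor{\inf P_1}\ttimes s{\alpha_l}F$ and pulls back the relation $\pr_{\mor{\inf P_1}}^*\fb{\mathrm{curv}(\Omega_1)}=\rho_l^*\mathrm{fcurv}(\nu)-\pr_F^*\mathrm{fcurv}(\nu)$ of \cref{re:curvpull,lem:pullback} to $\tilde\sigma^*\fb{\mathrm{curv}(\Omega_1)}=f_1^*\mathrm{fcurv}(\nu)-f_2^*\mathrm{fcurv}(\nu)$. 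The first term vanishes because $f_1$ factors through $\sigma^{(1)}$; the second vanishes because $f_2$ factors through $\sigma^{(2)}$ followed by the $\id_g$-action, and the $h=1$ restriction of \cref{eq:transcurv} reads $\rho(\,\cdot\,,\id_g)^*\mathrm{fcurv}(\nu)=(\alpha_{g^{-1}})_{*}\mathrm{fcurv}(\nu)$ (the $\fa{\mathrm{curv}(\Omega_2)}$-term drops since $(\tilde\alpha_1)_{*}=0$), so $f_2^*\mathrm{fcurv}(\nu)=(\alpha_{g^{-1}})_{*}(0)=0$.

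The main difficulty is concentrated in this last construction: the bookkeeping of anchors, transition functions and the $G$-action needed to glue $\sigma^{(1)}$, $\sigma^{(2)}$ and $\tau$ into a single smooth \splitting\ $\tilde\sigma$, together with the care that $\sigma^{(1)}$ and $\sigma^{(2)}$ may be distinct patches reconciled only through the principal $\mor{\inf P_1}$-bundle structure of $\alpha_r$. I note that this approach in fact uses only regularity of $\nu$ and the connectivity underlying \cref{eq:transcurv}; regularity of $\Omega_2$ does not seem to enter, consistent with the fact that fake-flat already implies regular.
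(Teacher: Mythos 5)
Your parts (a)--(c) are correct and take essentially the paper's route: component-wise verification of \cref{eq:conform:a,eq:conform:b,eq:conform:c} against the connectivity equations, and the identity $\mathrm{curv}(\Omega_1)=F^{*}_{\nu'}(\mathrm{curv}(\Omega_2))$ for $\nu'=(\mathrm{fcurv}(\nu),\mathrm{curv}(\nu))$ fed into the uniqueness clauses of \cref{lem:pullback} for (b) and (c). Part (d) is where you genuinely diverge, and your route is both correct and slightly stronger. The paper produces the \splitting\ $\sigma_1$ of $\inf P_1$ through an equation of the form $\sigma_1^{-1}\circ\tau=\rho(\tau',(1,g_1))\circ R(\sigma_2^{-1},\id_h)$, which threads through a \splitting\ $\sigma_2$ of $\inf P_2$; since the right $\inf P_2$-action shifts the fake-curvature by $\fb{\mathrm{curv}(\Omega_2)}$, the term $(\alpha_{h^{-1}})_{*}(\sigma_2^{*}\fb{\mathrm{curv}(\Omega_2)})$ survives into the final computation and regularity of $\Omega_2$ is invoked to kill it. You instead anchor two \splitting s of $F$ at a common local section of $\ob{\inf P_2}$ and compare them through the principal $\mor{\inf P_1}$-bundle structure of $\alpha_r$ (available because every 1-morphism is a weak equivalence), so that you never move in the $\inf P_2$-direction and only $\mathrm{fcurv}(\nu)$ ever appears: smoothness of $\chi$ comes from the diffeomorphism $\mor{\inf P_1}\ttimes{s}{\alpha_l}F\cong F\ttimes{\alpha_r}{\alpha_r}F$, the relation $\tilde\sigma^{*}\fb{\mathrm{curv}(\Omega_1)}=f_1^{*}\mathrm{fcurv}(\nu)-f_2^{*}\mathrm{fcurv}(\nu)$ is exactly the paper's \cref{eq:re:inducedconn:1}, and both terms vanish by $\sigma^{(i)*}\mathrm{fcurv}(\nu)=0$ together with the $h=1$ case of \cref{eq:transcurv} (which indeed needs only connectivity of $\nu$, not fake-flatness of $\Omega_2$). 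Your closing observation is therefore a genuine mild sharpening: in part (d) regularity of $\Omega_2$ is not used, only regularity and connectivity of $\nu$. What the paper's version buys in exchange is a single unified mechanism (passing everything through $\inf P_2$) that parallels its other splitting manipulations; what yours buys is a weaker hypothesis and a shorter cancellation.
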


\begin{proof}
For \cref{re:pullbackcond:a*} we have to verify \cref{eq:conform:a,eq:conform:b,eq:conform:c}.
It is straightforward to check using \cref{eq:conana:mu}  over $F \times G$ the relation 
\begin{equation*}
(\alpha_l \times \id)^{*}R^{*}\fc \Omega_1=(\alpha_{\pr_G^{-1}})_{*}(\pr_F^{*}\alpha_l^{*}\fc{\Omega}_1)\text{.}
\end{equation*}
\begin{comment}
We check
\begin{eqnarray*}
\nu_1|_{\rho(f,(h,g))} &=& \fc\Omega_1|_{\alpha_l(\rho(f,(h,g)))} - \fc\Omega_2|_{\alpha_r(\rho(f,(h,g)))}
\\&=& \fc\Omega_1|_{R_1(\alpha_l(f),t(h)g)} - \fc\Omega_2|_{R_2(\alpha_r(f),g)}
\\&=&  (\alpha_{g^{-1}t(h)^{-1}})_{*}(\fc\Omega_1|_{\alpha_l(f)}) - (\alpha_{g^{-1}})_{*} (\fc\Omega_2|_{\alpha_r(f)})
\\&=&  (\alpha_{g^{-1}})_{*}(\mathrm{Ad}_h^{-1}(\fc\Omega_1|_{\alpha_l(f)}) -\fc\Omega_2|_{\alpha_r(f)})
\\&=&  (\alpha_{g^{-1}})_{*}(\mathrm{Ad}_h^{-1}(\mu_f + \fc\Omega_2|_{\alpha_r(f)}) -\fc\Omega_2|_{\alpha_r(f)})
\\ &=& (\alpha_{g^{-1}})_{*} \big ( \mathrm{Ad}_h^{-1} (\nu_1|_{f})+(l_h^{-1}\circ \alpha_h)_{*}(t_{*} (\fc\Omega_2|_{\alpha_r(f)})) \big )\text{.}
\end{eqnarray*}
This calculation  can be interpreted via $(h=1)$ as $\fc\Omega_1|_{R_1(\alpha_l(f),g)}= (\alpha_{g^{-1}})_{*}(\fc\Omega_1|_{\alpha_l(f)})$.
\end{comment}
Since $\alpha_l$ is a surjective submersion this implies $R^{*}\fc\Omega_1=(\alpha_{g^{-1}})_{*}(\fc\Omega_1)$, which is \cref{eq:conform:c}. Similarly one shows \cref{eq:conform:a} using \cref{eq:conana:b}.  
\begin{comment}
Indeed
\begin{eqnarray*}
\fa\Omega_1|_{R_1(\alpha_l(f),g)} &=& \fa\Omega_1|_{\alpha_l(\rho(f,(1,g)))}
\\&=& t_{*}(\nu_0|_{\rho(f,(1,g))}) + \fa\Omega_2|_{\alpha_r(\rho(f,(1,g)))}
\\&=& t_{*}((\alpha_{g^{-1}})_{*}(\nu_0|_{f}) )+ \fa\Omega_2|_{R_2(\alpha_r(f),g)}
\\&=& \mathrm{Ad}_g^{-1}(t_{*}(\nu_0|_{f})) + \mathrm{Ad}_{g}^{-1}(\fa\Omega_2|_{\alpha_r(f)}) + \theta_g 
\\&=& \mathrm{Ad}_g^{-1}(\fa\Omega_1|_{\alpha_l(f)}) + \theta_g 
\end{eqnarray*}
\end{comment}
In order to prove \cref{eq:conform:b} we work over an open subset $V \subset \mor{\inf P_1} \times \mor{\Gamma}$ that admits a smooth map $f:V \to F$ such that $\alpha_l(f(\chi,(h,g)))=R(s(\chi),g)$. Denoting by $\rho_l$ the left action of $\inf P_1$ on $F$, we have from \cref{eq:conana:7} \begin{equation*}
R^{*}\fb\Omega_1|_V =(R \times f)^{*} \rho_l^{*}\nu_0 -f^{*}\nu_0\text{.}  
\end{equation*}
Rewriting $\rho_l$ in terms of the $\mor{\Gamma}$-action $\rho$ on $F$ using the conditions of \cref{def:actionanafunctor}, and then using \cref{eq:conana:b} yields \cref{eq:conform:b}. 
This shows \cref{re:pullbackcond:a*}. 
For \cref{re:pullbackcond:c*} we have from \cref{re:curvpull} $\mathrm{curv}(\Omega_1)=F^{*}_{\nu'}\mathrm{curv}(\Omega_2)$, where $\nu' =\left(\mathrm{fcurv}(\nu), \mathrm{curv}(\nu)\right ).$
By \cref{lem:pullback} the component $\fb{\mathrm{curv}(\Omega_1)}$ is uniquely determined by
\begin{equation}
\label{eq:re:inducedconn:1}
\pr_{\mor{\inf P_1}}^{*}(\fb{\mathrm{curv}(\Omega_1)})= \rho_l^{*}\mathrm{fcurv}(\nu)-\pr_F^{*}\mathrm{fcurv}(\nu)
\end{equation}
over $\mor{\inf P_1} \ttimes{s}{\alpha_l} F$, where $\rho_l$ is the left action and $\pr_F$ is the projection. The component $\fa{\mathrm{curv}(\Omega_1)}$ is determined by $\alpha_l^{*}\fa{\mathrm{curv}(\Omega_1)} = t_{*}(\mathrm{fcurv}(\nu)) +\alpha_r^{*}\fa{\mathrm{curv}(\Omega_2)}$. 
This shows \cref{re:pullbackcond:c*}. For \cref{re:pullbackcond:d*} we only have to look at the component $\fc{\mathrm{curv}(\Omega_1)}$, which is according to \cref{lem:pullback} determined by $\alpha_l^{*}\fc{\mathrm{curv}(\Omega_1)} = \mathrm{curv}(\nu) +\alpha_r^{*}\fc{\mathrm{curv}(\Omega_2)}$; this shows the claim.

For \cref{re:pullbackcond:b} we set  $Y_k:= \ob{\inf P_k} \times_M \ob{\inf P_k}$ for $k=1,2$,  consider $Z:= Y_1 \times _M Y_2$ and the projection $\pr_{12}:Z \to Y_1 $. For a given point $(p_1,p_1')\in Y_1$ let $U_1\subset Y_1$ be an open neighborhood with a section $s: U_1 \to W$. We put $(p_1,p_1',p_2,p_2') := s(p_1,p_1')$. We choose a \splitting\ $\sigma_2$ of $\inf P_2$ defined in a neighborhood of $(p_2,p_2')\in Y_2$, as well as \splitting s $\tau$ of $F$ in a neighborhood of $(p_1,p_2)$, with transition function $h$, and $\tau'$ in a neighborhood of $(p_1',p_2')$. We claim that there exists a \splitting\  $\sigma_1$ of $\inf P_1$ with transition function $g_1$ defined on $U_1$ such that over $W$ we have
\begin{equation}
\label{eq:re:inducedconn:5}
\tau' = \rho(\sigma_1^{-1} \circ \tau \circ R(\sigma_2,\id_h),(1,g_1^{-1}))\text{.}
\end{equation}
In order to see this, we choose some \splitting\ of $\inf P_1$ on $U_1$ with some transition function $g_1$. Then one can prove that the right hand side of \cref{eq:re:inducedconn:5} defines a \splitting\ of $F$ along $\pr_{24}:W \to \ob{\inf P_1} \times_M \ob{\inf P_2}$.
\begin{comment}
We define maps $\tilde\tau:W \to F$ and $\tilde h: W \to G$ by
\begin{equation*}
\tilde\tau := \rho(\sigma_1^{-1} \circ \tau \circ R(\sigma_2,\id_h),(1,g_1^{-1}))
\quand
\tilde h := g_2hg_1^{-1}\text{.}
\end{equation*} 
This satisfies
\begin{align*}
\alpha_l(\tilde\tau(x_1,x_1',x_2,x_2'))&=R(\alpha_l(\sigma_1^{-1} \circ \tau \circ R(\sigma_2,\id_h)),g_1^{-1})\\&=R(s(\sigma_1),g_1^{-1})\\&=x_1'
\\
\alpha_r(\tilde\tau(x_1,x_1',x_2,x_2'))&=R(\alpha_r(\sigma_1^{-1} \circ \tau \circ R(\sigma_2,\id_h)),g_1^{-1})\\&=R(s(R(\sigma_2,\id_h)),g_1^{-1})\\&=R(s(\sigma_2),hg_1^{-1})\\&=R(x_2',g_2hg_1^{-1})
\\&=R(x_2',\tilde h)\text{.}
\end{align*}
Thus, $\tilde\tau$ is a \splitting\ of $F$ along the projection $W \to V'$, with transition function $\tilde h$. 
\end{comment}
By \cref{lem:Fsections}, the difference between this \splitting\ and $\tau' \circ \pr_{24}$ defines a smooth map $x:W \to H$. Acting with $x\circ s:U \to H$ on $\sigma_1$ yields a new \splitting\ of $\inf P_1$ with the claimed property.

By eventually passing to smaller open sets, we can assume by regularity of $\nu$ that $\tau^{*}\mathrm{fcurv}(\nu)=\tau'^{*}\mathrm{fcurv}(\nu)=0$, and by regularity of $\Omega_2$ that $\sigma_2^{*}\fb{\mathrm{curv}(\Omega_2)}=0$. Now we rewrite \cref{eq:re:inducedconn:5} as
\begin{equation}
\label{eq:re:inducedconn:6}
\sigma_1^{-1} \circ \tau = \rho(\tau',(1,g_1))\circ R(\sigma_2^{-1},\id_h)\text{.}
\end{equation}
\cref{eq:re:inducedconn:1} gives
\begin{equation*}
\sigma_1^{*}\fb{\mathrm{curv}(\Omega_1)}= -(\sigma_1^{-1} \circ \tau )^{*} \mathrm{fcurv}(\nu) +\tau^{*} \mathrm{fcurv}(\nu)\text{.}
\end{equation*}
Using the expression of \cref{eq:re:inducedconn:6} and then simplifying with \cref{eq:transcurv,eq:conform:a} yields zero. 
\begin{comment}
\begin{align*}
\fb{\mathrm{curv}(\Omega)}_{\sigma_1} &= -\fb{\mathrm{curv}(\Omega)}_{\sigma_1^{-1}}
\\&\eqcref{eq:re:inducedconn:1}  - \fb{\mathrm{fcurv}(\nu)}_{\sigma_1^{-1} \circ \tau } + \fb{\mathrm{fcurv}(\nu)}_{\tau}
\\&= - \fb{\mathrm{fcurv}(\nu)}_{\rho(\tau',(1,g_1))\circ R(\sigma_2^{-1},\id_h)} + \fb{\mathrm{fcurv}(\nu)}_{\tau}
\\&= - \fb{\mathrm{fcurv}(\nu)}_{\rho(\tau',(1,g_1))}- \fb{\mathrm{curv}(\Omega_2)}_{ R(\sigma_2^{-1},\id_h)} + \fb{\mathrm{fcurv}(\nu)}_{\tau}
\\&\eqcref{eq:transcurv,eq:conform:a} - (\alpha_{g_1^{-1}})_{*}( \fb{\mathrm{fcurv}(\nu)}_{\tau'})+ (\alpha_{h^{-1}})_{*}( \fb{\mathrm{curv}(\Omega_2)}_{\sigma_2} )+ \fb{\mathrm{fcurv}(\nu)}_{\tau}
\end{align*}
\end{comment}
\end{proof}

\begin{remark}
Pullbacks on 1-morphisms can be \quot{induced} in the following way: let $F,F':\inf P_1 \to \inf P_2$ be 1-morphisms between principal $\Gamma$-2-bundles equipped with connections $\Omega_1$ and $\Omega_2$, respectively. If $\nu'$ is an $\Omega_2$-pullback on $F'$, and $f:F\Rightarrow F'$ is a 2-morphism, then $\nu:=f^{*}\nu'$ is a pullback of $F$, and each of the properties \quot{connection-preserving}, \quot{connective}, \quot{regular}, and \quot{fake-flat} holds for $\nu$ if and only of it holds for $\nu'$.  
\end{remark}

\begin{theorem}
\label{th:existence}
Let $\inf P$ be a principal $\Gamma$-2-bundle for which the $G$-action on $\ob{\inf P}$ is free and proper. Then, there exists a connection on $\inf P$. 
\end{theorem}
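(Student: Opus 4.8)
The plan is to reduce the existence of a connection on $\inf P$ to the existence of a suitable $1$-form on a principal $G$-bundle, where classical results apply. The starting point is the trivial bundle $\inf I = \idmorph M \times \Gamma$, which carries the flat connection $\Omega = \pr_\Gamma^{*}\Theta$ of \cref{ex:trivialconn}. By \cref{thm:classcoho} we may cover $M$ by open sets $U_i$ over which $\inf P|_{U_i}$ is $1$-isomorphic to the trivial bundle $\inf I_i := \inf I|_{U_i}$, so locally connections always exist. The task is therefore to glue these local connections into a global one, and this is precisely where the freeness and properness hypothesis on the $G$-action on $\ob{\inf P}$ will be used, via \cref{prop:exconnpull}.

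Concretely, first I would choose an open cover $\{U_i\}$ of $M$ together with $1$-isomorphisms $F_i : \inf I_i \to \inf P|_{U_i}$; these exist because $\inf P$ is locally trivial. On each $\inf I_i$ I put the flat connection $\Omega^{(i)} := \pr_\Gamma^{*}\Theta$. Since $\inf I_i$ has the very simple total-space structure $\idmorph{U_i} \times \Gamma$, and since the $G$-action on $\ob{\inf I_i} = U_i \times G$ is free and proper (it is just right multiplication on $G$), \cref{prop:exconnpull} guarantees that the inverse $1$-morphism $F_i^{-1} : \inf P|_{U_i} \to \inf I_i$ admits a \emph{connective} pullback $\nu_i$. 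I would then use \cref{re:inducedconn}\cref{re:pullbackcond:a*} to push $\Omega^{(i)}$ back along $F_i^{-1}$: the induced form $\Omega_i := (F_i^{-1})^{*}_{\nu_i}\Omega^{(i)} \in \Omega^1(\inf P|_{U_i},\gamma)$ is a genuine connection on $\inf P|_{U_i}$. This produces a family of local connections on $\inf P$ itself, without reference to the trivializations any longer.

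The gluing step is then carried out by convexity. By the remark following \cref{def:connection} (and spelled out in the convexity remark after the definition of regular/fake-flat connections), connections on a fixed principal $\Gamma$-$2$-bundle form a convex subset of $\Omega^1(\inf P,\gamma)$: a convex combination $\sum_i \lambda_i \Omega_i$ of connections is again a connection, because \cref{def:connection} is an affine condition (pullback and adjoint action are linear, and the inhomogeneous Maurer--Cartan term $\pr_\Gamma^{*}\Theta$ survives since the $\lambda_i$ sum to one). Choosing a smooth partition of unity $\{\lambda_i\}$ subordinate to $\{U_i\}$ (here I use that $M$ is a manifold, hence paracomputable, so partitions of unity exist and pull back along $\pi$ to functions on $\ob{\inf P}$), I would set $\fa\Omega$, $\fb\Omega$, $\fc\Omega$ to be the $\pi^{*}\lambda_i$-weighted sums of the corresponding components of the $\Omega_i$, being careful that the weighting functions are pulled back from $M$ so that the defining equations \cref{eq:conform:a,eq:conform:b,eq:conform:c}, which hold fibrewise, are preserved under the affine combination. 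The resulting $\Omega$ is a global connection.

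The main obstacle is the existence of the connective pullbacks $\nu_i$: this is exactly the content of \cref{prop:exconnpull}, and it is there that the freeness and properness of the $G$-action enters, because that proof needs $\ob{\inf P_1}$ to be a principal $G$-bundle over a quotient manifold so that the relevant \v{C}ech-type complex of $\mathfrak h$-valued forms is exact (the argument of \cite{murray}). I would need to verify that the hypothesis on $\inf P$ transfers to the $1$-morphism source $\inf P|_{U_i}$, which it does since the $G$-action is unchanged by restriction to an open subset of the base. A secondary technical point to check is that the partition-of-unity weighting is compatible with the equivariance conditions; since the weights are functions on $M$ pulled back to $\ob{\inf P}$, they are $\Gamma$-invariant and the affine combination of the $R$-equivariance equations goes through verbatim. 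Once these two points are in place, the convexity argument finishes the proof immediately.
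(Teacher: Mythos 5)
Your proof is correct and follows essentially the same route as the paper's: local trivializations of $\inf P$, the Maurer--Cartan connection on the trivial bundle, connective pullbacks supplied by \cref{prop:exconnpull}, induced local connections via \cref{re:inducedconn}, and gluing by convexity using a partition of unity pulled back from $M$. One small correction: when you invoke \cref{prop:exconnpull} for $F_i^{-1}\colon \inf P|_{U_i} \to \inf I_i$, the hypothesis that matters is freeness and properness of the $G$-action on the \emph{source} $\ob{\inf P|_{U_i}}$ --- which holds by the theorem's assumption, as you correctly note in your final paragraph --- and not on $\ob{\inf I_i}=U_i\times G$, so the mid-proof justification via the $G$-action on the trivial bundle is the wrong reason for the right conclusion.
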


\begin{proof}
We choose an open cover $\mathcal{U}=\{U_{i}\}_{i\in I}$ of $M$, together with local trivializations $\mathcal{T}_i: \inf P|_{U_i} \to \idmorph{(U_i)} \times \Gamma$ and a subordinated partition of unity $\{\phi_i\}_{i\in I}$.
As seen in \cref{ex:trivialconn}, the Maurer-Cartan form $\Theta$ is a connection on the trivial principal $\Gamma$-2-bundle. By \cref{prop:exconnpull} there exist  connective  $\Theta$-pullbacks $\nu_i$ on $\mathcal{T}_i$, and by \cref{re:pullbackcond:a} $(\mathcal{T}_i)^{*}_{\nu_i}\Theta$ is a  connection on $\inf P|_{U_i}$. We set
\begin{equation*}
\Omega := \sum_{i\in I} \pi^{*}\phi_i \cdot  (\mathcal{T}_i)_{\nu_i}^{*}\Theta\text{;}
\end{equation*}
this is a connection on $\inf P$.
\end{proof}

\subsection{Classification by non-abelian differential cohomology}

\label{sec:classdiff}

Let $(g,a,A,B,\varphi)$ be a generalized differential $\Gamma$-cocycle with respect to an open cover $\mathcal{U}=\{U_i\}_{i\in I}$. We define on the reconstructed principal $\Gamma$-2-bundle $\inf P_{(g,a)}$ (see \cref{sec:2bundleclass}) a connection $\Omega$, via
\begin{align*}
&\fa\Omega|_{U_i \times G}\mquad &&:= \mathrm{Ad}_{\pr_G}^{-1}(A_i ) + \pr_G^{*}\theta
\\
&\fb\Omega|_{(U_i \cap U_j) \times H \times G}\mquad\mquad\mquad &&:= (\alpha_{\pr_G^{-1}})_{*}( \mathrm{Ad}_{\pr_H}^{-1}(\varphi_{ij}) + (\tilde\alpha_{\pr_H})_{*}(A_j)  + \pr_H^{*} \theta)
\\
&\fc\Omega|_{U_i \times G}\mquad &&:= -(\alpha_{\pr_G^{-1}})_{*}(B_i)\text{.}
\end{align*}
Cocycle condition
\cref{eq:transconnco} implies $\Delta \fa\Omega=t_{*}(\fb\Omega)$, and cocycle conditions
\cref{eq:transconn2co,eq:transconnco} imply $\Delta\fb\Omega=0$. 
Thus, we have a 1-form $\Omega\in\Omega^1(\inf P_{(g,a)},\gamma)$. In order to show that this 1-form is a connection, we check
\Cref{eq:conform:a,eq:conform:b,eq:conform:c}, which is straightforward. A bit tedious, but straightforward to check is the following.

\begin{lemma}
\label{lem:curvrecon}
The curvature of the connection $\Omega$ has the following components:
\begin{align*}
&\fa{\mathrm{curv}(\Omega)}|_{U_i \times G}  &&=  \mathrm{Ad}_{\pr_G}^{-1} (\mathrm{fcurv}(A_i,B_i))
\\
&\fb{\mathrm{curv}(\Omega)}|_{(U_i \cap U_j) \times H \times G}\mquad &&=  (\alpha_{\pr_G^{-1}})_{*}(\mathrm{Ad}_{\pr_H}^{-1}(\alpha_{*}(A_j\wedge \varphi_{ij})+\mathrm{d}\varphi_{ij}+\frac{1}{2}[\varphi _{ij}\wedge\varphi_{ij}]\\&&&\hspace{5em} +B_j-(\alpha_{\pr_G^{-1}})_{*}(B_i) )+(\tilde\alpha_{\pr_H})_{*}(\mathrm{fcurv}(A_j,B_j)))
\\
&\fc{\mathrm{curv}(\Omega)}|_{U_i \times G} &&= -(\alpha_{\pr_G^{-1}})_{*}(\mathrm{curv}(A_i,B_i))\text{.}  
\end{align*}
\end{lemma}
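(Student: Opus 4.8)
The plan is to compute each of the three components of $\mathrm{curv}(\Omega) = \mathrm{D}\Omega + \frac{1}{2}[\Omega\wedge\Omega]$ directly from the explicit formulas \cref{eq:curv:a,eq:curv:b,eq:curv:c}, substituting the component expressions for $\fa\Omega$, $\fb\Omega$, $\fc\Omega$ given just before \cref{lem:curvrecon}, and then simplifying until the stated answers in terms of $\mathrm{fcurv}(A_i,B_i)$ and $\mathrm{curv}(A_i,B_i)$ emerge. Since everything lives over the explicit reconstructed groupoid $\inf P_{(g,a)}$, each component is an ordinary differential-form computation on $U_i \times G$ or $(U_i\cap U_j)\times H \times G$, so no abstract machinery beyond the formulary for strict Lie 2-algebras (the appendix) is needed.

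First I would handle $\fc{\mathrm{curv}(\Omega)}$, which is the cleanest: by \cref{eq:curv:c} it equals $\mathrm{d}\fc\Omega + \alpha_{*}(\fa\Omega\wedge\fc\Omega)$, and substituting $\fc\Omega = -(\alpha_{\pr_G^{-1}})_{*}(B_i)$ and $\fa\Omega = \mathrm{Ad}_{\pr_G}^{-1}(A_i)+\pr_G^{*}\theta$, I would use the derivation property of $\alpha_{*}$ together with the standard commutation rules relating $\mathrm{d}(\alpha_{\pr_G^{-1}})_{*}$ to $(\alpha_{\pr_G^{-1}})_{*}$ and the Maurer-Cartan form $\pr_G^{*}\bar\theta$. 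The $\pr_G^{*}\theta$ piece of $\fa\Omega$ should exactly cancel the extra term produced by differentiating $(\alpha_{\pr_G^{-1}})_{*}$, leaving $-(\alpha_{\pr_G^{-1}})_{*}(\mathrm{d}B_i + \alpha_{*}(A_i\wedge B_i)) = -(\alpha_{\pr_G^{-1}})_{*}(\mathrm{curv}(A_i,B_i))$. Next I would do $\fa{\mathrm{curv}(\Omega)}$ via \cref{eq:curv:a}: substituting into $\mathrm{d}\fa\Omega + \frac12[\fa\Omega\wedge\fa\Omega] + t_{*}(\fc\Omega)$, the purely $A_i$-dependent terms assemble via $\mathrm{Ad}_{\pr_G}^{-1}$ into $\mathrm{Ad}_{\pr_G}^{-1}(\mathrm{d}A_i + \frac12[A_i\wedge A_i])$ (the Maurer-Cartan terms from $\pr_G^{*}\theta$ cancelling by the flatness of $\theta$), while $t_{*}(\fc\Omega) = -\mathrm{Ad}_{\pr_G}^{-1}(t_{*}(B_i))$ supplies the remaining summand, giving $\mathrm{Ad}_{\pr_G}^{-1}(\mathrm{fcurv}(A_i,B_i))$.

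The main obstacle will be the $\fb{\mathrm{curv}(\Omega)}$ component from \cref{eq:curv:b}, $\Delta\fc\Omega + \mathrm{d}\fb\Omega + \frac12[\fb\Omega\wedge\fb\Omega] + \alpha_{*}(s^{*}\fa\Omega\wedge\fb\Omega)$, because $\fb\Omega$ carries the full $(\alpha_{\pr_G^{-1}})_{*}$ conjugation and involves three separate summands ($\mathrm{Ad}_{\pr_H}^{-1}(\varphi_{ij})$, $(\tilde\alpha_{\pr_H})_{*}(A_j)$, and $\pr_H^{*}\theta$). Here I would first pull the global $(\alpha_{\pr_G^{-1}})_{*}$ factor outside, carefully tracking the compensating $\pr_G$-Maurer-Cartan terms it generates upon differentiation and upon interaction with $s^{*}\fa\Omega$ (noting $s(i,j,x,h,g)=(j,x,g)$ so $s^{*}\fa\Omega$ restricts to $\mathrm{Ad}_{\pr_G}^{-1}(A_j)+\pr_G^{*}\theta$). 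Then the remaining $H$-direction computation requires the identities for $(\tilde\alpha_h)_{*}$, $\mathrm{d}(\tilde\alpha_h)_{*}$, and $\mathrm{Ad}_h^{-1}$ from the formulary, plus the value $\Delta\fc\Omega$ across the source/target maps which is where the $B_j - (\alpha_{\pr_G^{-1}})_{*}(B_i)$ difference enters. The bookkeeping of signs and the repeated use of $t_{*}(\tilde\alpha_h)_{*} = \mathrm{Ad}_h^{-1}-\id$ and $(\tilde\alpha_h)_{*}\circ t_{*}$ to convert between the $\mathrm{Ad}_{\pr_H}^{-1}$ and $(\tilde\alpha_{\pr_H})_{*}$ summands is the genuinely tedious part; I expect the $A_j$-terms to reorganize into $(\tilde\alpha_{\pr_H})_{*}(\mathrm{fcurv}(A_j,B_j))$ while the $\varphi_{ij}$- and $B$-terms collect inside $\mathrm{Ad}_{\pr_H}^{-1}(\,\cdot\,)$ as stated. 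Throughout, the only conceptual input is that $\Omega$ is already known to be a connection (so the $\Delta\fa\Omega = t_{*}(\fb\Omega)$ and $\Delta\fb\Omega=0$ constraints, verified above from the cocycle conditions, hold), which guarantees the result is a well-defined $2$-form in $\Omega^2(\inf P_{(g,a)},\gamma)$; the verification is then purely a matter of matching the three local expressions.
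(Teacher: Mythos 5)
Your plan is correct and follows essentially the same route as the paper: a direct component-wise substitution of the formulas for $\fa\Omega$, $\fb\Omega$, $\fc\Omega$ into \cref{eq:curv:a,eq:curv:b,eq:curv:c}, pulling the outer $(\alpha_{\pr_G^{-1}})_{*}$ conjugation through the derivative and bracket terms so that the Maurer--Cartan contributions cancel, and reducing the $H$-direction bookkeeping in the $\fb$-component to the formulary identities for $\mathrm{Ad}_h^{-1}$ and $(\tilde\alpha_h)_{*}$, with $\Delta\fc\Omega$ supplying the $B_j$-versus-$B_i$ difference. The paper organizes the middle component by first establishing the corresponding identity for the unconjugated form $\omega$ with $\fb\Omega=(\alpha_{\pr_G^{-1}})_{*}(\omega)$, which is exactly your step of "pulling the global $(\alpha_{\pr_G^{-1}})_{*}$ factor outside"; no substantive difference remains.
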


Next we verify that above \quot{reconstruction} of principal $\Gamma$-2-bundles with connection from local data is well-defined in non-abelian differential cohomology. 

\begin{lemma}
\label{lem:rgen}
The construction of the pair $(\inf P_{(g,a)},\Omega)$ induces a well-defined map 
\begin{equation*}
r^{\gen}: \hat\h^1(M,\Gamma)^{\gen} \to \hc 0 (\zweibuncon\Gamma M)\text{.}
\end{equation*}
\end{lemma}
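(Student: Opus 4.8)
The pair $(\inf P_{(g,a)},\Omega)$ is by construction a principal $\Gamma$-2-bundle with connection, hence an object of $\zweibuncon\Gamma M$; what remains is to show that its isomorphism class depends only on the class of the cocycle in $\hat\h^1(M,\Gamma)^{\gen}$. Since this set is a direct limit over refinements of generalized equivalence classes, the plan is to establish two statements: (i) passing to a refinement of the cover yields a $1$-isomorphic object, and (ii) a generalized equivalence of cocycles over a fixed cover yields a $1$-isomorphic object. Because every $1$-morphism in $\zweibuncon\Gamma M$ is invertible (\cref{re:inv1morph}), in each case it suffices to exhibit a single $1$-morphism, i.e. a $\Gamma$-equivariant anafunctor together with a connective, connection-preserving pullback.

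For (i), the refinement induces the evident strictly $\Gamma$-equivariant smooth functor $\phi$ between the reconstructed bundles (as in \cref{sec:2bundleclass}), and the connection forms on the refined bundle are literally the restrictions of those on the original; thus $\phi^{*}\Omega'=\Omega$. The canonical pullback on the associated anafunctor (\cref{re:canpullback:a}) is then connection-preserving by \cref{rem:smoothfunctorconnection:a} and connective by \cref{rem:smoothfunctorconnection:b}, so it is the required $1$-isomorphism.

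For (ii), let $(h_i,\phi_i,e_{ij})$ be a generalized equivalence from $(g,a,A,B,\varphi)$ to $(g',a',A',B',\varphi')$. The maps $h_i,e_{ij}$ already furnish, via \cref{sec:2bundleclass}, a strictly $\Gamma$-equivariant smooth functor $\phi:\inf P_{(g,a)}\to\inf P_{(g',a')}$. I would equip the associated anafunctor with the canonical $\Omega'$-pullback of \cref{re:canpullback:a}, shifted in the sense of \cref{re:canpullback:b} by the pair $\kappa=(\kappa_0,\kappa_1)$ defined on each component $U_i\times G$ of $\ob{\inf P_{(g,a)}}$ by $\kappa_0:=(\alpha_{\pr_G^{-1}})_{*}(\phi_i)$ and $\kappa_1:=\fc\Omega-\phi^{*}\fc\Omega'$. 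Both forms are of the shape $(\alpha_{\pr_G^{-1}})_{*}(\text{a form on }U_i)$ and hence $R$-invariant, so $\nu^{\kappa}$ is connective by \cref{re:canpullback:shifted:connective}. By the reformulation \cref{eq:shiftedpullback}, connection-preservation of $\nu^{\kappa}$ amounts to the three identities $\fa\Omega=\phi^{*}\fa\Omega'+t_{*}(\kappa_0)$, $\fb\Omega=\phi^{*}\fb\Omega'+\Delta\kappa_0$ and $\fc\Omega=\phi^{*}\fc\Omega'+\kappa_1$; the last holds by the very definition of $\kappa_1$.

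The main work, and the step I expect to be the genuine obstacle, is verifying the first two identities. Inserting the explicit connection formulas together with the definition of $\phi$ on objects and morphisms, the $\fa$-identity reduces, after conjugation by $\mathrm{Ad}_{\pr_G}^{-1}$, to the equivalence condition \cref{32a}, while the $\fb$-identity, whose $\Delta\kappa_0$ term compares $\phi_i$ and $\phi_j$ across a morphism, reduces to \cref{32d} (using \cref{32c} for the source-target bookkeeping). These are bilinear manipulations with the crossed-module identities and the Maurer-Cartan form of $\Gamma$, entirely parallel to the computation already used to check that $\Omega$ is a connection. It is essential that we are in the \emph{generalized} setting: neither the construction of $\inf P_{(g,a)}$ nor that of $\kappa$ ever invokes the curvature conditions \cref{eq:transcurvco} or \cref{eq:equivtranscurv}, so no further compatibility is required. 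Finally, combining (i) and (ii) by passing two equivalent cocycles to a common refinement shows that the assignment descends to $\hat\h^1(M,\Gamma)^{\gen}$, giving the well-defined map $r^{\gen}$.
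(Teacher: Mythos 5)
Your overall strategy coincides with the paper's proof: the same reduction to (i) refinements and (ii) generalized equivalences, the same smooth functor $\phi$ from \cref{sec:2bundleclass}, the same canonical pullback shifted by a pair $\kappa=(\kappa_0,\kappa_1)$, the same appeal to \cref{re:canpullback:shifted:connective} for connectivity and to \cref{re:inv1morph} to upgrade the 1-morphism to a 1-isomorphism. Your $\kappa_1$ also agrees with the paper's, since $\fc\Omega-\phi^{*}\fc{\Omega'}=(\alpha_{\pr_G^{-1}})_{*}((\alpha_{h_i^{-1}})_{*}(B_i')-B_i)$.

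The one step that fails as written is your formula $\kappa_0=(\alpha_{\pr_G^{-1}})_{*}(\phi_i)$; the correct shift is $\kappa_0=(\alpha_{\pr_G^{-1}\cdot h_i^{-1}})_{*}(\phi_i)$. The point is that $\phi$ sends $(i,x,g)$ to $(i,x,h_i(x)g)$, so the pullback $\phi^{*}\fa{\Omega'}$ carries the twist $\mathrm{Ad}_{h_i\cdot\pr_G}^{-1}$ rather than $\mathrm{Ad}_{\pr_G}^{-1}$. Inserting \cref{32a} one finds $\phi^{*}\fa{\Omega'}=\fa\Omega-\mathrm{Ad}^{-1}_{h_i\cdot \pr_G}(t_{*}(\phi_i))=\fa\Omega-t_{*}\bigl((\alpha_{\pr_G^{-1}\cdot h_i^{-1}})_{*}(\phi_i)\bigr)$, so the $\fa$-identity forces the extra $(\alpha_{h_i^{-1}})_{*}$; with your $\kappa_0$ it would instead require $t_{*}(\phi_i)=\mathrm{Ad}_{h_i}^{-1}(t_{*}(\phi_i))$, which has no reason to hold. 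The same twist is needed for the $\Delta\kappa_0$ term in the $\fb$-identity to reduce to \cref{32d}. This is a local slip rather than a conceptual gap: the corrected $\kappa_0$ is still $(\alpha_{\pr_G^{-1}})_{*}$ applied to a form pulled back from $U_i$ (namely $(\alpha_{h_i^{-1}})_{*}(\phi_i)$), so your $R$-invariance/connectivity argument goes through unchanged, and the rest of your argument is exactly the paper's.
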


\begin{proof}
We have to check two things: (a) Invariance under generalized equivalences of cocycles, and (b) Invariance under refinement of open covers.
 For (a), let another generalized differential $\Gamma$-cocycle $(g',a',A',B',\varphi')$ be equivalent to $(g,a,A,B,\varphi)$ via generalized equivalence data $(h,e,\phi)$. Then we have the  functor $\phi: \inf P_{(g,a)} \to \inf P_{(g',a')}$ defined in \cref{sec:2bundleclass}, given by
\begin{equation*}
\phi(i,x,g):= (i,x,h_i(x)g)
\quand
\phi(i,j,x,h,g) := (i,j,x,e_{ij}(x)\alpha(h_j(x),h),h_j(x)g)\text{.}
\end{equation*}
Let $F$ be the corresponding anafunctor, an let $\nu$ be the canonical $\Omega_2$-pullback on $F$. We define $\kappa=(\kappa_0,\kappa_1)$  with $\kappa_0\in \Omega^1(\ob{\inf P_{(g,a)}},\mathfrak{h})$ and $\kappa_1 \in \Omega^2(\ob{\inf P_{(g,a)}},\mathfrak{h})$ by
\begin{equation*}
\kappa_0|_{U_i \times G} := (\alpha_{\pr_G^{-1} \cdot h_i^{-1}})_{*}(\phi_i)
\quand
\kappa_1|{_{U_i \times G}} := (\alpha_{\pr_G^{-1}})_{*}((\alpha_{h_i^{-1}})_{*}(B_i')-B_i) \text{.}
\end{equation*}
The shifted pullback $\nu^{\kappa}$ on $F$ is connective (the $R$-invariance in \cref{re:canpullback:shifted:connective} is easy to check)
\begin{comment}
Indeed, by \cref{re:canpullback:shifted:connective@} we have to show $R^{*}\kappa_i = (\alpha_{\pr_2^{-1}})_{*}(\pr_{1}^{*}\kappa_i)$. For $\kappa_0$, this is
\begin{equation*}
\kappa_0|_{(i,x,gg')} =(\alpha_{g'^{-1}g^{-1}h_i^{-1}})_{*}(\phi_i)=  (\alpha_{g'^{-1}})_{*}(\alpha_{g^{-1}h_i^{-1}})_{*}(\phi_i)= (\alpha_{g'^{-1}})_{*}\kappa_0|_{(i,x,g)}\text{,}
\end{equation*}
and for $\kappa_2$, this is
\begin{equation*}
\kappa_1|_{(i,x,gg')} = (\alpha_{g'^{-1}g^{-1}})_{*}(\alpha_{h_i(x)^{-1}}(B_i')-B_i)= (\alpha_{g'^{-1}})_{*} (\alpha_{g^{-1}})_{*}(\alpha_{h_i(x)^{-1}}(B_i')-B_i)= (\alpha_{g'^{-1}})_{*}\kappa_1|_{(i,x,g)}
\end{equation*}
\end{comment}
 and connection-preserving: using the equivalence condition of \cref{32a}  one can deduce $\fa\Omega = \phi^{*}\fa{\Omega'} + t_{*}\kappa_0$; using the equivalence conditions of \cref{32d,32a,32c} one can deduce $\fb\Omega =\phi^{*}\fb{\Omega'}+ \Delta\kappa_0$, and $\fc\Omega = \phi^{*}\fc{\Omega} + \kappa_1$ holds by definition of $\kappa_1$. 
Hence $F$ and $\nu^{\kappa}$ form a 1-morphism in the  bicategory $\zweibuncon\Gamma M$. 

For (b), let  $\mathcal{U}'$ be a refinement of $\mathcal{U}$, $(g',a',A',B',\varphi')$ be the refined $\Gamma$-cocycle, and $\Omega'$ be the corresponding connection on $\inf P_{(g',a')}$. Then, the evident smooth functor $\phi: \inf P_{(g',a')} \to \inf P_{(g,a)}$ obviously satisfies $\phi^{*}\Omega'=\Omega$.
According to \cref{rem:smoothfunctorconnection:a,rem:smoothfunctorconnection:b} it induces a 1-morphism in the bicategory $\zweibuncon\Gamma M$. 
In both cases, \cref{re:inv1morph} guarantees that these 1-morphisms are automatically 1-isomorphisms.
\end{proof}

\begin{lemma}
The construction of the pair $(\inf P_{(g,a)},\Omega)$  induces a well--defined map
\begin{equation*}
r: \hat\h^1(M,\Gamma) \to \hc 0 (\zweibunconreg\Gamma M)\text{.}
\end{equation*}
\end{lemma}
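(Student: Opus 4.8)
The statement to prove is that the construction $(\inf P_{(g,a)},\Omega)$ induces a well-defined map $r:\hat\h^1(M,\Gamma) \to \hc 0(\zweibunconreg\Gamma M)$. The overall strategy parallels \cref{lem:rgen} almost verbatim; the only genuinely new content is that the target now involves \emph{regular} connections and \emph{regular} pullbacks, so I must verify that the reconstructed connection $\Omega$ is regular, and that the 1-morphisms appearing in the well-definedness argument carry regular pullbacks. First I would check that $\Omega$ is a regular connection on $\inf P_{(g,a)}$. For this I would exhibit an explicit \splitting\ over $\ob{\inf P_{(g,a)}} \times_M \ob{\inf P_{(g,a)}}$ and compute $\sigma^{*}\fb{\mathrm{curv}(\Omega)}$. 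The natural candidate is, over $(U_i \cap U_j)\times G \times G$, the \splitting\ $\sigma((i,x,g_1),(j,x,g_2)) := (i,j,x,1,g_{ij}(x)^{-1}g_1)$ with transition function derived from $g_{ij}$. Feeding this into the formula of \cref{lem:curvrecon} for $\fb{\mathrm{curv}(\Omega)}$, the crucial observation is that the component $h=\pr_H$ is pulled back to the constant $1$ along $\sigma$, so the term $(\tilde\alpha_{\pr_H})_{*}(\cdots)$ vanishes because $(\tilde\alpha_1)_{*}=0$ (recall $\tilde\alpha_h(1)=1$), while the $\mathrm{Ad}_{\pr_H}^{-1}$-term is controlled by the cocycle data; I expect the remaining contribution to vanish identically by construction, giving $\sigma^{*}\fb{\mathrm{curv}(\Omega)}=0$.

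\textbf{Well-definedness.} Next I would establish the two invariances, exactly mirroring the proof of \cref{lem:rgen}. For invariance under equivalence of $\Gamma$-cocycles, I use the smooth functor $\phi:\inf P_{(g,a)} \to \inf P_{(g',a')}$ from \cref{sec:2bundleclass} together with the shift $\kappa=(\kappa_0,\kappa_1)$ built from the equivalence data $(h_i,e_{ij},\phi_i)$, defined by the same formulas as in \cref{lem:rgen}. The pullback $\nu^{\kappa}$ is connective and connection-preserving by the identical calculation; what is new is that I must check $\nu^{\kappa}$ is \emph{regular}. Here I would invoke \cref{rem:smoothfunctorconnection}\cref{re:canpullback:reg*}: since $\Omega'$ is regular (as just shown for the reconstructed connection), the canonical pullback is regular, and \cref{re:canpullback:shifted:fc} together with \cref{eq:canpullback:shifted} shows that the shifted pullback stays regular provided $\mathrm{d}\kappa_0 + \tfrac{1}{2}[\kappa_0\wedge\kappa_0] + \alpha_{*}(\phi^{*}\fa\Omega_2 \wedge \kappa_0) + \kappa_1 = 0$. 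This last identity should follow from the equivalence condition \cref{eq:equivtranscurv}, which for a (non-generalized) equivalence is precisely the relation among $B_i$, $B_i'$, $\phi_i$, and $A_i'$ needed; this is the point where non-generalized (as opposed to generalized) equivalence is essential, and is exactly what distinguishes this lemma from \cref{lem:rgen}.

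\textbf{Refinement and conclusion.} For invariance under refinement, the evident smooth functor $\phi:\inf P_{(g',a')} \to \inf P_{(g,a)}$ satisfies $\phi^{*}\Omega'=\Omega$ strictly, so by \cref{rem:smoothfunctorconnection:a,rem:smoothfunctorconnection:b} and \cref{re:canpullback:reg*} the induced canonical pullback is connective, connection-preserving, and regular. In both cases \cref{re:inv1morph} promotes these 1-morphisms to 1-isomorphisms in $\zweibunconreg\Gamma M$, yielding the well-defined map $r$.

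\textbf{The main obstacle.} The hard part is verifying regularity of the reconstructed connection $\Omega$ itself, i.e. producing the right \splitting\ $\sigma$ and confirming $\sigma^{*}\fb{\mathrm{curv}(\Omega)}=0$ from \cref{lem:curvrecon}. The formula for $\fb{\mathrm{curv}(\Omega)}$ mixes a $\mathrm{Ad}_{\pr_H}^{-1}$-term carrying the $B_i$, $\varphi_{ij}$ data and a $(\tilde\alpha_{\pr_H})_{*}$-term carrying the fake-curvature; the delicate point is choosing $\sigma$ so that both the $H$-coordinate becomes trivial (killing the second term via $(\tilde\alpha_1)_{*}=0$) and the first term vanishes as a consequence of the cocycle conditions rather than requiring any flatness. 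Once the correct $\sigma$ is identified the calculation should be routine, but getting the bookkeeping of the $G$- and $H$-coordinates under the \splitting\ correct is where care is needed.
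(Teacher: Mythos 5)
Your proposal follows the paper's proof essentially verbatim: regularity of $\Omega$ comes from the canonical \splitting s with trivial $H$-component (so that $(\tilde\alpha_1)_{*}=0$ kills the $(\tilde\alpha_{\pr_H})_{*}(\mathrm{fcurv}(A_j,B_j))$-term while the cocycle condition \cref{eq:transcurvco} --- precisely the condition defining non-generalized cocycles --- kills the $\mathrm{Ad}_{\pr_H}^{-1}$-term), and regularity of the comparison 1-morphism comes from \cref{eq:equivtranscurv} forcing $\kappa_1=-\mathrm{d}\kappa_0-\frac{1}{2}[\kappa_0\wedge\kappa_0]-\alpha_{*}(\phi^{*}\fa{\Omega'}\wedge\kappa_0)$, exactly as in the paper. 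The only slip is your explicit \splitting\ formula: since $t(i,j,x,1,g')=(i,x,g_{ij}(x)^{-1}g')$, the correct $G$-coordinate is $g'=g_{ij}(x)g_1$ rather than $g_{ij}(x)^{-1}g_1$, though this does not affect the argument because the pullback of $\fb{\mathrm{curv}(\Omega)}$ vanishes for any \splitting\ whose $H$-component is constantly $1$.
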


\begin{proof}
If a differential $\Gamma$-cocycle $(g,a,A,B,\varphi)$ is non-generalized, then the additional cocycle condition of \cref{eq:transcurvco} reduces
the expression for $\fb{\mathrm{curv}(\Omega)}$ of \cref{lem:curvrecon} to
\begin{equation}
\label{eq:curvbmod}
\fb{\mathrm{curv}(\Omega)}|_{(U_i \cap U_j) \times H \times G} = (\alpha_{\pr_G^{-1}})_{*}((\tilde\alpha_{\pr_H})_{*}(\mathrm{fcurv}(A_j,B_j)))\text{.}
\end{equation}
Using the canonical \splitting s $\sigma_{ij}(x)=(i,j,x,1,g_{ij}(x))$ of $\inf P_{(g,a)}$, we see that $\sigma_{ij}^{*}\fb{\mathrm{curv}(\Omega)}=0$. Thus, the connection $\Omega$ is regular. Next we modify the parts (a) (Invariance under equivalences of cocycles) and (b) (Invariance under refinement of open covers) of the proof of \cref{lem:rgen}.
Concerning (a), we suppose another (non-generalized) differential $\Gamma$-cocycle $(g',a',A',B',\varphi')$ is equivalent to $(g,a,A,B,\varphi)$ via \emph{non-generalized} equivalence data $(h,e,\phi)$. Then, the additional equivalence condition of \cref{eq:equivtranscurv}, together with the cocycle condition of \cref{32a},
implies
\begin{equation*}
\kappa_1 =-\alpha_{*}(\phi^{*}\fa{\Omega'} \wedge \kappa_0)- \mathrm{d}\kappa_0- \frac{1}{2}[\kappa_0 \wedge \kappa_0]\text{.} 
\end{equation*}
\begin{comment}
Indeed,
\begin{align*}
\kappa_1|{_{U_i \times G}} &= (\alpha_{\pr_G^{-1}})_{*}((\alpha_{h_i^{-1}})_{*}(B_i')-B_i) \\&\eqcref{eq:equivtranscurv}(\alpha_{\pr_G^{-1}})_{*}  ((\alpha_{h_i^{-1}})_{*}(-\alpha_{*}(A_i' \wedge \phi_i) - \mathrm{d}\phi_i - \frac{1}{2}[\phi_i \wedge \phi_i]))
\\&= -  \alpha_{*}(\mathrm{Ad}^{-1}_{h_i \cdot \pr_G} (A_i') \wedge (\alpha_{\pr_G^{-1}h_i^{-1}})_{*}(\phi_i)) - (\alpha_{\pr_G^{-1}h_i^{-1}})_{*}(\mathrm{d}\phi_i) \\&\qquad- \frac{1}{2}[(\alpha_{\pr_G^{-1}h_i^{-1}})_{*}(\phi_i) \wedge (\alpha_{\pr_G^{-1}h_i^{-1}})_{*}(\phi_i)]
\\&=   -\alpha_{*}(\mathrm{Ad}^{-1}_{h_i \cdot \pr_G} (A_i')-\bar\theta_{\pr_G^{-1}h_i^{-1}} \wedge \kappa_0) - \mathrm{d}\kappa_0- \frac{1}{2}[\kappa_0 \wedge \kappa_0]
\\&=-  \alpha_{*}(\mathrm{Ad}^{-1}_{h_i \cdot \pr_G} (A_i')+\theta_{h_i\cdot \pr_G} \wedge \kappa_0) - \mathrm{d}\kappa_0- \frac{1}{2}[\kappa_0 \wedge \kappa_0]
\\&= -\alpha_{*}(\phi^{*}\fa{\Omega'} \wedge \kappa_0) - \mathrm{d}\kappa_0- \frac{1}{2}[\kappa_0 \wedge \kappa_0]
\end{align*}
\end{comment}
Using the formula of \cref{re:canpullback:shifted:fc}, 
\begin{comment}
namely,
\begin{equation*}
\mathrm{fcurv}(\nu^{\kappa})=\mathrm{fcurv}(\nu) + \pr_1^{*}\big ( \mathrm{d}\kappa_0+ \frac{1}{2}[\kappa_0\wedge \kappa_0]+ \alpha_{*}(\phi^{*}\fa\Omega_2 \wedge \kappa_0)+\kappa_1   \big )\text{,}
\end{equation*}
\end{comment}
this means that
$\mathrm{fcurv}(\nu^{\kappa})=\mathrm{fcurv}(\nu)$, so that \cref{re:canpullback:reg} implies that $\nu^{\kappa}$ is regular. Concerning (b), \cref{re:canpullback:reg} implies directly that the canonical pullback on the obvious smooth functor is regular. \end{proof}

Equipped with the maps $r^{\gen}$ and $r$ we are in position to state the following result.

\begin{theorem}
\label{th:classmain}
Principal $\Gamma$-2-bundles with connections are classified up to 1-isomorphism by differential non-abelian cohomology, in the sense that we have a commutative diagram
\begin{equation*}
\alxydim{}{\hat\h^1(M,\Gamma)^{\ff} \ar[d]^{} \ar@{^(->}[r] & \hat\h^1(M,\Gamma) \ar[d]^{r} \ar[r] & \hat\h^1(M,\Gamma)^{\gen} \ar[d]^{r^{\gen}} \\
\hc 0 (\zweibunconff \Gamma M) \ar@{^(->}[r] & \hc 0 (\zweibunconreg \Gamma M) \ar[r] & \hc 0 (\zweibuncon \Gamma M)\text{,}}
\end{equation*}
in which all vertical arrows are bijections. 
\end{theorem}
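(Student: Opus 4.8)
The plan is to prove \cref{th:classmain} by establishing that each vertical map is a bijection, reusing the machinery already developed for the purely topological classification in \cref{thm:classcoho}. The maps $r$ and $r^{\gen}$ are already constructed (and shown well-defined) in the two preceding lemmata, so what remains is surjectivity and injectivity, together with compatibility with the horizontal inclusions and the fake-flat case. The key observation is that the curvature formulas of \cref{lem:curvrecon} match exactly the curvature and fake-curvature of the Breen-Messing cocycle data, so the classes of connections (flat/fake-flat/regular/general) correspond precisely to the cocycle versions (flat/fake-flat/non-generalized/generalized); this is what makes the three columns fit together.

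First I would prove surjectivity of $r$ by an \emph{extraction} procedure, parallel to the surjectivity argument for \cref{eq:reconmap}. Given a principal $\Gamma$-2-bundle $\inf P$ with a regular connection $\Omega$, I would make the same choices as in \cref{sec:2bundleclass}: a good cover $\{U_i\}$ with sections $s_i:U_i\to\ob{\inf P}$ and \splitting s $\sigma_{ij}$ with transition functions $g_{ij}$, yielding the underlying cocycle $(g,a)$. Then I extract the differential data by pulling $\Omega$ back along these sections and splittings: set $A_i:=s_i^{*}\fa\Omega$, $B_i:=-s_i^{*}\fc\Omega$ (with the sign dictated by the reconstruction formula), and $\varphi_{ij}$ from $\sigma_{ij}^{*}\fb\Omega$ after stripping the universal Maurer-Cartan contribution via the connection conditions \cref{eq:conform:a,eq:conform:b,eq:conform:c}. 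The connection conditions then force the cocycle conditions \cref{eq:transconnco,eq:transconn2co}, and regularity — via the analysis of \cref{re:regfake} applied through the canonical splittings — forces \cref{eq:transcurvco}, so that $(A,B,g,\varphi,a)$ is a genuine (non-generalized) differential $\Gamma$-cocycle. By construction $r$ of this class is $1$-isomorphic to $(\inf P,\Omega)$, giving surjectivity; the generalized case drops the regularity input and hence \cref{eq:transcurvco}, giving surjectivity of $r^{\gen}$.

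For injectivity I would argue that if two cocycles reconstruct to $1$-isomorphic bundles-with-connection, they are equivalent cocycles. Starting from a connection-preserving $1$-morphism $F:\inf P_{(g,a)}\to\inf P_{(g',a')}$ with connective, connection-preserving pullback $\nu$, I would extract equivalence data exactly as in the injectivity half of \cref{thm:classcoho}: the maps $h_i,e_{ij}$ come from comparing \splitting s of $F$ via \cref{lem:Fsections}, producing \cref{32c,32f}. The differential $1$-forms $\phi_i$ of the equivalence are then recovered from the shift $\kappa_0$ relating $\nu$ to the canonical pullback (as in \cref{re:canpullback:b,re:canpullback:d}), and the equations \cref{eq:shiftedpullback} translate the connection-preserving condition into \cref{32a,32d}. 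Connectivity of $\nu$ is what guarantees the $R$-invariance needed for these local forms to be globally consistent. In the regular/non-generalized column, regularity of $\nu$ forces the extra equivalence condition \cref{eq:equivtranscurv}, via the computation in \cref{re:canpullback:shifted:fc} showing $\mathrm{fcurv}(\nu^{\kappa})=\mathrm{fcurv}(\nu)$; dropping regularity yields only a generalized equivalence, handling $r^{\gen}$.

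Finally, the fake-flat column and the commutativity of the squares. The left vertical map is the restriction of $r$, well-defined because \cref{eq:curvbmod} shows that fake-flatness of the cocycle ($\mathrm{fcurv}(A_i,B_i)=0$) is equivalent to $\fa{\mathrm{curv}(\Omega)}=0$ and $\fb{\mathrm{curv}(\Omega)}=0$, i.e. fake-flatness of $\Omega$; bijectivity then follows by the same extraction and equivalence arguments restricted to fake-flat data, using \cref{re:regfake} to see that a regular fake-flat reconstruction lands in the fake-flat bicategory. Commutativity of both squares is immediate at the level of representatives, since all horizontal maps simply forget structure (the inclusion of fake-flat into general cocycles, respectively the passage to generalized cocycles/equivalences) and the reconstruction is natural with respect to this forgetting. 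The main obstacle I anticipate is the injectivity step: carefully disentangling the interplay between the three pieces of equivalence data $(h_i,e_{ij},\phi_i)$ and verifying that the connective shift $\kappa$ reproduces precisely \cref{32a,32d} (and \cref{eq:equivtranscurv} in the regular case) is the delicate bookkeeping, since it requires simultaneously tracking the groupoid-theoretic comparison of splittings and the differential-form shift, where the sign conventions and the $(\alpha_{g^{-1}})_{*}$-twists must align exactly.
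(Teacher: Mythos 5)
Your overall strategy coincides with the paper's: extract a (generalized) differential cocycle by pulling $\Omega$ back along sections and \splitting s, use regularity to upgrade it to a non-generalized one, and prove injectivity by comparing \splitting s of a connection-preserving $1$-morphism. However, there is a genuine gap in your surjectivity argument for the middle (regular) column. You assert that regularity of $\Omega$, ``via the analysis of \cref{re:regfake} applied through the canonical splittings,'' forces \cref{eq:transcurvco}. This does not follow: regularity only guarantees that every point of $\ob{\inf P}\times_M\ob{\inf P}$ has \emph{some} neighbourhood supporting \emph{some} \splitting\ $\sigma$ with $\sigma^{*}\fb{\mathrm{curv}(\Omega)}=0$; it says nothing about the \splitting s $\sigma_{ij}$ you have already chosen to define $g_{ij}$ and $a_{ijk}$. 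Two \splitting s over the same open set differ by the action of an $H$-valued map, and under that action $\fb{\mathrm{curv}(\Omega)}$ transforms by \cref{eq:re:regfake:1}, which picks up the term $(\tilde\alpha_{h})_{*}$ applied to $\fa{\mathrm{curv}(\Omega)}$ --- and $\fa{\mathrm{curv}(\Omega)}$ need not vanish for a merely regular connection. So $\sigma_{ij}^{*}\fb{\mathrm{curv}(\Omega)}=0$ is \emph{not} automatic and must be arranged. The paper does this by refining the cover: the good \splitting s form an open cover $\mathcal{V}$ of $\ob{\inf P}\times_M\ob{\inf P}$, whose preimages under $(s_i,s_j)$ give a ``hypercover of height 1'' of $M$ that must be refined by an honest open cover before the $\sigma_{ij}$ can be chosen with the required vanishing. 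Without this step the cocycle condition \cref{eq:transcurvco} is not established. (Also note that \cref{re:regfake} itself concerns deducing fake-flatness from regularity together with $\fa{\mathrm{curv}(\Omega)}=0$, and the ``canonical \splitting s'' $\sigma_{ij}(x)=(i,j,x,1,g_{ij}(x))$ live on the \emph{reconstructed} bundle $\inf P_{(g,a)}$, i.e.\ they belong to the proof that $r$ lands in regular connections, not to the extraction direction.)

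A second, more minor point: in the injectivity step you propose to recover the $\phi_i$ ``from the shift $\kappa_0$ relating $\nu$ to the canonical pullback'' via \cref{re:canpullback:b,re:canpullback:d} and \cref{eq:shiftedpullback}. Those statements apply to anafunctors \emph{associated to smooth functors}; an arbitrary $1$-morphism $F:\inf P_{(g,a)}\to\inf P_{(g',a')}$ has no canonical pullback to shift. Either you must first replace $F$ (up to $2$-isomorphism) by such an associated anafunctor --- an extra step you do not justify --- or, as the paper does, you should define $\phi_i:=(\alpha_{h_i})_{*}(\sigma_i^{*}\nu_0)$ directly from the \splitting s $\sigma_i$ of $F$ and verify \cref{32d} by pulling $\nu_0$ back along both sides of the identity \cref{eq:localglobal1}. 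With these two repairs your argument matches the paper's proof.
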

\begin{proof}
First of all, we note that the restriction of $r$ to the subset $\hat\h^1(M,\Gamma)^{\ff}$ results in fake-flat connections $\Omega$. Indeed, one can see from \cref{lem:curvrecon} and \cref{eq:curvbmod} that the vanishing of $\mathrm{fcurv}(A_i,B_i)$  implies the vanishing of $\fa{\mathrm{curv}(\Omega)}$ and $\fb{\mathrm{curv}(\Omega)}$.
This shows that the diagram exists and is commutative.

In order to show surjectivity  of its vertical maps we explain how to extract differential $\Gamma$-cocycles. Suppose $\inf P$ is a principal $\Gamma$-2-bundle over $M$ with  connection $\tilde\Omega$. We  proceed relative to the  choices  made in \cref{sec:2bundleclass}, i.e.
a cover $\{U_i\}_{i\in I}$ of $M$ by contractible open sets with contractible double intersections, smooth sections $s_i: U_i \to \inf P$, and \splitting s $\sigma_{ij}:U_i \cap U_j \to \mor{\inf P}$ along $(s_i,s_j):U_i \cap U_j \to \ob{\inf P} \times_M \ob{\inf P}$, together with transitions functions $g_{ij}:U_i \cap U_j \to G$. 
As described in \cref{sec:2bundleclass}, these choices induce smooth maps $a_{ijk}:U_i \cap U_j \cap U_k \to H$ in such a way that $(g,h)$ is a $\Gamma$-cocycle. Now we add the required differential forms:
\begin{enumerate}[(a)]

\item 
over each open set $U_i$, the differential forms
\begin{equation*}
A_i := s_i^{*}\fa{\tilde\Omega} \in \Omega^1(U_i,\mathfrak{g})
\quand
B_i := -s_i^{*}\fc{\tilde\Omega} \in \Omega^2(U_i,\mathfrak{h})\text{.}
\end{equation*}

\item
Over each double intersection $U_i \cap U_j$ the differential form
\begin{equation*}
\varphi_{ij} :=(\alpha_{g_{ij}})_{*}(\sigma_{ij}^{*}\fb{\tilde\Omega}) \in \Omega^1(U_i \cap U_j,\mathfrak{h})\text{.}
\end{equation*}

\end{enumerate}
We show the following:
\begin{enumerate}[1.)]

\item 
The collection $(g,h,A,B,\varphi)$ is a generalized differential $\Gamma$-cocycle.

Over $U_i \cap U_j$, we
pull back \cref{eq:conform:a} along $(s_i,g_{ij}^{-1})$, resulting in $(s_i,g_{ij}^{-1})^{*}R^{*}\fa{\tilde\Omega} = \mathrm{Ad}_{g_{ij}}(A_i) - g_{ij}^{*}\bar \theta$. On the left, we rewrite $R(s_i,g_{ij}^{-1}) =t(R(\sigma_{ij},\id_{g_{ij}^{-1}}))$ and obtain
\begin{multline*}
(\sigma_{ij},\id_{g_{ij}^{-1}})^{*}R^{*}t^{*}\fa{\tilde\Omega} = (\sigma_{ij},\id_{g_{ij}^{-1}})^{*}R^{*}(s^{*}\fa{\tilde\Omega} + t_{*}(\fb{\tilde\Omega}))\\= s_j^{*}\fa{\tilde\Omega} + t_{*}((\sigma_{ij},\id_{g_{ij}^{-1}})^{*}R^{*}\fb{\tilde\Omega})=A_j+t_{*}(\varphi_{ij})
\end{multline*}
where the last step uses \cref{eq:conform:b}. This is the cocycle condition \cref{eq:transconnco}. 
Over $U_i \cap U_j \cap U_k$ we pull back \cref{eq:conform:b} along $(\sigma_{ik},(\alpha(g_{ik}^{-1},a_{ijk}),g_{ik}^{-1}g_{jk}g_{ij}))$. Using the second half of \cref{eq:local:sub1} the left hand side becomes
\begin{equation*}
(\sigma_{ij} \circ R(\sigma_{jk},\id_{g_{ij}}))^{*}\fb{\tilde\Omega} = \sigma_{ij}^{*}\fb{\tilde\Omega} +(\alpha_{g_{ij}^{-1}})_{*}(\sigma_{jk}^{*}\fb{\tilde\Omega})=(\alpha_{g_{ij}^{-1}g_{jk}^{-1}})_{*}((\alpha_{g_{jk}})_{*}(\varphi_{ij}) + \varphi_{jk})\text{.}
\end{equation*}
The right hand side is, using the first half of \cref{eq:local:sub1}  and \cref{eq:conform:a},
\begin{equation*}
(\alpha_{g_{ij}^{-1}g_{jk}^{-1}})_{*}\left (\mathrm{Ad}_{a_{ijk}^{-1}}(\varphi_{ik})+(\tilde \alpha_{a_{ijk}})_{*}(A_k) +a_{ijk}^{*}\theta  \right)\text{.}
\end{equation*}
\begin{comment}
Indeed,
\begin{align*}
\\&\mquad(\alpha_{g_{ij}^{-1}g_{jk}^{-1}g_{ik}})_{*}\left (\mathrm{Ad}_{\alpha(g_{ik}^{-1},a_{ijk})}^{-1}(\sigma_{ik}^{*}\fb\Omega)+(\tilde \alpha_{\alpha(g_{ik}^{-1},a_{ijk})})_{*}((s_k,g_{ik})^{*}R^{*}\fa\Omega) +\alpha(g_{ik}^{-1},a_{ijk})^{*}\theta \right)
\\&=(\alpha_{g_{ij}^{-1}g_{jk}^{-1}})_{*}\left (\mathrm{Ad}_{a_{ijk}^{-1}}(\alpha_{g_{ik}})_{*}(\sigma_{ik}^{*}\fb\Omega)+(\tilde \alpha_{a_{ijk}})_{*}(s_k^{*}\fa\Omega ) +a_{ijk}^{*}\theta  \right)
\\&=(\alpha_{g_{ij}^{-1}g_{jk}^{-1}})_{*}\left (\mathrm{Ad}_{a_{ijk}^{-1}}(\varphi_{ik})+(\tilde \alpha_{a_{ijk}})_{*}(A_k) +a_{ijk}^{*}\theta  \right)
\end{align*}
\end{comment}
This implies the cocycle condition \cref{eq:transconn2co}. This shows that $(g,a,A,B,\varphi)$ is a generalized differential $\Gamma$-cocycle.

\item
If $\tilde\Omega$ is regular, then $(g,a,A,B,\varphi)$ is non-generalized. 

In fact, we have to pass to a smaller open cover constructed as follows. Consider a cover $\mathcal{V}$ of $X:=\ob{\inf P} \times_M \ob{\inf P}$ consisting of open sets $V\subset X$ supporting  transition spans $\sigma$ with $\sigma^{*}\fb{\mathrm{curv}(\tilde\Omega)}=0$; these open sets exist because $\tilde\Omega$ is regular. The inverse image of $\mathcal{V}$ under the map $(s_i,s_j): U_i \cap U_j \to X$ is an open cover $\mathcal{V}_{ij}$ of $U_i \cap U_j$. On paracompact spaces (such as smooth manifolds), every such \quot{hypercover of height 1} can be refined by an ordinary open cover. Restriction of sections $s_i$ to this refinement guarantees the existence of transition spans $\sigma_{ij}$ along $(s_i,s_j)$ with  $\sigma_{ij}^{*}\fb{\mathrm{curv}(\tilde\Omega)}=0$,  for all double intersections\footnote{This is not totally trivial, but drops a bit out of the context. First of all, in our situation  \cite[Lemma 7.2.3.5]{Lurie2009} states the following: there exists an open cover $\mathcal{W}=\{W_i\}_{i\in J}$ of $M$ and  a refinement map $r:J \to I$ with $W_i \subset U_{r(i)}$, such that for $i,j\in J$ with $r(i)\neq r(j)$ there exists an open set $W\in \mathcal{V}_{ij}$ with $W_i \cap W_j \subset W$. Let $\tilde s_i := s_{r(i)}|_{W_i}$ denote the restricted sections. Consider a double intersection $W_i\cap W_j$.
\begin{itemize}

\item 
If $r(i)\neq r(j)$, choose $W\in \mathcal{V}_{ij}$ with $W_i \cap W_j \subset W$. By construction, $W=(s_i,s_j)^{-1}(V)$, where $V\subset X$ is open and supports a transition span $\sigma$ with $\sigma^{*}\fb{\mathrm{curv}(\tilde\Omega)}=0$.  Thus, $\sigma_{ij}:= \sigma \circ (s_i,s_j):W \to \mor{\inf P}$ is a transition span along $(s_i,s_j)$. Now we restrict to $W_i \cap W_j \subset W$, and obtain a transition span $\tilde\sigma_{ij}:W_i \cap W_j \to \mor{\inf P}$  along $(\tilde s_i,\tilde s_j)$.

\item
If $r(i)=r(j)$, then $\tilde s_i = \tilde s_j$. Thus, $\tilde\sigma_{ij} := \id_{\tilde s_i}$  is a transition span with  $\tilde\sigma_{ij}^{*}\fb{\mathrm{curv}(\tilde\Omega)}=\tilde s_i^{*} \id^{*} \fb{\mathrm{curv}(\tilde\Omega)}=0$.
\end{itemize}}. We obtain the generalized $\Gamma$-cocycle $(g,a,A,B,\varphi)$ as described before, but over  $U_i\cap U_j$ we obtain
using \cref{eq:conform:c}:
\begin{equation*}
0=(\alpha_{g_{ij}})_{*}(\sigma_{ij}^{*}\fb{\mathrm{curv}({\Omega})})= -(\alpha_{g_{ij}})_{*}(B_i)+B_j+\mathrm{d}\varphi_{ij}+\frac{1}{2}[\varphi_{ij}\wedge \varphi_{ij}]+ \alpha_{*}(A_j \wedge \varphi_{ij})\text{;}
\end{equation*}
this is exactly the required cocycle condition \cref{eq:transcurvco}. 
\begin{comment}
Indeed we deduce  using \cref{eq:conform:c}:
\begin{align*}
0&=(\alpha_{g_{ij}})_{*}(\sigma_{ij}^{*}\fb{\mathrm{curv}(\Omega)})
\\&= (\alpha_{g_{ij}})_{*}(\sigma_{ij}^{*}t^{*}\fc\Omega)-(\alpha_{g_{ij}})_{*}(\sigma_{ij}^{*}s^{*}\fc\Omega) +(\alpha_{g_{ij}})_{*}(\mathrm{d}\sigma_{ij}^{*}\fb\Omega)+\frac{1}{2}[\varphi_{ij}\wedge \varphi_{ij}]+ \alpha_{*}(\mathrm{Ad}_{g_{ij}}(\sigma_{ij}^{*}s^{*}\fa\Omega) \wedge \varphi_{ij})
\\&= (\alpha_{g_{ij}})_{*}(s_i^{*}\fc\Omega)-(\alpha_{g_{ij}})_{*}(s_j,g_{ij})^{*}R^{*}\fc\Omega +(\alpha_{g_{ij}})_{*}(\mathrm{d}(\alpha_{g_{ij}^{-1}})_{*}(\varphi_{ij}))+\frac{1}{2}[\varphi_{ij}\wedge \varphi_{ij}]+ \alpha_{*}(\mathrm{Ad}_{g_{ij}}((s_j,g_{ij})^{*}R^{*}\fa\Omega) \wedge \varphi_{ij})
\\&= -(\alpha_{g_{ij}})_{*}(B_i)+B_j+\mathrm{d}\varphi_{ij}+\frac{1}{2}[\varphi_{ij}\wedge \varphi_{ij}]+ \alpha_{*}(A_j \wedge \varphi_{ij})
\end{align*}
This is \cref{eq:transcurvco}.
\end{comment}

\item
If $\tilde\Omega$ is fake-flat, then  $(g,a,A,B,\varphi)$ is fake-flat. Indeed,  $\mathrm{fcurv}(A_i,B_i) =s_i^{*}\fa{\mathrm{curv}({\tilde\Omega})}$. 

\item
The cocycle $(g,a,A,B,\varphi)$ is as a preimage of $\inf P$ under $r^{\gen}$. 

We consider the smooth $\Gamma$-equivariant functor  $\phi: \inf P_{(g,a)} \to \inf P$ of  \cref{sec:2bundleclass}, defined by
\begin{equation*}
\phi(i,x,g) := R(s_i(x),g)
\quand
\phi(i,j,x,h,g) := R(\sigma_{ij}(x),(\alpha(g_{ij}(x)^{-1},h),g_{ij}(x)^{-1}g))\text{.}
\end{equation*}
It is straightforward to check that $\phi^{*}{\tilde\Omega}=\Omega$ (where $\Omega$ is the connection reconstructed from $(A,B,\varphi)$ as described at the beginning of this subsection).  
\begin{comment}
We have 
\begin{align*}
\phi^{*}\fa{\tilde\Omega}|_{i,x,g} &= \fa{\tilde\Omega}_{R(s_i,g)}=\mathrm{Ad}_{g}^{-1}(s_i^{*}\fa{\tilde\Omega})+\theta_g=\mathrm{Ad}_g^{-1}(A_i)+ \theta_g=\fa\Omega
\\
\phi^{*}\fb{\tilde\Omega}|_{i,j,x,h,g} &= \fb{\tilde\Omega}_{R(\sigma_{ij},(\alpha(g_{ij}^{-1},h),g_{ij}^{-1}g))}
\\&=(\alpha_{g^{-1}g_{ij}})_{*}\left (\mathrm{Ad}_{\alpha(g_{ij}^{-1},h)}^{-1}(\sigma_{ij}^{*}\fb{\tilde\Omega})+(\tilde \alpha_{\alpha(g_{ij}^{-1},h)})_{*}(s^{*}\sigma_{ij}^{*}\fa{\tilde\Omega}) +\theta_{\alpha(g_{ij}^{-1},h)} \right)
\\&=(\alpha_{g^{-1}})_{*}\left (\mathrm{Ad}_{h}^{-1}(\alpha_{g_{ij}})_{*}(\sigma_{ij}^{*}\fb{\tilde\Omega})+(\tilde \alpha_{h})_{*}(\mathrm{Ad}_{g_{ij}}\fa{\tilde\Omega}_{R(s_j,g_{ij})}+\theta_{g_{ij}^{-1}}) +\theta_{h} \right)
\\&=(\alpha_{g^{-1}})_{*}\left (\mathrm{Ad}_{h}^{-1}(\varphi_{ij})+(\tilde \alpha_{h})_{*}(A_j) +\theta_{h} \right)
\\&=\fb\Omega
\\
\phi^{*}\fc{\tilde\Omega}|_{i,x,g} &= \fc{\tilde\Omega}_{R(s_i,g)}
= (\alpha_{g^{-1}})_{*}(s_i^{*}\fc{\tilde\Omega})=- (\alpha_{g^{-1}})_{*}(B_i)=\fc\Omega 
\end{align*}
\end{comment}
Hence, the canonical $\tilde\Omega$-pullback on the induced anafunctor is connection-preserving and by \cref{rem:smoothfunctorconnection}  connective. Thus, we have a 1-morphism in the bicategory $\zweibuncon \Gamma M$. \Cref{rem:smoothfunctorconnection} also guarantees that the canonical pullback is regular if $\tilde\Omega$ is regular, and fake-flat if $\tilde\Omega$ is fake-flat. Thus, we also obtain 1-morphisms in the bicategories $\zweibunconreg\Gamma M$ and $\zweibunconff\Gamma M$. By \cref{re:inv1morph} these are automatically 1-isomorphisms. This shows surjectivity of all three vertical maps.
 
\end{enumerate}
In order to prove injectivity of the vertical maps in our diagram, we consider two generalized differential $\Gamma$-cocycles $(g,a,A,B,\varphi)$ and $(g',a',A',B',\varphi')$ with respect to the same open cover, and the reconstructed principal $\Gamma$-2-bundles $\inf P:=\inf P_{(g,a)}$ and $\inf P':=\inf P_{(g',a')}$ equipped with the reconstructed connections $\Omega$ and $\Omega'$, respectively. We suppose that there is a 1-morphism $F: \inf P \to \inf P'$ equipped with a connection-preserving, connective pullback $\nu$. We have to show that the two cocycles are generalized equivalent. Following the treatment in \cref{sec:2bundleclass}, we have \splitting s $\sigma_{ij}(x):=(i,j,x,1,g_{ij}(x))$ of $\inf P$ with transition functions $g_{ij}$, and analogous \splitting s $\sigma_{ij}'$ of $\inf P'$ with transition functions $g_{ij}'$. 
 We choose \splitting s $\sigma_i:U_i \to F$ of $F$ along $(s_i,s_i'):U_i \to \ob{\inf P} \times_M \ob{\inf P'}$ with transition functions $h_i: U_i \to G$. In  \cref{sec:2bundleclass} we have then derived  \cref{eq:local:sub2},
\begin{equation}
\label{eq:localglobal1}
\sigma_{ij}\circ \rho(\sigma_j,\id_{g_{ij}}) = \rho(\sigma_i \circ R(\sigma'_{ij},\id_{h_i}),(\alpha(h_i^{-1}g_{ij}'^{-1},e_{ij}),h_i^{-1}g_{ij}'^{-1}h_jg_{ij}))\text{,}
\end{equation}
which is an equality between \splitting s of $F$, and proved that $(h,e)$ satisfies the equivalence conditions  of \cref{32c,32f}. In order to upgrade these results to the differential setting, we set $\phi_i := (\alpha_{h_i})_{*}(\sigma_i^{*}\nu_0)\in\Omega^1(U_i,\mathfrak{h})$ and pull back $\nu_0$ separately along both sides of \erf{eq:localglobal1}. We get, on the left hand side,
\begin{equation*}
(\alpha_{g_{ij}^{-1}h_j^{-1}})_{*}((\alpha_{h_j})_{*}(\varphi_{ij})+\phi_j)\text{,}
\end{equation*}
\begin{comment}
\begin{equation*}
\nu_0|_{\sigma_{ij}\circ \rho(\sigma_j,\id_{g_{ij}})} \eqcref{eq:conana:7} \fb\Omega_{\sigma_{ij}}+\nu_0|_{\rho(\sigma_j,\id_{g_{ij}})}\eqcref{eq:conana:b}(\alpha_{g_{ij}^{-1}})_{*}\left (\varphi_{ij}+\nu_0|_{\sigma_j} \right)=(\alpha_{g_{ij}^{-1}h_j^{-1}})_{*}((\alpha_{h_j})_{*}(\varphi_{ij})+\phi_j)\text{.} \end{equation*} 
\end{comment}
and, on the right hand side,
\begin{equation*}
(\alpha_{g_{ij}^{-1}h_j^{-1}})_{*}\left (\mathrm{Ad}_{e_{ij}}^{-1}((\alpha_{g_{ij}'})_{*}(\phi_i  )+ \varphi'_{ij})+(\tilde\alpha_{e_{ij}})_{*}(A_j')+\theta_{e_{ij}}\right)\text{.}
\end{equation*}
Thus, equality of both formulas gives the  equivalence condition of \cref{32d}.
This shows  that $(h,e,\phi)$ forms a generalized equivalence, which proves the injectivity of $r^{\gen}$. 

For the injectivity of $r$ we need to show that above equivalence is non-generalized, under the assumption that the pullback $\nu$ on $F$ is regular. With this assumption, we can assume in above constructions that the \splitting s $\sigma_i$ satisfy $\sigma_i^{*}\mathrm{fcurv}(\nu)=0$. It is straightforward to check that
\begin{equation*}
0=(\alpha_{h_i})_{*}(\sigma_i^{*}\mathrm{fcurv}(\nu))= B_i' + \alpha_{*}(A_i' \wedge \phi_i) + \mathrm{d}\phi_i + \frac{1}{2}[\phi_i \wedge \phi_i] - (\alpha_{h_i})_{*}(B_i)\text{.}
\end{equation*}
This is the additional equivalence condition of \cref{eq:equivtranscurv} that characterized non-generalized equivalence data.
\begin{comment}
Indeed,
\begin{align*}
0&=(\alpha_{h_i})_{*}(\mathrm{fcurv}(\nu)_{\sigma_i})  \\&=(\alpha_{h_i})_{*}\mathrm{d}\nu_0|_{\sigma_i}+ \frac{1}{2}[(\alpha_{h_i})_{*}\nu_0|_{\sigma_i}\wedge(\alpha_{h_i})_{*} \nu_0|_{\sigma_i}]+(\alpha_{h_i})_{*} \alpha_{*}(\fa{\Omega'}_{\alpha_r(\sigma_i)} \wedge \nu_0|_{\sigma_i})+(\alpha_{h_i})_{*}\nu_1|_{\sigma_i}
 \\&=\mathrm{d}\phi_i-\alpha_{*}(\bar\theta_{h_i} \wedge \phi_i)+ \frac{1}{2}[\phi_i\wedge\phi_i]+ \alpha_{*}(\mathrm{Ad}_{h_i}(\fa{\Omega'}_{R(s_i',h_i)}) \wedge \phi_i)+(\alpha_{h_i})_{*}(\fc\Omega_{s_i}-\fc{\Omega'}_{R(s_i',h_i)})
 \\&=\mathrm{d}\phi_i-\alpha_{*}(\bar\theta_{h_i} \wedge \phi_i)+ \frac{1}{2}[\phi_i\wedge\phi_i]+ \alpha_{*}(A_i'+\mathrm{Ad}_{h_i}(\theta_{h_i}) \wedge \phi_i)+(\alpha_{h_i})_{*}(-B_i)+B_i'
\end{align*}
\end{comment}
\end{proof}

\begin{corollary}
\label{co:pgen}
The map $p^{\gen}: \hat\h^1(M,\Gamma)^{\gen} \to \h^1(M,\Gamma)$ of \cref{sec:nonabdiffcoho} is surjective.
\end{corollary}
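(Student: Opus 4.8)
The plan is to deduce the corollary from the two main theorems just established, \cref{thm:classcoho} and \cref{th:classmain}, together with the existence result \cref{th:existence}. The idea is to transport the map $p^{\gen}$ across these classification bijections so that its surjectivity becomes equivalent to the geometric statement that every principal $\Gamma$-2-bundle over $M$ carries a connection (in the fully general sense).

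First I would record the commutative square
\begin{equation*}
\alxydim{@C=1.7cm@R=1.2cm}{\hat\h^1(M,\Gamma)^{\gen} \ar[r]^-{r^{\gen}} \ar[d]_{p^{\gen}} & \hc 0 (\zweibuncon\Gamma M) \ar[d]^{\Phi} \\ \h^1(M,\Gamma) \ar[r]_-{\cong} & \hc 0 (\zweibun\Gamma M)\text{,}}
\end{equation*}
whose lower horizontal arrow is the classification bijection of \cref{thm:classcoho}, whose upper arrow $r^{\gen}$ is the bijection of \cref{th:classmain}, and whose right arrow $\Phi$ is the forgetful map $[\inf P,\Omega] \mapsto [\inf P]$. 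Commutativity is immediate from the reconstruction procedures: evaluated on a generalized differential cocycle $(g,a,A,B,\varphi)$, both composites return the isomorphism class of the reconstructed bundle $\inf P_{(g,a)}$, since $\Phi(r^{\gen}(g,a,A,B,\varphi))=[\inf P_{(g,a)}]$ and the lower bijection sends $[g,a]\mapsto[\inf P_{(g,a)}]$. Because $r^{\gen}$ and the lower arrow are bijections, $p^{\gen}$ is surjective if and only if $\Phi$ is surjective, i.e. if and only if every principal $\Gamma$-2-bundle admits a connection.

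It then remains to verify this geometric statement. Given a class in $\hc 0(\zweibun\Gamma M)$, by \cref{thm:classcoho} it is represented by a reconstructed bundle $\inf P_{(g,a)}$ for a suitable $\Gamma$-cocycle $(g,a)$. The key observation is that $\ob{\inf P_{(g,a)}}=\coprod_{i\in I}U_i\times G$ carries the $G$-action $R((i,x,g),g')=(i,x,gg')$, which is componentwise right translation on the $G$-factor and hence free and proper. Therefore \cref{th:existence} applies directly to $\inf P_{(g,a)}$ and produces a connection $\Omega$, so that the given class equals $\Phi([\inf P_{(g,a)},\Omega])$. This shows $\Phi$ is surjective, and with the previous paragraph completes the proof.

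The one point that requires care is precisely the free-and-properness hypothesis of \cref{th:existence}, which fails for a general total-space groupoid; the argument sidesteps this by always realizing a topological class through the explicit model $\inf P_{(g,a)}$, where the hypothesis holds automatically. I expect this to be the main conceptual step rather than a computational obstacle. As a self-contained alternative avoiding \cref{th:existence} altogether, one can instead build a generalized differential cocycle over $(g,a)$ by hand: put $B_i:=0$ (admissible because the condition \cref{eq:transcurvco} is dropped in the generalized setting), and define $A_i$ and $\varphi_{ij}$ by averaging the Maurer--Cartan pullbacks $g_{ij}^{*}\theta$ and $a_{ijk}^{*}\theta$ against a partition of unity subordinate to $\mathcal{U}$, so that \cref{eq:transconnco,eq:transconn2co} follow from the topological cocycle identities \cref{eq:transtrans,31c}; the resulting generalized cocycle maps to $(g,a)$ under $p^{\gen}$.
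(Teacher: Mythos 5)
Your main argument is correct and is essentially the paper's own proof: the paper likewise realizes each class by the explicit model $\inf P_{(g,a)}$, observes that the $G$-action on $\ob{\inf P_{(g,a)}}$ is free and proper since it is essentially right multiplication of $G$ on itself, applies \cref{th:existence} to obtain a connection, and then extracts a generalized differential cocycle mapping to $(g,a)$. Your closing partition-of-unity alternative is also sound and matches a direct argument the author sketches (in commented-out form) in \cref{sec:nonabdiffcoho}.
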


\begin{proof}
Let $(g,a)$ be a $\Gamma$-cocycle and $\inf P_{(g,a)}$ be the reconstructed principal $\Gamma$-2-bundle. Since the $G$-action on $\ob{\inf P_{(g,a)}}$ is essentially the multiplication of $G$ on itself, it is free and proper. 
\begin{comment}
Recall that
\begin{equation*}
X:=\ob{\inf P_{(g,a)}} = \coprod_{i\in I} U_i \times G
\quand
X \times G \to X:((i,x,g),g')\mapsto (i,x,gg')\text{.}
\end{equation*}
This action is clearly free. We write $X=Y\times G$, and look at the map
\begin{equation*}
Y \times G \times G \to (Y \times G) \times (Y \times G):(y,g,g')\mapsto ((y,g),(y,gg'))\text{.}
\end{equation*}
It is the composition of the diffeomorphism
\begin{equation*}
Y \times G \times G \to Y \times G \times G:(y,g,g')\mapsto (y,g,gg')
\end{equation*}
and (up to re-ordering factors) the diagonal map
\begin{equation*}
\Delta \times \id \times \id: Y \times G \times G \to Y \times Y \times G \times G\text{,}
\end{equation*}
which is an embedding and hence proper. Hence the action is proper. 
\end{comment}
By \cref{th:existence} there exists a connection on $\inf P_{(g,a)}$. Extracting local data yields a generalized differential $\Gamma$-cocycle with image equivalent to $(g,a)$.
\end{proof}

\begin{corollary}
\label{co:gbunredcon}
Suppose $f:M \to N$ is a smooth map with rank at most one, and $\inf P$ is a principal $\Gamma$-2-bundle with  (any/regular/fake-flat) connection $\Omega$ over $N$. Then, there exists a principal $G$-bundle $P$ with (some/some/flat) connection over $M$ and an  isomorphism $\act PH \cong f^{*}\inf P$ in $\zweibuncon \Gamma M$ / $\zweibunconreg\Gamma M$ / $\zweibunconff\Gamma M$.
\end{corollary}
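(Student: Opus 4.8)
The plan is to transport the entire statement into non-abelian differential cohomology, apply \cref{lem:redGbun} there, and transport back through the classification of \cref{th:classmain}; in this sense \cref{co:gbunredcon} is simply the global shadow of \cref{lem:redGbun}, and I would treat the three cases in parallel. Write $\xi$ for the class of $(\inf P,\Omega)$ under the appropriate bijection of \cref{th:classmain}, so that $\xi\in\hat\h^1(N,\Gamma)^{\gen}$, $\xi\in\hat\h^1(N,\Gamma)$, or $\xi\in\hat\h^1(N,\Gamma)^{\ff}$ according to whether $\Omega$ is a general, regular, or fake-flat connection.

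Next I would feed $\xi$ into \cref{lem:redGbun}. In the regular case the middle diagram gives $f^{*}\xi=j(j_G(i(\xi)))$; choose a principal $G$-bundle $P$ with connection $\omega$ representing $j_G(i(\xi))\in\hat\h^1(M,G)$. In the general case the left diagram gives $f^{*}\xi=i(j(j_G(\xi)))$, with $P,\omega$ now representing $j_G(\xi)$, again a $G$-bundle with some connection. In the fake-flat case the right diagram gives $f^{*}\xi=j^{\flat}(j_G^{\flat}(\xi))$, and here $j_G^{\flat}(\xi)$ lies in $\hat\h^1(M,G)^{\flat}$ (this is exactly the last line of the proof of \cref{lem:redGbun}), so $P$ carries a \emph{flat} connection --- this is where the word ``flat'' in the statement comes from.

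It then remains to read these cohomological identities back as isomorphisms of 2-bundles with connection, for which I would invoke two compatibilities of the classification. First, the bijections of \cref{th:classmain} are natural with respect to pullback along smooth maps --- a consequence of the reconstruction of $(\inf P_{(g,a)},\Omega)$ from a cocycle commuting with pullback of cocycles --- so that the image of $f^{*}\xi$ is the class of $f^{*}\inf P$ with its pulled-back connection. Second, by \cref{ex:gbunredcon} the map $j$ (respectively $j^{\flat}$) corresponds under the classification to the functor $\buncon GM\to\zweibunconreg\Gamma M$ (respectively $\buncon GM^{\flat}\to\zweibunconff\Gamma M$) of \cref{ex:gbunred} sending $(P,\omega)$ to $\act PH$ with its induced connection; hence $j(j_G(i(\xi)))$ and its analogues are precisely the classes of $\act PH$, while the extra $i$ occurring in the general case only records the forgetful passage to generalized cohomology, matched on the bundle side by the inclusion $\zweibunconreg\Gamma M\hookrightarrow\zweibuncon\Gamma M$. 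Applying the relevant bijection of \cref{th:classmain} to the identity delivered by the lemma therefore yields
\begin{equation*}
[f^{*}\inf P]=[\act PH]
\end{equation*}
in $\hc 0(\zweibunconreg\Gamma M)$ (respectively in $\hc 0(\zweibuncon\Gamma M)$ and $\hc 0(\zweibunconff\Gamma M)$), which is exactly the asserted 1-isomorphism $\act PH\cong f^{*}\inf P$.

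The main obstacle is not the formal chase through the diagrams but precisely the two naturality statements just used: that the reconstruction and extraction procedures behind \cref{th:classmain} intertwine pullback of 2-bundles with pullback of cocycles, and that the inclusion $\idmorph G\subset\Gamma$ inducing $j$ is realized bundle-theoretically by the functor of \cref{ex:gbunredcon}. Both are plausible from the explicit local formulas but each deserves a single verification; for $j$ this amounts to comparing the connection on $\act PH$ from \cref{ex:gbunred} with the connection reconstructed from the image cocycle and checking that they agree up to a connective, connection-preserving canonical pullback. Once these are established, the three cases are genuinely identical and no further computation is required.
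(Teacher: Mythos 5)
Your proposal is correct and follows essentially the same route as the paper: the paper's own proof simply observes that the functors of \cref{ex:gbunredcon} induce $j$ and $j^{\flat}$ under the classification of \cref{th:classmain} and then invokes \cref{lem:redGbun}, which is precisely the diagram chase you spell out. The two compatibilities you flag for verification (naturality of the classification under pullback, and the identification of $j$, $j^{\flat}$ with the bundle-level functors) are exactly the ingredients the paper asserts without further detail, so your more explicit account is, if anything, slightly more careful than the original.
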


\begin{proof}
The functors
\begin{equation*}
\buncon GM \to \zweibunconreg\Gamma M
\quand
\buncon GM^{\flat} \to \zweibunconff\Gamma M
\end{equation*}
of \cref{ex:gbunredcon} induce the maps $j$ and $j^{\flat}$ considered in \cref{sec:nonabdiffcoho} under the isomorphism  of \cref{th:classmain}. Then, the claim follows from \Cref{lem:redGbun}.
\begin{comment}
Actually, by looking closely into the proofs, the last bracket could be (regular/regular/flat).
\end{comment}
\end{proof}

\subsection{Local connection forms and gauge transformations}

\label{sec:local}

In this subsection we show that principal $\Gamma$-2-bundles with connection are locally modelled by so-called $\Gamma$-connections and gauge transformations. 
A \emph{$\Gamma$-connection} on $M$ is a pair $(A,B)$ consisting of a 1-form $A\in \Omega^1(M,\mathfrak{g})$ and a 2-form $B \in \Omega^2(M,\mathfrak{h})$. We define curvature and fake-curvature of a $\Gamma$-connection by
\begin{align*}
\mathrm{curv}(A,B) &:= \mathrm{d}B + \alpha_{*}( A\wedge B)\in\Omega^3
(M,\mathfrak{h})
\\
\mathrm{fcurv}(A,B) &:= \mathrm{d}A +\frac{1}{2} [A
\wedge A] - t_{*} ( B)\in\Omega^2(M,\mathfrak{g})
\end{align*}
and call it \emph{fake-flat}, if   $\mathrm{fcurv}(A,B)=0$, and \emph{flat}, if it is fake-flat and $\mathrm{curv}(A,B) =0$. There is a bijection
between the set of $\Gamma$-connections on $M$ and $\Omega^1(\idmorph{M},\gamma)$, under which $(A,B)$ corresponds to $\Psi \in \Omega^1(\idmorph{M},\gamma)$ with $\fa\Psi = A$, $\fb\Psi=0$, and $\fc\Psi=-B$. In particular, every $\Gamma$-connection $(A,B)$ defines a connection $\Omega_{A,B} := \Omega_{\Psi}$ on the trivial principal $\Gamma$-bundle $\inf I$, see \cref{ex:trivialconn}. Explicitly, we have
\begin{equation}
\label{eq:OmegaAB}
\fa\Omega_{A,B} = \mathrm{Ad}_{g}^{-1}(A) + g^{*}\theta  
\;\;\text{,}\quad
\fb\Omega_{A,B} =(\alpha_{g^{-1}})_{*} ((\tilde\alpha_{h})_{*}(A)+h^{*}\theta) \;\;\text{,}\quad
\fc\Omega_{A,B} =-(\alpha_{g^{-1}})_{*}(B)\;\; \end{equation}
where $g$ and $h$ denote the projections. Recasting the results of \cref{ex:trivialconn} in the language of $\Gamma$-connections, we obtain the following.

\begin{lemma}
\label{lem:conntrivbun}
The assignment $(A,B) \mapsto \Omega_{A,B}$ establishes a bijection between the set of $\Gamma$-connections on $M$ and the set of connections on the trivial principal $\Gamma$-bundle  $\inf I$. Moreover, the conditions \quot{fake-flat} and \quot{flat} are invariant under this bijection.
\end{lemma}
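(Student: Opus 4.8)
The plan is to obtain the claimed bijection by composing two bijections that are already available, and to deduce the invariance of the flatness conditions from the curvature formula recorded in \cref{ex:trivialconn}. First I would observe that the assignment $(A,B) \mapsto \Omega_{A,B}$ factors as $(A,B) \mapsto \Psi \mapsto \Omega_{\Psi}$, where $\Psi \in \Omega^1(\idmorph M,\gamma)$ is the form with $\fa\Psi = A$, $\fb\Psi = 0$, $\fc\Psi = -B$, and $\Omega_\Psi$ is the connection on $\inf I$ built in \cref{ex:trivialconn}. The second arrow $\Psi \mapsto \Omega_\Psi$ is already known to be a bijection onto the set of connections on $\inf I$ by \cref{ex:trivialconn}, so it suffices to check that the first arrow is a bijection onto all of $\Omega^1(\idmorph M,\gamma)$. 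For that I would unwind \cref{def:forms} on the groupoid $\idmorph M$: since $\idmorph M$ has only identity morphisms, the operator $\Delta$ vanishes on forms pulled back from the objects, and the condition $\Delta\fb\Psi = 0$, evaluated on the composable pair $(\id_x,\id_x)$, forces $\fb\Psi = 0$ pointwise. Consequently a $1$-form on $\idmorph M$ is exactly a pair $(\fa\Psi,\fc\Psi) \in \Omega^1(M,\mathfrak{g}) \times \Omega^2(M,\mathfrak{h})$ with no further constraint, so $(A,B) \mapsto (A,0,-B)$ is a bijection onto $\Omega^1(\idmorph M,\gamma)$, and the composite $(A,B) \mapsto \Omega_{A,B}$ is a bijection onto the connections on $\inf I$.

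For the invariance of \quot{fake-flat} and \quot{flat}, I would use the identity \cref{eq:ex:trivbun:1}, namely $\mathrm{curv}(\Omega_\Psi) = \mathrm{Ad}^{-1}_{\pr_\Gamma}(\pr_M^{*}(\mathrm{D}\Psi + \tfrac12[\Psi\wedge\Psi]))$, which shows that the curvature of $\Omega_{A,B}$ is controlled entirely by the $\idmorph M$-form $\mathrm{D}\Psi + \tfrac12[\Psi\wedge\Psi]$. Plugging $\fa\Psi = A$, $\fb\Psi = 0$, $\fc\Psi = -B$ into the formulas of \cref{def:forms} and the wedge product, and using again that $\Delta = 0$ on $\idmorph M$, I would compute its three components to be
\begin{align*}
\fa{\big(\mathrm{D}\Psi + \tfrac12[\Psi\wedge\Psi]\big)} &= \mathrm{d}A + \tfrac12[A\wedge A] - t_{*}(B) = \mathrm{fcurv}(A,B)\text{,}\\
\fb{\big(\mathrm{D}\Psi + \tfrac12[\Psi\wedge\Psi]\big)} &= 0\text{,}\\
\fc{\big(\mathrm{D}\Psi + \tfrac12[\Psi\wedge\Psi]\big)} &= -\mathrm{d}B - \alpha_{*}(A\wedge B) = -\mathrm{curv}(A,B)\text{.}
\end{align*}
Since \cref{ex:trivialconn} characterizes $\Omega_\Psi$ as fake-flat precisely when the $\fa{(\cdots)}$-component above vanishes, and as flat precisely when the whole form vanishes, these displayed identities translate the two conditions into $\mathrm{fcurv}(A,B) = 0$, respectively into $\mathrm{fcurv}(A,B) = \mathrm{curv}(A,B) = 0$, which are exactly the defining conditions for $(A,B)$ to be fake-flat or flat.

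The only real care needed is bookkeeping: correctly invoking the sign $(-1)^{p}$ in the differential of \cref{def:forms} and the sign $(-1)^{kl}$ in the bracket, and keeping track of the $\fc\Psi = -B$ sign throughout. Beyond that the argument is purely a matter of composing established bijections and reading off components, so I do not anticipate any genuine obstacle; the substance of the lemma was already carried out in \cref{ex:trivialconn}, and here it is merely repackaged in the language of $\Gamma$-connections.
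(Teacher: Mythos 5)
Your proof is correct and follows essentially the same route as the paper, which obtains the lemma by composing the bijection $(A,B)\mapsto\Psi$ onto $\Omega^1(\idmorph{M},\gamma)$ with the bijection $\Psi\mapsto\Omega_\Psi$ of \cref{ex:trivialconn} and reading the flatness conditions off \cref{eq:ex:trivbun:1}. Your component computation of $\mathrm{D}\Psi+\tfrac12[\Psi\wedge\Psi]$ (with $\fb$-component vanishing and $\fa$-, $\fc$-components equal to $\mathrm{fcurv}(A,B)$ and $-\mathrm{curv}(A,B)$) is exactly the recasting the paper leaves implicit.
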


We will also use the notation $\inf I_{A,B}$ for the trivial principal $\Gamma$-bundle equipped with the connection $\Omega_{A,B}$. 
A \emph{gauge transformation} between $\Gamma$-connections $(A,B)$ and $(A',B')$ on $M$ is a pair $(g,\varphi)$ of a smooth map $g:M \to G$ and a 1-form $\varphi \in \Omega^1(M,\mathfrak{h})$ such that
\begin{align}
\label{eq:coder:gt1}
\mathrm{Ad}_{g}(A) - g^{*}\bar \theta &=A'+t_{*}(\varphi)
\\
\label{eq:coder:gt2}
B'+\mathrm{d}\varphi+\frac{1}{2}[\varphi\wedge \varphi]+ \alpha_{*}(A' \wedge \varphi)&=(\alpha_{g})_{*}(B)\text{.}
\end{align}  
We define a smooth, $\Gamma$-equivariant functor
$\phi_g:\inf I \to \inf I$  by setting
\begin{equation*}
\phi_g(x,g'):=(x,g(x)g')
\quand
\phi_g(\id_x,h,g'):=(\id_x,\alpha(g(x),h),g(x)g')\text{.}
\end{equation*}
\begin{comment}
Abstractly, this is:
\begin{equation*}
\phi_g := (\pr_M, m \circ (\tilde g \times \id_{\Gamma}))  : \idmorph{M} \times \Gamma \to \idmorph{M} \times \Gamma\text{,}
\end{equation*}
where $\tilde g:\idmorph{M} \to \Gamma$ is the obvious extension of $g$ and $m:\Gamma \times \Gamma \to \Gamma$ is the multiplication.
\end{comment}
The definition of $\phi_g$  implies the composition law  $\phi_{g_2g_1}=\phi_{g_2}\circ \phi_{g_1}$. By \cref{rem:equivsmoothfunctor} $\phi_g$ is a 1-morphism in $\zweibun\Gamma M$.  
We promote $\phi_g$  to a setting with connections in the following way. We denote by $\Omega := \Omega_{A,B}$ and $\Omega' :=\Omega_{A',B'}$
the connections on $\inf I$ corresponding to the $\Gamma$-connections $(A,B)$ and $(A',B')$.  It is easy to see  that in general $\phi_g^{*}\Omega'\neq \Omega$. More precisely, we have
\begin{align*}
\phi_g^{*}\fa{\Omega'} &= \fa\Omega - \mathrm{Ad}_{g\cdot \pr_G}^{-1}(t_{*}(\pr_M^{*}\varphi))
\\
\phi_g^{*}\fb{\Omega'} &= \fb\Omega-(\alpha_{\pr_G^{-1}\cdot t(\pr_H)^{-1}\cdot g^{-1}})_{*}  (\pr_M^{*}\varphi))+(\alpha_{\pr_G^{-1}\cdot g^{-1}})_{*}  (\pr_M^{*}\varphi)
\\
\phi_g^{*}\fc{\Omega'} &= \fc\Omega+(\alpha_{\pr_G^{-1}\cdot g^{-1}})_{*}\pr_M^{*}(\alpha_{*}(A' \wedge \varphi) + \mathrm{d}\varphi + \frac{1}{2}[\varphi \wedge \varphi])\text{.}
\end{align*}
In other words, if $F_g$ is the anafunctor associated to the smooth functor $\phi_g$, then the canonical $\Omega'$-pullback $\nu$ on $F_g$ is \emph{not} connection-preserving.   However, using the affine space structure of \cref{lem:pullexists}, we shift $\nu$ to a new $\Omega'$-pullback $\nu^{\varphi}$ by a pair $(\varphi_0,\varphi_1)$ of forms $\varphi_0\in\Omega^1(M\times G,\mathfrak{h})$ and $\varphi_1\in\Omega^2(M\times G,\mathfrak{h})$, which we obtain from the given 1-form $\varphi$ by the following formulae:
\begin{align}
\label{eq:shiftF:0}
\varphi_0 &:=(\alpha_{\pr_G^{-1}\cdot g^{-1}})_{*}(\pr_M^{*}\varphi)
\\
\label{eq:shiftF:1}
\varphi_1 &:=-(\alpha_{\pr_G^{-1} \cdot g^{-1}})_{*}\pr_M^{*}(\mathrm{d}\varphi+\frac{1}{2}[\varphi\wedge\varphi]+\alpha_{*}(A' \wedge \varphi))\text{.}
\end{align}

\begin{lemma}
\label{lem:gtmorph}
The shifted $\Omega'$-pullback $\nu^{\varphi}$ on $F_g$ has the following properties:
\begin{enumerate}[(a)]

\item 
\label{lem:gtmorph:a}
It is connection-preserving, i.e. $(F_g)^{*}_{\nu^{\varphi}}\Omega'=\Omega$.

\item
\label{lem:gtmorph:b}
It is connective.

\item
\label{lem:gtmorph:c}
It is regular.

\item
\label{lem:gtmorph:d}
It is fake-flat if $(A,B)$ and $(A',B')$ are fake-flat.

\end{enumerate}
\end{lemma}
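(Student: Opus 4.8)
The plan is to recognize $\nu^{\varphi}$ as the shift, in the sense of \cref{re:canpullback:b}, of the canonical $\Omega'$-pullback $\nu$ on the anafunctor $F_g$ associated to the $\Gamma$-equivariant smooth functor $\phi_g$, by the pair $\kappa := (\varphi_0,\varphi_1)$ of forms on $\ob{\inf I}=M\times G$. With this identification, all four assertions reduce to the general statements collected in \cref{re:canpullback,rem:smoothfunctorconnection}, fed with the explicit expressions for $\phi_g^{*}\Omega'$ displayed just before the lemma. Throughout I write $k:=\pr_G^{-1}\cdot g^{-1}=(g\cdot\pr_G)^{-1}$, so that $\varphi_0=(\alpha_{k})_{*}(\pr_M^{*}\varphi)$ and $\mathrm{Ad}_{k}=\mathrm{Ad}_{g\cdot\pr_G}^{-1}$.

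For \cref{lem:gtmorph:a} I would invoke \cref{eq:shiftedpullback}, which gives the three components of $(F_g)^{*}_{\nu^{\varphi}}\Omega'$ as $\phi_g^{*}\fa{\Omega'}+t_{*}(\varphi_0)$, $\phi_g^{*}\fb{\Omega'}+\Delta\varphi_0$, and $\phi_g^{*}\fc{\Omega'}+\varphi_1$. Substituting the displayed formulas for $\phi_g^{*}\Omega'$, the $\fa$-component closes up because $t_{*}\circ(\alpha_{k})_{*}=\mathrm{Ad}_{k}\circ t_{*}$ (the differentiated crossed-module relation $t\circ\alpha_k=\mathrm{Ad}_k\circ t$) turns $t_{*}(\varphi_0)$ into exactly the term $\mathrm{Ad}_{g\cdot\pr_G}^{-1}(t_{*}(\pr_M^{*}\varphi))$ that must be cancelled; the $\fb$-component closes up because $\Delta\varphi_0=t^{*}\varphi_0-s^{*}\varphi_0$ reproduces the two extra $\alpha$-pushforward terms in $\phi_g^{*}\fb{\Omega'}$ with opposite sign; and the $\fc$-component closes up by the very definition \cref{eq:shiftF:1} of $\varphi_1$. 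Hence $(F_g)^{*}_{\nu^{\varphi}}\Omega'=\Omega$.

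For \cref{lem:gtmorph:b}, the canonical pullback $\nu$ is connective by \cref{rem:smoothfunctorconnection:b}, and by \cref{re:canpullback:shifted:connective} the shifted pullback inherits connectivity provided $\varphi_0$ and $\varphi_1$ are $R$-invariant, i.e. $R^{*}\varphi_i=(\alpha_{\pr_2^{-1}})_{*}(\pr_1^{*}\varphi_i)$ over $(M\times G)\times G$. This is immediate from \cref{eq:shiftF:0,eq:shiftF:1}: both forms carry the prefactor $(\alpha_{\pr_G^{-1}\cdot g^{-1}})_{*}$, and under $R((x,g'),g'')=(x,g'g'')$ only the exponent $\pr_G^{-1}$ is affected, picking up the factor $(g'')^{-1}$ and producing precisely the required $(\alpha_{(g'')^{-1}})_{*}$ in front.

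Finally, \cref{lem:gtmorph:c,lem:gtmorph:d} both follow from \cref{re:canpullback:shifted:fc}: once the closure identity \cref{eq:canpullback:shifted} is verified for $\kappa=(\varphi_0,\varphi_1)$, that remark yields that $\nu^{\varphi}$ is regular whenever $\Omega'$ is regular and fake-flat whenever $\Omega'$ is fake-flat. Since $\Omega'=\Omega_{A',B'}$ is always regular (\cref{ex:trivialconn}) and is fake-flat as soon as $(A',B')$ is fake-flat (\cref{lem:conntrivbun}), this gives \cref{lem:gtmorph:c} unconditionally and \cref{lem:gtmorph:d} under the stated hypothesis. The main obstacle is therefore the single computational identity \cref{eq:canpullback:shifted}, namely $\mathrm{d}\varphi_0+\tfrac12[\varphi_0\wedge\varphi_0]+\alpha_{*}(\phi_g^{*}\fa{\Omega'}\wedge\varphi_0)+\varphi_1=0$. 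I would establish it by expanding $\mathrm{d}\varphi_0=\mathrm{d}((\alpha_{k})_{*}(\pr_M^{*}\varphi))$ with the derivative rule for $\alpha$-pushforwards, using that $(\alpha_k)_{*}$ is a Lie algebra homomorphism to pull $\tfrac12[\varphi_0\wedge\varphi_0]$ through as $\tfrac12(\alpha_k)_{*}[\varphi\wedge\varphi]$, and using the equivariance $\alpha_{*}(\mathrm{Ad}_kX\wedge(\alpha_k)_{*}Y)=(\alpha_k)_{*}\alpha_{*}(X\wedge Y)$ together with $\phi_g^{*}\fa{\Omega'}=\mathrm{Ad}_{k}(A')+(g\cdot\pr_G)^{*}\theta$ to rewrite the third term as $(\alpha_k)_{*}(\alpha_{*}(A'\wedge\varphi))$ plus a Maurer--Cartan piece. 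The Maurer--Cartan correction from $\mathrm{d}\varphi_0$ cancels against this last piece, and the surviving terms assemble into $(\alpha_k)_{*}(\mathrm{d}\varphi+\tfrac12[\varphi\wedge\varphi]+\alpha_{*}(A'\wedge\varphi))=-\varphi_1$ by \cref{eq:shiftF:1}. This is the one genuinely tedious step, of the same routine character as the bracketed calculations throughout this section.
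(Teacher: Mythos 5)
Your proposal is correct and follows essentially the same route as the paper: item (a) via the shifted-pullback formula \cref{eq:shiftedpullback} applied to the displayed expressions for $\phi_g^{*}\Omega'$, item (b) via $R$-invariance of $(\varphi_0,\varphi_1)$ and \cref{re:canpullback:shifted:connective}, and items (c), (d) by verifying the closure identity \cref{eq:canpullback:shifted} and appealing to \cref{re:canpullback:shifted:fc}. You merely spell out the "straightforward" computations that the paper suppresses, and your cancellation of the Maurer--Cartan terms in the verification of \cref{eq:canpullback:shifted} matches the paper's own (hidden) calculation.
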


\begin{proof}
In order to compute $(F_g)^{*}_{\nu^{\varphi}}\Omega'$ we use above calculation of $\phi_g^{*}\Omega'$ and then incorporate the
shift using \cref{re:canpullback:c}; this yields \cref{lem:gtmorph:a*}.
\begin{comment}
Indeed,
\begin{align*}
\fa {((F_g)_{\nu^{\varphi}}^{*}\Omega')} &= \phi_g^{*}\fa{\Omega'} + t_{*}(\varphi_0)
\\&=\fa\Omega-\mathrm{Ad}_{g\pr_G}^{-1}(t_{*}(\varphi))+t_{*}((\alpha_{(g\pr_G)^{-1}})_{*}(\varphi))
\\&=\fa\Omega
\\
\fb {((F_g)_{\nu^{\varphi}}^{*}\Omega')} &= \phi_g^{*}\fb{\Omega'} + \Delta\varphi_0
\\&= \fb\Omega-(\alpha_{\pr_G^{-1}\cdot t(\pr_H)^{-1}\cdot g^{-1}})_{*}  (\pr_M^{*}\varphi))+(\alpha_{\pr_G^{-1}\cdot g^{-1}})_{*}  (\pr_M^{*}\varphi)
\\&\qquad+(\alpha_{\pr_G^{-1}   \cdot t(\pr_H)^{-1}\cdot  g^{-1}})_{*}(\pr_M^{*}\varphi)-(\alpha_{\pr_G^{-1}\cdot g^{-1}})_{*}(\pr_M^{*}\varphi)
\\&=\fb\Omega 
\\
\fc {((F_g)_{\nu^{\varphi}}^{*}\Omega')} &= \phi_g^{*}\fc{\Omega'} +\varphi_1
\\&= \fc\Omega
\end{align*}
\end{comment}
For \cref{lem:gtmorph:b*} we note that the forms $\varphi_0$ and $\varphi_1$ are  $R$-invariant; hence $\nu^{\varphi}$ is connective by \cref{re:canpullback:shifted:connective}. Claims \cref{lem:gtmorph:c*,lem:gtmorph:d*} follow from \cref{re:canpullback:shifted:fc} by verifying \cref{eq:canpullback:shifted},
\begin{equation*}
\mathrm{d}\varphi_0+ \frac{1}{2}[\varphi_0\wedge \varphi_0]+ \alpha_{*}(\phi_g^{*}\fa{\Omega'} \wedge \varphi_0)+\varphi_1=0\text{;}
\end{equation*}
this is straightforward. 
\begin{comment}
Indeed,
\begin{align*}
&\mquad  -\mathrm{d}\varphi_0|_{(x,g')}- \frac{1}{2}[\varphi_0|_{(x,g')}\wedge \varphi_0|_{(x,g')}]- \alpha_{*}(\fa{\Omega'}|_{(x,g(x)g')}\wedge \varphi_0|_{(x,g')})
\\&= -\mathrm{d}(\alpha_{g'^{-1}g(x)^{-1}})_{*}(\varphi_x)- \frac{1}{2}[(\alpha_{g'^{-1}g(x)^{-1}})_{*}(\varphi_x)\wedge(\alpha_{g'^{-1}g(x)^{-1}})_{*}(\varphi_x)]
\\&\qquad- \alpha_{*}((\mathrm{Ad}_{g(x)g'}^{-1}(A'_x) + \theta_{g(x)g'} )\wedge (\alpha_{g'^{-1}g(x)^{-1}})_{*}(\varphi_x))
\\&= -(\alpha_{g'^{-1}g(x)^{-1}})_{*}(\mathrm{d}\varphi+\frac{1}{2}[\varphi_x\wedge\varphi_x])-\alpha_{*}(\bar\theta_{g'^{-1}g(x)^{-1}} \wedge (\alpha_{g'^{-1}g(x)^{-1}})_{*}(\varphi_x))
\\&\qquad- \alpha_{*}((\mathrm{Ad}_{g(x)g'}^{-1}(A_x') + \theta_{g(x)g'} )\wedge (\alpha_{g'^{-1}g(x)^{-1}})_{*}(\varphi_x))
\\&= -(\alpha_{g'^{-1}g(x)^{-1}})_{*}(\mathrm{d}\varphi+\frac{1}{2}[\varphi_x\wedge\varphi_x]+\alpha_{*}(A_x' \wedge \varphi_x))
\\&=\varphi_1
\end{align*}
\end{comment}
 \end{proof}

We use the notation $F_{g,\varphi}$ for the anafunctor associated to $\phi_g$ equipped with the $\Omega'$-pullback $\nu^{\varphi}$. By \cref{lem:gtmorph}, this is a 1-morphism \begin{equation*}
F_{g,\varphi}:\inf I_{A,B} \to \inf I_{A',B'}
\end{equation*}
in $\zweibunconreg\Gamma M$, and in case of fake-flat $\Gamma$-connections  in $\zweibunconff\Gamma M$.
Next we discuss the situation that we have  $\Gamma$-connections $(A,B)$ and $(A',B')$, and \emph{two} gauge transformations $(g_1,\varphi_1)$ and $(g_2,\varphi_2)$. A \emph{gauge 2-transformation} is a smooth map $a:M \to H$ such that 
\begin{equation}
\label{eq:gauge2}
g_2 = t (a)  g_1
\quad\text{ and }\quad
\mathrm{Ad}_a^{-1} (\varphi_2) +(\tilde\alpha_{a})_{*}(A') =  \varphi_1 -a^{*}\theta\text{.}
\end{equation}
We define the smooth map
\begin{equation}
\label{eq:def:etaa}
\eta_a: \ob{\inf I} \to \mor{\inf I}: (x,g) \mapsto (\id_x,(a(x),g_1(x)g))\text{.}
\end{equation}

\begin{lemma}
\label{lem:2gtmorph}
This is a smooth $\Gamma$-equivariant natural transformation $\eta_a:\phi_{g_1}\Rightarrow \phi_{g_2}$, and induces a 2-morphism $\eta_a:F_{g_1,\varphi_1}\Rightarrow F_{g_2,\varphi_2}$ in $\zweibunconreg\Gamma M$.
\end{lemma}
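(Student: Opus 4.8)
The plan is to first show that $\eta_a$ is a smooth $\Gamma$-equivariant natural transformation $\phi_{g_1}\Rightarrow\phi_{g_2}$, and then promote the induced transformation of associated anafunctors to a connection-preserving $2$-morphism by means of the criterion in \cref{re:canpullback:c}. For the natural transformation property I would check the source/target conditions and the naturality square. Since $s(\eta_a(x,g))=(x,g_1(x)g)=\phi_{g_1}(x,g)$ and $t(\eta_a(x,g))=(x,t(a(x))g_1(x)g)$, the identity $t\circ\eta_a=\phi_{g_2}$ is exactly the first gauge $2$-transformation equation $g_2=t(a)g_1$ of \cref{eq:gauge2}. Evaluating both composites of the naturality square on a morphism $(\id_x,(h,g'))$ produces the $H$-components $a(x)\alpha(g_1(x),h)$ and $\alpha(g_2(x),h)a(x)$, which agree because $\alpha(g_2(x),h)=\alpha(t(a(x))g_1(x),h)=a(x)\alpha(g_1(x),h)a(x)^{-1}$ by the Peiffer identity $\alpha(t(\cdot),-)=(\cdot)(-)(\cdot)^{-1}$ of the crossed module. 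For $\Gamma$-equivariance I would verify the criterion of \cref{rem:equivsmoothfunctor}, $\eta_a(R((x,g'),g))=R(\eta_a(x,g'),\id_g)$; both sides equal $(\id_x,(a(x),g_1(x)g'g))$ since the right $\mor{\Gamma}$-action multiplies the second factor and $(h,k)\cdot(1,g)=(h,kg)$ in $H\ltimes G$. By \cref{rem:equivsmoothfunctor} this yields a $\Gamma$-equivariant transformation $f_{\eta_a}:F_{g_1}\Rightarrow F_{g_2}$.

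The substantial content is that $f_{\eta_a}$ is connection-preserving, i.e.\ $f_{\eta_a}^{*}\nu^{\varphi_2}=\nu^{\varphi_1}$, where $\nu^{\varphi_i}$ is the canonical $\Omega'$-pullback on $F_{g_i}$ shifted by $\kappa^{(i)}=(\kappa_0^{(i)},\kappa_1^{(i)})$ obtained from $(g_i,\varphi_i)$ through \cref{eq:shiftF:0,eq:shiftF:1}. By \cref{re:canpullback:c}, this is equivalent to the two identities $\eta_a^{*}\fb{\Omega'}=\kappa_0^{(1)}-\kappa_0^{(2)}$ and $\eta_a^{*}\Delta\fc{\Omega'}=\kappa_1^{(1)}-\kappa_1^{(2)}$, so the whole problem reduces to verifying them. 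Writing $g'$ for the $G$-coordinate of $\ob{\inf I}$ and using that $\eta_a$ sends $(x,g')\mapsto(x,a(x),g_1(x)g')$, I would substitute $h\mapsto a$ and $g\mapsto g_1g'$ into the components of $\Omega'=\Omega_{A',B'}$ given by \cref{eq:OmegaAB}, and repeatedly use the relation $(\alpha_{t(a)^{-1}})_{*}=\mathrm{Ad}_a^{-1}$ (the differentiated Peiffer identity) together with $g_2=t(a)g_1$.

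For the first identity I would compute $\eta_a^{*}\fb{\Omega'}=(\alpha_{g'^{-1}g_1^{-1}})_{*}\big((\tilde\alpha_a)_{*}(A')+a^{*}\theta\big)$, while on the cocycle side $\kappa_0^{(2)}=(\alpha_{g'^{-1}g_1^{-1}})_{*}\mathrm{Ad}_a^{-1}(\varphi_2)$ gives $\kappa_0^{(1)}-\kappa_0^{(2)}=(\alpha_{g'^{-1}g_1^{-1}})_{*}\big(\varphi_1-\mathrm{Ad}_a^{-1}(\varphi_2)\big)$. These two agree precisely by the $1$-form part of \cref{eq:gauge2}, namely $\varphi_1-\mathrm{Ad}_a^{-1}(\varphi_2)=(\tilde\alpha_a)_{*}(A')+a^{*}\theta$. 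Thus the first identity is exactly the second gauge $2$-transformation equation.

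For the second identity I would first compute $\Delta\fc{\Omega'}=t^{*}\fc{\Omega'}-s^{*}\fc{\Omega'}=(\alpha_{g^{-1}})_{*}\big(B'-\mathrm{Ad}_h^{-1}(B')\big)$, hence $\eta_a^{*}\Delta\fc{\Omega'}=(\alpha_{g'^{-1}g_1^{-1}})_{*}\big(B'-\mathrm{Ad}_a^{-1}(B')\big)$. On the cocycle side I would insert the gauge transformation curvature condition \cref{eq:coder:gt2} for each $i$, which turns $\kappa_1^{(i)}$ into $-(\alpha_{g'^{-1}})_{*}(B)+(\alpha_{g'^{-1}g_i^{-1}})_{*}(B')$. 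The delicate and pleasant point is that the $B$-term is independent of $i$ and cancels in the difference, leaving $(\alpha_{g'^{-1}g_1^{-1}})_{*}(B')-(\alpha_{g'^{-1}g_2^{-1}})_{*}(B')$, which equals $\eta_a^{*}\Delta\fc{\Omega'}$ again via $g_2=t(a)g_1$. I expect this cancellation to be the main obstacle to present cleanly, since $B$ is not part of the gauge-$2$-transformation data and its disappearance is what makes the statement consistent; once it is arranged, $f_{\eta_a}$ is connection-preserving, and being already a $\Gamma$-equivariant transformation it is a $2$-morphism $F_{g_1,\varphi_1}\Rightarrow F_{g_2,\varphi_2}$ in $\zweibunconreg\Gamma M$, as claimed.
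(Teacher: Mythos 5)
Your proof is correct and follows essentially the same route as the paper's: verify the source/target, naturality and $\Gamma$-equivariance conditions directly, then reduce connection-preservation via the shifted-pullback criterion of \cref{re:canpullback:c} to the identities $\eta_a^{*}\fb{\Omega'}=\kappa_0^{(1)}-\kappa_0^{(2)}$ and $\eta_a^{*}\Delta\fc{\Omega'}=\kappa_1^{(1)}-\kappa_1^{(2)}$, which follow from \cref{eq:gauge2} and \cref{eq:coder:gt2} respectively, with the $B$-terms cancelling in the difference exactly as you describe. No gaps.
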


\begin{proof}
Source and target conditions as well as naturality are straightforward to check.
\begin{comment}
We have 
\begin{equation*}
s(\eta_a(x,g))=s(\id_x,(a(x),g_1(x)g))=(x,g_1(x)g)=\phi_{g_1}(x,g)
\end{equation*}
and
\begin{equation*}
t(\eta_a(x,g))=t(\id_x,(a(x),g_1(x)g))=(x,t(a(x))g_1(x)g)=\phi_{g_2}(x,g)
\end{equation*}
For naturality, we consider a morphism $(\id_x,h,g):(x,g) \mapsto (x,t(h)g)$ and check
\begin{align*}
\eta_a(x,t(h)g)\circ \phi_{g_1}(\id_x,h,g)
&= (\id_x,a(x),g_1(x)t(h)g)\circ (\id_x,\alpha(g_1(x),h),g_1(x)g) \\&= (\id_x,a(x)\alpha(g_1(x),h),g_1(x)g)
\\&=  (\id_x, \alpha(t(a(x))g_1(x),h)a(x),g_1(x)g)
\\&= (\id_x, \alpha(g_2(x),h)a(x),g_1(x)g)
\\&= (\id_x,\alpha(g_2(x),h),g_2(x)g)\circ(\id_x,a(x),g_1(x)g) 
\\&= \phi_{g_2}(\id_x,h,g) \circ \eta_a(x,g)\text{.}
\end{align*}
\end{comment}
For $\Gamma$-equivariance, we check according to \cref{rem:equivsmoothfunctor}
\begin{equation*}
\eta_a(x,gg')=(\id_x,(a(x),g_1(x)gg'))=(\id_x,(a(x),g_1(x)g)\cdot(1,g'))=R(\eta_a(x,g),\id_{g'})\text{.}
\end{equation*}
Concerning the connections, we use \cref{re:canpullback:d} and hence have to verify the conditions  $\eta_a^{*}\fb{\Omega'}=\kappa_{1,0} - \kappa_{2,0}$ and $\eta_a^{*}\Delta\fc{\Omega'}=\kappa_{1,1}-\kappa_{2,1}$, where $\kappa_i=(\kappa_{i,0},\kappa_{i,1})$ are the shifts induced from $\varphi_i$ via \cref{eq:shiftF:0,eq:shiftF:1}. The first condition follows immediately from the definitions,
\begin{comment}
Indeed,
\begin{align*}
(\eta_a^{*}\fb\Omega_{A',B'}+\kappa_{2,0})_{x,g}
&=\fb\Omega_{A',B'}|_{\id_x,a(x),g_1(x)g)}+(\alpha_{g^{-1} g_2(x)^{-1}})_{*}(\varphi_2|_x)
\\&=(\alpha_{g^{-1}g_1(x)^{-1}})_{*} ((\tilde\alpha_{a(x)})_{*}(A_x')+\theta_{a(x)}) +(\alpha_{g^{-1} g_2(x)^{-1}})_{*}(\varphi_2|_x)
\\&=(\alpha_{g^{-1}g_1(x)^{-1}})_{*} ((\tilde\alpha_{a(x)})_{*}(A_x')+\theta_{a(x)} +\mathrm{Ad}^{-1}_{a(x)}(\varphi_2|_x))
\\&= (\alpha_{g^{-1} g_1(x)^{-1}})_{*}(\varphi_1|_x)
\\&= \kappa_{1,0}|_{x,g}
\end{align*}
\end{comment}
and the second conditions can be checked using  \cref{eq:coder:gt2}.
\begin{comment}
Indeed,
\begin{align*}
\kappa_{1,1}|_{x,g} 
&= -(\alpha_{g^{-1} \cdot g_1(x)^{-1}})_{*}(\mathrm{d}\varphi_1|_x+\frac{1}{2}[\varphi_1|_x\wedge\varphi_1|_x]+\alpha_{*}(A_x'
 \wedge \varphi_1|_x)) 
\\&\eqcref{eq:coder:gt2} (\alpha_{g^{-1} \cdot g_1(x)^{-1}})_{*}(B'_x-(\alpha_{g_1(x)})(B_x))
\\&= (\alpha_{g^{-1} \cdot g_1(x)^{-1}})_{*}(B'_x)-(\alpha_{g^{-1}})(B_x) 
\\&= - (\alpha_{g^{-1}g_2(x)^{-1}})_{*}(B') +(\alpha_{g^{-1}g_1(x)^{-1}})_{*}(B')+(\alpha_{g^{-1} \cdot g_2(x)^{-1}})_{*}(B'_x)-(\alpha_{g^{-1}})(B_x)
\\&\eqcref{eq:OmegaAB} \fc{\Omega'}_{(x,t(a(x))g_1(x)g)} -\fc{\Omega'}_{(x,g_1(x)g)}+(\alpha_{g^{-1} \cdot g_2(x)^{-1}})_{*}(B'_x-(\alpha_{g_2(x)})(B_x))
\\&\eqcref{eq:coder:gt2} (\Delta\fc{\Omega'})_{(\id_x,(a(x),g_1(x)g))} -(\alpha_{g^{-1} \cdot g_2(x)^{-1}})_{*}(\mathrm{d}\varphi_2|_x+\frac{1}{2}[\varphi_2|_x\wedge\varphi_2|_x]+\alpha_{*}(A_x'
 \wedge \varphi_2|_x))
\\&= (\Delta\fc{\Omega'})_{\eta_a(x,g)} + \kappa_{2,1}|_{x,g}
\end{align*}
\end{comment}
\end{proof}

$\Gamma$-connections, gauge transformations, and gauge 2-transformation form  bicategories $\con\Gamma M$ and $\conff\Gamma M$, see \cite{schreiber5}. The assignments $M \mapsto \con\Gamma M$ and $M\mapsto \conff\Gamma M$ form presheaves of bicategories.

\begin{proposition}
\label{prop:gt}
The assignments 
\begin{equation*}
(A,B) \mapsto \inf I_{A,B}
\quomma
(g,\varphi)\mapsto F_{g,\varphi}
\quand
a\mapsto \eta_a
\end{equation*}
form strict 2-functors
\begin{equation*}
L^{\ff}: \conff \Gamma M \to \zweibunconff\Gamma M
\quand
L:\con \Gamma M \to \zweibunconreg\Gamma M\text{.}
\end{equation*}
Moreover,  $L^{\ff}$ and $L$ are natural in $M$; i.e., they form morphisms of presheaves. 
\end{proposition}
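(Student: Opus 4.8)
The plan is to note that essentially all the analytic content is already contained in \cref{lem:conntrivbun,lem:gtmorph,lem:2gtmorph}: these verify precisely that the three assignments land in the correct bicategories. Indeed, \cref{lem:conntrivbun} sends a (fake-flat) $\Gamma$-connection $(A,B)$ to the object $\inf I_{A,B}$ carrying the (fake-flat) connection $\Omega_{A,B}$, which is automatically regular by \cref{ex:trivialconn}; \cref{lem:gtmorph} shows $F_{g,\varphi}$ is a connection-preserving, connective, regular (and, when both $\Gamma$-connections are fake-flat, fake-flat) $1$-morphism; and \cref{lem:2gtmorph} shows $\eta_a$ is a connection-preserving $2$-morphism. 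What remains is therefore (i) the three strict $2$-functor axioms — preservation of identity and composite $1$-morphisms, and of identity, vertical, and horizontal composite $2$-morphisms — and (ii) naturality in $M$.

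For the $1$-morphism axioms I would use the strict composition law $\phi_{g_2g_1}=\phi_{g_2}\circ\phi_{g_1}$ for the underlying functors together with the behaviour of canonical pullbacks under composition and shifting. Concretely, \cref{lem:pullback:comp} computes the composite pullback $\nu^{\varphi_2}\circ\nu^{\varphi_1}$ on $F_{g_2}\circ F_{g_1}$, and under the canonical identification $F_{g_2}\circ F_{g_1}\cong F_{g_2g_1}$ one must check that it agrees with the shifted canonical pullback $\nu^{\varphi_{21}}$ attached to the composite gauge transformation $(g_2,\varphi_2)\circ(g_1,\varphi_1)=(g_2g_1,\varphi_{21})$ in $\con\Gamma M$. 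Since both sides arise from the canonical pullback by the explicit shifts \cref{eq:shiftF:0,eq:shiftF:1}, this reduces to matching the two shift $1$-forms, i.e. to the composition formula for the $\varphi$-component of gauge transformations from \cite{schreiber5}; the identity $1$-morphism corresponds to $(g,\varphi)=(1,0)$, for which $\phi_1=\id$ and $\nu^{0}$ is the canonical pullback. For the $2$-morphism axioms I would trace the definition \cref{eq:def:etaa} through vertical and horizontal composition, using that $\eta_a$ is induced from a smooth natural transformation via \cref{re:canpullback:d}, so that composing the maps $a$ corresponds to composing the induced $2$-morphisms. That all these composites again inherit the properties ``connective'', ``regular'', and ``fake-flat'' is guaranteed by the proposition that composition of pullbacks preserves exactly these conditions.

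Naturality in $M$ is essentially formal: for a smooth map $f$, pulling back replaces the data $(A,B,g,\varphi,a)$ by $(f^{*}A,f^{*}B,g\circ f,f^{*}\varphi,a\circ f)$, and every defining formula — $\Omega_{A,B}$ in \cref{eq:OmegaAB}, the functor $\phi_g$, the shifts \cref{eq:shiftF:0,eq:shiftF:1}, and $\eta_a$ in \cref{eq:def:etaa} — depends on $M$ only through these data, the remaining ingredients ($\mathrm{Ad}$, $\theta$, $\alpha$, $t_{*}$) being fixed. Hence each construction commutes with the pullback $2$-functors on both sides, yielding the asserted morphisms of presheaves. The main obstacle I anticipate lies in (i): reconciling the canonical coherence isomorphism of anafunctor composition with the composite gauge transformation, which forces one to pin down the precise composition law in $\con\Gamma M$ and then push the $\alpha$-, $\tilde\alpha$- and $\mathrm{Ad}$-twists of \cref{eq:shiftF:0,eq:shiftF:1} through \cref{lem:pullback:comp}. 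Verifying that this comparison is itself connection-preserving — so that strictness, as opposed to mere pseudofunctoriality, holds with the paper's conventions — is the delicate point, and the one place where genuine (if routine) computation is unavoidable.
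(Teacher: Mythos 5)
Your proposal is correct and follows essentially the same route as the paper, whose entire proof consists of citing \cref{lem:gtmorph,lem:2gtmorph} for well-definedness and declaring the compatibility with composition of gauge transformations and with vertical and horizontal composition of gauge 2-transformations to be a straightforward exercise. Your extra detail on how those checks would go (via \cref{lem:pullback:comp} and the shift formulas), and your flag about reconciling the coherence isomorphism of anafunctor composition with strictness, simply make explicit what the paper leaves to the reader.
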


\begin{proof}
Well-definedness is the content of \cref{lem:gtmorph,lem:2gtmorph}. It remains to check that the composition of gauge transformations, and horizontal and vertical composition of gauge 2-transformations are respected. This is straightforward and left as an exercise.
\begin{comment}
Vertical and horizontal composition are produced as indicated in the following diagrams:
\begin{equation*}
\alxydim{@C=1.8cm}{\inf I \ar@/^2pc/[r]^{\phi_{g}}="1" \ar[r]|{\phi_{g'}}="2" \ar@/_2pc/[r]_{\phi_{g''}}="3" \ar@{=>}"1";"2"|-{\eta_{a}}\ar@{=>}"2";"3"|-{\eta_{a'}} & \inf I}
=\alxydim{@C=1.8cm}{\inf I \ar@/^2pc/[r]^{\phi_{g}}="1"  \ar@/_2pc/[r]_{\phi_{g''}}="3" \ar@{=>}"1";"3"|-{\eta_{a'a}} & \inf I}
\quand
\alxydim{@C=1.4cm}{\inf I \ar@/^2pc/[r]^{\phi_{g_1}}="1" \ar@/_2pc/[r]_{\phi_{g_1'}}="2" \ar@{=>}"1";"2"|{\eta_{a_1}} & \inf I \ar@/^2pc/[r]^{\phi_{g_2}}="1" \ar@/_2pc/[r]_{\phi_{g_2'}}="2" \ar@{=>}"1";"2"|{\eta_{a_2}} & \inf I}
=
\alxydim{@C=1.8cm}{\inf I \ar@/^2pc/[r]^{\phi_{g_2g_1}}="1" \ar@/_2pc/[r]_{\phi_{g_2'g_1'}}="2" \ar@{=>}"1";"2"|{\eta_{a_2\alpha(g_2,a_1)}} & \inf I\text{.}}
\end{equation*}
\end{comment}
\end{proof}

\begin{comment}
\begin{remark}
They are compatible with the inclusion of fake-flat connections into regular ones; i.e., the diagram
\begin{equation*}
\alxydim{}{\conff\Gamma - \ar[r]^-{L^{\ff}} \ar@{^(->}[d] & \zweibunconff\Gamma - \ar@{^(->}[d] \\\con\Gamma -\ar[r]_-{L} & \zweibunconreg\Gamma -\text{.}}
\end{equation*}
is commutative.
\end{remark}
\end{comment}

In the remainder of this subsection we prove two statements about the 2-functors $L$ and $L^{\ff}$, see \cref{lem:Locesssurj,prop:locfullyfaithful}. We recall that a \emph{section} in a principal $\Gamma$-2-bundle $\inf P$ is a smooth map $s:M \to \ob{\inf P}$ such that $\pi \circ s = \id_M$. Since $\pi:\ob{\inf P} \to M$ is a surjective submersion, every point $x\in M$ has an open neighborhood $U\subset M$ that supports a  section. Associated to a  section  $s$ is a smooth $\Gamma$-equivariant functor  $T: \inf I \to \inf P$ defined by
\begin{equation*}
T:= R \circ (s\times \id_{\Gamma}): \idmorph{M} \times \Gamma \to \inf P\text{,}
\end{equation*}
called the  \emph{trivialization} associated to $s$. We equip $T$ with the canonical $\Omega$-pullback, which is connective by \cref{rem:smoothfunctorconnection}. Thus, $\Psi := T^{*}\Omega$ is a connection on $\inf I$ by \cref{re:pullbackcond:a}, and thus $\Psi = \Omega_{A,B}$ for a unique $\Gamma$-connection $(A,B)$ on $M$; see \cref{lem:conntrivbun}. By definition of $\Psi$,  $T$ is connection-preserving.
If $\Omega$ is regular, then the canonical $\Omega$-pullback is regular (\cref{re:canpullback:reg}), and if $\Omega$  is fake-flat, then $\Psi$ is fake-flat (\cref{re:pullbackcond:c}), $(A,B)$  is fake-flat (\cref{lem:conntrivbun}), and the canonical $\Omega$-pullback is fake-flat (\cref{re:canpullback:fake}). This shows the following result.

\begin{proposition}
\label{lem:Locesssurj}
Let $\inf P$ be a principal $\Gamma$-2-bundle over $M$ with connection $\Omega$. For every point $x\in M$ there exists  an open neighborhood $U \subset M$ and a
\begin{enumerate}[(a)]

\item 
\label{lem:Locesssurj:a}
$\Gamma$-connection $(A,B)$ on $U$ such that $\inf I_{A,B} \cong \inf P|_U$ in 
$\zweibuncon \Gamma U$.

\item 
\label{lem:Locesssurj:b}
$\Gamma$-connection $(A,B)$ on $U$ such that $\inf I_{A,B} \cong \inf P|_U$ in 
$\zweibunconreg \Gamma U$, if   $\Omega$ is regular.

\item 
\label{lem:Locesssurj:c}
 fake-flat $\Gamma$-connection $(A,B)$ on $U$ such that $\inf I_{A,B} \cong \inf P|_U$ in 
$\zweibunconff \Gamma U$, if  $\Omega$ is fake-flat.

\end{enumerate} 
\end{proposition}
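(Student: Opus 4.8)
The plan is to manufacture the required local isomorphism from a single local section of $\inf P$, to transport the connection $\Omega$ along the associated trivialization, and finally to read off a $\Gamma$-connection via the bijection of \cref{lem:conntrivbun}. First I would fix $x\in M$ and apply \cref{lem:exsections} to obtain an open neighborhood $U$ of $x$ carrying a smooth section $s:U\to\ob{\inf P}$. This produces the trivialization
\begin{equation*}
T := R \circ (s \times \id_{\Gamma}) : \idmorph{U} \times \Gamma \to \inf P|_U\text{,}
\end{equation*}
a smooth functor whose strict $\Gamma$-equivariance and compatibility with $\pi$ are immediate from the action axioms of \cref{def:rightaction} together with the fact that $R$ preserves $\pi$ and $\pi\circ s=\id_U$. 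By \cref{rem:equivsmoothfunctor} the anafunctor $F$ associated to $T$ is then a $\Gamma$-equivariant $1$-morphism of principal $\Gamma$-$2$-bundles over $U$, hence a weak equivalence, and in fact a $1$-isomorphism by \cite[Corollary 6.2.4]{Nikolaus}.

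Next I would equip $F$ with its canonical $\Omega$-pullback $\nu$ (\cref{re:canpullback:a}). By \cref{rem:smoothfunctorconnection:b} this pullback is connective, and by construction $F^{*}_{\nu}\Omega = T^{*}\Omega =: \Psi$, so $\nu$ is connection-preserving. \cref{re:pullbackcond:a} then guarantees that $\Psi$ is a connection on $\inf I|_U$, whence $\Psi=\Omega_{A,B}$ for a unique $\Gamma$-connection $(A,B)$ on $U$ by \cref{lem:conntrivbun}. Thus the pair $(F,\nu)$ is a connective, connection-preserving $1$-morphism $\inf I_{A,B}\to\inf P|_U$, which by \cref{re:inv1morph} is automatically a $1$-isomorphism in $\zweibuncon\Gamma U$; this settles part (a).

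For parts (b) and (c) I would simply track the extra properties through the same construction. Since every connection on $\inf I$ is regular (\cref{ex:trivialconn}), $\inf I_{A,B}$ is automatically an object of $\zweibunconreg\Gamma U$, and when $\Omega$ is regular \cref{re:canpullback:reg} shows that $\nu$ is regular, so $(F,\nu)$ is a $1$-isomorphism in $\zweibunconreg\Gamma U$. When $\Omega$ is fake-flat, \cref{re:canpullback:fake} gives $\mathrm{fcurv}(\nu)=\pr_{\mor{\inf P}}^{*}\fb{\mathrm{curv}(\Omega)}=0$, so $\nu$ is fake-flat; then \cref{re:pullbackcond:c} makes $\Psi$ fake-flat and \cref{lem:conntrivbun} makes $(A,B)$ fake-flat, giving a $1$-isomorphism in $\zweibunconff\Gamma U$. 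The argument is almost entirely bookkeeping, since the genuine analytic content—existence and behaviour of connective pullbacks, invariance of the connection property under induced pullback, and the $\Gamma$-connection bijection—has already been established. Accordingly, the only point I expect to require care is verifying that the canonical pullback inherits at each stage exactly the property needed and that the trivial-bundle connection $\Omega_{A,B}$ automatically lies in the relevant sub-bicategory; there is no hard obstruction beyond correctly matching each property to its supporting lemma.
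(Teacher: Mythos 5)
Your proposal is correct and follows essentially the same route as the paper: a local section via \cref{lem:exsections}, the trivialization $T=R\circ(s\times\id_\Gamma)$ with its canonical $\Omega$-pullback, \cref{rem:smoothfunctorconnection} and \cref{re:pullbackcond:a} to get a connection $\Psi=T^{*}\Omega$ on $\inf I$, \cref{lem:conntrivbun} to identify $(A,B)$, and \cref{re:canpullback:reg}, \cref{re:canpullback:fake}, \cref{re:pullbackcond:c} to track the regular and fake-flat cases. The property-to-lemma matching is exactly as in the paper, so there is nothing to add.
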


\begin{remark}
\cref{lem:Locesssurj:b,lem:Locesssurj:c} imply that the sheaf morphisms $L$ and $L^{\ff}$ are surjective, i.e., essentially surjective on stalks. As 2-functors, $L$ and $L^{\ff}$ are in general not essentially surjective, regardless to which kind of subset $U \subset M$ one restricts them.
\begin{comment}
Since principal $\Gamma$-2-bundles have a classifying theory, every principal $\Gamma$-2-bundle trivializes over every contractible open set $U \subset M$. Suppose $J:\inf I \to \inf P|_U$ is such a trivialization. It induces an anafunctor $\idmorph{U} \to \inf P|_U$, i.e. a principal $\inf P|_U$-bundle over $U$. There is no reason why this should be trivializable, even if $U$ is contractible. 

More concretely, consider an abelian bundle gerbe for an open cover. The associated principal $\Gamma$-2-bundle has $\ob{\inf P}$ the disjoint union of the open sets. Obviously, not every contractible open set $U \subset M$ will have a smooth section. 
\end{comment}
\end{remark}

Suppose $F: \inf I_{A,B} \to \inf I_{A',B'}$ is a 1-morphism in $\zweibuncon\Gamma M$, and  denote by $\nu=(\nu_0,\nu_1)$ its  $\Omega_{A',B'}$-pullback. Consider the smooth map  
\begin{equation*}
s: M \to \ob{\inf I_{A,B}} \times_M \ob{\inf I_{A',B'}}:x \mapsto ((x,1),(x,1))\text{.}
\end{equation*}
We assume that $F$ admits a transition span $\sigma: M \to F$ along $s$ with $\sigma^{*}\mathrm{fcurv}(\nu)=0$.  Let $g:M \to G$ be a transition functor for $\sigma$, and let $\varphi := (\alpha_g)_{*}(\sigma^{*}\nu_0) \in \Omega^1(M,\mathfrak{g})$.

\begin{lemma}
\label{lem:gt}
The pair $(g,\varphi)$ is a gauge transformation between $(A,B)$ and $(A',B')$. Moreover, there exists a connection-preserving 2-morphism $F_{g,\varphi} \cong F$. 
\end{lemma}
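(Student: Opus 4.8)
The plan is to treat this as the $1$-morphism analogue of the cocycle-extraction argument in \cref{th:classmain}: pull back the defining relations of the connective, connection-preserving pullback $\nu$ along the transition span $\sigma$ and read off the two gauge-transformation equations. Throughout I evaluate $\Omega_{A,B}$ and $\Omega_{A',B'}$ via the explicit formulae \cref{eq:OmegaAB}, using $\alpha_l\circ\sigma(x)=(x,1)$ and $\alpha_r\circ\sigma(x)=(x,g(x))$, so that $\sigma^{*}\alpha_l^{*}\fa\Omega_{A,B}=A$, $\sigma^{*}\alpha_l^{*}\fc\Omega_{A,B}=-B$, $\sigma^{*}\alpha_r^{*}\fa\Omega_{A',B'}=\mathrm{Ad}_g^{-1}(A')+g^{*}\theta$, and $\sigma^{*}\alpha_r^{*}\fc\Omega_{A',B'}=-(\alpha_{g^{-1}})_{*}(B')$. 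I also record $\sigma^{*}\nu_0=(\alpha_{g^{-1}})_{*}(\varphi)$, which is just the definition of $\varphi$ inverted.

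For \cref{eq:coder:gt1} I pull back the relation $t_{*}(\nu_0)=\alpha_l^{*}\fa\Omega_{A,B}-\alpha_r^{*}\fa\Omega_{A',B'}$ of \cref{lem:pullback} along $\sigma$. Using the crossed-module identity $t_{*}\circ(\alpha_{g^{-1}})_{*}=\mathrm{Ad}_{g^{-1}}\circ t_{*}$ on the left and the values above on the right gives $\mathrm{Ad}_{g^{-1}}(t_{*}(\varphi))=A-\mathrm{Ad}_g^{-1}(A')-g^{*}\theta$; applying $\mathrm{Ad}_g$ and using $\mathrm{Ad}_g(g^{*}\theta)=g^{*}\bar\theta$ yields exactly $A'+t_{*}(\varphi)=\mathrm{Ad}_g(A)-g^{*}\bar\theta$. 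For \cref{eq:coder:gt2} I pull back the fake-curvature of \cref{def:curvpullback} along $\sigma$ and use the hypothesis $\sigma^{*}\mathrm{fcurv}(\nu)=0$. Substituting $\sigma^{*}\nu_0=(\alpha_{g^{-1}})_{*}(\varphi)$, $\sigma^{*}\nu_1=-B+(\alpha_{g^{-1}})_{*}(B')$ and the value of $\sigma^{*}\alpha_r^{*}\fa\Omega_{A',B'}$, and exploiting that $(\alpha_{g^{-1}})_{*}$ is a Lie-algebra homomorphism (so it commutes with $[-\wedge-]$), that $\alpha_{*}(\mathrm{Ad}_g^{-1}(A')\wedge(\alpha_{g^{-1}})_{*}\varphi)=(\alpha_{g^{-1}})_{*}\alpha_{*}(A'\wedge\varphi)$, and that the term $\alpha_{*}(g^{*}\theta\wedge(\alpha_{g^{-1}})_{*}\varphi)$ coming from $\mathrm d(\alpha_{g^{-1}})_{*}\varphi$ cancels the matching term from $\sigma^{*}\alpha_r^{*}\fa\Omega_{A',B'}$, the whole expression collapses to $(\alpha_{g^{-1}})_{*}\big(B'+\mathrm d\varphi+\tfrac12[\varphi\wedge\varphi]+\alpha_{*}(A'\wedge\varphi)\big)-B=0$. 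Applying $(\alpha_g)_{*}$ gives \cref{eq:coder:gt2}, proving that $(g,\varphi)$ is a gauge transformation.

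For the second claim I produce the $2$-morphism via \cref{rem:smoothfunctor:b}, defining $\tilde f:\ob{\inf I_{A,B}}\to F$ by $\tilde f(x,g'):=\rho(\sigma(x),\id_{g'})$, where $\rho$ is the $\mor\Gamma$-action on $F$. The anchor conditions $\alpha_l(\tilde f(x,g'))=(x,g')$ and $\alpha_r(\tilde f(x,g'))=(x,g(x)g')=\phi_g(x,g')$ are immediate from the anchor-equivariance of $\rho$ in \cref{def:actionanafunctor:b}, and the intertwining condition of \cref{rem:smoothfunctor:b} follows from the compatibility of $\rho$ with the left $\inf I_{A,B}$- and right $\inf I_{A',B'}$-actions in the same definition. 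Thus $\tilde f$ induces a transformation $f:F_{\phi_g}\Rightarrow F$, which is $\Gamma$-equivariant because $\tilde f(R((x,g'),g''))=\tilde f(x,g')\cdot\id_{g''}$ (last part of \cref{rem:equivsmoothfunctor}). By \cref{re:canpullback:d} the comparison of pullbacks is $f^{*}\nu=\nu^{\kappa}$ with $\kappa_i=\tilde f^{*}\nu_i$, so it remains to identify $\kappa=(\varphi_0,\varphi_1)$ of \cref{eq:shiftF:0,eq:shiftF:1}. Here I evaluate the connectivity relations \cref{eq:conana:b,eq:conana:mu} of $\nu$ along the identity morphisms $\id_{g'}$, for which $h=1$ so that $\mathrm{Ad}_h^{-1}=\id$, $(\tilde\alpha_h)_{*}=0$ and $h^{*}\theta=0$; this gives $\tilde f^{*}\nu_0=(\alpha_{\pr_G^{-1}})_{*}(\pr_M^{*}\sigma^{*}\nu_0)=\varphi_0$ and $\tilde f^{*}\nu_1=(\alpha_{\pr_G^{-1}})_{*}(\pr_M^{*}\sigma^{*}\nu_1)$, which equals $\varphi_1$ after inserting $\sigma^{*}\nu_1=-B+(\alpha_{g^{-1}})_{*}(B')$ and rewriting with the already-established \cref{eq:coder:gt2}. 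Hence $f^{*}\nu=\nu^{\varphi}$, i.e.\ $f:F_{g,\varphi}\Rightarrow F$ is connection-preserving, and being a transformation of anafunctors it is automatically invertible, giving $F_{g,\varphi}\cong F$.

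The main obstacle is the bookkeeping in the \cref{eq:coder:gt2} computation: one must keep straight the interplay of $(\alpha_{g^{-1}})_{*}$, $\mathrm{Ad}_g$, $t_{*}$ and the two Maurer--Cartan forms, and verify that all stray terms cancel --- this is exactly where the regularity hypothesis $\sigma^{*}\mathrm{fcurv}(\nu)=0$ is consumed, and it is the same identity that later forces $\kappa_1=\varphi_1$. The only other non-formal point is the intertwining condition for $\tilde f$; although routine, it requires carefully factoring morphisms of $\inf I_{A,B}$ through the $\Gamma$-action so as to apply \cref{def:actionanafunctor:b}, and I would verify it before invoking \cref{re:canpullback:d}.
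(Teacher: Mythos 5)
Your proposal is correct and follows essentially the same route as the paper: \cref{eq:coder:gt1} from pulling back \cref{eq:conana:a} along $\sigma$, \cref{eq:coder:gt2} from $\sigma^{*}\mathrm{fcurv}(\nu)=0$, and the 2-morphism via $\tilde f(x,g')=\sigma(x)\cdot\id_{g'}$ together with \cref{rem:smoothfunctor:b} and \cref{re:canpullback:d}. The only cosmetic difference is that you identify $\tilde f^{*}\nu_1=\varphi_1$ by combining \cref{eq:conana:6} with the already-established \cref{eq:coder:gt2}, whereas the paper substitutes the vanishing fake-curvature directly; these consume the same hypothesis and are equivalent.
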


\begin{proof}
We have to verify \cref{eq:coder:gt1,eq:coder:gt2}. The first is a direct calculation using \cref{eq:conana:a}.
\begin{comment}
Indeed,
\begin{align*}
t_{*}(\varphi) &=\mathrm{Ad}_g(t_{*}(\sigma^{*}\nu_0))
\\&\eqcref{eq:conana:a} \mathrm{Ad}_g(\sigma^{*}\alpha_l^{*}\fa\Omega_{A,B}-\sigma^{*}\alpha_r^{*}\fa\Omega_{A',B'})
\\&=\mathrm{Ad}_g((\id,1)^{*}\fa\Omega_{A,B}-(\id,g)^{*}\fa\Omega_{A',B'})
\\&\eqcref{eq:OmegaAB}\mathrm{Ad}_g(A-\mathrm{Ad}_{g}^{-1}(A') - g^{*}\theta)
\\&= \mathrm{Ad}_{g}(A) -A'- g^{*}\bar \theta\text{.} 
\end{align*}
\end{comment}
For the second, it is straightforward to deduce from the definition of $\mathrm{fcurv}(\nu)$ and \cref{eq:conana:a,eq:conana:6} that
\begin{equation*}
(\alpha_{g})_{*}(\sigma^{*}\mathrm{fcurv}({\nu}))= -(\alpha_{g})_{*}(B)+B'+\mathrm{d}\varphi+ \frac{1}{2}[\varphi\wedge \varphi]+\alpha_{*}(A' \wedge \varphi)\text{.}
\end{equation*}
\begin{comment}
Indeed,
\begin{align*}
&\mquad(\alpha_{g})_{*}(\sigma^{*}\mathrm{fcurv}({\nu}))
\\&= (\alpha_g)_{*}\sigma^{*}(\mathrm{d}\nu_0+ \frac{1}{2}[\nu_0\wedge \nu_0]+ \alpha_{*}(\alpha_r^{*}\fa{\Omega}_{A',B'} \wedge \nu_0)+\nu_1)
\\&\eqcref{eq:conana:a,eq:conana:6} (\alpha_g)_{*}(\sigma^{*}\mathrm{d}\nu_0)+ \frac{1}{2}[\varphi\wedge \varphi]+(\alpha_g)_{*} \alpha_{*}(\sigma^{*}(\alpha_l^{*}\fa\Omega_{A,B} - t_{*}(\nu_0)) \wedge\sigma^{*} \nu_0)
\\&\qquad+(\alpha_g)_{*}(\sigma^{*}\alpha_l^{*}\fc\Omega_{A,B}-\sigma^{*}\alpha_r^{*}\fc{\Omega}_{A',B'})
\\&\eqcref{eq:OmegaAB} \mathrm{d}\varphi-\alpha_{*}(g^{*}\bar\theta \wedge \varphi)+ \frac{1}{2}[\varphi\wedge \varphi]+\alpha_{*}(\mathrm{Ad}_g(A)-t_{*}(\varphi)) \wedge\varphi)+(\alpha_g)_{*}(-B+(\alpha_{g^{-1}})_{*}(B'))
\\&= \mathrm{d}\varphi+ \frac{1}{2}[\varphi\wedge \varphi]+\alpha_{*}(A' \wedge\varphi)-(\alpha_g)_{*}(B)+B'
\end{align*}
\end{comment}
Thus, the assumption
$\sigma^{*}\mathrm{fcurv}(\nu)=0$ implies \cref{eq:coder:gt2}. Next consider the smooth map
$\tilde \kappa: M \times G \to F$ defined by $\tilde\kappa(x,g') := \sigma(x) \cdot \id_{g'}$. It is easy to check that it satisfies the conditions of \cref{rem:smoothfunctor:b}, namely
\begin{equation*}
\alpha_l(\tilde\kappa(x,g')) = (x,g')
\quomma
\alpha_r ( \tilde\kappa(x,g')) =  \phi_g(x,g')
\quomma
\alpha\circ \tilde\kappa(x,g')\circ \beta=\tilde\kappa(t(\alpha))\circ \phi_g(\alpha)\circ \beta\text{,}
\end{equation*}
where $x \in M$, $g'\in G$ and $\alpha,\beta\in M \times \mor{\Gamma}$. 
\begin{comment}
\begin{align*}
\alpha_l(\tilde\kappa(x,g')) &=\alpha_l(\sigma(x) \cdot \id_{g'})=R((x,1),g') = (x,g')
\\
\alpha_r ( \tilde\kappa(x,g')) &=\alpha_r (\sigma(x) \cdot \id_{g'}) =R(R((x,1),g(x)),g')=(x,g(x)g')=  \phi_g(x,g')
\\
\alpha\circ \tilde\kappa(x,g')\circ \beta &=R((x,\id_1),(h_{\alpha},g_{\alpha}))\circ (\sigma(x) \cdot \id_{g'})\circ R((x,\id_{g(x)}),(\alpha(g(x)^{-1},h_{\beta}),g(x)^{-1}g_{\beta}))
\\&= ((x,\id_1) \circ\sigma(x)\circ(x,g(x))  ) \cdot ((h_{\alpha},g_{\alpha}) \circ  \id_{g'}\circ(\alpha(g(x)^{-1},h_{\beta}),g(x)^{-1}g_{\beta}) )
\\&= \sigma(x) \cdot(h_{\alpha}\alpha(g(x)^{-1},h_{\beta}),g(x)^{-1}g_{\beta} ) 
\\&=(\sigma(x) \circ(x,\id_{g(x)}) ) \cdot (\id_{t(h_{\alpha})g'} \circ(h_{\alpha}\alpha(g(x)^{-1},h_{\beta}),g(x)^{-1}g_{\beta}) )
 \\&=(\sigma(x) \cdot \id_{t(h_{\alpha})g'})\circ R((x,\id_{g(x)}),(h_{\alpha}\alpha(g(x)^{-1},h_{\beta}),g(x)^{-1}g_{\beta})) \\&=(\sigma(x) \cdot \id_{t(h_{\alpha})g'})\circ (x,\alpha(g(x),h_{\alpha})h_{\beta},g_{\beta}) 
\\&=(\sigma(x) \cdot \id_{t(h_{\alpha})g'})\circ \phi_g(x,h_{\alpha},g_{\alpha})\circ (x,h_{\beta},g_{\beta}) 
\\&=\tilde\kappa(t(\alpha))\circ \phi_g(\alpha)\circ \beta 
\end{align*}
where we have written $\alpha=(x,h_{\alpha},g_{\alpha})$ and $\beta=(x,h_{\beta},g_{\beta})$ with 
\begin{equation*}
(x,t(h_{\beta})g_{\beta})=t(\beta)=\alpha_r ( \tilde\kappa(x,g'))=  \phi_g(x,g')=(x,g(x)g')
\end{equation*}
i.e. $t(h_{\beta})g_{\beta} = g(x)g'$, and
\begin{equation*}
(x,g_{\alpha})=s(\alpha)=\alpha_l(\tilde\kappa(x,g')) = (x,g')
\end{equation*}
i.e. $g'=g_{\alpha}$. 
\end{comment}
Hence it induces a transformation $\kappa: F_{g,\varphi} \Rightarrow F$. According to \cref{rem:equivsmoothfunctor}, $\kappa$ is $\Gamma$-equivariant if $\tilde\kappa$ satisfies
$\tilde\kappa(x,g'g''))=\tilde\kappa(x,g')\cdot \id_{g''}$; this follows immediately from its definition. 
\begin{comment}
Indeed,
\begin{equation*}
\tilde\kappa(x,g'g''))=\sigma(x) \cdot \id_{g'g''}= \sigma(x) \cdot \id_{g'}\cdot \id_{g''}=\tilde\kappa(x,g')\cdot \id_{g''}\text{.}
\end{equation*}
\end{comment}
This shows that $\kappa$ is a 2-morphism $\kappa: F_{g,\varphi} \Rightarrow F$. It remains to check that it is connection-preserving. According to \cref{re:canpullback:d} this is the case if the given pullback $\nu$ on $F$ pulls back along $\tilde\kappa$ to the shift $\varphi$ of the canonical pullback on $F_{g}$; i.e.
\begin{equation*}
\tilde\kappa^{*}\nu_0=\varphi_0
\quand
\tilde\kappa^{*}\nu_1=\varphi_1\text{,}
\end{equation*}
where $\varphi_0$ and $\varphi_1$ are defined in \cref{eq:shiftF:0,eq:shiftF:1}. The first equation follows from just the definitions and the fact that $\nu$ is connective. 
\begin{comment}
\begin{multline*}
\tilde\kappa^{*}\nu_0|_{x,g'}=\nu_0|_{\tilde\kappa(x,g')}=\nu_0|_{\tilde\kappa(x,g')}=\nu_0|_{\sigma(x)\cdot \id_{g'}}\eqcref{eq:conana:b}(\alpha_{g'^{-1}})_{*}(\nu_0|_{\sigma(x)})
\\=(\alpha_{g'^{-1}g(x)^{-1}})_{*}( (\alpha_{g(x)})_{*}(\nu_0|_{\sigma(x)}))=(\alpha_{g'^{-1}g(x)^{-1}})_{*}(\varphi_x)=\varphi_0|_{x,g'}
\end{multline*}
\end{comment}
For the second equation one shows first using connectivity of $\nu$ and the vanishing of $\sigma^{*}\mathrm{fcurv}(\nu)$ that
\begin{equation*}
\tilde\kappa^{*}\nu_1 =-(\alpha_{\pr_G^{-1}})_{*}\left (\sigma^{*}(\mathrm{d}\nu_0+ \frac{1}{2}[\nu_0\wedge \nu_0]+ \alpha_{*}(\mathrm{Ad}_{g}^{-1}(A') \wedge \nu_0)+ \alpha_{*}(g^{*}\theta \wedge \sigma^{*}\nu_0) \right )\text{.}
\end{equation*}
From there it is straightforward to deduce \cref{eq:shiftF:1}. 
\begin{comment}
\begin{align*}
\tilde\kappa^{*}\nu_1|_{x,g'}
&=\nu_1|_{\tilde\kappa(x,g')}
\\&=\nu_1|_{\sigma(x) \cdot \id_{g'}}
\\&\eqcref{eq:conana:mu} (\alpha_{g'^{-1}})_{*}(\nu_1|_{\sigma(x)})
\\&\eqtext{$\sigma^{*}\mathrm{fcurv}(\nu)=0$} -(\alpha_{g'^{-1}})_{*}(\mathrm{d}\nu_0|_{\sigma(x)}+ \frac{1}{2}[\nu_0|_{\sigma(x)}\wedge \nu_0|_{\sigma(x)}]+ \alpha_{*}((\id,g)^{*}\fa{\Omega}_{A',B'}|_x \wedge \nu_0|_{\sigma(x)}))
\\&\eqcref{eq:OmegaAB} -(\alpha_{g'^{-1}})_{*}(\mathrm{d}\nu_0|_{\sigma(x)}+ \frac{1}{2}[\nu_0|_{\sigma(x)}\wedge \nu_0|_{\sigma(x)}]+ \alpha_{*}((\mathrm{Ad}_{g(x)}^{-1}(A'_x)+\theta_{g(x)}) \wedge \nu_0|_{\sigma(x)}))
\\&=-(\alpha_{g'^{-1} \cdot g(x)^{-1}})_{*}(\mathrm{d} (\alpha_{g(x)})_{*}(\nu_0|_{\sigma(x)})+\frac{1}{2}(\alpha_{g(x)})_{*}[\nu_0|_{\sigma(x)}\wedge \nu_0|_{\sigma(x)}]
\\&\qquad+(\alpha_{g(x)})_{*}(\alpha_{*}(\mathrm{Ad}^{-1}_{g(x)}(A'_x) \wedge  \nu_0|_{\sigma(x)}))
\\&=-(\alpha_{g'^{-1} \cdot g(x)^{-1}})_{*}(\mathrm{d} (\alpha_{g(x)})_{*}(\nu_0|_{\sigma(x)})+\frac{1}{2}[(\alpha_{g(x)})_{*}(\nu_0|_{\sigma(x)})\wedge (\alpha_{g(x)})_{*}(\nu_0|_{\sigma(x)})]
\\&\qquad+\alpha_{*}(A' _x\wedge  (\alpha_{g(x)})_{*}(\nu_0|_{\sigma(x)})))
\\&=-(\alpha_{g'^{-1} \cdot g(x)^{-1}})_{*}(\mathrm{d}\varphi_x+\frac{1}{2}[\varphi_x\wedge\varphi_x]+\alpha_{*}(A' _x\wedge \varphi_x))
\\&=\varphi_1|_{x,g'}
\end{align*}
\end{comment}
\end{proof}

\begin{proposition}
\label{prop:locfullyfaithful}
\begin{enumerate}[(a)]

\item 
Every point $x\in M$ has an open neighborhood $U\subset M$ such that $L(U)$ is fully faithful. 
\item
For every contractible open set $U \subset M$ the 2-functor $L^{\ff}(U)$ is fully faithful.
\end{enumerate}
\end{proposition}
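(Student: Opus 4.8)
For a 2-functor, \emph{fully faithful} means that each induced functor between hom-categories is an equivalence. So, fixing $\Gamma$-connections $(A,B)$ and $(A',B')$ on the relevant open set (fake-flat in case (b)), the task is to prove that the functor sending a gauge transformation $(g,\varphi)$ to $F_{g,\varphi}$ and a gauge 2-transformation $a$ to $\eta_a$ is an equivalence from $\con\Gamma U((A,B),(A',B'))$ onto $\zweibunconreg\Gamma U(\inf I_{A,B},\inf I_{A',B'})$, resp.\ into $\zweibunconff\Gamma U$. This splits into two tasks: essential surjectivity on 1-morphisms, and full faithfulness on 2-morphisms. The full-faithfulness on 2-morphisms works over any base, so the open neighborhood in (a) and the contractibility in (b) are needed only to secure essential surjectivity.

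I would first settle the bijection on 2-morphisms, for fixed gauge transformations $(g_1,\varphi_1)$ and $(g_2,\varphi_2)$. Since $F_{g_i,\varphi_i}$ is the anafunctor associated to the smooth functor $\phi_{g_i}$, the full-faithfulness of $\fun^{\infty}(\inf I,\inf I)\to\af(\inf I,\inf I)$ from \cref{rem:smoothfunctor:a}, together with the equivariance dictionary of \cref{rem:equivsmoothfunctor}, identifies $\Gamma$-equivariant transformations $F_{g_1}\Rightarrow F_{g_2}$ with $\Gamma$-equivariant smooth natural transformations $\phi_{g_1}\Rightarrow\phi_{g_2}$. Such a transformation is determined by its value $(\id_x,(a(x),g_1(x)))$ on the canonical section $x\mapsto(x,1)$ for a unique smooth $a:U\to H$, and the target condition forces $g_2=t(a)g_1$; thus it is exactly the $\eta_a$ of \cref{eq:def:etaa}. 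Finally, by \cref{re:canpullback:c} the connection-preservation $\eta_a^{*}\nu^{\varphi_2}=\nu^{\varphi_1}$ is equivalent to $\eta_a^{*}\fb{\Omega'}=\kappa_{1,0}-\kappa_{2,0}$ and $\eta_a^{*}\Delta\fc{\Omega'}=\kappa_{1,1}-\kappa_{2,1}$, with $\kappa_i$ the shift induced by $\varphi_i$ via \cref{eq:shiftF:0,eq:shiftF:1}. Running the computation of \cref{lem:2gtmorph} in reverse, the first identity is automatic from $g_2=t(a)g_1$, while the second is equivalent to the remaining equation of \cref{eq:gauge2}; hence $a\mapsto\eta_a$ is a bijection between gauge 2-transformations and connection-preserving 2-morphisms, over any $U$.

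For essential surjectivity I would invoke \cref{lem:gt}: a 1-morphism $F$ with pullback $\nu$ lies in the image, up to connection-preserving 2-isomorphism, as soon as $F$ admits a transition span $\sigma$ along the diagonal section $s:x\mapsto((x,1),(x,1))$ with $\sigma^{*}\mathrm{fcurv}(\nu)=0$; the resulting $(g,\varphi)$ is then a genuine gauge transformation, and $F_{g,\varphi}$ is regular by \cref{lem:gtmorph}, resp.\ fake-flat when $(A,B),(A',B')$ are. In case (b) one has $\mathrm{fcurv}(\nu)=0$ identically, so the vanishing condition is vacuous; a transition span along $s$ exists over any contractible $U$, being a section of the pulled-back principal $\Gamma$-bundle $s^{*}Q$ of \cref{lem:Fsections}, which is trivial over contractible $U$. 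This yields essential surjectivity uniformly over every contractible $U$, proving (b). In case (a) the pullback is only regular, so \cref{def:curvpullback} supplies a splitting with vanishing $\mathrm{fcurv}$-pullback only on a neighborhood $W$ of $s(x)$ in the fibre product; pulling it back along $s$ produces the required span over the neighborhood $U:=s^{-1}(W)$ of $x$, on which $F$ is then in the image.

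The main obstacle is exactly this last step. For a regular (non fake-flat) pullback, a transition span with vanishing fake-curvature along $s$ is guaranteed only locally around $x$, and one cannot in general patch local good spans into a single one over a prescribed open set, since $\mathrm{fcurv}$ is not linear in the span and partition-of-unity gluing (as in \cref{lem:pullexists}) destroys the vanishing. This is precisely why (a) can only assert the existence of \emph{some} neighborhood $U$, and cannot be upgraded to arbitrary contractible sets, whereas the fake-flat hypothesis in (b) trivializes the fake-curvature and lets the argument run uniformly over contractible $U$. Everything else — that the produced $(g,\varphi)$ satisfies \cref{eq:coder:gt1,eq:coder:gt2}, and the various pullback identities — is the routine bookkeeping already packaged in \cref{lem:gt,lem:gtmorph,lem:2gtmorph} and the remarks of \cref{sec:pullbackana}.
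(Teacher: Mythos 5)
Your proposal is correct and follows essentially the same route as the paper: essential surjectivity of the Hom-functor via \cref{lem:gt}, with the transition span along the diagonal section obtained from \cref{lem:Fsections} over a contractible set and the vanishing of $\mathrm{fcurv}(\nu)$ coming either from regularity (after shrinking, whence only \emph{some} neighborhood in (a)) or for free from fake-flatness in (b); and full faithfulness on 2-morphisms by reducing a connection-preserving 2-morphism to a smooth $\Gamma$-equivariant natural transformation, reading off $a:U\to H$ with $t(a)=g_2g_1^{-1}$, and matching the pullback-shift conditions of \cref{re:canpullback:c,re:canpullback:d} against \cref{eq:gauge2}. No gaps.
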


\begin{proof}
Suppose $(A,B)$ and $(A',B')$ are $\Gamma$-connections on $M$. We consider the Hom-functor
\begin{equation*}
L: \hom_{\con\Gamma M}((A,B),(A',B')) \to \hom_{\zweibunconreg\Gamma M}(\inf I_{A,B},\inf I_{A',B'})\text{.}
\end{equation*}
That a 2-functor is fully faithful means that the Hom-functor is an equivalence of categories. We want to show using \cref{lem:gt} that it is essentially surjective. By \cref{lem:Fsections} we can first choose any contractible open neighborhood $U \subset M$ of $x$ to guarantee the existence of a transition span $\sigma:U \to F$ along $s$. If $\nu$ is regular, then by definition of regularity we can restrict to a smaller open subset such that $\sigma^{*}\mathrm{fcurv}(\nu)=0$. If $\nu$ is fake-flat, the second restriction is unnecessary; thus  \cref{lem:gt} applies to both cases. 

It remains to show that the Hom-functor is fully faithful; there is no difference between the regular and the fake-flat case. We have to analyze the assignment of the natural transformation $\eta_a$ to a gauge 2-transformation $a: (g_1,\varphi_1) \Rightarrow (g_1,\varphi_1)$. From the definition of $\eta_a$ in \cref{eq:def:etaa} we conclude immediately that the assignment is injective. \begin{comment}
The definition was:
\begin{equation*}
\eta_a: \ob{\inf I} \to \mor{\inf I}: (x,g) \mapsto (\id_x,(a(x),g_1(x)g))\text{.}
\end{equation*}
\end{comment}
For surjectivity, suppose $\eta:F_{g_1,\varphi_1} \Rightarrow F_{g_2,\varphi_2}$ is a connection-preserving 2-morphism. We can assume that it is induced from a smooth $\Gamma$-equivariant natural transformation $\tilde\eta$. Its components form a smooth map $\tilde\eta: M \times G \to M \times H \times G$. Source and target matching, naturality, and $\Gamma$-equivariance imply that  $\tilde\eta(x,g)=(x,a(x),g_1(x)g)$ for a smooth map $a:M  \to H$ satisfying
\begin{equation}
\label{eq:Lfullyfaithful:1}
t(a(x))=g_2(x)g_1(x)^{-1}\text{.}
\end{equation}
\begin{comment}
We have 
\begin{align*}
s(\tilde\eta(x,g)&=\phi_{g_1}(x,g)=(x,g_1(x)g)
\\
t(\tilde\eta(x,g))&=\phi_{g_2}(x,g)=(x,g_2(x)g)
\\
\tilde\eta(x,t(h)g)\circ \phi_{g_1}(\id_x,h,g)&= (x,\tilde\eta_H(x),g_1(x)t(h)g)\circ (\id_x,\alpha(g_1(x),h),g_1(x)g)
\\&=  (x,\tilde\eta_H(x)\alpha(g_1(x),h),g_1(x)g)
\\&=(x,\alpha(g_2(x),h)\tilde\eta_H(x),g_1(x)g) \\&=(x,\alpha(g_2(x),h),g_2(x)g) \circ(x,\tilde\eta_H(x),g_1(x)g) 
\\&=\phi_{g_2}(\id_x,h,g) \circ \tilde\eta(x,g)\text{.}
\\
\tilde\eta(x,gg')&=(x,\tilde\eta_H(x,gg'),g_1(x)gg')\\&=(x,\tilde\eta_H(x,g),g_1(x)gg')
\\&=R(\tilde\eta(x,g),\id_{g'})\text{.}
\end{align*}
\end{comment}
That $\eta$ is connection-preserving implies by \cref{re:canpullback:d} that $\tilde\eta^{*}\fb{\Omega}_{A',B'}=\kappa_{1,0} - \kappa_{2,0}$, where $\kappa_{i,0}=(\alpha_{\pr_G^{-1}\cdot g_i(x)^{-1}})_{*}(\varphi_i|_x)$ according to \cref{eq:shiftF:0}. Then we get
\begin{equation}
\label{eq:Lfullyfaithful:2}
\varphi_1-\mathrm{Ad}_{a}^{-1}(\varphi_2)=(\alpha_{g_1})_{*}(s^{*}\kappa_{1,0} - s^{*}\kappa_{2,0})=(\alpha_{g_1})_{*}s^{*}\tilde\eta^{*}\fb\Omega=(\tilde\alpha_{a})_{*}(A)+a^{*}\theta\text{,}
\end{equation}
where $s:M \to M\times G:x \mapsto (x,1)$. \begin{comment}
Indeed, the full calculation is
\begin{align*}
\varphi_1|_x-\mathrm{Ad}_{a(x)}^{-1}(\varphi_2|_x)&=\varphi_1|_x-(\alpha_{g_1(x)g_2(x)^{-1}})_{*}(\varphi_2|_x)
\\&=(\alpha_{g_1(x)g})_{*}(\kappa_{1,0}|_{x,g} - \kappa_{2,0}|_{x,g})
\\&=(\alpha_{g_1(x)g})_{*}\tilde\eta^{*}\fb\Omega|_{x,g} 
\\&= (\alpha_{g_1(x)g})_{*}\fb\Omega|_{(x,a(x),g_1(x)g)}
\\&=(\tilde\alpha_{a(x)})_{*}(A_x)+\theta_{a(x)}
\end{align*}
\end{comment}
\Cref{eq:Lfullyfaithful:1,eq:Lfullyfaithful:2} show that $a:M \to H$ is a gauge 2-transformation. Obviously, $\eta_a=\eta$. This shows that the Hom-functor is full.
\end{proof}

\begin{appendix}

\section{Formulary for calculations in strict Lie 2-algebras}

The formulas presented here are valid for a strict Lie 2-algebra consisting of Lie algebras $\mathfrak{g}$ and $\mathfrak{h}$, a Lie algebra homomorphism $t_{*}:\mathfrak{h} \to \mathfrak{g}$, and a bilinear map $\alpha_{*}:\mathfrak{g} \times \mathfrak{h} \to \mathfrak{h}$. The axioms are:
\begin{align*}
\alpha_{*}([X_1,X_2],Y)&= \alpha_{*}(X_1,\alpha_{*}(X_2,Y))-\alpha_{*}(X_2,\alpha_{*}(X_1,Y))
\\
\alpha_{*}(X,[Y_1,Y_2])&=[\alpha_{*}(X,Y_1),Y_2]+[Y_1,\alpha_{*}(X,Y_2)]\text{,}
\\
\alpha_{*}(t_{*}(Y_1),Y_2)&=[Y_1,Y_2]
\\
t_{*}\alpha_{*}(X,Y) &= [X,t_{*}(Y)]
\end{align*}
Formulas involving the adjoint action:
\begin{align*}
t_{*}\circ \mathrm{Ad}_h &= \mathrm{Ad}_{t(h)}\circ t_{*}
\\
\mathrm{Ad}_a(\alpha_{*}(X,Y))&= \alpha_{*}(\mathrm{Ad}_{t(a)}(X),\mathrm{Ad}_a(Y))
\end{align*}
Formulas involving the map $\alpha_g:H \to H$ defined by $\alpha_g(h) := \alpha(g,h)$:
\begin{align*}
\mathrm{Ad}_g \circ t_{*} &= t_{*} \circ (\alpha_g)_{*}
\\\mathrm{Ad}_{\alpha(g,h)} \circ (\alpha_{g})_{*}&=(\alpha_g)_{*}\circ \mathrm{Ad}_{h}
\\
(\alpha_{t(h)})_{*}&= \mathrm{Ad}_h
\\
\alpha_{*}(\mathrm{Ad}_g(X),Y) &= (\alpha_g)_{*}(\alpha_{*}(X,(\alpha_{g^{-1}})_{*}(Y)))
\\
(\alpha_{g})_{*}(\alpha_{*}(X, Y)) &= \alpha_{*}(\mathrm{Ad}_{g}(X) , (\alpha_{g})_{*} (Y))
\end{align*}
Formulas involving the map $\tilde\alpha_h:G \to H$ defined by $\tilde\alpha_h(g):= h^{-1}\alpha(g,h)$:
\begin{align*}
(\tilde\alpha_{h_1h_2})_{*} &=  \mathrm{   Ad}_{h_2}^{-1} \circ (\tilde\alpha_{h_1} )_{*}+(\tilde\alpha_{h_2})_{*}
\\
(\tilde\alpha_{\alpha(g,h)})_{*} &=  (\alpha_g)_{*} \circ (\tilde\alpha_{h})_{*}\circ \mathrm{Ad}_g^{-1}
\\
(\tilde\alpha_{h^{-1}})_{*} &=  -\mathrm{Ad}_{h} \circ (\tilde\alpha_{h} )_{*}
\\
t_{*} (  (\tilde\alpha_        h)_{*}(X)) &= \mathrm{Ad}_{t(h)}^{-1}(X)-X
\\
((\tilde\alpha_h)_{*}\circ t_{*})(Y)&=\mathrm{Ad}_h^{-1}(Y)-Y
\\
(\tilde\alpha_h)_{*}([X,Y]) &= [(\tilde\alpha_h)_{*}(X), (\tilde\alpha_h)_{*}(Y)]+ \alpha_{*}(X,(\tilde\alpha_h)_{*}(Y))-\alpha_{*}(Y,(\tilde\alpha_h)_{*}(X))
\end{align*}
Formulas involving the exterior derivative:
\begin{align*}
\mathrm{d}\alpha_{*}(\omega\wedge\eta) &= \alpha_{*}(\mathrm{d}\omega\wedge\eta)+(-1)^{\deg(\omega)}\alpha_{*}(\omega\wedge \mathrm{d}\eta)
\\
\mathrm{d}(\alpha_{g})_{*}(\varphi) &= (\alpha_{g})_{*}(\mathrm{d}\varphi)+\alpha_{*}(g^{*}\bar\theta \wedge (\alpha_{g})_{*}(\varphi))
\\
\mathrm{d}\mathrm{Ad}_{g}^{-1}(\varphi)
&= \mathrm{Ad}_g^{-1}(\mathrm{d}\varphi)-[g^{*}\theta \wedge \mathrm{Ad}_g^{-1}(\varphi)]
\\
\mathrm{d}(\tilde\alpha_h)_{*}(\varphi)&= (\tilde\alpha_h)_{*}(\mathrm{d}\varphi)+(-1)^{\deg(\varphi)}\alpha_{*}(\varphi\wedge h^{*}\theta)  - [h^{*}\theta\wedge (\tilde \alpha_h)_{*}(\varphi) ]
\end{align*}

\end{appendix}


\bibliographystyle{kobib}
\bibliography{kobib}

\begin{thebibliography}{LGSX09}

\bibitem[ACJ05]{aschieri}
P.~Aschieri, L.~Cantini, and B.~Jurco, \quot{Nonabelian bundle gerbes, their
  differential geometry and gauge theory}.
\newblock {\em Commun. Math. Phys.}, 254:367--400, 2005.
\newblock \kobiburl{http://arxiv.org/abs/hep-th/0312154}
\bibitem[Bar04]{bartels}
T.~Bartels, {\em 2-bundles and higher gauge theory}.
\newblock PhD thesis, University of California, Riverside, 2004.
\newblock \kobiburl{http://arxiv.org/abs/math/0410328}
\bibitem[BC04]{baez7}
J.~C. Baez and A.~S. Crans, \quot{Higher-dimensional algebra {VI}: {L}ie
  2-algebras}.
\newblock {\em Theory Appl. Categ.}, 12:492--528, 2004.
\newblock \kobiburl{http://arxiv.org/abs/math/0307263}
\bibitem[BM05]{breen1}
L.~Breen and W.~Messing, \quot{Differential geometry of gerbes}.
\newblock {\em Adv. Math.}, 198(2):732--846, 2005.
\newblock \kobiburl{http://arxiv.org/abs/math.AG/0106083}
\bibitem[BS07]{baez2}
J.~C. Baez and U.~Schreiber, \quot{Higher gauge theory}.
\newblock In A.~Davydov, M.~Batanin, M.~Johnson, S.~Lack, and A.~Neeman,
  editors, {\em Categories in Algebra, Geometry and Mathematical Physics},
  Proc. Contemp. Math. AMS, Providence, Rhode Island, 2007.
\newblock \kobiburl{http://arxiv.org/abs/math/0511710}
\bibitem[BS09]{baez8}
J.~C. Baez and D.~Stevenson, \quot{The classifying space of a topological
  2-group}.
\newblock In N.~Baas, editor, {\em Algebraic Topology}, volume~4 of {\em Abel
  Symposia}, pages 1--31. Springer, 2009.
\newblock \kobiburl{http://arxiv.org/abs/0801.3843}
\bibitem[Chu63]{Church1963}
P.~T. Church, \quot{Differentiable open maps on manifolds}.
\newblock {\em Trans. Amer. Math. Soc.}, 109:87--100, 1963.
\bibitem[FSS12]{Fiorenza}
D.~Fiorenza, U.~Schreiber, and J.~Stasheff, \quot{Cech cocycles for
  differential characteristic classes -- An infinity-Lie theoretic
  construction}.
\newblock {\em Adv. Theor. Math. Phys.}, 16:149--250, 2012.
\newblock \kobiburl{http://arxiv.org/abs/1011.4735}
\bibitem[Gir71]{giraud}
J.~Giraud, \quot{Cohomologie non-abélienne}.
\newblock {\em Grundl. Math. Wiss.}, 197, 1971.
\bibitem[GK]{Gukova}
S.~Gukov and A.~Kapustin, \quot{Topological Quantum Field Theory, Nonlocal
  Operators, and Gapped Phases of Gauge Theories}.
\newblock Preprint.
\newblock \kobiburl{http://arxiv.org/abs/1307.4793}
\bibitem[GS08]{ginot1}
G.~Ginot and M.~Sti\'enon, \quot{$G$-Gerbes, principal 2-group bundles and
  characteristic classes}.
\newblock {\em J. Symplectic Geom.}, 13(4):1001--1047, 2008.
\newblock \kobiburl{http://arxiv.org/abs/0801.1238}
\bibitem[JSW15]{Jurco2015}
B.~Jurco, C.~Saemann, and M.~Wolf, \quot{Semistrict Higher Gauge Theory}.
\newblock {\em J. High Energy Phys.}, 04:087, 2015.
\newblock \kobiburl{http://arxiv.org/abs/1403.7185}
\bibitem[JSW16]{Jurco}
B.~Jurco, C.~Saemann, and M.~Wolf, \quot{Higher Groupoid Bundles, Higher
  Spaces, and Self-Dual Tensor Field Equations}.
\newblock {\em Fortschr. Phys.}, pages 1521--3978, 2016.
\newblock \kobiburl{http://arxiv.org/abs/1604.01639}
\bibitem[Ler]{lerman1}
E.~Lerman, \quot{Orbifolds as stacks?}
\newblock Preprint.
\newblock \kobiburl{http://arxiv.org/abs/0806.4160}
\bibitem[LGSX09]{Laurent-Gengoux}
C.~Laurent-Gengoux, M.~Stiénon, and P.~Xu, \quot{Non-abelian differentiable
  gerbes}.
\newblock {\em Adv. Math.}, 220(5):1357--1427, 2009.
\newblock \kobiburl{http://arxiv.org/abs/math/0511696v5}
\bibitem[Lur09]{Lurie2009}
J.~Lurie, {\em Higher topos theory}.
\newblock Number 109 in Annals of Mathematics Studies. Princeton University
  Press, 2009.
\bibitem[Met]{metzler}
D.~S. Metzler, \quot{Topological and Smooth Stacks}.
\newblock Preprint.
\newblock \kobiburl{http://arxiv.org/abs/math.DG/0306176}
\bibitem[Mur96]{murray}
M.~K. Murray, \quot{Bundle gerbes}.
\newblock {\em J. Lond. Math. Soc.}, 54:403--416, 1996.
\newblock \kobiburl{http://arxiv.org/abs/dg-ga/9407015}
\bibitem[NW13]{Nikolaus}
T.~Nikolaus and K.~Waldorf, \quot{Four equivalent versions of non-abelian
  gerbes}.
\newblock {\em Pacific J. Math.}, 264(2):355--420, 2013.
\newblock \kobiburl{http://arxiv.org/abs/1103.4815}
\bibitem[Par15]{Parzygnat2015}
A.~J. Parzygnat, \quot{Gauge invariant surface holonomy and monopoles}.
\newblock {\em Theory Appl. Categ.}, 30(42):1319--1428, 2015.
\newblock \kobiburl{http://arxiv.org/abs/1410.6938}
\bibitem[Pro96]{pronk}
D.~A. Pronk, \quot{Entendues and stacks as bicategories of fractions}.
\newblock {\em Compos. Math.}, 102:243--303, 1996.
\bibitem[Sar65]{sard1}
A.~Sard, \quot{Hausdorff Measure of Critical Images on Banach Manifolds}.
\newblock {\em Amer. J. Math.}, 87(1):158--174, 1965.
\bibitem[Sch05]{schreiber4}
U.~Schreiber, {\em From Loop Space Mechanics to Nonabelian Strings}.
\newblock PhD thesis, Universität Duisburg-Essen, 2005.
\newblock \kobiburl{http://www.arxiv.org/abs/hep-th/0509163}
\bibitem[Sch11]{Schreiber2011}
U.~Schreiber, {\em Differential cohomology in a cohesive $\infty$-topos}.
\newblock Habilitation, Universit\"at Hamburg, 2011.
\newblock
  \kobiburl{http://ncatlab.org/schreiber/show/differential+cohomology+in+a+coh%
esive+topos}
\bibitem[SP11]{pries2}
C.~Schommer-Pries, \quot{Central extensions of smooth 2-groups and a
  finite-dimensional string 2-group}.
\newblock {\em Geom. Topol.}, 15:609--676, 2011.
\newblock \kobiburl{http://arxiv.org/abs/0911.2483}
\bibitem[SW11]{schreiber5}
U.~Schreiber and K.~Waldorf, \quot{Smooth functors vs. differential forms}.
\newblock {\em Homology, Homotopy Appl.}, 13(1):143--203, 2011.
\newblock \kobiburl{http://arxiv.org/abs/0802.0663}
\bibitem[SW13]{schreiber2}
U.~Schreiber and K.~Waldorf, \quot{Connections on non-abelian gerbes and their
  holonomy}.
\newblock {\em Theory Appl. Categ.}, 28(17):476--540, 2013.
\newblock \kobiburl{http://arxiv.org/abs/0808.1923}
\bibitem[Wal]{Waldorf2017}
K.~Waldorf, \quot{Parallel transport in principal 2-bundles}.
\newblock Preprint.
\newblock \kobiburl{http://arxiv.org/abs/1704.08542}
\bibitem[Woc11]{wockel1}
C.~Wockel, \quot{Principal 2-bundles and their gauge 2-groups}.
\newblock {\em Forum Math.}, 23:565--610, 2011.
\newblock \kobiburl{http://arxiv.org/abs/0803.3692}
\end{thebibliography}

\end{document}